\newtheorem{thm}{Theorem}[section]
\newtheorem{prop}[thm]{Proposition}
\newtheorem{lem}[thm]{Lemma}
\newtheorem{cor}[thm]{Corollary}
\theoremstyle{definition}
\newtheorem{defi}[thm]{Definition}
\newtheorem{example}[thm]{Example}
\theoremstyle{remark}
\newtheorem{remark}[thm]{Remark}
\newenvironment{packed_enum}{
\begin{enumerate}
  \setlength{\itemsep}{1pt}
  \setlength{\parskip}{0pt}
  \setlength{\parsep}{0pt}
}{\end{enumerate}}
\DeclareMathAlphabet{\mathscr}{OT1}{pzc}{m}{it}
\def\romanenum{\renewcommand{\labelenumi}{\textup{(}\roman{enumi}\textup{)}}}
\def\sh{\textup{sh}}
\def\b{\mathfrak{b}}
\def\Q{\mathbb{Q}}
\def\Spec{\textup{Spec }}
\def\Spf{\textup{Spf }}
\def\ker{\textup{ker }}
\def\im{\textup{im }}
\def\Hom{\textup{Hom}}
\def\A{\mathbb{A}}
\def\O{\mathcal{O}}
\def\Z{\mathbb{Z}}
\def\N{\mathbb{N}}
\def\G{\mathbb{G}}
\def\m{\mathfrak{m}}
\def\a{\mathfrak{a}}
\def\plim{\varprojlim}
\def\P{\mathbb{P}}
\def\p{\mathfrak{p}}
\def\id{\textup{id}}
\def\sep{{\textup{\footnotesize sep}}}
\def\Gal{\textup{Gal}}
\def\Tor{\textup{Tor}}
\def\Frak{\textup{Frac}\,}
\def\an{\textup{an}}
\def\rig{\textup{rig}}
\def\Rig{\textup{Rig}}
\def\FF{\textup{FF}}
\def\TF{\textup{TF}}
\def\uRig{\textup{uRig}}
\def\D{\mathbb{D}}
\def\srig{\textup{urig}}
\def\urig{\textup{urig}}
\def\sAff{\textup{sAff}}
\def\sSp{\textup{sSp\,}}
\def\sp{\textup{sp}}
\def\sI{\mathcal{I}}
\def\sJ{\mathcal{J}}
\def\sT{\mathcal{T}}
\def\B{\mathbb{B}}
\def\fU{\mathfrak{U}}
\def\fV{\mathfrak{V}}
\def\fW{\mathfrak{W}}
\def\fG{\mathfrak{G}}
\def\fH{\mathfrak{H}}
\def\fX{\mathfrak{X}}
\def\fY{\mathfrak{Y}}
\def\fZ{\mathfrak{Z}}
\def\fS{\mathfrak{S}}
\def\fT{\mathfrak{T}}
\newcommand{\ul}[1]{\underline{#1}}
\def\cl{\textup{cl}}
\def\F{\mathbb{F}}
\def\sup{\textup{sup}}
\def\sF{\mathcal{F}}
\def\sG{\mathcal{G}}
\def\fC{\mathfrak{C}}
\def\parent{\textup{par}}
\def\children{\textup{ch}}
\def\leaves{\textup{lv}}
\def\subtree{\textup{subt}}
\def\sr{\textup{ur}}
\def\ur{\textup{ur}}
\def\r{\textup{r}}
\def\alg{\textup{alg}}
\def\Spa{\textup{Spa}}
\def\ad{\textup{ad}}
\def\End{\textup{End}}
\def\sG{\mathscr{G}}
\def\discr{\textup{discr}}
\def\Mod{\textup{Mod}}
\def\uTor{\textup{uTori}}
\def\uqTor{\textup{uqTori}}
\def\phi{\varphi}
\newcommand\@dotsep{3.5}
\def\@tocline#1#2#3#4#5#6#7{\relax
  \ifnum #1>\c@tocdepth 
  \else
    \par \addpenalty\@secpenalty\addvspace{#2}%
    \begingroup \hyphenpenalty\@M
    \@ifempty{#4}{%
      \@tempdima\csname r@tocindent\number#1\endcsname\relax
    }{%
      \@tempdima#4\relax
    }%
    \parindent\z@ \leftskip#3\relax
    \advance\leftskip\@tempdima\relax
    \rightskip\@pnumwidth plus1em \parfillskip-\@pnumwidth
    #5\leavevmode\hskip-\@tempdima #6\relax
    \leaders\hbox{$\m@th
      \mkern \@dotsep mu\hbox{.}\mkern \@dotsep mu$}\hfill
    \hbox to\@pnumwidth{\@tocpagenum{#7}}\par
    \nobreak
    \endgroup
  \fi}
\begin{document}

\title{\textsc{Néron models of formally finite type}}

\author{Christian Kappen}
\email{christian.kappen@uni-due.de}

\address{
Institut für Experimentelle Mathematik\\
Ellernstrasse 29, 45326 Essen}

\begin{abstract}
We introduce Néron models of formally finite type for uniformly rigid spaces, and we prove that they generalize the notion of formal Néron models for rigid-analytic groups as it was defined by Bosch and Schlöter. Using this compatibility result, we give examples of uniformly rigid groups whose Néron models are not of topologically finite type.
\end{abstract}

\maketitle

\section{Introduction}
Let $K$ be a complete discretely valued field with valuation ring $R$, valuation ideal $\m_R\subseteq R$ and residue field $k$, and let $\pi\in R$ be a uniformizer; we equip $R$ with the the $\m_R$-adic topology. Let $\FF_R$ denote the category of formal $R$-schemes of locally formally finite (ff) type, that is, the category of locally noetherian formal $R$-schemes which are locally isomorphic to formal spectra of quotients of mixed formal power series rings in finitely many variables
\[
R[[S_1,\ldots,S_m]]\langle T_1,\ldots,T_n\rangle\;,
\]
where an ideal of definition is generated by $\m_R$ and by the $S_i$. An object in $\FF_R$ is said to be of locally topologically finite (tf) type if we can take $m=0$ everywhere in this local description; let $\TF_R\subseteq\FF_R$ denote the full subcategory of formal $R$-schemes of locally tf type. In \cite{berthelot_rigcohpreprint}, Berthelot has introduced a generic fiber functor $\rig$ from $\FF_R$ to the category $\Rig_K$ of rigid $K$-spaces; it is characterized by the fact that its restriction to $\TF_R$ coincides with Raynaud's generic fiber functor and that it maps admissible blowups to isomorphisms. 

In \cite{bosch_schloeter}, Bosch and Schlöter developed a theory of formal Néron models of locally tf type for smooth rigid spaces; their work has found many applications, for instance within the framework of motivic integration (cf.\ \cite{nicsebmotintrig}). The Néron model of a smooth rigid space is defined as follows:
\begin{defi}\label{nmrigdefi}
Let $X$ be a smooth rigid $K$-space. A Néron model for $X$ is a pair $(\fX,\phi)$, where $\fX$ is a smooth object in $\TF_R$ and where $\phi:\fX^\rig\rightarrow X$ is a morphism such that the natural map
\[
\Hom_R(\fZ,\fX)\rightarrow\Hom_K(\fZ^\rig,X)\quad\,\quad \ul{\psi}\mapsto\phi\circ\ul{\psi}^\rig
\]
is bijective for all smooth objects $\fZ\in\TF_R$.
\end{defi}
This definition agrees with Definition 1.1 in \cite{bosch_schloeter} if $\phi$ is an open immersion; if $\phi$ is an open immersion on any quasi-compact open part of $\fX^\rig$, we recover the notion of a Néron quasi-model (Definition 6.4 in \cite{bosch_schloeter}). In many interesting cases, the morphism $\phi$ will not be an isomorphism; that is, a Néron model for $X$ needs not be a model of $X$. Néron models of rigid spaces should rather be viewed as smooth models for the loci of unramified points.

Bosch and Schlöter transferred the construction process for algebraic Néron models to the framework of formal and rigid geometry. Let us recall their main existence results for rigid-analytic groups (cf.\ \cite{bosch_schloeter} Theorem 1.2, Criterion 1.4, Theorem 6.2 and Theorem 6.6):

\begin{thm}[Bosch and Schlöter \cite{bosch_schloeter}]\label{bsthm}
Let $G$ be a rigid-analytic $K$-group.
\begin{enumerate}
\item If $\fG\in\TF_R$ is a smooth formal R-group scheme and if $\phi:\fG^\rig\hookrightarrow G$ is a retrocompact open immersion respecting the group structures, then $(\fG,\phi)$ is a formal Néron model of $G$ if and only if the image of $\phi$ contains all unramified points of $G$.
\item If the group of unramified points of $G$ is bounded, then the Néron model $(\fG,\phi)$ of $G$ exists; it is quasi-compact, and $\phi$ is an open immersion.
\item If $G$ is the analytification of a smooth quasi-compact $K$-group scheme $\mathcal{G}$, then the Néron model $\fG$ of $G$ exists if and only if the Néron lft-model (cf.\ \cite{blr} 10.1/1) $\ul{\mathcal{G}}$ of $\mathcal{G}$ exists. In this case, $\fG$ is the formal completion of $\ul{\mathcal{G}}$ along its special fiber, and $\phi$ is an open immersion on every quasi-compact admissible subset of its domain. If moreover the special fiber of $\ul{\mathcal{G}}$ is quasi-compact or if $\mathcal{G}$ is commutative, then $\phi$ is a retrocompact open immersion.
\end{enumerate}
\end{thm}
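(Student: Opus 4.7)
The plan is to transfer the Bosch--L\"utkebohmert--Raynaud smoothening strategy for algebraic N\'eron models to the formal-rigid framework. I would treat the three parts in order, since each subsequent part builds on the preceding one.

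For part (i), the forward implication is essentially a tautology: every unramified point of $G$ corresponds, via the valuative criterion, to a morphism $\mathrm{Spf}\,R \to \fG$, so the universal property forces its generic fiber to lie in the image of $\phi$. For the converse, let $\fZ \in \TF_R$ be smooth with a morphism $\psi : \fZ^\rig \to G$. The unramified points are dense in $\fZ^\rig$ (smoothness plus Henselian base), and their images lie in the image of $\phi$ by hypothesis; retrocompactness of $\phi$ together with a density argument then allows $\psi$ to factor, after an admissible blow-up, through $\fG^\rig$. It remains to lift this factorization to a formal morphism $\fZ \to \fG$, which is where the group structure enters: by translation one reduces to the situation near the identity section, where the formal smoothening process supplies the desired lift.

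For part (ii), boundedness of the unramified locus provides a quasi-compact formal $R$-model of some open part of $G$ which captures all unramified points. Running the formal smoothening procedure then yields, after finitely many admissible blow-ups, a smooth quasi-compact formal $R$-scheme whose rigid fiber embeds openly into $G$ and still contains every unramified point; the group law is then transferred by applying the universal property from (i) to multiplication and inversion. Part (iii) I would prove by forming the formal completion $\widehat{\ul{\mathcal{G}}}$ of the N\'eron lft-model along its special fiber. Checking Definition \ref{nmrigdefi} amounts to matching, via Berthelot's generic fiber functor and the algebraic N\'eron property of $\ul{\mathcal{G}}$, morphisms $\fZ \to \widehat{\ul{\mathcal{G}}}$ with morphisms $\fZ^\rig \to \mathcal{G}^{\mathrm{an}}$ for smooth $\fZ \in \TF_R$. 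Conversely, a formal N\'eron model $\fG$ provides, by translation-gluing, enough smooth formal models to algebraize into an lft-model. The quasi-compact case of retrocompactness is immediate; the commutative case uses that the component group is discrete and the components are all translates of a fixed quasi-compact one, so only finitely many meet any given quasi-compact open.

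The main obstacle is the lifting in part (i) of a rigid-analytic morphism $\fZ^\rig \to \fG^\rig$ to a formal morphism $\fZ \to \fG$. This is the real content of the theorem: it requires a formal smoothening statement compatible with the restriction of Berthelot's generic fiber functor to $\TF_R$, and the careful analysis of admissible blow-ups and dilatations in this setting is exactly where the theory genuinely departs from the classical BLR framework.
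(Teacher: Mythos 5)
The paper does not prove Theorem~\ref{bsthm} at all: it is quoted verbatim from Bosch and Schl\"uter, with the text immediately before it reading ``Let us recall their main existence results for rigid-analytic groups (cf.\ \cite{bosch_schloeter} Theorem 1.2, Criterion 1.4, Theorem 6.2 and Theorem 6.6).'' So there is no ``paper's own proof'' to compare your sketch against; what you have written is an attempted reconstruction of the Bosch--Schl\"uter argument, not a proof of a result of the present paper. The paper's actual contribution is the \emph{uniformly rigid} analog (Theorem~\ref{maincompthm} and its auxiliary Weil-type extension Theorem~\ref{mainweilextthm}), which uses Theorem~\ref{bsthm} as an input in Corollaries~\ref{neronex1cor} and~\ref{neronex2cor}.

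As a reconstruction your sketch is broadly plausible but misattributes the key tool in part~(i). The step you flag as ``the real content of the theorem'' --- lifting $\fZ^\rig\to\fG^\rig$ to $\fZ\to\fG$ --- is not handled by the formal smoothening process; it is handled by a Weil extension theorem for rational maps to smooth formal group schemes (cf.\ \cite{bosch_schloeter}~2.6, of which the present paper proves an ff-type analog in Section~\ref{groupcasesec}). Formal smoothening is the main engine of part~(ii), not part~(i). Moreover the order of operations in your sketch of~(i) is inverted: one does not first factor $\psi$ through $\fG^\rig$ and then lift; rather one extends $\psi$ to a morphism $\fU\to\fG''$ from an $R$-dense open $\fU\subseteq\fZ$ to some formal model of $G$, observes that the (formally) unramified generic points of $\fZ_k$ specialize into $\fG$, and then invokes the Weil extension theorem to extend the rational map to all of $\fZ$, which only afterwards gives the factorization of $\psi$ through $\fG^\rig$. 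Finally, note the remark in the introduction that in~(i) the Bosch--Schl\"uter statement only requires \emph{finite} unramified extensions, whereas the generic points of $\fZ_k$ produce \emph{infinite} formally unramified extensions; in the tf-type setting this gap is bridged by an approximation step, and it is precisely this step that is unavailable in the ff-type setting, forcing the present paper to strengthen the hypothesis in Theorem~\ref{maincompthm} to ``all formally unramified points.'' Your sketch glosses over this distinction.
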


\begin{example}\label{opendiscexample}
For instance, the Néron model of the open unit disc $(\Spf R[[S]])^\rig$ is $(\Spf R\langle T\rangle,\phi)$, where $\phi:(\Spf R\langle T\rangle)^\rig\rightarrow(\Spf R[[S]])^\rig$ is the open immersion that is given by $S\mapsto \pi T$. Indeed, if we equip $\Spf R[[S]]$ with the multiplicative group structure and if $\Spf R\langle T\rangle$ is given the group structure $(x,y)\mapsto ((1+\pi x)(1+\pi y)-1)/\pi$, then $\phi$ is a homomorphism, and the image of $\phi$, which is the closed subdisc of radius $|\pi|$ around $S=0$, contains all unramified points of the open unit disc.
\end{example}  
 
In his preprint \cite{bisecartpre}, Chai proposed to study Néron models of rigid spaces within the framework of smooth formal $R$-schemes of locally ff type; more precisely speaking, he suggested to replace $\TF_R$ by $\FF_R$ in Definition \ref{nmrigdefi} above, using formal smoothness of topological algebras as a notion of smoothness in $\FF_R$. Chai's interest in Néron models of ff type was stimulated by his investigations of the base change conductor for abelian varieties. 
It is natural to ask whether Chai's definition is compatible with Definition \ref{nmrigdefi} in the sense that a Néron model $(\fX,\phi)$ of a rigid $K$-space $X$ according to Definition \ref{nmrigdefi} automatically satisfies the stronger universal property for smooth test objects $\fZ$ in $\FF_R$. Example \ref{opendiscexample} shows that this is not the case, not even in situations where $\fX$ and $X$ are group objects and where $\phi$ is a homomorphism: the identity on the open unit disc is the generic fiber of the identity on the smooth object $\Spf R[[S]]$ in $\FF_R$, but it does not factor through the image of $\phi$. This observation already indicates that Chai's definition of Néron models of ff type might not be the optimal one.

When trying to write down interesting examples of Néron models of ff type in the sense of Chai, the main obstruction to checking the universal property for smooth test objects in $\FF_R$ originates in the fact that rigid generic fibers of objects in $\FF_R$ admit too many morphisms to rigid spaces: for example, an unbounded function on the open rigid unit disc defines a morphism to the rigid projective line which does not extend to quasi-compact models; its graph does not admit a schematic closure in $\Spf R[[S]]\times\P^1_R$ (cf.\ the introduction of \cite{urigspaces}). Such phenomena do not occur if one restricts $\rig$ to $\TF_R$: given two quasi-paracompact objects in $\TF_R$, any morphism of their rigid generic fibers extends to a morphism of models after admissible blowup, cf.\ \cite{bosch_frgnotes} Theorem 2.8/3; this fact is essential for the arguments of Bosch and Schlöter.

The superabundance of morphisms between rigid generic fibers of objects in $\FF_R$ can be eliminated by means of additional rigidifying "uniform" structure. Formally, this is achieved by means of the category $\uRig_K$ of uniformly rigid $K$-spaces which we introduced in the article \cite{urigspaces}. A uniformly rigid $K$-space may be regarded as a rigid $K$-space together with extra structure that is encoded in a coarser Grothendieck topology and a smaller sheaf of analytic functions; morphisms of uniformly rigid spaces are, locally, defined in terms of \emph{bounded} functions on products of open and closed polydiscs. There is a generic fiber functor $\urig:\FF_R\rightarrow\uRig_K$ which maps an affine object $\Spf A$ to the semi-affinoid $K$-space $\sSp (A\otimes_RK)$, cf.\ \cite{urigspaces} Section 2.4. Raynaud's theory of formal models in rigid geometry allows us to view the category of quasi-paracompact and quasi-separated rigid $K$-spaces as a full subcategory of $\uRig_K$, cf.\ \cite{urigspaces} 2.59 and 2.60: if $\fX$ is any quasi-paracompact model of locally tf type for some quasi-paracompact and quasi-separated rigid $K$-space $X$, then the "Raynaud-type" uniform structure $X^\ur:=\fX^\urig\in\uRig_K$, also called the uniform rigidification of $X$, does not depend on $\fX$, and it is functorial in $X$.   We refer to the introduction of \cite{urigspaces} for a more comprehensive discussion.

To obtain a good notion of Néron models of formally finite type, we propose to replace, in Definition \ref{nmrigdefi}, not just $\TF_R$ by $\FF_R$, as suggested by Chai, but at the same time $\TF_R$ by $\FF_R$, $\Rig_K$ by $\uRig_K$ and $\rig$ by $\urig$:

\begin{defi}\label{nmurigdefi}
Let $X$ be a smooth uniformly rigid $K$-space. A Néron model for $X$ is a pair $(\fX,\phi)$, where $\fX$ is a smooth object in $\FF_R$ and where $\phi:\fX^\urig\rightarrow X$ is a morphism such that the natural map
\[
\Hom_R(\fZ,\fX)\rightarrow\Hom_K(\fZ^\urig,X)\quad\,\quad \ul{\psi}\mapsto\phi\circ\ul{\psi}^\urig
\]
is bijective for all smooth objects $\fZ\in\FF_R$.
\end{defi}

Here a uniformly rigid $K$-space $X$ is called smooth if its underlying rigid $K$-space $X^\r$ (cf.\ \cite{urigspaces} Prop.\ 2.5.5) is smooth, while smoothness in $\FF_R$ is, as before, defined via formal smoothness on the level of topological $R$-algebras; equivalently, it can be characterized in terms of the formal analog of the Jacobian criterion, cf.\ \cite{ajr1} and \cite{ajr2}.

Examples for smooth objects in $\FF_R$ which are Néron models of their uniformly rigid generic fibers are provided by the following result:

\textbf{Propositions \ref{affinenmprop} and \ref{genfibneronprop}:}
\textit{Smooth affine objects and smooth group objects in $\FF_R$ are Néron models of their uniformly rigid generic fibers.}

We are mostly interested in situations where the universal morphism $\phi$ of the Néron model $(\fX,\phi)$ of a uniformly rigid $K$-space $X$ is not surjective. The main result of the present paper is the following; it is an analog of Theorem \ref{bsthm} above, and it concerns Raynaud-type uniform structures on quasi-paracompact rigid groups:

\textbf{Theorem \ref{maincompthm} and its Corollaries:}
\textit{
Let $G$ be a smooth and quasi-paracompact rigid $K$-group.
\begin{enumerate}
\item If $\fG\in\TF_R$ is a smooth quasi-paracompact formal R-group scheme and if $\phi:\fG^\rig\hookrightarrow G$ is a retrocompact open immersion respecting the group structures such that the image of $\phi$ contains all formally unramified points of $G$, then $(\fG,\phi^\ur)$ is a formal Néron model of $G^\ur$.
\item If the group of unramified points of $G$ is bounded, then the Néron model of $G^\ur$ is the uniform rigidification $(\fG,\phi^\ur)$ of the Néron model $(\fG,\phi)$ of $G$ (which exists by Theorem \ref{bsthm}).
\item If $G$ is the analytification of a smooth quasi-compact $K$-group scheme $\mathcal{G}$ whose Néron lft-model $\ul{\mathcal{G}}$ exists and if $\ul{\mathcal{G}}$ is quasi-compact or if $\mathcal{G}$ is commutative, then the conclusion of ($ii$) holds.
\end{enumerate}
}

This result provides the desired compatibility statement saying that in many interesting cases, the Néron model $(\fG,\phi)$ of a rigid $K$-group $G$ satisfies the stron\-ger universal property for smooth test objects $\fZ$ in $\FF_R$ and with respect to morphisms respecting the uniform structure $\fZ^\urig$ on $\fZ^\rig$ and the Raynaud-type uniform structure $G^\ur$ on $G$. By Proposition \ref{neronmodinverseprop}, this universal property is indeed a strengthening of the universal property of $(\fG,\phi)$. The existence results in the above theorem use the existence results of Bosch and Schlöter which we quoted above. Let us note that in statement ($i$), we require $\im\phi$ to contain all points of $G$ with values in possibly infinite formally unramified extensions of $R$, whereas in Theorem \ref{bsthm} ($i$) it suffices to consider finite unramified extensions. Let us moreover remark that $G^\ur$ is a group object (cf.\ the last paragraph of Section 2 in \cite{urigspaces}) and that the morphisms $\phi^\ur$ in statements ($ii$) and ($iii$) above are open immersions (cf.\ the second last paragraph of Section 2 in \cite{urigspaces}). 

\begin{example}
The rigid open unit disc $\D_K=(\Spf R[[S]])^\rig$ over $K$ carries two canonical uniform structures: the structure $\overline{\D}_K:=(\Spf R[[S]])^\urig$, for which it is semi-affinoid, and the Raynaud-type uniform structure $\D_K^\ur:=(\Spf R[[S]])^{\rig,\ur}$ for which it is not quasi-compact.  By Proposition \ref{affinenmprop}, the Néron model of $\overline{\D}_K$ is given by $\Spf R[[S]]$, while Theorem \ref{maincompthm} and Example \ref{opendiscexample} show that the Néron model of $\D_K^\ur$ is given by $(R\langle T\rangle,\phi^\ur)$, where $\phi$ is the morphism that pulls $S$ back to $\pi T$. It is a priori clear that there exists no surjective morphism $\psi:\overline{\D}_K\rightarrow\D_K^\ur$: indeed, since $\overline{\D}_K$ is quasi-compact and since $\D_K^\ur$ is an admissible infinite ascending union of closed subdiscs of radii $<1$, every morphism $\psi$ from $\overline{\D}_K$ to $\D_K^\ur$ must factor through a closed subdisc of radius $<1$. The fact that $(\Spf R\langle T\rangle,\phi^\ur)$ is the Néron model of $\D_K^\ur$ shows that every such $\psi$ actually factors through the closed subdisc of radius $|\pi|$ around the origin. 
\end{example}

Let us give some indications on the proof of Theorem \ref{maincompthm}. A main difficulty we have to face is that smooth formal $R$-schemes of ff type have, in general, few formally unramified points. For example, if $R'/R$ is any formally unramified flat extension of complete discrete valuation rings, then every morphism $\Spf R'\rightarrow\Spf R[[S]]$ has the property that the pullback of $S$ is $\pi$-divisible; that is, every such morphism factors generically through the closed disc of radius $|\pi|$ around the origin. In fact, if the residue field of $R$ is not perfect, there even exist nonempty smooth affine objects $\fZ$ in $\FF_R$ such that $\fZ(R')=\emptyset$ for all formally unramified local extensions of discrete valuation rings $R'/R$; an example for such a $\fZ$ is obtained by completing the affine line over $R$ along an inseparable closed point of its special fiber. However, if we we pass from affine smooth formal $R$-schemes of ff type to their $R$-envelopes (cf.\ the next paragraph), then new formally unramified points appear; the complete discrete valuation rings where these points take values are in general not finite over $R$, which is the reason why we need to take formally unramified extensions of $R$ into account.

If $\fZ=\Spf A$ is an affine smooth formal $R$-scheme of ff type, its $R$-envelope $\fZ^\pi$ is defined to be $\Spf A^\pi$, where $A^\pi$ is the ring $A$ equipped with its $\m_R$-adic topology. The formal $R$-scheme $\fZ^\pi$ is affine, noetherian  and $\m_R$-adic, but in general it is not of ff type over $R$. The formation of $R$-envelopes commutes, in general, neither with localization nor with fibered products, and fibered products of envelopes need not be locally noetherian. Using algebraization techniques modulo powers of $\m_R$, we show (cf.\ Section \ref{envsection}) that morphisms from $\fZ^\urig$ to uniform rigidifications $X^\ur$ of separated and quasi-paracompact objects $X\in\Rig_K$ extend to formally unramified boundary points of the generic fiber $\fZ^\pi_K$ of $\fZ^\pi$; the latter is an adic space in the sense of Huber, and it may be viewed as a compactification of $\fZ^\rig$. In the proof of Theorem \ref{maincompthm}, we use these formally unramified boundary points together with the fact that in the cases of interest, the Néron model $\fG$ of the rigid group $G$ under consideration commutes with formally unramified base change (cf.\ \cite{blr} 10.1/3 and \cite{wegel} Theorem 4). Let us note that $R$-envelopes already appeared in \cite{strauch_deformation} and in \cite{huber_finiteness_ii} under the name of quasi-compactifications.

\begin{example}
Let $R[[S]]_{(\pi)}^\wedge$ denote the $\pi$-adic completion of $R[[S]]_{(\pi)}$. Then $R[[S]]_{(\pi)}^\wedge$ is a complete discrete valuation ring which is flat and formally unramified over $R$, and the natural morphism 
\[
\Spf (R[[S]]_{(\pi)}^\wedge)\rightarrow \Spf(R[[S]]^\pi)
\]
preserving $S$ has the property that the pullback of $S$ is not $\pi$-divisible. On the other hand, the pullback of $S'$ under any morphism 
\[
\sSp (R[[S]]_{(\pi)}^\wedge[\pi^{-1}])\rightarrow(\Spf R[[S']])^{\rig,\ur}
\]
is $\pi$-divisible.
\end{example}

In order to rigorously implement the strategy of proof outlined above, we first have to establish a number of preliminary results in analytic arithmetic geometry which may be of independent interest. For instance, we prove a flat base change theorem for higher proper direct images in locally noetherian formal geometry (cf.\ Section \ref{higherimbcsec}), we study schematic images and schematically dominant morphisms of formal schemes (cf.\ Sections \ref{schemimsec} and \ref{schemdomsec}), and we prove an ff type local analog of the Hartogs continuation theorem (cf.\ Section \ref{ancontsec}). For the detailed structure of the present paper, we refer to the table of contents below.

Our main Theorem \ref{maincompthm} produces an ample collection of examples of Néron models for uniformly rigid spaces. Starting out from these examples, we can construct new uniformly rigid spaces $X$ together with their Néron models $(\fX,\phi)$  by means of completion and descent techniques (cf. Section \ref{constrsec}); the resulting Néron models $\fX$ will in general not be of locally tf type, and their universal morphisms $\phi$ will in general not be surjective. As an interesting class of examples, we introduce uniformly rigid tori (cf.\ Section \ref{urigtorisec}) which, if the residue field $k$ of $R$ is algebraically closed, provide a link between algebraic $K$-tori and abelian $K$-varieties with potentially ordinary reduction.

We expect Néron models of uniformly rigid spaces to have interesting applications in arithmetic geometry. Chai and the author are currently working on further developing the methods of \cite{bisecartpre} within the framework of uniformly rigid spaces, in order to prove a formula for the base change conductor of abelian varieties with potentially ordinary reduction.





The present article contains parts of the author's doctoral thesis. He would like to express his gratitude to his thesis advisor Siegfried Bosch. 

\begin{spacing}{0.01}
\tableofcontents
\end{spacing}


\section{Formal geomtry}

In this section, we establish results in locally noetherian formal geometry (cf.\ \cite{egain} \S 10) which we will need later.

\subsection{Faithful flatness}
We begin by gathering some standard facts on flatness and faithful flatness in locally noetherian formal geometry. Let us recall that a morphism $\phi\colon\fY\rightarrow\fX$ of ringed spaces is called flat at a point $y$ of $\fY$ if the induced homomorphism of stalks is flat. It is called flat if it is flat in every point of $\fY$, and it is called faithfully flat\index{morphism!flat}\index{morphism!faithfully flat} if it is flat and surjective.

From now on, let $\fX$ and $\fY$ denote locally noetherian formal schemes. If $\fY$ and $\fX$ are affine, then $\phi$ is flat if and only if the underlying homomorphism $\phi^*$ of rings of global sections is flat; this is easily seen by considering completed stalks and by using the fact that flatness of ring homomorphisms can be checked after faithfully flat base change, cf.\ \cite{ajl} 7.1.1. The corresponding statement for faithful flatness is not true unless $\phi$ is adic. For example, if $\fX=\Spf A$ is affine and if $\phi$ is the completion of $\fX$ along an ideal $I\subseteq A$ such that $A$ is $I$-adically complete and such that $I$ strictly contains the biggest ideal of definition of $A$, then $\phi^*$ is an isomorphism of rings of global sections, but $\phi$ is not surjective. 

We refer to \cite{yekutieli_rescompl} Def.\ 1.14 for the notion of a morphism of locally formally finite (ff) type between locally noetherian formal schemes. Let us consider a cartesian diagram
\[
\begin{diagram}
\fY\times_\fX\fX'&\rTo^{\phi'}&\fX'\\
\dTo<{\psi'}&&\dTo>\psi\\
\fY&\rTo^\phi&\fX
\end{diagram}
\]
where $\fX$, $\fX'$ and $\fY$ are locally noetherian formal schemes. If $\phi$ or $\psi$ is of locally ff type, then the fibered product $\fY\times_\fX\fX'$ is locally noetherian, and $\phi'$ respectively $\psi'$ is of locally ff type, cf.\ \cite{yekutieli_rescompl} Thm.\ 1.22. If $\fX_0$, $\fX_0'$ and $\fY_0$ are subschemes of definition for $\fX$, $\fX'$ and $\fY$ respectively, then by \cite{egain} 10.7, the fibered product $\fY_0\times_{\fX_0}\fX_0'$ is a subscheme of definition for $\fY\times_\fX\fX'$. Let us remark that subschemes of definition of locally noetherian formal schemes are quasi-separated. In the following, we assume that $\fY\times_\fX\fX'$ is locally noetherian.

\begin{lem}\label{flatnessbasechangeprop}
If $\phi$ is adic or if $\psi$ is of locally ff type, then the properties of being flat or faithfully flat propagate from $\phi$ to $\phi'$.
\end{lem}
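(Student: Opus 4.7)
The question is local on $\fX$, $\fX'$, and $\fY$, so I first reduce to the affine situation: $\fX = \Spf A$, $\fX' = \Spf A'$, $\fY = \Spf B$, and $\fY \times_\fX \fX' = \Spf C$ with $C = B \hat{\otimes}_A A'$. The task is then to show that $A' \to C$ inherits flatness (resp.\ faithful flatness) from $A \to B$ in each of the two cases.

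For the flatness assertion in the adic case, I would note that $\phi'$ is again adic (adicity being stable under base change), and invoke the standard criterion (EGA $\mathrm{0}_{\mathrm{III}}$~10.8.8) that an adic morphism of locally noetherian formal schemes is flat iff each of the reductions $\phi_n : \fY_n \to \fX_n$ modulo compatible powers of a chosen ideal of definition is flat. Since the paragraph preceding the lemma records that subschemes of definition base-change compatibly with fibered products, each $\phi'_n$ is the base change of $\phi_n$ along an ordinary morphism of noetherian schemes, and classical flat base change in the scheme-theoretic setting concludes this case.

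For the flatness assertion in the ff-type case, I would factor $\psi$ locally as $\fX' \hookrightarrow \fX'' \to \fX$ with $\fX'' := \Spf(A[[S_1, \ldots, S_m]]\langle T_1, \ldots, T_n\rangle)$, i.e., a closed immersion followed by the "standard" ff-type morphism, and treat the two steps separately. For the standard morphism, one identifies $B \hat{\otimes}_A A[[S_i]]\langle T_j\rangle$ with $B[[S_i]]\langle T_j\rangle$ and deduces flatness from that of $A \to B$ using the fact that adjoining formal and restricted power series variables over a noetherian ring preserves flatness. For the subsequent closed immersion, flatness passes from $\mathcal{O}_{\fY \times_\fX \fX'',y}$ over $\mathcal{O}_{\fX'',x}$ to the quotient by the defining ideal. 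For the surjectivity part of faithful flatness, I would use that the underlying topological space of a locally noetherian formal scheme coincides with that of any subscheme of definition, together with the compatibility of subschemes of definition with fibered products recorded just before the lemma; this reduces surjectivity of $\phi'$ to scheme-theoretic faithfully flat base change applied to the restriction of $\phi$ to a pair of subschemes of definition, which is classical.

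The main obstacle I expect to encounter lies in the ff-type case: although the decomposition $\psi = \psi_1 \circ \psi_2$ is natural, the identification $B \hat{\otimes}_A A[[S_i]]\langle T_j\rangle \cong B[[S_i]]\langle T_j\rangle$ requires careful bookkeeping of the topologies (since $\phi$ is not assumed adic here), and one must use the noetherian hypotheses on the fibered product to ensure that the completed tensor product coincides with the expected model and that the resulting flat-module computations go through as expected.
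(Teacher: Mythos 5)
Your argument takes a genuinely different route: the paper disposes of the flatness assertion in a single citation to \cite{ajl} Prop.\ 7.1(b), which already covers both the adic and the locally ff-type case at once, and then handles surjectivity by the same reduction to subschemes of definition that you describe (though note that no flatness is needed for that step --- one only needs that surjectivity of morphisms of schemes is stable under arbitrary base change, \cite{egain} 3.6.1(ii), not ``faithfully flat base change''). Your plan instead reconstructs the flatness statement directly, reducing the adic case to a flatness-modulo-powers criterion combined with classical flat base change of schemes, and factoring a locally ff-type $\psi$ locally into a closed immersion followed by a ``standard'' morphism $\Spf(A[[S_i]]\langle T_j\rangle)\to\Spf A$.

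There is, however, a gap in the ``standard morphism'' sub-step. To conclude that $A[[S_i]]\langle T_j\rangle \to B[[S_i]]\langle T_j\rangle$ is flat when $A\to B$ is flat, you appeal to ``the fact that adjoining formal and restricted power series variables over a noetherian ring preserves flatness''; but the standard fact behind that phrase is that $R\to R[[S_i]]\langle T_j\rangle$ is flat for $R$ noetherian adic, which yields flatness of $A\to A[[S_i]]\langle T_j\rangle$ and of $B\to B[[S_i]]\langle T_j\rangle$ separately, not flatness of the map between them. The naive factorization $A[[S_i]]\langle T_j\rangle\to B\otimes_A A[[S_i]]\langle T_j\rangle\to B\hat{\otimes}_A A[[S_i]]\langle T_j\rangle$ does not settle it either, because the second map is a completion of a possibly non-noetherian ring, whose flatness is not automatic. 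What is actually needed here --- that the functor $(\cdot)[[S_i]]\langle T_j\rangle$ carries flat morphisms of noetherian adic rings to flat morphisms --- is precisely the content of the lemma for this particular $\psi$, and requires a genuine argument: for example, localize at a maximal ideal of $B[[S_i]]\langle T_j\rangle$, apply the local criterion of flatness with respect to $(S_1,\ldots,S_m)$, use that the $S_i$ form a regular sequence so that the $\operatorname{Tor}_1$ obstruction vanishes, and handle the residual tf-type (hence adic) base change $A\langle T_j\rangle\to B\langle T_j\rangle$ by the same style of argument as your adic case. That is more than ``careful bookkeeping of the topologies''; it is essentially the work packaged inside \cite{ajl} Prop.\ 7.1(b). (The exact reference EGA $0_{\mathrm{III}}$ 10.8.8 for the adic flatness criterion should also be double-checked.)
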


\begin{proof}
If $\phi$ is flat, then flatness of $\phi'$ is shown in \cite{ajl} Prop.\ 7.1 (b). By \cite{egain} 3.6.1 (ii), surjectivity of morphisms of schemes is stable under base change; hence we obtain the surjectivity part of the statement on faithful flatness after passing to subschemes of definition. 
\end{proof}

\begin{lem}\label{isodesclem}
If $\phi$ is of tf type and separated and if $\psi$ is faithfully flat and of ff type, then the property of being an isomorphism descends from $\phi'$ to $\phi$.
\end{lem}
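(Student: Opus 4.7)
The plan is to follow the standard three-step descent of the property of being an isomorphism: first descend faithful flatness from $\phi'$ to $\phi$, then show $\phi$ is additionally a monomorphism by inspecting its diagonal, and finally conclude. For the first step, $\phi'$ is an isomorphism hence flat, and composed with flatness of $\psi$ this gives flatness of the stalk map $\sO_{\fX,\phi(y)}\to\sO_{\fY\times_\fX\fX',y'}$ for any compatible pair of points $y'\mapsto y$. The morphism $\psi'$ is faithfully flat by Lemma \ref{flatnessbasechangeprop}, so the stalk map $\sO_{\fY,y}\to\sO_{\fY\times_\fX\fX',y'}$ is faithfully flat, and descent of flatness along faithfully flat ring extensions (\cite{ajl} 7.1.1) yields flatness of $\sO_{\fX,\phi(y)}\to\sO_{\fY,y}$. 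Hence $\phi$ is flat, and surjectivity follows from the surjectivity of $\phi\circ\psi'=\psi\circ\phi'$; so $\phi$ is faithfully flat.

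For the second step, $\phi$ being separated makes the diagonal $\Delta_\phi\colon\fY\to\fY\times_\fX\fY$ a closed immersion, hence separated and of tf type. Via the identification $(\fY\times_\fX\fX')\times_{\fX'}(\fY\times_\fX\fX')=(\fY\times_\fX\fY)\times_\fX\fX'$, the base change of $\Delta_\phi$ along $\psi$ is $\Delta_{\phi'}$, which is an isomorphism since $\phi'$ is. The same descent argument as above, applied to $\Delta_\phi$ in place of $\phi$ (with $\psi$ pulled back to $\fY\times_\fX\fY$), then shows that $\Delta_\phi$ itself is faithfully flat. Now a faithfully flat closed immersion of locally noetherian formal schemes is an isomorphism: affine-locally, it is given by a surjection $R\to R/I$ with $I=I^2$ (the flatness condition on quotient rings), so in the noetherian case $I$ is generated by an idempotent; surjectivity forces this idempotent into every prime of $R$ and hence to be zero. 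Thus $\Delta_\phi$ is an isomorphism, meaning $\phi$ is a monomorphism.

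For the final step, $\phi$ is adic, so I would pick an ideal of definition $\cI\subset\sO_\fX$ and consider the reductions $\phi_n\colon\fY_n\to\fX_n$ modulo $\cI^{n+1}$, which are morphisms of noetherian schemes of finite type. Each $\phi_n$ is faithfully flat (from Step 1) and a monomorphism (from Step 2, the latter being stable under pullback to $\fX_n$); since a flat monomorphism of finite presentation is an open immersion, and a surjective open immersion is an isomorphism, each $\phi_n$ is an isomorphism. Passing to the inverse limit via $\sO_\fX=\varprojlim\sO_{\fX_n}$ and $\sO_\fY=\varprojlim\sO_{\fY_n}$ then yields that $\phi$ itself is an isomorphism.

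The main technical obstacle is Step 2, both in identifying the base change of $\Delta_\phi$ with $\Delta_{\phi'}$ via the fibered-product reshuffling above, and in verifying that faithfully flat closed immersions of locally noetherian formal schemes are isomorphisms through the idempotent-ideal argument (where noetherianness is essential to pass from $I=I^2$ to $I$ being generated by an idempotent).
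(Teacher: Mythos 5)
Your proof is correct, but it takes a genuinely different route from the paper's. The paper argues via finiteness: it first shows $|\phi|$ is bijective (by factoring $|\phi'|$ through $|\fY|\times_{|\fX|}|\fX'|$ and using surjectivity of $|\psi|$), then observes that the reduction $\phi_0$ is a separated quasi-finite morphism of finite type, applies Zariski's Main Theorem to conclude $\phi_0$ and hence $\phi$ is finite, and finally invokes classical fpqc descent for finite morphisms of schemes (\cite{egaiii} 4.8.8) after noting complete and ordinary tensor products agree. Your argument avoids ZMT entirely: you descend faithful flatness via stalkwise fpqc descent of flatness, upgrade to a monomorphism by descending the isomorphism property to $\Delta_\phi$ and identifying a faithfully flat closed immersion as an isomorphism, and then reduce to the scheme level using the criterion that a flat monomorphism of finite presentation is an open immersion, concluding by the inverse limit. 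Both arguments use the separatedness hypothesis essentially (the paper feeds it to ZMT; you use it to make $\Delta_\phi$ a closed immersion) and both need $\phi$ of tf type (the paper so that $\phi_0$ is of finite type; you so that $\phi$ is adic and the reductions are noetherian of finite type). The paper's route is somewhat shorter once finiteness is in hand; yours trades ZMT for EGA IV 17.9.1 plus the diagonal bookkeeping. One small imprecision worth repairing in your write-up: surjectivity of the closed immersion $\Spf(A/(e))\hookrightarrow\Spf A$ only forces the idempotent $e$ into every \emph{open} prime of $A$, not every prime. The conclusion $e=0$ still holds, because $e\in\sqrt{\mathfrak a}$ and $e=e^n$ imply $e\in\bigcap_n\mathfrak a^n=0$ by separatedness of the adic topology; but the reason you state (lying in every prime) only covers the scheme case, not the formal one.
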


\begin{proof}
We may work locally on $\fX$ and thereby assume that $\fX$ is quasi-compact. Let us write $|\cdot|$ to denote the functor sending a formal scheme to its underlying topological space. The bijective map $|\phi'|$ admits the factorization
\[
|\phi'|\,:\,|\fY\times_\fX\fX'|\rightarrow|\fY|\times_{|\fX|}|\fX'|\rightarrow|\fX'|\;,
\]
where the first map is the natural surjection and where the second map is the projection to the second factor. Since $|\phi'|$ is injective, the first map is also injective and, hence, bijective; since $|\phi'|$ is bijective, it follows that the second map is bijective.
 Since $|\psi|$ is surjective and since bijectivity of a map of sets can be checked after surjective base change, it follows that $|\phi|$ is bijective. Let $\fX_0$ be a subscheme of definition for $\fX$, and let $\fY_0$ be the $\phi$-pullback of $\fX_0$; then since $\phi$ is of tf type, $\fY_0$ is a subscheme of definition for $\fY$, and the restriction $\phi_0:\fY_0\rightarrow\fX_0$ of $\phi$ to $\fY_0$ is of finite type. Since $|\phi|=|\phi_0|$, it follows that $\phi_0$ is quasi-finite. Since $\phi$ is separated, $\phi_0$ is separated as well, and by Zariski's Main Theorem \cite{egaiv} 18.12.13 it follows that $\phi_0$ is finite. By \cite{egaiii} 4.8.1, we see that $\phi$ is finite. To prove that $\phi$ is an isomorphism, we may thus assume that $\fX$ and $\fY$ are affine and that $\phi$ is associated to a finite ring homomorphism. In the finite case, complete tensor products agree with ordinary tensor products, so we conclude by \cite{egaiii} 4.8.8 and by means of descent theory for schemes that $\phi$ is indeed an isomorphism.
\end{proof}
\subsection{Higher direct images and flat base change}\label{higherimbcsec}
If $Y$, $X$ and $X'$ are schemes, if $f:Y\rightarrow X$ is a separated morphism of finite type, if $g:X'\rightarrow X$ is a flat morphism and if $f':Y':=Y\times_XX'\rightarrow X'$ and $g':Y'\rightarrow X'$ denote the projections in the resulting cartesian diagram, then by \cite{egaiii} 1.4.15 for any quasi-coherent $\O_Y$-module $\sF$  with $g'$-pullback $\sF':=(g')^*\sF$, the natural morphism
\[
g^*(R^if_*\sF)\rightarrow R^if'_*\sF'
\]
is an isomorphism. In this section we prove the corresponding statement for locally noetherian formal schemes, where we assume that $f$ is proper and that $\sF$ is coherent. The proof which works in the algebraic setting does not carry over to the formal situation because of the appearance of complete tensor products in \v{C}ech complexes and because inverse limits in general do not preserve exactness. Our main input is \cite{egaiii} 3.4.4 which, for an affine base $\fX$, gives precise information on the relation of the cohomology groups $H^q(\fY,\sG)$ to the cohomology groups $H^q(\fY,\sG_n)$ of the reduction $\sG_n$ of $\sG$ modulo some $(n+1)$-st power of an ideal of definition of $\fX$.

We begin by collecting some elementary facts on projective systems.

Let $A$ be a ring, let $(M_\alpha)_{\alpha\in\N}$ be a system of $A$-modules, and let $\phi_\alpha\colon M_{\alpha+1}\rightarrow M_\alpha$ be a system of $A$-homomorphisms. For natural numbers $\beta\geq \alpha$, we write
\[
\phi_{\alpha\beta}\,\mathrel{\mathop:}=\,\phi_\alpha\circ\cdots\circ\phi_{\beta-1}\colon M_{\beta}\rightarrow M_\alpha\;.
\]
Let us recall from \cite{egaiii} 0.13.1.1 that the projective system $(M_\alpha,\phi_\alpha)_{\alpha\in\N}$ is said to satisfy the Mittag-Leffler (ML) condition if for each $\alpha\in\N$, there exists some $\beta\geq \alpha$ such that for all $\gamma\geq\beta$,
\[
\phi_{\alpha\gamma}(M_\gamma)=\phi_{\alpha\beta}(M_\beta)\;.
\]
We say that the system $(M_\alpha,\phi_\alpha)_{\alpha\in\N}$ is Artin-Rees (AR) null if there exists some $\beta\in\N$ such that $\phi_{\alpha,\alpha+\beta}=0$ for all $\alpha\in\N$. This property is preserved under any base change $\cdot\otimes_AA'$. Moreover, if $(M_\alpha,\phi_\alpha)_{\alpha\in\N}$ is (AR) null, then $(M_\alpha,\phi_\alpha)_{\alpha\in\N}$ clearly satisfies (ML), and $\varprojlim (M_\alpha,\phi_\alpha)_{\alpha\in\N}=0$.

\begin{lem}\label{ml1lem}
If $(M_\alpha,\phi_\alpha)_{\alpha\in\N}$ satisfies \textup{(ML)} and if $\varprojlim (M_\alpha,\phi_\alpha)_{\alpha\in\N}=0$, then for each $\alpha\in\N$ there exists some $\beta\geq\alpha$ such that $\phi_{\alpha\beta}=0$.
\end{lem}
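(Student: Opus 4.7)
The plan is to extract from the Mittag-Leffler condition the eventual images and show they are forced to vanish because the inverse limit is trivial. For each $\alpha\in\N$, the (ML) condition gives a $\beta(\alpha)\geq\alpha$ such that the descending chain of submodules $\phi_{\alpha\gamma}(M_\gamma)\subseteq M_\alpha$ stabilizes at $\gamma=\beta(\alpha)$; denote the stable value by $N_\alpha\subseteq M_\alpha$. First I would verify that the transition map $\phi_\alpha$ restricts to a map $N_{\alpha+1}\to N_\alpha$ and that this restriction is surjective: any element $x\in N_\alpha$ equals $\phi_{\alpha\gamma}(m)$ for $m\in M_\gamma$ with $\gamma$ chosen large enough that both $N_\alpha$ and $N_{\alpha+1}$ have stabilized at or before $\gamma$, and then $\phi_{\alpha+1,\gamma}(m)\in N_{\alpha+1}$ is a preimage.

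Next I would use the fact that for an $\N$-indexed projective system of sets (or modules) with surjective transition maps, the canonical map from the inverse limit to each term is surjective: this is the elementary diagonal argument where one lifts a given element step by step. Applying this to $(N_\alpha)_{\alpha\in\N}$, the projection $\varprojlim N_\alpha\to N_\alpha$ is surjective for every $\alpha$. On the other hand, the inclusions $N_\alpha\hookrightarrow M_\alpha$ are compatible with the transition maps, so they induce an injection $\varprojlim N_\alpha\hookrightarrow \varprojlim M_\alpha=0$. Combining these two observations forces $N_\alpha=0$ for every $\alpha$.

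Finally, once $N_\alpha=0$, the defining property of $\beta(\alpha)$ says that $\phi_{\alpha,\beta(\alpha)}(M_{\beta(\alpha)})=N_\alpha=0$, so the map $\phi_{\alpha,\beta(\alpha)}$ itself is zero, which is exactly the claim of the lemma. I do not expect a serious obstacle here: the only delicate point is the surjectivity statement for inverse limits of surjective $\N$-indexed systems, but that is standard and uses only countability of the index set; no completeness or topological hypotheses on the $M_\alpha$ are needed.
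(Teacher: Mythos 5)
Your proof is correct and follows essentially the same route as the paper's: you form the stable (eventual) images $N_\alpha$ (the paper's $M'_\alpha=\bigcap_{\beta\geq\alpha}\phi_{\alpha\beta}(M_\beta)$), use (ML) to see that the restricted transition maps are surjective, invoke surjectivity of the projections from a countable inverse limit with surjective transitions to force $N_\alpha=0$, and conclude. The only cosmetic difference is that the paper asserts $\varprojlim M'_\alpha=\varprojlim M_\alpha$ outright and cites EGA III 0.13.1.2, while you use only the inclusion $\varprojlim N_\alpha\hookrightarrow\varprojlim M_\alpha$, which suffices.
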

\begin{proof}
For each $\alpha\in\N$, we set
\[
M'_\alpha\,\mathrel{\mathop:}=\,\bigcap_{\beta\geq\alpha}\phi_{\alpha\beta}(M_\beta)\;;
\]
the homomorphisms $\phi_\alpha$ restrict to a projective system $(M'_\alpha,\phi'_\alpha)_{\alpha\in\N}$, and 
\[
\varprojlim (M'_\alpha,\phi'_\alpha)_{\alpha\in\N}=\varprojlim (M_\alpha,\phi_\alpha)_{\alpha\in\N}=0\;.
\]
Since $(M_\alpha,\phi_\alpha)_{\alpha\in\N}$ satisfies (ML), the morphisms $\phi'_\alpha$ are surjective, cf.\ \cite{egaiii} 0.13.1.2; hence $M'_\alpha=0$ for all $\alpha\in\N$. Since $(M_\alpha,\phi_\alpha)_{\alpha\in\N}$ satisfies (ML), it follows moreover that for each $\alpha\in\N$ there exists some $\beta\geq\alpha$ in $\N$ such that $M'_\alpha=\phi_{\alpha\beta}(M_\beta)$; thus for this $\beta$ we have $\phi_{\alpha\beta}=0$, as desired.
\end{proof}

We immediately obtain: 
\begin{cor}\label{ml1cor}
Under the assumptions of Lemma \ref{ml1lem},
\[
\plim(M_\alpha\otimes_AA',\phi_\alpha\otimes_AA')_{\alpha\in\N}=0
\]
for any $A$-algebra $A'$.
\end{cor}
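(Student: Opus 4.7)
The corollary should follow directly from Lemma \ref{ml1lem} together with the observation that tensor product preserves the vanishing of composite transition maps. Concretely, the plan is as follows.

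By Lemma \ref{ml1lem}, the hypotheses give, for each $\alpha\in\N$, an index $\beta\geq\alpha$ with $\phi_{\alpha\beta}=0$. Since $\phi_{\alpha\beta}$ is a composition of finitely many of the $\phi_i$ and since the tensor product functor $\cdot\otimes_A A'$ commutes with composition of $A$-linear maps, applying $\cdot\otimes_A A'$ gives
\[
(\phi_\alpha\otimes_A A')_{\alpha,\beta}\;=\;\phi_{\alpha\beta}\otimes_A A'\;=\;0
\]
in the tensored system $(M_\alpha\otimes_A A',\phi_\alpha\otimes_A A')_{\alpha\in\N}$. Hence this system also has the property that for each $\alpha$ there is some $\beta\geq\alpha$ with vanishing composite transition map.

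It then suffices to observe that any projective system with this vanishing property has trivial inverse limit: if $(m'_\alpha)_{\alpha\in\N}\in\plim(M_\alpha\otimes_A A')$ and if $\beta\geq\alpha$ is chosen so that $\phi_{\alpha\beta}\otimes_A A'=0$, then the compatibility condition forces
\[
m'_\alpha\;=\;(\phi_{\alpha\beta}\otimes_A A')(m'_\beta)\;=\;0\,,
\]
which gives the claim. There is no real obstacle here; the only point worth being careful about is that $\phi_{\alpha\beta}\otimes_A A'$ is indeed the composite of the tensored transition maps, but this is immediate from the functoriality of $\cdot\otimes_A A'$ on the category of $A$-modules.
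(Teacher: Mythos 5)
Your proof is correct and is exactly the intended argument: the paper gives no written proof, prefacing the corollary only with ``We immediately obtain,'' and what it immediately obtains is precisely your observation that Lemma~\ref{ml1lem} yields, for each $\alpha$, some $\beta\geq\alpha$ with $\phi_{\alpha\beta}=0$, that this vanishing persists under $\cdot\otimes_AA'$ by functoriality, and that a projective system with this property has trivial inverse limit.
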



Let $\a\subseteq A$ be an ideal. Let us recall from \cite{egaiii} 0.13.7.7 that a filtration $(N_\alpha)_{\alpha\in\N}$ of an $A$-module $N$ is called $\a$-good if $\a N_n\subseteq N_{n+1}$ for all $n\in\N$ and if equality holds for all $n\in\N$ greater than some $n_0\in\N$.

\begin{lem}\label{ml2lem}
Let $A$ be a noetherian adic ring, let $\a$ be an ideal of definition of $A$, let $M$ be a finite $A$-module, let $(M_\alpha,\phi_\alpha)_{\alpha\in\N}$ be a projective system of $A$-modules, and let
\[
(\rho_\alpha\colon M\rightarrow M_\alpha)_{\alpha\in\N}
\]
be a system of $A$-module homomorphisms that is compatible with the transition homomorphisms $\phi_\alpha$. For $\alpha\in\N$, we let $N_\alpha$ denote the kernel of $\rho_\alpha$. Let us assume that the following holds:
\begin{packed_enum}
\item For any $\alpha\in\N$, multiplication by elements in $\a^{\alpha+1}$ is trivial on $M_\alpha$.
\item The filtration $(N_\alpha)_{\alpha\in\N}$ of $M$ is $\a$-good.
\item The system $(M_\alpha,\phi_\alpha)_{\alpha\in \N}$ satisfies the \textup{(ML)} condition.
\item The morphisms $\rho_\alpha$ induce an isomorphism of modules 
\[
M\overset{\sim}{\rightarrow}\varprojlim (M_\alpha,\phi_\alpha)_{\alpha\in\N}\;.
\]
\end{packed_enum}
Then for any noetherian adic flat topological $A$-algebra $A'$, the morphisms $\rho_\alpha\otimes_AA'$ induce an isomorphism of modules
\[
M\otimes_AA'\rightarrow\varprojlim(M_\alpha\otimes_AA',\phi_\alpha\otimes_AA')_{\alpha\in\N}\;.
\]
\end{lem}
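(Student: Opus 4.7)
The plan is to reduce everything to the completeness of $M\otimes_AA'$ in the $\a A'$-adic topology.  The first step is to analyze the image subsystem by setting $I_\alpha:=\rho_\alpha(M)=M/N_\alpha$ and $Q_\alpha:=M_\alpha/I_\alpha$.  Assumption (iv) factors as $M\xrightarrow{\sim}\varprojlim I_\alpha\hookrightarrow\varprojlim M_\alpha$, which forces both arrows to be isomorphisms, hence $\varprojlim Q_\alpha=0$.  Moreover $(Q_\alpha)$, being a quotient of the \textup{(ML)} system $(M_\alpha)$, itself satisfies \textup{(ML)}, so Lemma \ref{ml1lem} yields the stronger statement that for each $\alpha$ some composite transition $Q_\beta\to Q_\alpha$ vanishes.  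Both properties survive $-\otimes_AA'$, whence $\varprojlim(Q_\alpha\otimes_AA')=0=\varprojlim^1(Q_\alpha\otimes_AA')$, the former vanishing by Corollary \ref{ml1cor}.

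Flatness of $A'$ keeps $0\to I_\alpha\otimes_AA'\to M_\alpha\otimes_AA'\to Q_\alpha\otimes_AA'\to 0$ exact; the long exact sequence of limits then yields $\varprojlim(I_\alpha\otimes_AA')\cong\varprojlim(M_\alpha\otimes_AA')$, so it is enough to show that $\rho_\alpha\otimes_AA'$ induces an isomorphism $M\otimes_AA'\xrightarrow{\sim}\varprojlim_\alpha(I_\alpha\otimes_AA')$.  Assumptions (i) and (ii) combine to give the two-sided bound $\a^{\alpha+1}M\subseteq N_\alpha\subseteq\a^{\alpha-c}M$ for $\alpha\geq c$ and some constant $c$ coming from the $\a$-good filtration, so $(N_\alpha)$ is cofinal with the $\a$-adic filtration on $M$.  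Flatness of $A'$ preserves this cofinality, whence
\[
\varprojlim_\alpha(I_\alpha\otimes_AA')\;=\;\varprojlim_\alpha\frac{M\otimes_AA'}{N_\alpha\otimes_AA'}\;=\;(M\otimes_AA')^{\wedge}_{\a A'}\;;
\]
the map in question is thus identified with the $\a A'$-adic completion morphism of $M\otimes_AA'$.

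What remains -- and I expect this to be the main technical point -- is to verify that $M\otimes_AA'$ is already $\a A'$-adically complete.  Continuity of $A\to A'$ together with $\mathfrak{b}'$-completeness of $A'$ for some ideal of definition $\mathfrak{b}'$ gives $\a A'\subseteq\sqrt{\mathfrak{b}'}\subseteq\operatorname{rad}(A')$, so Krull's intersection theorem supplies $\a A'$-adic separatedness of every finite $A'$-module.  To promote separatedness to completeness, I observe that the $\a A'$-adic topology on $A'$ is finer than the $\mathfrak{b}'$-adic one, so any $\a A'$-Cauchy sequence converges in the $\mathfrak{b}'$-complete ring $A'$; the limit lies in the correct $\a A'$-coset because finitely generated ideals such as $(\a A')^n$ are closed in the $\mathfrak{b}'$-adic topology, by a standard Artin--Rees argument.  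Passing through a finite presentation then transfers $\a A'$-completeness from $A'$ to $M\otimes_AA'$, concluding the proof.
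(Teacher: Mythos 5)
Your proof is correct, and it follows a genuinely different decomposition than the paper's, though the two arguments are close cousins. The paper factors $\rho_\alpha$ through $\tilde\rho_\alpha\colon M/\a^{\alpha+1}M\to M_\alpha$ and tracks both the kernel $\tilde N_\alpha$ (shown to be (AR) null via the $\a$-good filtration) and the cokernel $Q_\alpha$, splitting the resulting four-term exact sequence into two short exact sequences that survive passage to the limit and the base change $\cdot\otimes_AA'$. You instead pass through the image $I_\alpha=M/N_\alpha$, which gives you a single short exact sequence $0\to I_\alpha\to M_\alpha\to Q_\alpha\to 0$ with the \emph{same} $Q_\alpha$, eliminating the need to handle an (AR) null kernel term at all; the price is that you must then invoke explicitly the two-sided bound $\a^{\alpha+1}M\subseteq N_\alpha\subseteq\a^{\alpha-n_0}M$ to identify $\varprojlim(I_\alpha\otimes_AA')$ with the $\a A'$-adic completion of $M\otimes_AA'$. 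Both routes bottom out in the same two technical facts: that $\varprojlim Q_\alpha=0$ together with (ML) implies $\varprojlim(Q_\alpha\otimes_AA')=0$ (Lemma \ref{ml1lem} / Corollary \ref{ml1cor}), and that $M\otimes_AA'$ is already $\a A'$-adically complete. For the latter the paper cites \cite{egain} 0.7.2.4, whereas you give the direct Cauchy-sequence argument; both are fine. Two minor remarks: your appeal to $\varprojlim^1(Q_\alpha\otimes_AA')=0$ is not actually needed --- left-exactness of $\varprojlim$ plus the vanishing of $\varprojlim(Q_\alpha\otimes_AA')$ already forces $\varprojlim(I_\alpha\otimes_AA')\to\varprojlim(M_\alpha\otimes_AA')$ to be an isomorphism --- and the closedness of $(\a A')^n$ in the $\mathfrak b'$-adic topology is really a Krull-intersection statement for $A'/(\a A')^n$ rather than an Artin--Rees one, though either route works.
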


\begin{proof}
In the following, we drop transition morphisms in projective systems from the notation when no confusion can result. By ($i$), for $\alpha\in\N$ the $A$-homomorphism $\rho_\alpha$ factors uniquely over an $A$-homomorphism
\[
\tilde{\rho}_\alpha\colon M/\a^{\alpha+1}M\rightarrow M_\alpha\;.
\]
Let $\tilde{N}_\alpha$ and $Q_\alpha$ denote its kernel and its cokernel respectively, and let us consider the resulting projective system of exact sequences
\[
0\rightarrow\tilde{N}_\alpha\rightarrow M/\a^{\alpha+1}M\overset{\tilde{\phi}_\alpha}{\longrightarrow} M_\alpha\rightarrow Q_\alpha\rightarrow 0\;.
\]
By ($ii$), there exists some $\beta\in\N$ such that $\a N_\gamma=N_{\gamma+1}$ for all $\gamma\geq\beta$, which implies that $N_{\alpha+\beta}=\a^\alpha N_{\beta}$ for all $\alpha\in\N$. It follows that all transition maps
\[
\tilde{N}_{\alpha+1+\beta}\rightarrow\tilde{N}_\alpha
\]
are zero, and hence $(\tilde{N}_\alpha)_{\alpha\in\N}$ is (AR) null. The above exact sequences decompose into two projective systems of short exact sequences
\[
0\rightarrow\tilde{N}_\alpha\rightarrow M/\a^{\alpha+1}M\rightarrow Q_\alpha'\rightarrow 0
\]
and
\[
0\rightarrow Q_\alpha'\rightarrow M_\alpha\rightarrow Q_\alpha\rightarrow 0\;,
\]
where $(Q_\alpha')_{\alpha\in\N}$ satisfies (ML) by \cite{egaiii} 0.13.2.1 (i) and where the system $(\tilde{N}_\alpha)_{\alpha\in\N}$ satisfies (ML) since it is (AR) null. Hence, we may pass to the limit without losing exactness; reassembling the resulting two short exact sequences, we obtain an exact sequence
\[
0\rightarrow\varprojlim(\tilde{N}_\alpha)_{\alpha\in\N}\rightarrow \varprojlim (M/\a^{\alpha+1}M)_{\alpha\in\N}\rightarrow \varprojlim (M_\alpha)_{\alpha\in\N}\rightarrow \varprojlim (Q_\alpha)_{\alpha\in\N}\rightarrow 0\;.
\]
Since $M$ is $\a$-adically complete and since 
\[
\varprojlim(\rho_\alpha)_{\alpha\in\N}\colon M\rightarrow\varprojlim (M_\alpha)_{\alpha\in\N}
\]
is an isomorphism by assumption ($iv$), it follows that $\varprojlim (Q_\alpha)_{\alpha\in\N}=0$. Since $(M_\alpha)_{\alpha\in\N}$ satisfies (ML), the same holds for $(Q_\alpha)_{\alpha\in\N}$ by \cite{egaiii} 0.13.2.1 (i); it follows that $(Q_\alpha)_{\alpha\in\N}$ satisfies the conditions of Lemma \ref{ml1lem}. Since $A'$ is flat over $A$, we have exact sequences
\[
0\rightarrow\tilde{N}_\alpha\otimes_AA'\rightarrow M/\a^{\alpha+1}M\otimes_AA'\rightarrow M_\alpha\otimes_AA'\rightarrow Q_\alpha\otimes_AA'\rightarrow 0\;.
\]
Now $(\tilde{N}_\alpha\otimes_AA')_{\alpha\in\N}$ is (AR)-null, and $\varprojlim (Q_\alpha\otimes_AA')_{\alpha\in\N}=0$ by Corollary \ref{ml1cor}. As above, we decompose the above exact sequence into two short exact sequences, pass to the limit without losing exactness and reassemble the result; we obtain an isomorphism of modules
\[
\varprojlim(M/\a^{\alpha+1}M\otimes_AA')_{\alpha\in\N}\overset{\sim}{\rightarrow}\varprojlim(M_\alpha\otimes_AA')_{\alpha\in\N}\;.
\]
Since $M\otimes_AA'$ is finite over the noetherian ring $A'$ and since $A'$ is $\a A'$-adically complete and separated, cf. \cite{egain} 0.7.2.4, we conclude that
\[
M\otimes_AA'\rightarrow\varprojlim(M_\alpha\otimes_AA')_{\alpha\in\N}
\]
is an isomorphism of modules, as desired.
\end{proof}

We can now apply the above results on projective systems with \cite{egaiii} 3.4.4 to obtain the following formal flat base change theorem:

\begin{thm}\label{mainflatbasechangethm}
Let 
\[
\begin{diagram}
\fY'&\rTo^{\phi'}&\fX'\\
\dTo<{\psi'}&&\dTo>\psi\\
\fY&\rTo^\phi&\fX
\end{diagram}
\]
be a cartesian diagram of locally noetherian formal schemes, where $\phi$ is proper and where $\psi$ is flat. Let $\sF$ be a coherent $\O_\fY$-module; then the natural morphism of coherent $\O_{\fX'}$-modules
\[
\psi^*R^q\phi_*\sG\rightarrow R^q\phi'_*(\psi')^*\sG
\]
is an isomorphism.
\end{thm}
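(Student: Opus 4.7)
The plan is to reduce the question to the affine case and then use EGA III 3.4.4 together with the projective-system results already established to commute flat base change past the projective limit that computes formal cohomology.

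First I would work Zariski-locally on $\fX$ and $\fX'$, so that I may assume $\fX = \Spf A$ and $\fX' = \Spf A'$ are affine with $A \to A'$ a flat adic homomorphism of noetherian adic rings. Let $\a\subseteq A$ be an ideal of definition and set $\a' := \a A'$; put $\fY_n := \fY\times_{\Spf A}\Spec(A/\a^{n+1})$ and let $\sG_n$ be the reduction of $\sG$ modulo $\a^{n+1}$. Since the base-change morphism between the two coherent $\O_{\fX'}$-modules in question is a local matter on $\fX'$, it suffices to prove that, after passing to global sections over $\fX'$, the natural map
\[
H^q(\fY,\sG)\otimes_A A'\;\longrightarrow\;H^q(\fY',(\psi')^*\sG)
\]
is an isomorphism.

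For the left-hand side, I would invoke EGA III 3.4.4 applied to the proper morphism $\phi$ and the coherent sheaf $\sG$. This gives precisely the hypotheses of Lemma \ref{ml2lem}: setting $M := H^q(\fY,\sG)$ and $M_n := H^q(\fY_n,\sG_n)$, the module $M$ is finite over $A$, multiplication by $\a^{n+1}$ is trivial on $M_n$, the kernel filtration of $M\to M_n$ is $\a$-good, the system $(M_n)$ satisfies (ML), and $M\overset{\sim}{\to}\varprojlim M_n$. Lemma \ref{ml2lem} therefore yields an isomorphism
\[
M\otimes_A A'\;\overset{\sim}{\longrightarrow}\;\varprojlim(M_n\otimes_A A').
\]
For the right-hand side, $\phi'$ is proper as a base change of $\phi$, so applying EGA III 3.4.4 once more delivers $H^q(\fY',(\psi')^*\sG)\cong\varprojlim H^q(\fY'_n,\sG'_n)$, where $\fY'_n:=\fY_n\times_{\Spec A/\a^{n+1}}\Spec A'/\a'^{n+1}$ and $\sG'_n$ is the reduction of $(\psi')^*\sG$. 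Ordinary flat base change for proper morphisms of noetherian schemes (EGA III 1.4.15), applied to each $\fY_n\to\Spec A/\a^{n+1}$ along the flat base change $A/\a^{n+1}\to A'/\a'^{n+1}$, identifies $H^q(\fY'_n,\sG'_n)$ with $M_n\otimes_{A/\a^{n+1}}A'/\a'^{n+1}$, which coincides with $M_n\otimes_A A'$ because $\a^{n+1}$ annihilates $M_n$. Composing the displayed isomorphisms yields the desired statement, and one checks that the isomorphism obtained agrees with the natural base change morphism.

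The main obstacle is precisely the point flagged at the beginning of this section: inverse limits in formal cohomology do not commute with $\cdot\otimes_A A'$ in general, and \v{C}ech-theoretic arguments that work in the algebraic case break down because of complete tensor products. This difficulty is entirely absorbed by the (ML) and (AR)-nullity data produced by EGA III 3.4.4 for proper morphisms and coherent sheaves, which are engineered to feed into Corollary \ref{ml1cor} and Lemma \ref{ml2lem}; once those inputs are in place, the proof is the routine chase outlined above.
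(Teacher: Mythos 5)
The first half of your argument---applying EGA III 3.4.4 to $\phi$ and feeding the resulting data into Lemma \ref{ml2lem} to obtain $H^q(\fY,\sF)\otimes_A A'\cong\varprojlim_n\bigl(H^q(\fY_n,\sF_n)\otimes_A A'\bigr)$, then using EGA III 1.4.15 on each layer---is correct and matches the paper. The gap lies in your treatment of the right-hand side. You set $\a':=\a A'$ and invoke EGA III 3.4.4 for $\phi'$ to get $H^q(\fY',(\psi')^*\sF)\cong\varprojlim_n H^q(\fY'_n,(\psi')^*\sF_n)$ with $\fY'_n:=\fY_n\times_{\Spec A/\a^{n+1}}\Spec A'/(\a A')^{n+1}$. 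But EGA III 3.4.4 computes the cohomology of a proper formal morphism as the limit of cohomologies of the reductions modulo powers of an \emph{ideal of definition}, and since the theorem only assumes $\psi$ flat, not adic, $\a A'$ need not be open in $A'$ and hence need not be an ideal of definition. A concrete instance: $A=R\langle T\rangle$ and $A'=R[[T]]$, with $\psi$ the completion morphism along $(\pi,T)$; this $\psi$ is flat but not adic, and $\pi A'=\a A'$ is strictly contained in the ideal of definition $(\pi,T)$ of $A'$. For such $\psi$ your $\fY'_n$ is only an auxiliary scheme: the genuine reduction of $\fY'$ modulo $\a^{n+1}$ is the formal completion of your $\fY'_n$ and is a \emph{formal} scheme, so EGA III 3.4.4 simply does not give the limit description you assert.

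This is precisely the difficulty the paper's proof closes. After the step you also perform (flat base change EGA III 1.4.15 on each scheme-theoretic layer $\fY_n$), the paper interpolates two more moves: the Comparison Theorem EGA III 4.1.5 to pass from the scheme $\fY_n\otimes_A A'$ to its formal completion, which is the actual $\fY'_n$; and a second application of EGA III 3.4.4, now using the \emph{biggest} ideal of definition $\a'$ of $A'$, yielding a double inverse limit $\varprojlim_n\varprojlim_m H^q\bigl(\fY'_n,((\psi')^*\sF_n)_m\bigr)$. This double limit then collapses to $\varprojlim_m H^q(\fY',((\psi')^*\sF)_m)=H^q(\fY',(\psi')^*\sF)$ by comparing the two filtrations. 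As written, your proof establishes the theorem only when $\psi$ is adic; to cover the general flat case you need to insert the Comparison Theorem and the double-limit collapse.
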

\begin{proof}
We may work locally on $\fX'$; hence we may assume that $\fX=\Spf A$ and $\fX'=\Spf A'$ are affine, and by \cite{egaiii} 3.4.2, 3.4.6 it suffices to show that for all $q\in\N_{\geq 0}$, the natural homomorphism of finite $A'$-modules
\[
H^q(\fY,\sF)\otimes_AA'\rightarrow H^q(\fY',(\psi')^*\sF)
\]
is an isomorphism. Let $\a$ be the biggest ideal of definition of $A$, and let a subscript $n$ denote reduction modulo $\a^{n+1}$. By \cite{egaiii} 3.4.4, the natural system of $A$-homomorphisms
\[
H^q(\fY,\sF)\rightarrow H^q(\fY,\sF_n)
\]
satisfies the conditions of Lemma \ref{ml2lem}; by Lemma \ref{ml2lem}, we thus obtain an induced isomorphism
\[
H^q(\fY,\sF)\otimes_AA'\rightarrow \varprojlim(H^q(\fY,\sF_n)\otimes_AA')_{n\in\N}\;.
\]
Now $H^q(\fY,\sF_n)$ is naturally identified with $H^q(\fY_n,\sF_n)$, and $\fY_n$ is a scheme over $\Spec A_n$. By \cite{egaiii} 1.4.15, we have a natural identification
\[
H^q(\fY_n,\sF_n)\otimes_AA'\,\cong\,H^q(\fY_n\otimes_AA',\sF_n\otimes_AA')\;.
\]
Since $\fY'_n$ is obtained from $\fY_n\otimes_AA'$ via formal completion, the Comparison Theorem \cite{egaiii} 4.1.5 provides a natural identification
\[
H^q(\fY_n\otimes_AA',\sF_n\otimes_AA')\cong H^q(\fY'_n,(\psi')^*\sF_n)\;,
\]
so we obtain a natural isomorphism
\[
H^q(\fY,\sF)\otimes_AA'\,\cong\,\varprojlim(H^q(\fY'_n,(\psi')^*\sF_n)_{n\in\N}\quad.
\]
Let $\a'$ be the biggest ideal of definition of $A'$, and let a subscript $m$ denote reduction modulo $(\a')^{m+1}$. We note that $\a'$ induces an ideal of definition of $\fY'_n$ for all $n\in\N$. By \cite{egaiii} 3.4.4, there is a natural isomorphism
\[
H^q(\fY'_n,(\psi')^*\sF_n)\cong\varprojlim (H^q(\fY'_n,((\psi')^*\sF_n)_m))_{m\in\N}\quad,
\]
so we obtain natural isomorphisms
\begin{eqnarray*}
H^q(\fY,\sF)\otimes_AA'&\cong&\varprojlim(H^q(\fY'_n,(\psi')^*\sF_n)_{n\in\N}\\
&\cong&\varprojlim (\varprojlim (H^q(\fY'_n,((\psi')^*\sF_n)_m))_{m\in\N})_{n\in\N}\\
&\cong&\varprojlim (H^q(\fY',((\psi')^*\sF)_m))_{m\in\N}\\
&\cong&H^q(\fY',(\psi')^*\sF)\;,
\end{eqnarray*}
where the last identification is again provided by \cite{egaiii} 3.4.4, and where we have used that $((\psi')^*\sF)_{n,m}=((\psi')^*\sF)_m$ for all $m\geq n$. One verifies that the natural isomorphism thus obtained agrees with the natural homomorphism in the statement of the proposition.
\end{proof}

\subsection{Schematic images of proper morphisms of formal schemes}\label{schemimsec}

\begin{defi}\label{schemimdefi}
Let $\phi\colon\fY\rightarrow\fX$ be a morphism of locally noetherian formal schemes. A closed formal subscheme $\fV\subseteq\fX$ is called a schematic image of $\phi$ if $\phi$ factors through $\fV$ and if $\fV$ is minimal among all closed formal subschemes of $\fX$ with this property.
\end{defi}

\begin{lem}
The schematic image of a morphism of locally noetherian formal schemes is unique, and it always exists.
\end{lem}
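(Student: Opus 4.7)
The plan is to exploit the standard bijection between closed formal subschemes of a locally noetherian formal scheme $\fX$ and coherent sheaves of ideals of $\O_\fX$ (cf.\ \cite{egain} \S10.14), and to construct the schematic image as the closed formal subscheme attached to the maximal ideal sheaf annihilated by the comorphism $\O_\fX\to\phi_{*}\O_\fY$.

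First I would observe that $\phi$ factors through a closed formal subscheme $V(\sI)\hookrightarrow\fX$ if and only if the composition $\sI\hookrightarrow\O_\fX\to\phi_{*}\O_\fY$ vanishes. Let $S$ denote the set of coherent ideal sheaves $\sI\subseteq\O_\fX$ with this property, and let $\sI^{*}:=\sum_{\sI\in S}\sI$ be their sum, viewed as the image of $\bigoplus_{\sI\in S}\sI\to\O_\fX$. On any affine open $\fU=\Spf A\subseteq\fX$, the sheaf $\sI^{*}$ is associated to the ideal $\sum_{\sI\in S}\sI(\fU)\subseteq A$; since $A$ is noetherian, this ideal is finitely generated, so $\sI^{*}$ is coherent, and the closed formal subscheme $V(\sI^{*})\subseteq\fX$ is well defined.

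Then I would verify the two required properties. For the factorization, the composition $\sI^{*}\to\phi_{*}\O_\fY$ vanishes since it does so on every summand $\sI\in S$, so $\phi$ factors through $V(\sI^{*})$. For minimality, any closed formal subscheme $\fV=V(\sJ)$ through which $\phi$ factors corresponds to some $\sJ\in S$, hence $\sJ\subseteq\sI^{*}$ and $V(\sI^{*})\subseteq\fV$. Uniqueness is then immediate; alternatively, if $V(\sI_{1})$ and $V(\sI_{2})$ are both schematic images, then $\sI_{1}+\sI_{2}$ is again a coherent ideal annihilated by the comorphism, defining the scheme-theoretic intersection $V(\sI_{1})\cap V(\sI_{2})$ through which $\phi$ continues to factor, and minimality forces $\sI_{1}=\sI_{2}=\sI_{1}+\sI_{2}$.

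The only mildly technical step is the coherence of $\sI^{*}$, which rests on local noetherianity of $\fX$: any quasi-coherent sub-ideal of $\O_\fX$ corresponds locally to an ideal of a noetherian ring and is therefore automatically of finite type. In particular, no additional hypothesis on $\phi$ (such as quasi-compactness or separatedness) is required for the construction to go through.
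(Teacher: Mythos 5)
Your proof is correct and follows essentially the same route as the paper's: both reduce to showing that the family of coherent ideal sheaves contained in $\ker\phi^\sharp$ has a largest element, realized as their sum, whose coherence is verified affine-locally using noetherianity of the ring of global sections. The only difference is cosmetic---the paper phrases the affine-local step via the existence of a maximal element in a directed family of ideals, whereas you invoke finite generation of the sum ideal together with the standard compatibility of $\widetilde{(\cdot)}$ with sums of submodules on $\Spf A$; these are the same noetherianity argument.
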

\begin{proof}
Let $\phi:\fY\rightarrow\fX$ be a morphism of locally noetherian formal schemes. The closed formal subschemes of $\fX$ correspond to the coherent ideals in $\O_\fX$. Let $M$ denote the set of coherent subsheaves of $\ker\phi^\sharp$; then $M$ is nonempty because it contains the zero ideal, it is partially ordered by inclusion, and it is directed for this partial ordering because if $\sI$ and $\sJ$ are in $M$, then $\sI+\sJ\in M$.  By definition, a schematic image of $\phi$ corresponds to a biggest element of $M$; in particular, it is unique if it exists. To show that $M$ has a biggest element, it suffices to show that the sum $\sum_{\sI\in M}\sI\subseteq\O_\fX$ is coherent. Let $\fU\subseteq\fX$ be any affine formal subscheme of $\fX$; then $(\sum_{\sI\in M}\sI)|_\fU=\sum_{\sI\in M}(\sI|_\fU)$, and it suffices to see that this submodule of $\O_\fU$ is coherent. Let $N$ denote the partially ordered set of coherent ideals of $\O_\fU$, and let $\psi:M\rightarrow N$ be the map sending $\sI$ to $\sI_\fU$. Then $\psi$ is order-preserving, and $\psi(M)$ is directed because $\sI|_U+\sJ|_U=(\sI+\sJ)|_U$. To show that $\sum_{\sI\in M}\sI|_U=\sum_{\sI\in\psi(M)}\sI$ is coherent, it suffices to see that $\psi(M)$ has a biggest or, equivalently, a maximal element. However, the coherent ideals of $\O_\fU$ correspond to the ideals in the noetherian ring of global sections of $\fU$, and every nonempty set of ideals in a noetherian ring has a maximal element.
\end{proof}

If $\fX$ is defined over a complete discrete valuation ring $R$ and if $\fY$ is $R$-flat, then the schematic image of $\phi$ is $R$-flat, because the ideal of $\pi$-torsion of a locally noetherian formal $R$-scheme is coherent.

Let us consider an example where $\ker\phi^\sharp$ is not coherent:

\begin{example}
Let $k$ be any field, and let $\phi\colon\D^1_k\rightarrow\A^1_k$ be the completion of the affine line over $k$ in the origin. Then the schematic image of $\phi$ exists and in fact coincides with $\A^1_k$, while $\ker\phi^\sharp$ is not coherent.
\end{example}
\begin{proof}
Any closed subscheme of the affine scheme $\A^1_k$ is associated to an ideal in the ring of global sections of $\A^1_k$. Since $\phi$ induces an injection of rings of global sections, $\phi$ cannot factor through a proper closed subscheme of $\A^1_k$; hence the schematic image of $\phi$ exists and coincides with $\A^1_k$. If the ideal $\ker\phi^\sharp$ was coherent, it would be trivial since it has no nonzero global section. However, for any nonempty open subscheme $U\subseteq\A^1_k$ not containing the origin, $(\ker\phi^\sharp)(U)=\O_{\A^1_k}(U)$ since $\phi^{-1}(U)=\emptyset$.
\end{proof}

From now on, we will only consider schematic images of morphisms $\phi$ in situations where $\ker\phi^\sharp$ is coherent; then the formation of the schematic image of $\phi$ is local on $\fX$. For example, the latter is the case when $\ker\phi^\sharp$ is trivial or when $\phi_*\O_\fY$ is coherent. If $\phi$ is proper, then $\phi_*\O_\fY$ is coherent by \cite{egaiii} 3.4.2. In this case, the formation of the schematic image commutes with flat base change:

\begin{prop}\label{flatbasechangeprop}
Let $\phi\colon\fY\rightarrow\fX$ and $\psi\colon\fX'\rightarrow\fX$ be morphisms of locally noetherian formal schemes, where $\phi$ is proper and where $\psi$ is flat, and let
\[
\begin{diagram}
\fY'&\rTo^{\phi'}&\fX'\\
\dTo<{\psi'}&&\dTo>\psi\\
\fY&\rTo^\phi&\fX
\end{diagram}
\]
be the induced cartesian diagram. If $\fV\subseteq\fX$ and $\fV'\subseteq\fX'$ are the schematic images of $\phi$ and $\phi'$ respectively, then
\[
\fV'\,=\,\psi^{-1}(\fV)\;,
\]
where $\psi^{-1}(\fV)=\fV\times_\fX\fX'$ denotes the schematic preimage of $\fV$ under $\psi$.
\end{prop}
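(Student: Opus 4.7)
The plan is to translate the statement into a statement about ideal sheaves and then apply the flat base change theorem (Theorem \ref{mainflatbasechangethm}) in degree zero. Since $\phi$ is proper, $\phi_*\O_\fY$ is coherent by \cite{egaiii} 3.4.2, so the ideal $\sI := \ker\phi^\sharp \subseteq \O_\fX$ is coherent and defines $\fV$. Since properness is preserved under base change (and the fibered product is locally noetherian because $\phi$, being proper, is in particular of locally ff type), the same applies to $\phi'$: the ideal $\sI' := \ker(\phi')^\sharp$ is coherent and defines $\fV'$.

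Now I would observe that $\fV\times_\fX\fX'$ is the closed formal subscheme of $\fX'$ cut out by $\psi^*\sI$, viewed as a subsheaf of $\O_{\fX'}$ via the natural map $\psi^*\sI\rightarrow\psi^*\O_\fX=\O_{\fX'}$. Because $\psi$ is flat, this map is injective, so we may identify $\psi^*\sI$ with an ideal of $\O_{\fX'}$. Hence it suffices to show that
\[
\sI'\;=\;\psi^*\sI
\]
as subsheaves of $\O_{\fX'}$.

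For this, consider the short exact sequence
\[
0\rightarrow\sI\rightarrow\O_\fX\rightarrow\phi_*\O_\fY/\sI\rightarrow 0\;,
\]
where $\phi_*\O_\fY/\sI$ is identified with the image of $\phi^\sharp$. Applying the flat functor $\psi^*$ preserves exactness, so I obtain an injection $\psi^*\sI\hookrightarrow\O_{\fX'}$ with cokernel $\psi^*(\phi_*\O_\fY/\sI)\hookrightarrow\psi^*\phi_*\O_\fY$, where the last inclusion is again a consequence of flatness of $\psi$. By Theorem \ref{mainflatbasechangethm} applied to $\sF=\O_\fY$ and $q=0$, the natural map $\psi^*\phi_*\O_\fY\rightarrow\phi'_*\O_{\fY'}$ is an isomorphism. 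Chasing the definitions, this isomorphism fits into a commutative square with $\psi^*\phi^\sharp$ and $(\phi')^\sharp$, so it identifies the kernel $\psi^*\sI$ of $\psi^*\phi^\sharp$ with the kernel $\sI'$ of $(\phi')^\sharp$.

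The main subtlety to be careful about is the compatibility of the flat base change isomorphism with the sheaf maps $\phi^\sharp$ and $(\phi')^\sharp$; this compatibility is built into the construction of the base change morphism, but it must be checked to conclude that $\sI'=\psi^*\sI$ rather than merely that the two ideals have isomorphic pullbacks. Once this is in place, the equality $\fV'=\psi^{-1}(\fV)$ is immediate from the description of closed formal subschemes via coherent ideals.
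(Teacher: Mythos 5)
Your argument is correct and follows essentially the same route as the paper's proof: both identify $\fV$ with the coherent ideal $\sI = \ker\phi^\sharp$, pull back the defining exact sequence along the flat morphism $\psi$, and then invoke Theorem \ref{mainflatbasechangethm} in degree zero to identify $\psi^*\phi_*\O_\fY$ with $\phi'_*\O_{\fY'}$ compatibly with the structure morphisms, concluding that $\psi^*\sI = \ker(\phi')^\sharp$. The only cosmetic difference is that you factor the pullback through the short exact sequence onto the image of $\phi^\sharp$, whereas the paper pulls back the left-exact sequence $0 \to \sI \to \O_\fX \to \phi_*\O_\fY$ in one step; the substance is identical.
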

\begin{proof}
Let $\sI$ denote the coherent $\O_\fX$-ideal defining $\fV$. Then $\sI$ is defined by the natural exact sequence of coherent $\O_\fX$-modules
\[
0\rightarrow\sI\rightarrow\O_\fX\rightarrow\phi_*\O_\fY\quad.
\]
Since $\psi$ is flat, we obtain an induced exact sequence
\[
0\rightarrow\psi^*\sI\rightarrow\O_{\fX'}\rightarrow\psi^*\phi_*\O_\fY\quad,
\]
and $\psi^*\sI$ is the coherent $\O_{\fX'}$-ideal defining $\psi^{-1}(\fV)$. By Theorem \ref{mainflatbasechangethm}, $\psi^*\phi_*\O_\fY$ is naturally identified with $(\phi')_*\O_{\fY'}$, so we see that $\psi^*\sI$ also defines the schematic image $\fV'$ of $\phi'$, as desired.
\end{proof}

\subsection{Schematically dominant morphisms of formal schemes}\label{schemdomsec}

According to \cite{egaiv} 11.10, a morphism $\phi\colon Y\rightarrow X$ of schemes is called schematically dominant if $\phi^\sharp$ is a monomorphism. Let $S$ be any scheme such that $X$ is defined over $S$. By \cite{egaiv} 11.10.1, the following are equivalent: 
\begin{packed_enum}
\item $\phi$ is schematically dominant.
\item For any open subscheme $U\subseteq X$, the restriction $\phi^{-1}(U)\rightarrow U$ of $\phi$ does not factor though a proper closed subscheme of $U$.
\item For any open subscheme $U\subseteq X$ and any $S$-scheme $X'$, two $U$-valued points $x_1,x_2\in X'(U)$ of $X'$ coincide if and only if the $\phi$-induced $\phi^{-1}(U)$-valued points of $X'$ coincide.
\end{packed_enum}
If $\phi\colon Y\rightarrow X$ is a morphism of $S$-schemes, then according to \cite{egaiv} 11.10.8, $\phi$ is called universally schematically dominant over $S$ if for all $S$-schemes $T$, the base change $\phi_T\colon Y_T\rightarrow X_T$ of $\phi$ from $S$ to $T$ is schematically dominant.

We generalize this concept to the setting of morphisms of locally noetherian formal schemes that are adic over a given base. Let $\fS$ be a locally noetherian formal base scheme. 

\begin{defi}\label{univschemdom}\index{morphism!schematically dominant}
A morphism $\phi\colon\fY\rightarrow\fX$ of adic locally noetherian formal $\fS$-schemes is called universally schematically dominant above $\fS$ if for any scheme $T$ and any morphism $T\rightarrow\fS$, the induced morphism of schemes
\[
\phi_T\colon\fY\times_\fS T\rightarrow\fY\times_\fS T
\]
is schematically dominant. 
\end{defi}

Here we do not assume $T$ to be locally noetherian, and both $\fX$ and $\fY$ are not assumed to be of locally tf type over $\fS$. If $\phi$ is an open immersion, we say that $\phi$ identifies $\fY$ with an $\fS$-dense open formal subscheme of $\fX$. In analogy with \cite{egaiv} 11.10.9, universal schematic dominance is, in the flat case, equivalent to schematic dominance in the fibers:

\begin{lem}\label{univschemdomfiberlem}
Let $\phi\colon\fY\rightarrow\fX$ be a morphism of locally noetherian adic $\fS$-schemes, and let us assume that $\fY$ is flat over $\fS$. Then the following are equivalent:
\begin{packed_enum}
\item $\phi$ is universally schematically dominant above $\fS$.
\item For any point $s\in\fS$, the fiber 
\[
\phi_s\colon\fY\times_\fS k(s)\rightarrow\fX\times_\fS k(s)
\]
is a schematically dominant morphism of schemes.
\end{packed_enum}
\end{lem}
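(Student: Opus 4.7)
The direction (i) $\Rightarrow$ (ii) is immediate: I specialize (i) to $T=\Spec k(s)$ with $s\in\fS$ and note that the base change of $\phi$ in this case is precisely $\phi_s$.

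For (ii) $\Rightarrow$ (i), my plan is to reduce to the scheme-theoretic analogue \cite{egaiv}~11.10.9. Fix a scheme $T$ and a morphism $\sigma\colon T\to\fS$. Since the structure sheaf of $T$ is discrete while any ideal of definition of $\fS$ is open, $\sigma$ factors locally on $T$ through a scheme of definition $\fS_n$ of $\fS$. Combined with the adic hypothesis in Definition \ref{univschemdom}, this identifies the fibered products $\fY\times_\fS T$ and $\fX\times_\fS T$ locally with $\fY_n\times_{\fS_n}T$ and $\fX_n\times_{\fS_n}T$, making them honest schemes and $\phi_T$ an honest $T$-morphism of schemes. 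Lemma \ref{flatnessbasechangeprop} then transfers the flatness of $\fY$ over $\fS$ to flatness of $\fY\times_\fS T$ over $T$.

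Next, for any $t\in T$ with image $s\in\fS$, the fibre $(\phi_T)_t$ is obtained from $\phi_s$ by base change along the flat field extension $k(s)\hookrightarrow k(t)$; since tensoring with a flat module preserves injectivity, hypothesis (ii) guarantees that $(\phi_T)_t$ is schematically dominant for every $t\in T$. Applying \cite{egaiv}~11.10.9 to the $T$-morphism $\phi_T$ with $T$-flat source $\fY\times_\fS T$ yields universal schematic dominance of $\phi_T$ over $T$, and in particular schematic dominance. Since $T\to\fS$ was arbitrary, this establishes (i).

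The main point of care is the bookkeeping ensuring that the adic hypothesis genuinely allows one to treat the relevant fibered products as schemes and that the formal fibre $\phi_s$ feeds correctly into the scheme-theoretic fibre $(\phi_T)_t$; once these routine reductions are in place, the lemma follows at once from the classical scheme-theoretic criterion. No new formal-geometric input beyond Lemma \ref{flatnessbasechangeprop} is needed, because the flatness hypothesis on $\fY/\fS$ is what makes the scheme-theoretic fibrewise criterion available after base change.
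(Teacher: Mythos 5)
Your overall strategy matches the paper's: factor $T\to\fS$ through the noetherian thickening $\fS_n$ of the subscheme of definition, use flatness, and invoke the scheme-theoretic criterion \cite{egaiv}~11.10.9. The direction (i)~$\Rightarrow$~(ii) and the reduction to schemes via the adic hypothesis are all fine, and your observation that $(\phi_T)_t$ arises from $\phi_s$ by flat base change along $k(s)\hookrightarrow k(t)$ is a valid way to verify the fiberwise criterion over $T$.

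The gap is in the final step. You apply \cite{egaiv}~11.10.9 with base $T$ and source $\fY\times_\fS T$. But $T$ is an arbitrary scheme, and in the generality of Definition~\ref{univschemdom} the morphisms $\fX,\fY\to\fS$ are explicitly \emph{not} assumed of locally tf type, so there is no finite-presentation hypothesis to fall back on; the version of 11.10.9 that applies here is the one for locally noetherian data, and $T$ (hence also $\fY\times_\fS T\cong\fY_n\times_{\fS_n}T$) need not be locally noetherian. So the hypotheses of 11.10.9 are not verified over $T$, and the step "apply 11.10.9 to $\phi_T$" is not justified as written. The paper sidesteps this precisely by applying 11.10.9 only over $\fS_n$, which is locally noetherian and has locally noetherian, $\fS_n$-flat source $\fY\times_\fS\fS_n$, concluding that $\phi_{\fS_n}$ is universally schematically dominant over $\fS_n$; since $T\to\fS$ factors through $\fS_n$, schematic dominance of $\phi_T$ then follows from this universality rather than from a fresh application of 11.10.9 over $T$. (The paper also makes $T$ affine first so that the factorization through $\fS_n$ is global, but that is a minor point since schematic dominance is local on the target.) Your argument is easily repaired by inserting this $\fS_n$-level application of 11.10.9 and then using universality, which recovers the paper's proof.
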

\begin{proof}
The implication ($i$)$\Rightarrow$($ii$) is trivial, so let us assume that ($ii$) is satisfied. We show that ($i$) holds. Let $T$ be any $\fS$-scheme; we must show that $\phi_T$ is schematically dominant. Since schematic dominance is local on the target, we may assume that $T$ is affine and, hence, quasi-compact. Then the morphism $T\rightarrow \fS$ factors through an infinitesimal neighborhood $\fS_n$ of the smallest subscheme of definition $\fS_0$ of $\fS$ in $\fS$. It thus suffices to show that $\phi_{\fS_n}$ is universally schematically dominant. By Proposition \ref{flatnessbasechangeprop}, the scheme $\fY\times_\fS\fS_n$ is flat over $\fS_n$, so by \cite{egaiv} 11.10.9, $\phi_{\fS_n}$ is universally schematically dominant if and only if $\phi_{\fS_n,s}$ is schematically dominant for each point $s\in\fS_n$. However, the underlying topological spaces of $\fS$ and $\fS_n$ coincide, and for any physical point $s$ of $\fS$, $\phi_{\fS_n,s}=\phi_s$.
\end{proof}

\begin{cor}\label{schemdensecor}
Let $R$ be a complete discrete valuation ring with residue field $k$. An open immersion $\fU\hookrightarrow\fX$ of flat adic locally noetherian formal $R$-schemes is universally schematically dominant if and only if $\fU_k\subseteq\fX_k$ is schematically dense; in this case we say that $\fU$ is $R$-dense in $\fX$. If in addition $\fX_k$ is reduced, then this condition is equivalent to $\fU_k$ being dense in $\fX_k$.
\end{cor}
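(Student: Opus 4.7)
The plan is to derive this directly from Lemma \ref{univschemdomfiberlem} applied to the base $\fS = \Spf R$. Since $R$ is a complete discrete valuation ring with uniformizer $\pi$, the formal spectrum $\Spf R$ consists of the open prime ideals of $R$, which are exactly those containing a power of $\pi$; the only such prime is the maximal ideal. Thus $\Spf R$ is a single point $s$ with residue field $k(s) = k$. The open immersion $\fU \hookrightarrow \fX$ is automatically adic, and by hypothesis $\fU$ is $R$-flat, so Lemma \ref{univschemdomfiberlem} applies: universal schematic dominance of $\fU \hookrightarrow \fX$ above $\Spf R$ is equivalent to schematic dominance of the single special fiber $\fU_k \hookrightarrow \fX_k$.

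Next I observe that $\fU_k \hookrightarrow \fX_k$ is itself an open immersion of $k$-schemes, since open immersions are preserved by base change. For such an open immersion, schematic dominance coincides with the property that $\fU_k$ is a schematically dense open subscheme of $\fX_k$; this is immediate from the characterization $(ii)$ of schematic dominance recalled just before Definition \ref{univschemdom}, applied with $U = \fX_k$ itself. This gives the first equivalence of the corollary, and motivates the terminology ``$R$-dense.''

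For the final claim I invoke the standard fact (see \cite{egaiv} 11.10.2) that in a reduced scheme, a subscheme is schematically dense if and only if it is topologically dense. Under the additional hypothesis that $\fX_k$ is reduced, this applied to the open subscheme $\fU_k \subseteq \fX_k$ yields the equivalence between schematic density and ordinary topological density, completing the corollary.

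There is essentially no obstacle here: the argument reduces the statement to a direct application of the preceding lemma, using only the elementary fact that $\Spf R$ has a single point together with the standard comparison between schematic and topological density in the reduced case. The only point worth checking carefully is that the flatness hypothesis of Lemma \ref{univschemdomfiberlem} is satisfied, which is built into the assumptions of the corollary.
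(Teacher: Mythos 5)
Your proof is correct and follows the same route the paper intends, namely a direct application of Lemma \ref{univschemdomfiberlem} with $\fS=\Spf R$ (a one-point formal scheme with residue field $k$), followed by the standard unwinding that for an open immersion schematic dominance is schematic density, and that in a reduced scheme schematic density agrees with topological density. The paper gives no proof precisely because it is immediate from the preceding lemma, and your write-up supplies exactly the missing details.
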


\subsection{Smoothness in locally noetherian formal geometry}
The notion of smoothness in locally noetherian formal geometry has been extensively studied in Chapter 0 of \cite{egaiv} and in \cite{ajr1}, \cite{ajr2}. Let us briefly recall the main definitions and results. According to \cite{ajr1} 2.6, a morphism of locally ff type of locally noetherian formal schemes $\phi\colon\fY\rightarrow\fX$ is called smooth\index{morphism!smooth}, unramified or étale if it satisfies the corresponding infinitesimal lifting property, cf.\ \cite{ajr1} 2.1, 2.3. These properties are stable under composition as well as under base change, cf.\ \cite{ajr1} 2.9. Moreover, they are local on the domain, cf.\ \cite{ajr1} 4.1. The morphism $\phi$ is called smooth, unramified or étale in a point $x\in\fX$ if it satisfies the corresponding property in an open neighborhood of $\fX$, cf.\ \cite{ajr1} 4.3. Open immersions are étale, and closed immersions are unramified, cf.\ \cite{ajr1} 2.12. By \cite{ajr1} 4.4, completion morphisms are étale. Smoothness is conveniently characterized in terms of sheaves of continuous relative differentials: Let $\phi\colon\fY\rightarrow\fX$ be of locally ff type, where $\fX$ and $\fY$ are locally noetherian. Then the sheaf of continuous relative differential forms $\Omega^1_{\fY/\fX}$ is coherent, and the morphism $\phi\colon\fY\rightarrow\fX$ is unramified if and only if $\Omega^1_{\fY/\fX}$ is trivial, cf.\ \cite{ajr1} 4.6. If $\phi$ is smooth, then $\Omega^1_{\fY/\fX}$ is locally free by \cite{ajr1} 4.8; its rank is called the relative dimension of $\fY$ over $\fX$. Let $\fS$ be a locally noetherian formal base scheme, and let $\fX$ be smooth of locally ff type over $\fS$. Then $\phi$ is smooth if and only if $\fY$ is smooth over $\fS$ and the natural sequence
\[
0\rightarrow \phi^*\Omega^1_{\fX/\fS}\rightarrow\Omega^1_{\fY/\fS}\rightarrow\Omega^1_{\fY/\fX}\rightarrow 0
\]
is exact and locally split, cf.\ \cite{ajr1} 4.12. One derives the Jacobian criterion for smoothness in locally noetherian formal geometry: let us assume that $\fX$ is smooth of locally ff type over $\fS$ and that $\phi\colon\fY\hookrightarrow\fX$ is a closed immersion. Let $y\in \fY$ be a point, and let us consider local sections of $\fX$ defining $\phi$ near $y$. Then $\fY$ is smooth in $y$ over $\fS$ if and only if the associated Jacobian matrix has an appropriate minor that does not vanish in $y$, cf.\ \cite{ajr1} 4.15 and \cite{ajr2} 5.11, 5.12. By \cite{ajr2} 5.4, smoothness of a morphism $\phi\colon\fY\rightarrow\fX$ of locally ff type in a point $y$ is equivalent to formal smoothness of the corresponding homomorphism of (completed) stalks with respect to the maximal-adic topologies. Moreover, by \cite{ajr2} 5.4, $\phi$ is smooth in $y$ if and only if it is flat in $y$ and if its fiber over the image of $y$ in $\fX$ is smooth in $y$; this fiber is in general not a scheme but a formal scheme, unless $\phi$ is adic. A smooth morphism of locally ff type is locally an étale morphism of ff type to a relative closed formal unit ball, cf.\ \cite{ajr2} 5.9. Moreover, a smooth morphism of locally ff type is locally a completion of a smooth morphism of tf type, cf.\ \cite{ajr2} 7.12.

Let $R$ be a complete discrete valuation ring with residue field $k$. Smooth morphisms of formal $R$-schemes of ff type are regular:

\begin{prop}\label{regprop}\index{morphism!regular}
Let $\phi\colon\fY\rightarrow\fX$ be a smooth morphism of affine formal $R$-schemes of ff type. Then $\phi$ induces a regular homomorphism of rings of global sections.
\end{prop}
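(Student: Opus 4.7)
The plan is to combine the local structural theorem \cite{ajr2}~7.12 with standard excellence results. First, I reduce to a local statement: by \cite{ajr2}~7.12, one can cover $\fY$ by formal affine opens $\Spf B_i$ such that the restriction of $\phi$ to each is the completion of a smooth morphism of tf type. Equivalently, each induced ring map $A\to B_i$ factors as $A\to B'_i\to B_i$, where $A\to B'_i$ is a smooth ring homomorphism of finite type and $B'_i\to B_i$ is a $J_i$-adic completion for some ideal $J_i\subseteq B'_i$ containing $\mathfrak{a}B'_i$ (with $\mathfrak{a}$ an ideal of definition of $A$).

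Next I use excellence. The ring $R$ is excellent as a complete DVR; $R[[S_1,\ldots,S_m]]$ is excellent as a complete Noetherian local ring; and the restricted power series ring $R[[S_1,\ldots,S_m]]\langle T_1,\ldots,T_n\rangle$, being the $(\pi,S_1,\ldots,S_m)$-adic completion of the excellent polynomial ring $R[[S_1,\ldots,S_m]][T_1,\ldots,T_n]$, is excellent by Valabrega's theorem on completions of excellent rings. Quotients of excellent rings are excellent, so $A$ is excellent, and $B'_i$ is excellent as a finitely generated extension of $A$. Since smooth finite type ring homomorphisms are regular, $A\to B'_i$ is regular; and since $B'_i$ is excellent, Grothendieck's theorem on completions of G-rings shows that the $J_i$-adic completion $B'_i\to B_i$ is regular as well. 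Composition yields $A\to B_i$ regular.

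Finally I globalize. Flatness of $A\to B$ is already known from flatness of $\phi$, so it remains to verify geometric regularity of the fibers; equivalently, for each prime $\mathfrak{q}$ of $B$ with $\mathfrak{p}=\mathfrak{q}\cap A$, the local homomorphism $A_{\mathfrak{p}}\to B_{\mathfrak{q}}$ should have geometrically regular closed fiber. If $\mathfrak{q}$ contains the ideal of definition $\mathfrak{b}$ of $B$, then $\mathfrak{q}$ lies in some $\Spf B_i$; the natural map $B_{\mathfrak{q}}\to(B_i)_{\mathfrak{q}_i}$, where $\mathfrak{q}_i$ denotes the extension of $\mathfrak{q}$ to $B_i$, is a faithfully flat local homomorphism, so regularity descends from $A\to B_i$. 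Any other prime $\mathfrak{q}$ specializes to a maximal ideal $\mathfrak{q}'$ of $B$, which necessarily contains $\mathfrak{b}$ because $\mathfrak{b}$ lies in the Jacobson radical of the $\mathfrak{b}$-adically complete ring $B$; one then applies the previous case to $\mathfrak{q}'$ and composes with the regular localization $B_{\mathfrak{q}'}\to B_{\mathfrak{q}}$. The main obstacle I expect is precisely this globalization: the formal cover of $\Spf B$ controls only primes in $V(\mathfrak{b})$, and the remaining primes must be accessed through specialization, relying on the Jacobson property of ideals of definition in adically complete rings.
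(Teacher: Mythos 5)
Your proof follows the same route as the paper's: reduce locally via \cite{ajr2}~7.12, establish excellence via Valabrega's results, invoke regularity of completion homomorphisms of excellent rings (\cite{egaiv}~7.8.3(v)), and compose. Two points need attention. First, \cite{ajr2}~7.12 exhibits $\phi$ locally as a completion of a smooth morphism of \emph{formal} schemes of tf type, not directly as a smooth ring homomorphism of finite type followed by a completion; your claim that ``$A\to B'_i$ is a smooth ring homomorphism of finite type'' therefore needs an algebraization step. The paper reduces to the adic case and then invokes Elkik's Th\'eor\`eme~7 (\cite{elkik}) for exactly this; you should cite Elkik or an equivalent algebraization result here.

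Second, your globalization is more laborious than the paper's and has small gaps as written. The paper's is a one-liner: for a finite affine open cover $(\fY_i)$ of $\fY=\Spf B$ with $\fY_i=\Spf B_i$, the map $B\to\bigoplus_i B_i$ is faithfully flat (each $B\to B_i$ is a flat completion homomorphism, and every prime of $B$ lies under a maximal ideal, which is open because the ideal of definition lies in the Jacobson radical, hence lies in some $\Spf B_i$ and lifts by flatness), so regularity of $A\to B$ follows from regularity of each $A\to B_i$ by \cite{egaiv}~6.8.3(ii). In your prime-by-prime version: (a) in Case~2, the composition you describe gives regularity of $A_{\mathfrak{p}'}\to B_{\mathfrak{q}}$ where $\mathfrak{p}'=\mathfrak{q}'\cap A$, not yet of $A_{\mathfrak{p}}\to B_{\mathfrak{q}}$; you must add that $A_{\mathfrak{p}'}\to B_{\mathfrak{q}}$ factors through $A_{\mathfrak{p}}$ with the same fiber over $\mathfrak{p}$; (b) for Case~2 to work you need Case~1 to yield \emph{full} regularity of $A_{\mathfrak{p}'}\to B_{\mathfrak{q}'}$ (all fibers), which it does via \cite{egaiv}~6.8.3(ii), but your target phrase ``geometrically regular closed fiber'' undersells this and risks confusion at the specialization step. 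Note finally that the case split is avoidable: by the faithful-flatness observation above, every prime of $B$ already lies in the image of some $\Spec B_i$, so your Case~1 argument applies to all primes at once.
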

\begin{proof}
By \cite{valabrega1} Proposition 7 and \cite{valabrega2} Theorem 9, $R$-algebras of ff type are excellent, and by \cite{egaiv} 7.8.3 ($v$), completion homomorphisms of excellent noetherian rings are regular. If $(\fX_i)_{i\in I}$ is a finite affine open covering of $\fX=\Spf A$ and if $\fX_i=\Spf A_i$, then the induced homomorphisms $A\rightarrow A_i$ are completion homomorphisms, and the induced homomorphism from $A$ to the direct sum of the $A_i$ is faithfully flat. By \cite{egaiv} 6.8.3 ($ii$), we may thus work locally on $\fY$, and by \cite{egaiv} 7.8.3 ($v$) and  \cite{ajr2} 7.12 it follows that we may assume that $\phi$ is adic. By \cite{elkik} Théorème 7, $\fY$ is then obtained from a smooth algebra over the ring of global functions on $\fX$ via formal completion. By \cite{egaiv} 6.8.6, smooth morphisms of locally noetherian schemes are regular. Since algebras of finite type over excellent noetherian rings are excellent (\cite{egaiv} 7.8.3 ($ii$)), the statement now follows from a third application of \cite{egaiv} 7.8.3 ($v$).
\end{proof}

Regarding the notion normality for formal $R$-schemes of locally ff type, we refer to the discussion in \cite{conrad_irred} Section 1.2.

\begin{cor}\label{smoothimpliesnormalcor}
If $\phi\colon\fY\rightarrow\fX$ is a smooth morphism of formal $R$-schemes of locally ff type and if $\fX$ is normal, then $\fY$ is normal as well.
\end{cor}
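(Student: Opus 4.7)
Since normality is a local property, we may work locally on $\fY$ and $\fX$; in particular, by choosing a suitable affine open covering of $\fY$ whose images lie in affine opens of $\fX$, we reduce to the case where $\fX = \Spf A$ and $\fY = \Spf B$ are both affine formal $R$-schemes of ff type. According to the discussion in \cite{conrad_irred} Section 1.2, normality of such an affine formal $R$-scheme of ff type is equivalent to normality of its ring of global sections, so the assumption that $\fX$ is normal means that $A$ is a normal noetherian ring, and the conclusion to be established is that $B$ is normal.

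By Proposition \ref{regprop}, the smooth morphism $\phi$ induces a regular ring homomorphism $A \to B$, which is in particular flat with geometrically regular (hence geometrically normal) fibers. The plan is now to invoke the classical ascent-of-normality result (cf.\ \cite{egaiv} 6.5.4): if $A \to B$ is a flat homomorphism of noetherian rings with geometrically normal fibers, then $B$ is normal whenever $A$ is. Applying this to our situation yields normality of $B$ and hence of $\fY$.

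The only step that is not completely formal is verifying that the global-sections criterion for normality used above is indeed the one adopted in \cite{conrad_irred}; this is the main (though minor) obstacle, and it is addressed by the discussion in loc.\ cit., which shows that for affine formal $R$-schemes of ff type, normality of the underlying space agrees with normality of the ring of global sections, because excellence of such rings (used already in Proposition \ref{regprop}) ensures that completion at any coherent ideal preserves normality.
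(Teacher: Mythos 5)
Your proof is correct and follows essentially the same approach as the paper: reduce to the affine case, use Proposition \ref{regprop} to obtain a regular homomorphism $A\to B$, and conclude by ascent of normality via \cite{egaiv} 6.5.4 (ii). You simply spell out the localization step and the equivalence between normality of an affine formal $R$-scheme of ff type and normality of its ring of global sections, both of which the paper leaves implicit.
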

\begin{proof}
This follows from Proposition \ref{regprop} together with \cite{egaiv} 6.5.4 ($ii$).
\end{proof}

In Section \ref{smoothlocalstructureprop}, we will recurrently complete smooth formal $R$-schemes of locally ff type in closed points in order to reduce to local situations. Completion morphisms being étale, the local formal $R$-schemes of ff type thus obtained are smooth. The structure of a local smooth formal $R$-scheme $\fX$ of ff type is particularly simple: if the residue field of $\fX$ coincides with $k$, then $\fX$ is a formal open unit ball, and in general there always exists a finite extension $R'/R$ of discrete valuation rings such that $\fX\times_RR'$ is a finite disjoint union of  formal open unit balls over $R'$:

\begin{lem}\label{localstructurerationalpointlem}
Let $\fX$ be a local smooth formal $R$-scheme of ff type whose residue field naturally coincides with $k$. Then there exists an $R$-isomorphism 
\[
\fX\cong\Spf R[[T_1,\ldots,T_d]]\;,
\]
where $d$ denotes the relative dimension of $\fX$ over $R$.
\end{lem}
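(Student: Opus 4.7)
Since $\fX = \Spf A$ is local with residue field $k$, the noetherian ring $A$ has a unique open prime, which must coincide with the maximal ideal $\m_A$. Hence $\m_A$ is an ideal of definition, $A$ is complete with respect to its $\m_A$-adic topology, and $A/\m_A$ is identified with $k$ compatibly with the structure map $R\to A$. My plan is to exhibit an isomorphism $R[[T_1,\ldots,T_d]]\overset{\sim}{\to}A$ by choosing a regular system of parameters of $A$ whose first member is $\pi$, and then comparing associated graded rings.

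First I would establish that $A$ is a regular local ring of Krull dimension $d+1$. By Proposition \ref{regprop} applied to the smooth morphism $\Spf A\to\Spf R$, the ring homomorphism $R\to A$ is regular, so regularity of $R$ propagates to $A$. Flatness gives $\dim A = \dim R + \dim(A/\pi A) = 1+d$, since the fiber $A/\pi A$ is smooth of relative dimension $d$ over $k$ and local with residue field $k$. Combining regularity of $A$ (so $\dim_k \m_A/\m_A^2 = d+1$) with regularity of the special fiber $A/\pi A$ (so $\dim_k \m_A/(\pi,\m_A^2) = d$) forces $\pi\notin\m_A^2$, so one may pick $t_1,\ldots,t_d\in\m_A$ whose classes form a basis of $\m_A/(\pi,\m_A^2)$; then $(\pi,t_1,\ldots,t_d)$ is a regular system of parameters for $A$.

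Setting $B:=R[[T_1,\ldots,T_d]]$, I would then define the continuous $R$-algebra homomorphism $\phi\colon B\to A$ by $T_i\mapsto t_i$; this is well-defined because $A$ is $\m_A$-adically complete and each $t_i$ lies in $\m_A$, so every formal power series in the $T_i$ converges when evaluated at the $t_i$. Both $B$ and $A$ are complete regular local rings of dimension $d+1$ with regular systems of parameters $(\pi,T_i)$ and $(\pi,t_i)$ respectively, and the induced map on associated graded rings,
\[
\mathrm{gr}_{\m_B} B \,=\, k[\pi, T_1,\ldots,T_d] \,\longrightarrow\, \mathrm{gr}_{\m_A} A \,=\, k[\pi, t_1,\ldots,t_d],
\]
is an isomorphism of polynomial rings by construction. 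A standard successive-approximation argument using $\m$-adic completeness and separation on both sides then upgrades this to an isomorphism $\phi$.

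The argument is mostly formal manipulation with complete regular local rings; the only mildly delicate input is the verification that $\pi\notin\m_A^2$, which is where the interplay between flatness, regularity of $A$, and regularity of the smooth special fiber enters. Once $\pi$ is seen to be part of a regular system of parameters, the remainder is essentially Cohen's structure theorem adapted to the relative situation over $R$.
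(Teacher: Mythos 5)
Your proof is correct, but it follows a genuinely different route from the paper's. The paper appeals to two results from \cite{egaiv} \S 0.19: it first invokes 0.19.6.4 (structure theorem for formally smooth complete local $k$-algebras) to get $A/\pi A\cong k[[T_1,\ldots,T_d]]$, and then 0.19.7.1.5 (a lifting statement for formally smooth complete topological algebras) to lift this isomorphism to an $R$-isomorphism $A\cong R[[T_1,\ldots,T_d]]$. You instead argue directly in the spirit of Cohen's structure theorem over $R$: you establish that $A$ is regular of dimension $d+1$ via Proposition \ref{regprop}, compare the cotangent spaces of $A$ and of its smooth special fiber to see that $\pi$ is part of a regular system of parameters $(\pi,t_1,\ldots,t_d)$, send $T_i\mapsto t_i$, and check the induced map of $\m$-adic associated graded rings is an isomorphism of polynomial rings, which forces the map itself to be an isomorphism by completeness and separation. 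Both approaches are sound; the paper's is shorter and leans on EGA black boxes, while yours is more self-contained and makes the regular-parameter structure explicit. Two points worth making precise if you wanted to polish your write-up: the identification $\dim_k\m_{A_k}/\m_{A_k}^2=d$ (with $d$ the rank of the continuous differential module) uses the conormal sequence for the augmentation $k\to A_k\to k$ in the formal/continuous setting, and the final ``successive approximation'' step is exactly Bourbaki AC III.2.8 (or Atiyah--Macdonald 10.23), which requires separation of both rings — available here since both are complete noetherian local rings.
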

\begin{proof}
Let us write $\fX=\Spf A$, $A_k=A/\pi A$. By \cite{egaiv} 0.19.6.4, there exists a $k$-isomorphism $\phi_k\colon A_k\overset{\sim}{\rightarrow} k[[T_1,\ldots,T_d]]$. Since $A$ is $\pi$-adically complete and formally $R$-smooth for the maximal-adic topologies,  \cite{egaiv} 0.19.7.1.5 shows that $\phi_k$ extends to an $R$-isomorphism $A\overset{\sim}{\rightarrow} R[[T_1,\ldots,T_d]]$.
\end{proof}

\begin{prop}\label{smoothlocalstructureprop}
Let $\fX$ be a local smooth formal $R$-scheme of ff type. There exists a finite extension $R'/R$ of discrete valuation rings with ramification index $1$ such that $\fX\times_RR'$ is $R'$-isomorphic to a disjoint union of finitely many copies of $\Spf R'[[T_1,\ldots,T_d]]$, where $d$ denotes the relative dimension of $\fX$ over $R$.
\end{prop}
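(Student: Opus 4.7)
My plan is to choose the extension $R'/R$ so large that every closed point of $\fX\times_RR'$ has residue field equal to that of $R'$, and then to apply Lemma \ref{localstructurerationalpointlem} pointwise after decomposing $\fX\times_RR'$ into local pieces.

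Write $\fX=\Spf A$ with $A$ a complete noetherian local ring of ff type over $R$, and let $k'$ be its residue field. Choose a finite normal extension $k''/k$ containing $k'$, and let $R'/R$ be a finite extension of discrete valuation rings with ramification index $1$ and residue field $k''$; such an $R'$ exists by the structure theory of complete discrete valuation rings (applied to construct a Cohen-type extension of $R$ realizing $k''$). Set $B:=A\hat\otimes_RR'$; this is a noetherian adic $R'$-algebra, smooth of ff type over $R'$ by stability of smoothness under base change. Since $\m_A$ is an ideal of definition of $A$, $\m_AB$ is an ideal of definition of $B$, so the underlying topological space of $\Spf B$ is canonically homeomorphic to $\Spec(k'\otimes_k k'')$. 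Writing $k'_s$ for the separable closure of $k$ inside $k'$, the isomorphism $k'_s\otimes_k k''\cong(k'')^{[k'_s:k]}$ (which holds because $k''$ contains the Galois closure of $k'_s/k$), combined with the pure inseparability of $k'/k'_s$ and the inclusion $k'\subseteq k''$, implies that $(k'\otimes_k k'')_{\rm red}\cong(k'')^{[k'_s:k]}$; hence $\Spf B$ has finitely many closed points $x_1,\ldots,x_n$, each with residue field $k''$.

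Let $\m_1,\ldots,\m_n$ be the corresponding maximal ideals of $B$. Since $\m_AB\subseteq\bigcap_i\m_i=:J$ and $B/\m_AB$ is artinian, the Jacobson radical $J$ satisfies $J^N\subseteq\m_AB$ for some $N\in\N$, so $B$ is $J$-adically complete. Combining this completeness with the Chinese Remainder Theorem yields a decomposition $B\cong\prod_{i=1}^nB_i$ into local factors corresponding to the $\m_i$-adic completions. Each $B_i$ is a complete noetherian local $R'$-algebra with residue field $k''$, and is smooth of ff type over $R'$ because it is cut out as a factor of $B$ by an idempotent. By Lemma \ref{localstructurerationalpointlem}, $B_i$ is $R'$-isomorphic to $R'[[T_1,\ldots,T_d]]$, where $d$ is the relative dimension of $\fX$ over $R$. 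Reassembling the factors produces the desired decomposition $\fX\times_RR'\cong\coprod_{i=1}^n\Spf R'[[T_1,\ldots,T_d]]$.

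The main obstacle is the construction of $R'$ when $k'/k$ contains an inseparable part: the standard recipe yielding the unique unramified lift with prescribed separable residue field is not immediately available, but one may instead appeal to the structure theorem for complete discrete valuation rings, which in equal characteristic trivially yields $R'=k''[[\pi]]$ and in the mixed characteristic case permits a stepwise construction at the level of Cohen rings.
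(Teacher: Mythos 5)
Your proof is correct and follows essentially the same route as the paper's: take the normal envelope $k''$ of the residue field $k'$ of $\fX$ over $k$, lift it to a finite extension $R'/R$ with $e_{R'/R}=1$, observe that $\fX\times_RR'$ decomposes into finitely many local pieces with residue field $k''$, and apply Lemma~\ref{localstructurerationalpointlem} to each. The paper cites Liu's Lemma 10.3.32 for the existence of the unramified-type lift $R'$ and leaves the computation of $(k'\otimes_kk'')_{\mathrm{red}}$ implicit, whereas you justify both by hand — but the overall argument is the same.
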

\begin{proof}
Let us write $\fX=\Spf A$, let $k_A$ denote the residue field of $A$, and let $k'$ be the normal envelope of $k_A$ over $k$. Then the $k'$-algebra $k_A\otimes_kk'$ modulo its nilradical is a finite direct sum of copies of $k'$. By \cite{liu_ag} Lemma 10.3.32, the extension $k'/k$ lifts to a finite extension of discrete valuation rings $R'/R$ such that $e_{R'/R}=1$. Hence, $\fX\times_RR'$ is a finite disjoint union of local smooth formal $R'$-schemes of ff type whose residue fields all coincide with $k'$. The statement thus follows from Lemma \ref{localstructurerationalpointlem}.
\end{proof}

\begin{remark}
While the finite extension of discrete valuation rings $R'/R$ which is provided by Proposition \ref{smoothlocalstructureprop} has ramification index $1$, it needs not be unramified: the extension of residue fields is allowed to be inseparable.
\end{remark}

To conclude our discussion of smoothness for formal $R$-schemes of ff type, we investigate the structure of formal tubes: let $\fX$ be a smooth affine formal $R$-scheme of ff type, let $f_1,\ldots,f_r$ be a regular sequence in the ring of global functions on $\fX$ (cf.\ \cite{egaiv} 0.15.1.7) such that the closed formal subscheme $\fV$ defined by the $f_i$ is $R$-smooth, and let $n\in\N$ be a natural number. 
\begin{prop}\label{formaltubestructureprop}
The affine open part
\[
\fX\langle\pi^{-(n+1)}f\rangle\,\mathrel{\mathop:}=\,\fX\left\langle\frac{f_1}{\pi^{n+1}},\ldots,\frac{f_r}{\pi^{n+1}}\right\rangle
\]
of the admissible blowup of $\fX$ in the ideal $(\pi^{n+1},f_1,\ldots,f_r)$ where the pullback ideal is generated by $\pi^{n+1}$ is $R$-smooth, and if $\fV$ is connected, then $\fX\langle\pi^{-(n+1)}f\rangle$ is connected as well.
\end{prop}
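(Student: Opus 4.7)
The plan is to identify the coordinate ring $A':=\mathcal{O}(\fX\langle\pi^{-(n+1)}f\rangle)$ with (the $\pi$-torsion-free quotient of) $A\langle T_1,\ldots,T_r\rangle/(g_1,\ldots,g_r)$, where $g_i:=\pi^{n+1}T_i-f_i$, and then to verify $R$-smoothness by the Jacobian criterion in ff type formal geometry recalled above (\cite{ajr1} 4.15, \cite{ajr2} 5.11, 5.12). First I would spell out the chart: by the standard construction of admissible blowups of the ideal $(\pi^{n+1},f_1,\ldots,f_r)$, in the specified chart the variable $T_i$ corresponds to the formal ratio $f_i/\pi^{n+1}$, so $A'$ has the description above, with the proviso that one kills $\pi$-torsion; I will see a posteriori that none is present.

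I then apply the Jacobian criterion to the closed immersion $V(g)\hookrightarrow\Spf A\langle T\rangle$. The ambient ring $A\langle T\rangle$ is $R$-smooth of relative dimension $d+r$, where $d$ is the relative dimension of $\fX$ over $R$. In the $r\times(d+r)$ Jacobian of $(g_i)$, the $r\times r$ block $(\partial g_i/\partial T_j)=\pi^{n+1}\delta_{ij}$ is invertible away from $V(\pi)$, which handles all points where $\pi$ is a unit. Over the special fiber the relations reduce to $-f_i$, so the special-fiber part of $V(g)$ is contained in $V(\pi,f_1,\ldots,f_r)\times_k\mathbb{A}^r_k$ (in the variables $T_j$); there the complementary block $(\partial g_i/\partial x_k)=-(\partial f_i/\partial x_k)$, taken with respect to local parameters of $\fX$, has rank $r$ at every point lying over $\fV$ by the hypothesis that $\fV$ is $R$-smooth of codimension $r$ in $\fX$. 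Hence the Jacobian has rank $r$ everywhere on $V(g)$, so the Jacobian criterion gives that $V(g)$ is $R$-smooth of relative dimension $d$. In particular $V(g)$ is $R$-flat, which forces $A\langle T\rangle/(g)$ to be $\pi$-torsion-free; thus $A'=A\langle T\rangle/(g)$ and $\fX\langle\pi^{-(n+1)}f\rangle$ is $R$-smooth.

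For connectedness, I would reduce $A'$ modulo an ideal of definition $I_A\ni\pi$ of $A$. Since the relations $g_i$ reduce to $-f_i$ modulo $I_A$ and do not involve the $T_j$, one obtains
\[
A'/I_AA'\;\cong\;\bigl(A/(I_A+(f_1,\ldots,f_r))\bigr)[T_1,\ldots,T_r].
\]
The underlying topological space of $\fX\langle\pi^{-(n+1)}f\rangle$ coincides with the spectrum of this ring, which is a polynomial ring in $r$ variables over the coordinate ring of (the underlying space of) $\fV$. A polynomial ring over a ring has no nontrivial idempotents beyond those of the base, so the connected components of $\fX\langle\pi^{-(n+1)}f\rangle$ are in bijection with those of $\fV$, yielding the second claim.

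The main technical obstacle will be the Jacobian check: one has to verify cleanly that the rank-$r$ condition for $(\partial f_i/\partial x_k)$ on $\fV$ propagates to every special-fiber point of $V(g)$, using that the projection to the $x$-coordinates lands in $\fV_k$ and that rank does not drop under residue-field extension, and one has to make sure the Jacobian criterion is applied in the ff type setting and not only in the tf type setting (which is where the $f_i$ live a priori). Once these points are addressed, the $R$-flatness of $V(g)$ automatically eliminates the torsion issue, and both assertions of the proposition follow.
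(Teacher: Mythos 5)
Your argument is correct, and it takes a genuinely different (and somewhat more direct) route than the paper. The paper first uses the hypothesis that $(\pi^{n+1},f_1,\ldots,f_r)$ is a regular sequence to obtain the explicit chart description via the Rees-algebra presentation (\cite{eisenbudca} Ex.\ 17.14), then invokes $R$-flatness of the admissible blowup and the fiber criterion for smoothness (\cite{ajr2} 5.4) to reduce to checking that the special fiber $(A/(f)\otimes_R k)\langle T\rangle$ is $k$-smooth, which follows because $\fV_k$ is $k$-smooth and adding restricted power-series variables preserves smoothness; connectedness is read off the same way. You instead take $V(g)$ with $g_i=\pi^{n+1}T_i-f_i$ as a provisional chart, apply the Jacobian criterion in the ff type setting directly, and recover $R$-flatness (hence absence of $\pi$-torsion, hence agreement with the actual chart) a posteriori from smoothness. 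Both are legitimate; yours avoids quoting the Rees-algebra computation and the fiber criterion, at the cost of a direct Jacobian check. One small point: the case ``away from $V(\pi)$'' in your Jacobian computation is vacuous --- every point of the underlying topological space of a locally noetherian formal $R$-scheme lies in the special fiber, so $\pi$ is never invertible at a point, and the block $\pi^{n+1}\delta_{ij}$ never contributes. This does no harm, since your second case (the $x$-derivative block) already covers all points of $V(g)$, but it is worth noticing that the argument really rests entirely on the nonvanishing of the minor coming from $(\partial f_i/\partial x_k)$ over $\fV_k$. Your connectedness argument reduces modulo a full ideal of definition $I_A$ rather than modulo $\pi$ as in the paper; either works, since both yield the same underlying topological space, which you correctly identify as $\mathrm{Spec}$ of a polynomial ring over the reduction of $\fV$.
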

\begin{proof}
Let us write $f_0\mathrel{\mathop:}=\pi^{n+1}$, and let $A$ denote the ring of global functions on $\fX$. By \cite{eisenbudca} Exercise 17.14, it follows that the blowup
of $\Spec A$ in the ideal $(f_0,\ldots,f_r)$ is covered by the affine open subschemes 
\[
\Spec A[T_0,\ldots,\hat{T}_i,\ldots,T_r]/((f_iT_j-f_j)_{0\leq j\leq r\,,\,j\neq i})\;;
\]
hence
\[
\fX\langle\pi^{-(n+1)}f\rangle\,=\,\Spf A\langle T_1,\ldots,T_r\rangle/((\pi^{n+1}T_j-f_j)_{1\leq j\leq r})\;.
\]
Since $\fX\langle\pi^{-(n+1)}f\rangle$ is $R$-flat, the fiber criterion for smoothness (\cite{ajr2} 5.4) shows that to prove $R$-smoothness of $\fX\langle\pi^{-(n+1)}f\rangle$, it thus suffices to prove that 
\[
\fX\langle\pi^{-(n+1)}f\rangle_k\,=\,\Spf ((A/(f_1,\ldots,f_r)\otimes_Rk)\langle T_1,\ldots,T_r\rangle)
\]
is smooth over $k$. Since $\fV$ is $R$-smooth by assumption, its special fiber 
\[
\fV_k=\Spf (A/(f_1,\ldots,f_r)\otimes_Rk)
\]
is $k$-smooth, so the statement is clear. The statement concerning connectedness is now equally obvious.
\end{proof}

\subsection{Normalizations of formal $R$-schemes of ff type}

Let $R$ be a complete discrete valuation ring with fraction field $K$. We refer to \cite{conrad_irred} Section 1.2 and 2.1 for the concepts of normality and normalization for formal $R$-schemes of ff type and for rigid $K$-spaces. In this section, we show that if $\fX$ is a flat formal $R$-scheme of ff type whose rigid generic fiber $\fX^\rig$ is normal, then the normalization morphism $\tilde{\fX}\rightarrow\fX$ is a finite admissible blowup.

\begin{lem}\label{fingenisoisblowuplem}
Let $\phi$ be a finite morphism of flat formal $R$-schemes of ff type such that $\phi^\rig$ is an isomorphism. Then $\phi$ is a finite admissible blowup.
\end{lem}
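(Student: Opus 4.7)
The statement is local on $\fX$, so I reduce to the affine case $\fX=\Spf A$, $\fY=\Spf B$ with $B$ a finite $A$-algebra and with $A,B$ both $R$-flat. The hypothesis that $\phi^\rig$ is an isomorphism, together with $R$-flatness, forces $\phi^*\colon A\to B$ to be injective and to have $\pi$-power-torsion cokernel; identifying $A$ with its image gives a chain $A\hookrightarrow B\subseteq A[\pi^{-1}]$, and finite generation of $B$ over $A$ yields some $n\in\N$ with $\pi^n B\subseteq A$.

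To produce the blowing-up ideal I would choose $A$-module generators $b_1,\ldots,b_s$ of $B$ and set $a_i:=\pi^n b_i\in A$. The ideal $I:=(\pi^n,a_1,\ldots,a_s)\subseteq A$ is $\pi$-adically open, so the formal blowup of $\fX$ in $I$ is admissible. Since $IB=\pi^n B$ is the principal ideal generated by the non-zero-divisor $\pi^n$ and hence invertible in $B$, the universal property of the admissible blowup produces a canonical $A$-morphism $\Spf B\to\fX'$ to the blowup $\fX'\to\fX$ of $\fX$ in $I$.

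The heart of the proof is the verification that this morphism is an isomorphism, to be carried out by inspecting the standard cover of $\fX'$ by the affine charts $U_0,U_1,\ldots,U_s$ attached to the generators of $I$. On the chart $U_0$ where $\pi^n$ generates the pullback of $I$, the ring (after killing $\pi$-torsion, in analogy with Proposition \ref{formaltubestructureprop}) is $A\langle T_1,\ldots,T_s\rangle/(\pi^n T_i-a_i)$, and the assignment $T_i\mapsto b_i$ identifies $U_0$ with $\Spf B$. For $j\geq 1$, the chart $U_j$ carries a coordinate $V$ satisfying $a_j V=\pi^n$; using $a_j=\pi^n b_j$ this becomes $\pi^n(1-b_j V)=0$, so $b_j$ turns into a unit on $U_j$ after killing $\pi$-torsion, and $U_j$ is thereby identified with the principal open $\Spf B[b_j^{-1}]\subseteq U_0$. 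It follows that $\fX'=U_0=\Spf B$, and $\phi$ is the admissible blowup along $I$.

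For the global conclusion, the local ideals constructed above glue to a coherent open $\O_\fX$-ideal $\mathcal{I}\subseteq\O_\fX$, intrinsically described as $\pi^n\cdot\phi_*\O_\fY$ (viewed inside $\O_\fX$ via the injection $\O_\fX\hookrightarrow\phi_*\O_\fY$, with $n$ chosen on a quasi-compact covering and exploiting that $\phi_*\O_\fY$ is coherent since $\phi$ is finite). The main technical hurdle lies in the chart identification of the third paragraph: rigorously exhibiting each $U_j$ ($j\geq 1$) as a principal open of $U_0$ in the formal setting requires careful tracking of $\pi$-torsion and $\pi$-adic completion on each chart, but this reduces to direct manipulation of the defining relations together with the $R$-flatness of $B$.
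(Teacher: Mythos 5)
Your strategy is essentially the paper's: reduce to the affine case, view $A\subseteq B\subseteq A[\pi^{-1}]$, take $I=\pi^nB$ with $\pi^nB\subseteq A$, and show $\operatorname{Spf}B$ is the admissible blowup of $\operatorname{Spf}A$ in $I$. Where the paper simply cites the proof of \cite{frg1}~4.5 for that last step, you attempt to verify the chart description directly, which is a reasonable thing to want to do. Two comments on that verification.

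First, a minor point: you assert that $\phi^\rig$ being an isomorphism forces $\phi^*\otimes_RK$ to be an isomorphism. For ff-type rings this is not immediate, since $\fX^\rig$ is the Berthelot generic fiber, not $\operatorname{Sp}(A\otimes_RK)$; the paper invokes \cite{dejong_crystalline}~7.1.9 precisely for this. Not a serious issue, but worth flagging.

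Second, and this is the real gap: your treatment of the charts $U_j$ with $j\geq 1$ is circular. You write $a_jV=\pi^n$ on $U_j$ and then substitute $a_j=\pi^nb_j$ to get $\pi^n(1-b_jV)=0$, concluding that $b_j$ becomes a unit. But $b_j$ is \emph{not a priori an element of the chart ring} $C_j$: the identity $a_j=\pi^nb_j$ lives in $B$ (equivalently in $A[\pi^{-1}]$), not in $C_j$, and $C_j$ is generated over $A$ by $V=\pi^n/a_j$ and the ratios $a_i/a_j=b_i/b_j$, none of which visibly yields $b_j$ itself. To express $b_j=V^{-1}$ as an element of $C_j$ you need $V$ to already be a unit in $C_j$ --- but $V$ being a unit is \emph{exactly} the assertion $U_j\subseteq U_0$ you are trying to establish. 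So as written the $U_j$ step assumes what it claims to prove.

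The gap is fixable, and the clean argument is the content of \cite{frg1}~4.5. The key is the \emph{stability} relation $I^2=\pi^nI$, which is immediate here because $I=\pi^nB$ and $B$ is a ring: $I^2=\pi^{2n}B\cdot B=\pi^{2n}B=\pi^n\cdot\pi^nB=\pi^nI$. In the Rees algebra $R(I)=\bigoplus_{d\geq0}I^dt^d$ one then has, for any two homogeneous elements $x\in R(I)_d$, $y\in R(I)_e$ with $d,e\geq1$, that $xy\in I^{d+e}t^{d+e}=\pi^n I^{d+e-1}t^{d+e}=(\pi^nt)\cdot R(I)_{d+e-1}$; thus $R(I)_+^2\subseteq(\pi^nt)$, so every relevant homogeneous prime contains $\pi^nt$ only if it contains all of $R(I)_+$, i.e.\ $V_+(\pi^nt)=\emptyset$ and the whole blowup equals $D_+(\pi^nt)=U_0$. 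This renders the other charts redundant, rather than identifying each $U_j$ as a principal open of $U_0$ via the argument you gave. Your description of $U_j$ as $\operatorname{Spf}B\langle b_j^{-1}\rangle$ is in fact correct, but it follows \emph{from} the equality $\fX'=U_0$ rather than proving it.
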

\begin{proof}
It suffices to show that the morphism $\phi^\sharp:\O_\fX\rightarrow\phi_*\O_\fY$ of coherent $\O_\fX$-modules is injective, that there exists an element $c$ in $R$ such that the image $c(\phi_*\O_{\fY})$ of the $\O_\fX$-module homomorphism
\[
\phi_*\O_{\fY} \rightarrow \phi_*\O_{\fY}\quad,\quad f\mapsto c\cdot f
\] 
lies in $\phi^\sharp(\O_\fX)$ and that for any such $c$, $\phi$ has the universal property of the admissible blowup of $\fX$ in the $\phi^\sharp$-preimage of $c(\phi_*\O_{\fY})$. Since $\fX$ is quasi-compact, we may work locally on $\fX$; we may thus assume that $\fX$ and, hence, $\fY$ are affine. Let $\phi^*:A\rightarrow B$ denote the continuous homomorphism of topological $R$-algebras of global sections corresponding to $\phi$. Since $\phi^\rig$ is an isomorphism, \cite{dejong_crystalline} 7.1.9 shows that  $\phi^*\otimes_RK:A\otimes_RK\rightarrow B\otimes_RK$ is an isomorphism. We use $\phi^*$ to identify $A\otimes_RK$ with $B\otimes_RK$ and to view $A$ as a subring of $B$. Since $B$ is a finite $A$-module, there exists an element $c\in R$ such that the $A$-submodule $I:=cB$ of $B$ is contained in $A$. Precisely as in the proof of \cite{frg1} 4.5, we see that $\fY$ is the admissible blowup of $\fX$ in $I\O_\fX$, as desired.
\end{proof}

\begin{cor}\label{normisblowupcor}\index{normalization!by finite blowups}
Let $\fX$ be a flat formal $R$-scheme of ff type whose rigid generic fiber $\fX^\rig$ is normal. Then the normalization of $\fX$ is a finite admissible blowup.
\end{cor}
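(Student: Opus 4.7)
The plan is to reduce the corollary directly to Lemma \ref{fingenisoisblowuplem} applied to the normalization morphism $\nu\colon\tilde{\fX}\rightarrow\fX$. To invoke that lemma, three properties of $\nu$ must be verified: that $\nu$ is a finite morphism between flat formal $R$-schemes of ff type, and that its rigid generic fiber $\nu^\rig$ is an isomorphism.

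Finiteness of $\nu$, together with the fact that $\tilde{\fX}$ is again a formal $R$-scheme of ff type, is part of the construction of the normalization for formal $R$-schemes of ff type as developed in \cite{conrad_irred} Section 1.2; I would simply cite this. For the $R$-flatness of $\tilde{\fX}$, I would work locally: writing $\fX=\Spf A$ with $A$ a flat $R$-algebra of ff type, the ring $\pi$ is a non-zero-divisor in $A$, hence it is invertible in the total ring of fractions of $A$. The normalization $\tilde{A}$ sits inside this total ring of fractions as the integral closure of $A$, so $\pi$ remains a non-zero-divisor in $\tilde{A}$; that is, $\tilde{A}$ is $R$-flat.

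The crucial step is to identify $\nu^\rig$ with the normalization morphism of $\fX^\rig$. This compatibility between the formation of normalizations and Raynaud's generic fiber functor is precisely the content of the results of \cite{conrad_irred} Section 2.1. Once this identification is in place, the normality assumption on $\fX^\rig$ forces the rigid normalization morphism to be an isomorphism, so $\nu^\rig$ is an isomorphism.

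Having collected these three inputs, Lemma \ref{fingenisoisblowuplem} applies verbatim and yields that $\nu$ is a finite admissible blowup, which is the assertion of the corollary. I expect the main (essentially bibliographic) obstacle to be making precise the compatibility of formal and rigid normalizations, since the corresponding normality statements for rigid and formal objects rely on matching definitions; once this is pinned down via \cite{conrad_irred}, the remainder is formal.
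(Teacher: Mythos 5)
Your approach matches the paper's: reduce to Lemma~\ref{fingenisoisblowuplem} by checking that the normalization morphism $\nu\colon\tilde{\fX}\rightarrow\fX$ is finite, that $\tilde{\fX}$ is $R$-flat, and that $\nu^\rig$ is an isomorphism, citing \cite{conrad_irred} for finiteness and for compatibility of the formal and rigid normalizations. There is one small gap in your verification of $R$-flatness of $\tilde{\fX}$. You assert that the normalization $\tilde{A}$ of $A$ sits inside the total ring of fractions of $A$ as the integral closure of $A$ there. This description of the normalization is only valid when $A$ is reduced; for a non-reduced ring the normalization is formed from the reduced quotient and has no canonical map to the total ring of fractions of $A$ itself. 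The paper supplies the missing observation: since $A$ is $R$-flat, the map $A\hookrightarrow A\otimes_RK$ is injective, and $A\otimes_RK$ is normal by the hypothesis on $\fX^\rig$ and hence reduced, so $A$ is reduced. Only after this is established does the integral-closure description hold, and your torsion-freeness argument (which is the same as the paper's) then goes through.
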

\begin{proof}
Let $\phi\colon\tilde{\fX}\rightarrow\fX$ denote the normalization of $\fX$; by \cite{conrad_irred} Theore 2.1.3, $\phi^\rig$ is an isomorphism because $\fX^\rig$ is normal. By Lemma \ref{fingenisoisblowuplem}, it suffices to show that $\tilde{X}$ is $R$-flat. To do so, we may assume that $\fX$ is affine, $\fX=\Spf A$. Since $A$ is $R$-flat, it is contained in $A\otimes_RK$ which is normal and, hence, reduced; it follows that $A$ is reduced, which implies that the normalization of $A$ is the integral closure of $A$ in its total ring of fractions $\Frak(A)$. Since $A$ has no $\pi$-torsion, the same holds for $\Frak(A)$ and, hence, for the integral closure of $A$ in $\Frak(A)$, as desired.
\end{proof}

\section{Berthelot's construction and quasi-compactness}
Let $R$ be a complete discrete valuation ring with fraction field $K$ and residue field $k$, and let $\fX$ be a formal $R$-scheme of ff type. If $\fX$ is of tf type over $R$, then the rigid-analytic generic fiber $\fX^\rig$ of $\fX$ is quasi-compact by construction. The converse implication holds if $\fX$ is $R$-flat; this result is stated without proof in \cite{rapoport-zink} Proposition 5.12, where it is attributed to R.\ Huber. In the following, we provide a proof of the above statement, and we generalize it to a relative situation.

\begin{lem}\label{qctftlem}
Let $\fX$ be a flat formal $R$-scheme of ff type such that $\fX^\rig$ is quasi-compact. Then $\fX$ is of tf type over $R$.
\end{lem}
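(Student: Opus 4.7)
The plan is to reduce to the affine case and then to establish an algebraic dichotomy forcing the formal-power-series variables $S_i$ to become nilpotent modulo $\pi$.

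Since being of tf type is local on $\fX$, and since $\fX$ itself is quasi-compact (a flat locally noetherian formal $R$-scheme of ff type whose rigid generic fiber is quasi-compact must be quasi-compact), I would first cover $\fX$ by finitely many affine opens $\Spf A$ with $A=R[[S_1,\ldots,S_m]]\langle T_1,\ldots,T_n\rangle/I$. Writing $J\mathrel{\mathop:}=(\pi,S_1,\ldots,S_m)\subseteq A$ for the largest ideal of definition, the goal becomes the algebraic statement $J^{N}\subseteq\pi A$ for some $N\in\N$: granted this, one may write $S_i^N=\pi u_i$ with $u_i\in A$, and then $A$ is a quotient of the tf-type algebra $R\langle T_1,\ldots,T_n, S_1,\ldots,S_m, U_1,\ldots,U_m\rangle/(\pi U_i-S_i^N)$ via $U_i\mapsto u_i$, hence of tf type.

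Next I would invoke Berthelot's construction to present $\fX^\rig$ as the filtered admissible union $\bigcup_N\sSp(A_N)_K$, where $A_N$ denotes the $\pi$-adic completion of $A[J^N/\pi]$; each $A_N$ is an $R$-flat $R$-algebra of tf type, and $\sSp(A_N)_K$ corresponds geometrically to the closed sub-polydisc of radius $|\pi|^{1/N}$ in the $S_i$-directions. Quasi-compactness of $\fX^\rig$, together with the filtered nature of this covering, then forces $\fX^\rig=\sSp(A_{N_0})_K$ for some $N_0\in\N$.

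The hard part will be upgrading this analytic identity to the algebraic identity $A=A_{N_0}$, which is equivalent to $J^{N_0}\subseteq\pi A$. My proposed geometric argument is to identify $\Spf A_{N_0}$ with the distinguished affine chart of the admissible formal blowup $\tilde\fX\to\fX$ of the open ideal $\mathcal{I}\mathrel{\mathop:}=(\pi,S_1^{N_0},\ldots,S_m^{N_0})$, namely the chart where $\pi$ generates the pullback ideal. The remaining charts, in which some $S_j^{N_0}$ generates the pullback, have rigid-generic fibers contained in $\fX^\rig\setminus\sSp(A_{N_0})_K=\emptyset$; being $R$-flat with empty rigid-generic fiber, they must be empty as formal schemes. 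Hence $\tilde\fX=\Spf A_{N_0}$, so the blowup morphism is proper and affine, therefore finite, and its rigid-generic fiber is an isomorphism. By Lemma \ref{fingenisoisblowuplem}, combined with the fact that $\tilde\fX$ is already realised as the blowup of $\mathcal{I}$, this forces $\mathcal{I}$ to be principal on $\fX$ already, necessarily generated by $\pi$; that is, $S_i^{N_0}\in\pi A$ for each $i$, which is exactly the desired conclusion $J^{N_0}\subseteq\pi A$, whence $A=A_{N_0}$ is of tf type and $\fX$ is of tf type over $R$.
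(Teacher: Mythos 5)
Your approach is genuinely different from the paper's. You work directly with the filtered admissible covering $\fX^\rig=\bigcup_N\sSp(A_N)_K$ from Berthelot's construction and use quasi-compactness to collapse it to a single term; the paper instead invokes \cite{frg1} 4.1 to produce an external flat tf-type model $\fY$ of $\fX^\rig$ mapping to $\fX$, extracts a bound $n$ from the induced inclusion of special fibers, and passes through a dilatation. Your extraction of $N_0$ plays the same role as the paper's $n$, and up to that point the two arguments are parallel.

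The blowup step has two gaps. First, the assertion that the non-distinguished charts of the blowup in $\mathcal{I}=(\pi,S_1^{N_0},\ldots,S_m^{N_0})$ have rigid generic fibers contained in $\fX^\rig\setminus\sSp(A_{N_0})_K$ is incorrect: the charts of a blowup overlap, and the rigid generic fiber of the chart where $S_j^{N_0}$ generates is the locus $\{|S_j|\geq|\pi|^{1/N_0}\}$ inside $\fX^\rig$, which, given $\fX^\rig=\sSp(A_{N_0})_K\subseteq\{|S_i|\leq|\pi|^{1/N_0}\}$, is the boundary stratum $\{|S_j|=|\pi|^{1/N_0}\}$; this is contained in $\sSp(A_{N_0})_K$, not in its (empty) complement, and need not be empty. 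This is repairable by taking an exponent $N'>N_0$ in the blowup ideal, since then $\{|S_j|\geq|\pi|^{1/N'}\}$ is genuinely disjoint from $\{|S_j|\leq|\pi|^{1/N_0}\}$.

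Second, and this is the essential problem: even granting that $\tilde\fX$ coincides with the distinguished chart and that $\tilde\fX\to\fX$ is finite, the conclusion that $\mathcal{I}$ is principal on $\fX$ does not follow. Lemma~\ref{fingenisoisblowuplem} shows that this finite morphism is a finite admissible blowup of \emph{some} open ideal, but gives no information about $\mathcal{I}$ itself; a finite (even affine) admissible blowup can perfectly well be the blowup of a non-principal ideal, so you cannot deduce $S_i^{N'}\in\pi A$ this way. The correct thing to extract at this point is that the distinguished chart is a flat $R$-algebra of tf type (since $S_i^{N'}$ becomes a multiple of $\pi$ there) which is finite over $A$ and has the same rigid generic fiber; one should then cite a descent result for tf type along finite admissible blowups, namely \cite{urigspaces} Corollary 2.15, which is exactly how the paper closes its own proof. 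The algebraic inclusion $J^{N'}\subseteq\pi A$ is then a \emph{consequence} of $A$ being of tf type, not an available intermediate step.
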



\begin{proof}
The property of being of locally tf type over $R$ can be checked locally on $\fX$; we may thus assume that $\fX$ is affine. There exists a flat $R$-model of tf type $\fY$ of $\fX^\rig$ together with a morphism $\phi\colon\fY \rightarrow \fX$ inducing an isomorphism of rigid generic fibers: indeed, since $\fX^\rig$ is quasi-compact and quasi-separated, there exists a flat $R$-model of tf type for $\fX^\rig$, cf.\ \cite{frg1} 4.1. Let $\ul{A}$ denote the $R$-algebra of global functions on $\fX$, and let $(s,t)$ be a formal generating system for $\ul{A}$ in the sense of \cite{urigspaces} 2.1. By \cite{frg1} 4.5, there exists a finite admissible blowup $\fY'$ of $\fY$ such that the components of $(s,t)$ extend to global functions on $\fY'$. After replacing $\fY$ by $\fY'$, the existence of $\phi$ follows from the fact that $R$-morphisms to $\fX$ correspond to continuous $R$-homomorphisms of rings of global sections, cf.\ \cite{egain} 10.4.6. 

Since $\fY$ is adic over $R$, there exists a natural number $n\geq 1$ such that the special fiber $\fY_k$ of $\fY$ factors through the $n$-th infinitesimal neighborhood of the smallest subscheme of definition $\fX_0$ of $\fX$ in $\fX$. Let $ \a\subseteq\ul{A}$ denote the ideal corresponding to $\fX_0$, let $\fX''\rightarrow \fX$ be the admissible blowup of $\fX$ in $(\pi, \a^{n+1})$, and let $\fX'\subseteq \fX''$ be the corresponding dilatation (cf.\ \cite{nicaise_traceformula} Def.\ 2.20). By the universal property of dilatations (cf.\ \cite{nicaise_traceformula} 2.22), $\phi$ extends uniquely to a morphism $\phi'\colon\fY \rightarrow \fX'$. Since $\fX$ is $R$-flat by assumption, the specialization map $\sp_{\fX''}$ is surjective. Since $\phi^\rig$ is an isomorphism, it follows that the inclusion $\fX'\subseteq \fX''$ is in fact an equality. Let $\ul{A}'$ denote the flat $R$-algebra of global functions on the affine formal scheme $\fX'$; then $\ul{A}'$ is of tf type over $R$, cf.\ \cite{nicaise_traceformula} Prop.\ 2.23 or \cite{dejong_crystalline} Lemma 7.1.2 (b). By \cite{urigspaces} Corollary 2.15, it follows that $\ul{A}$ is of tf type over $R$ as well.
\end{proof}

\begin{remark}
The flatness assumption in Lemma \ref{qctftlem} cannot be dropped; for instance, $\Spf k[[S]]$ is of ff type over $R$, but not of tf type over $R$, and its generic fiber is empty.
\end{remark}

We carry Lemma \ref{qctftlem} to a relative setting. To do so, we first generalize a construction in \cite{dejong_crystalline} 7.1.13 and \cite{dejong_crystalline_err}. Let us recall that if $\fX$ is a formal $R$-scheme of locally ff type, then the special fiber $\fX_k$ of $\fX$ is cut out by some uniformizing element $\pi$ of $R$; it is not necessarily a scheme, but a formal $k$-scheme of locally ff type. We refer to the end of Section 2.3 in \cite{nicaise_traceformula} for a discussion of dilatations in formal geometry.

\begin{lem}\label{dJlem}
Let $\fX$ be an affine flat formal $R$-scheme of ff type, and let $\fV\subseteq\fX$ be a closed formal subscheme. For each $n\in\N$, we let $\fV_{(n)}\subseteq\fX$ denote the $n$-th infinitesimal neighborhood of $\fV$ in $\fX$, and we let $\fX_{(n)}$ denote the dilatation of $\fX$ in $\fV_{(n)}\cap\fX_k$. There exists an integer $n_0$ such that for all $n\geq n_0$, there exists a natural closed immersion $\phi_n\colon\fV_{(n-n_0)}\hookrightarrow \fX_{(n)}$ such that the diagram
\[
\begin{diagram}
&&\fX_{(n)}\\
&\ruInto<{\phi_n}&\dTo\\
\fV_{(n-n_0)}&\rInto&\fX
\end{diagram}
\]
commutes, where the horizontal arrow is the given closed immersion and where the vertical arrow is the dilatation morphism. Moreover, for varying $n$, the $\phi_n$ are compatible.
\end{lem}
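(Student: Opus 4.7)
The plan is to localize to an affine patch, unfold all the dilatations explicitly, and produce the $\phi_n$ by an explicit construction whose key subtlety is controlled by Artin--Rees. Since the claim is local on $\fX$, I reduce to $\fX=\Spf A$ affine and write $\fV=V(\mathfrak{a})$ for the coherent ideal $\mathfrak{a}\subseteq A$. Then $\fV_{(m)}=\Spf A/\mathfrak{a}^{m+1}$, the center of the dilatation $\fX_{(n)}$ is $V(\mathfrak{a}^{n+1}+\pi A)\subseteq\fX_k$, and choosing generators $f_1,\ldots,f_r$ of $\mathfrak{a}^{n+1}$ one has $\fX_{(n)}=\Spf B_n$ where $B_n$ is the $\pi$-torsion-free quotient of the completion of $A[T_1,\ldots,T_r]/(\pi T_i-f_i)_i$. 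Constructing $\phi_n$ compatibly with the given diagram then amounts to producing a surjective continuous $A$-algebra homomorphism $\phi_n^*\colon B_n\twoheadrightarrow A/\mathfrak{a}^{n-n_0+1}$.

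The integer $n_0$ will come from the Artin--Rees lemma applied to the noetherian ring $A$, the ideal $\mathfrak{a}$, and the submodule $\pi A\subseteq A$: there exists $c\geq 0$ with $\mathfrak{a}^N\cap\pi A\subseteq\pi\mathfrak{a}^{N-c}$ for all $N\geq c$. I take $n_0\mathrel{\mathop:}= c$, and for $n\geq c$ I define $\phi_n^*$ by extending the natural surjection $A\twoheadrightarrow A/\mathfrak{a}^{n-c+1}$ via $T_i\mapsto 0$. The relation $\pi T_i-f_i$ then maps to $-\bar f_i$, which vanishes because $f_i\in\mathfrak{a}^{n+1}\subseteq\mathfrak{a}^{n-c+1}$; surjectivity of $\phi_n^*$ is automatic; and compatibility with the dilatation morphism $\fX_{(n)}\to\fX$ holds by construction.

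The main obstacle will be verifying that $\phi_n^*$ descends through the $\pi$-torsion quotient and, subsequently, through the adic completion. For the primary $\pi$-torsion in $A[T_i]/(\pi T_i-f_i)_i$, any lift $P(T)\in A[T]$ of a $\pi$-torsion element satisfies $\pi P=\sum_i Q_i(T)(\pi T_i-f_i)$; specializing at $T=0$ yields $\pi P(0)\in(f_1,\ldots,f_r)=\mathfrak{a}^{n+1}$, whence Artin--Rees gives $P(0)\in\mathfrak{a}^{n-c+1}$, i.e.\ $\overline{P(0)}=0$. Higher-order ``denominators'' $f^I/\pi^{|I|}$ created by multiplication inside $B_n$ require an inductive strengthening of this estimate: an element $g\in A$ satisfying $\pi^M g\in\sum_{k=1}^{M}\pi^{M-k}\mathfrak{a}^{k(n+1)}$ must lie in $\mathfrak{a}^{n-c+1}$, which I will prove by induction on $M$, peeling off the top-degree term via $\mathfrak{a}^{M(n+1)}\cap\pi A\subseteq\pi\mathfrak{a}^{M(n+1)-c}$ and dividing by $\pi$ to reduce to the case $M-1$. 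Extension to the completion is then a matter of continuity for the adic topologies on $B_n$ and $A/\mathfrak{a}^{n-c+1}$.

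Compatibility of the $\phi_n$ for varying $n$ will be automatic from the construction: the transition map $B_{n+1}\to B_n$ induced by the factorization $\fX_{(n)}\to\fX_{(n+1)}$ sends each generator $f_i^{(n+1)}/\pi$ to an $A$-linear combination of the $f_j^{(n)}/\pi$ (since $f_i^{(n+1)}\in\mathfrak{a}^{n+2}=\mathfrak{a}\cdot\mathfrak{a}^{n+1}$ expands in the $f_j^{(n)}$), so composition with $T_j^{(n)}\mapsto 0$ yields zero, matching $\phi_{n+1}^*$ followed by the projection $A/\mathfrak{a}^{n-c+2}\twoheadrightarrow A/\mathfrak{a}^{n-c+1}$.
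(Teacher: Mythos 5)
Your proposal is correct and follows essentially the same path as the paper: localize to $\fX=\Spf A$, obtain $n_0$ from Artin--Rees applied to $\pi A\subseteq A$ and $\mathfrak{a}$, establish by induction on $M$ the key estimate $\pi^M g\in\sum_{k=1}^{M}\pi^{M-k}\mathfrak{a}^{k(n+1)}\Rightarrow g\in\mathfrak{a}^{n-n_0+1}$, and use it to show that the ``constant term'' map to $A/\mathfrak{a}^{n-n_0+1}$ is well defined on the dilatation algebra before completing. The only differences are cosmetic (you work through the presentation $A[T]/(\pi T_i-f_i)$ and its $\pi$-torsion quotient rather than directly in the subring $A[\mathfrak{a}^{n+1}/\pi]\subseteq A\otimes_RK$ as the paper does), and you sketch the compatibility of the $\phi_n$ which the paper merely asserts.
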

\begin{proof}
Let $A$ denote the $R$-algebra of global functions on $\fX$, and let $I\subseteq A$ be the ideal corresponding to $\fV$. By the Artin-Rees Lemma (\cite{eisenbudca} Lemma 5.1), there exists an integer $n_0$ such that for all $n\geq n_0$,
\[
\pi A\cap I^n\,=\, I^{n-n_0}(\pi A\cap  I^{n_0})\,\subseteq\,\pi I^{n-n_0}\;.
\]
We claim that for any integer $t\geq 1$ and any $n\geq n_0$,
\[
\pi^t A\cap(\sum_{i=1}^t \pi^{t-i} I^{in})\,\subseteq\,\pi^t I^{n-n_0}\;.
\]
Indeed, let us argue by induction on $t$. We have chosen $n_0$ such that the statement holds for $t=1$, so we may assume that $t>1$. Let $a$ be an element on the left hand side. Since $a$ lies in $\sum_{i=1}^t \pi^{t-i} I^{in}$, we can write
\[
a\,=\,\sum_{i=1}^t\pi^{t-i}a_i
\]
with elements $a_i\in I^{in}$. Since $a$ is divisible by $\pi$, the same must be true for $a_t$; hence $a_t\in\pi A\cap I^{tn}$, and $\pi A\cap I^{tn}\subseteq\pi I^{tn-n_0}$ by our choice of $n_0$. Since $n\geq n_0$, it follows that $a_t\in\pi I^{(t-1)n}$. We set $a_i'\mathrel{\mathop:}=a_i$ for $1\leq i \leq t-2$, $a_{t-1}'\mathrel{\mathop:}=a_{t-1}+\pi^{-1}a_t\in I^{(t-1)n}$; then $a_i'\in I^{in}$ for $1\leq i\leq t-1$, and
\[
a\,=\,\pi\sum_{i=1}^{t-1}\pi^{t-1-i}a'_i\;.
\]
Since $\pi^{-1}a\in\pi^{t-1}A$, the induction hypothesis implies that $a\in\pi^t I^{n-n_0}$, as desired; thus our claim has been shown. 
Let us now consider an element $a\in A[\frac{ I^n}{\pi}]\subseteq A\otimes_RK$. We choose a representation
\[
a\,=\,\sum_{i=0}^t\frac{a_i}{\pi^i}\quad,\quad a_i\in I^{in}
\]
of $a$. The class of $a_0$ modulo $ I^{n-n_0}$ does not depend on the choice of the representation. Indeed, if $a=0$, then $\pi^ta_0\in\pi^t I^{n-n_0}$ by the above claim. It is now clear that $a\mapsto a_0$ defines a surjective $A$-algebra homomorphism $A[I^n/\pi]\rightarrow A/ I^{n-n_0}$. The statement thus follows via formal completion with respect to an ideal of definition of $A$. 
\end{proof}

We can now provide the announced relative version of Lemma \ref{qctftlem}:

\begin{prop}\label{rqcadicprop}
Let $\phi\colon\fY\rightarrow \fX$ be a morphism of flat formal $R$-schemes of locally ff type. If $\phi^\rig$ is quasi-compact, then $\phi$ is adic.
\end{prop}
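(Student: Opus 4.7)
My plan is to reduce to the affine case, base-change $\fY$ along a carefully chosen tf type dilatation of $\fX$, apply Lemma \ref{qctftlem} to the resulting fiber product, and then read the adic condition off its special fiber.

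First I would work locally. Since adicness is local on both source and target, I may assume $\fX = \Spf A$ and $\fY = \Spf B$ are affine, with $J_A = (\pi, f_1, \ldots, f_\ell)$ an ideal of definition of $A$ coming from a presentation of $A$ as an $R$-algebra of ff type, and with $J_B$ the largest ideal of definition of $B$. Continuity of $\phi^*$ gives $\phi^*(J_A) \cdot B \subseteq \sqrt{J_B}$, so $\phi$ is adic if and only if the ideal $\phi^*(J_A) \cdot B = (\pi, \phi^*f_1, \ldots, \phi^*f_\ell) \cdot B$ is open in $B$.

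Next I would introduce the tf type dilatation $\fX' = \Spf A'$ with
\[
A' := A\langle T_1, \ldots, T_\ell\rangle/(\pi T_i - f_i)_{i=1,\ldots,\ell}.
\]
Since $f_i = \pi T_i$ in $A'$, one has $J_A\cdot A' = \pi A'$, so $A'$ carries the $\pi$-adic topology and is of tf type over $R$; its rigid generic fiber identifies with the quasi-compact admissible open $\{|f_i| \leq |\pi|,\ i = 1, \ldots, \ell\}$ of $\fX^\rig$. Forming the fiber product $\fY' := \fY \times_\fX \fX' = \Spf B'$ with $B' = B\langle T_i\rangle/(\pi T_i - \phi^*f_i)$, one checks, using Lemma \ref{flatnessbasechangeprop} applied to the dilatation (which is flat and adic) together with the flatness of $\fY$ over $R$, that $\fY'$ is flat over $R$; it is of ff type over $R$ by \cite{yekutieli_rescompl} Theorem 1.22, and its rigid generic fiber is $(\phi^\rig)^{-1}(\fX'^\rig)$, which is quasi-compact by the hypothesis on $\phi^\rig$. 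Lemma \ref{qctftlem} then yields that $\fY'$ is of tf type over $R$.

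Finally, I would translate the tf type condition on $\fY'$ into the adic condition on $\phi$. Being of tf type over $R$ means that $B'/\pi B'$ is an ordinary $k$-algebra of finite type carrying the discrete topology. A direct computation gives
\[
B'/\pi B'\;\cong\;B\langle T_i\rangle/(\pi, \phi^*f_i)\;\cong\;\bigl(B/\phi^*(J_A)\cdot B\bigr)[T_1, \ldots, T_\ell],
\]
where the topology on the right is induced by $J_B$ through the quotient map from $B\langle T_i\rangle$. For this topology to be discrete, the image of each formal generator of $J_B$ in $B/\phi^*(J_A)\cdot B$ must be nilpotent, whence $J_B^M \subseteq \phi^*(J_A)\cdot B$ for some $M$. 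Thus $\phi^*(J_A)\cdot B$ is open in $B$, and therefore an ideal of definition of $B$, so $\phi$ is adic. The main technical point is the verification that $\fY'$ satisfies the hypotheses of Lemma \ref{qctftlem} --- flatness over $R$, ff type, and quasi-compact rigid generic fiber --- all of which follow from the preliminary results collected earlier in Section 2 together with the compatibility of $\rig$ with fiber products.
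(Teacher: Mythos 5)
The central gap is the flatness claim for the dilatation. You assert that $\fX'\to\fX$ is flat, and then invoke Lemma \ref{flatnessbasechangeprop} to deduce $R$-flatness of $\fY'$. But dilatations (and admissible blowups more generally) are essentially never flat over the base: already $A\to A\langle T\rangle/(\pi T-f)$ fails to be flat whenever $\pi$ becomes a zero-divisor in the quotient (take $f=0$ and observe that $T$ is $\pi$-torsion). More fundamentally, $\fX'\to\fX$ is an isomorphism on rigid generic fibers while its special fiber lands in a proper closed subscheme, so it cannot be flat unless it is an isomorphism. Consequently neither $\fX'$ nor $\fY'=\fY\times_\fX\fX'$ need be $R$-flat, and Lemma \ref{qctftlem} does not apply to $\fY'$.

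The natural repair — replacing $\fY'$ by its flatification $\fY''$ (the quotient by $\pi$-torsion, i.e.\ the strict transform, which is what the dilatation of $\fY$ actually is) — breaks your final computation. You compute $B'/\pi B'\cong(B/\phi^*(J_A)B)\langle T_1,\ldots,T_\ell\rangle$ (note: restricted power series, not polynomials, since $B\langle T_i\rangle/\pi=(B/\pi)\langle T_i\rangle$; a small slip but not fatal) and then read off that $B/\phi^*(J_A)B$ is discrete. But Lemma \ref{qctftlem} only controls $B''$, and $B''/\pi B''$ is a further quotient of $B'/\pi B'$ by the image of the $\pi$-torsion ideal; discreteness of the quotient does not give discreteness upstairs. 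Indeed, "$B'/\pi B'$ discrete" is literally equivalent to what you are trying to prove, so it cannot be extracted from what you actually know (namely $B''$ is of tf type). Deducing $J_B^M\subseteq\phi^*(J_A)B$ from "$J_B^M B''\subseteq\pi B''$" requires controlling how $\pi B''$ meets the image of $B$, which is exactly the subtlety the paper confronts.

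The paper's proof handles this by dilating in $V(\pi,\a^{n+1})$ for variable $n$ rather than $n=0$, and then invoking the Artin--Rees bound of Lemma \ref{dJlem}: for $n\geq n_0$ one gets a closed immersion $\Spf B/\a^{n-n_0}B\hookrightarrow\fY'_{(n)}$ into the (genuinely $R$-flat) dilatation, and openness of $\a B$ in $B$ is read off from that closed immersion together with $\fY'_{(n)}$ being of tf type. Your first-order dilatation, even after flatifying, does not produce this embedding, and there is no substitute for the Artin--Rees step in your argument.
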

\begin{proof}
We may assume that $\fX$ and $\fY$ are affine, $\fX=\Spf A$, $\fY=\Spf B$. Using $\phi$, we consider $B$ as an $A$-algebra. Let $ \a\subseteq A$ be an ideal of definition; we must show that $\a B$ is an ideal of definition of $B$. Let $n\in\N$ be a natural number, and let $\fX'$ and $\fY'$ denote the dilatations of $\fX$ and $\fY$ in the respective ideals generated by $\pi$ and $ \a^{n+1}$; then $(\fY')^\rig$ is the $\phi^\rig$-preimage of $(\fX')^\rig$. By assumption on $\phi$, $(\fY')^\rig$ is quasi-compact, and by Lemma \ref{qctftlem}, it follows that $\fY'$ is of tf type over $R$. By Lemma \ref{dJlem}, there exists an $n_0\in\N$ such that for all $n\geq n_0$, there exists a closed immersion of $\Spf B/\a^{n-n_0}B$ into $\fY'$. It follows that $\a B$ is an ideal of definition for $B$.
\end{proof}

\section{Uniformly rigid geometry}

We establish results in uniformly rigid geometry (cf.\ \cite{urigspaces}) to be used later.

\subsection{Higher direct images and uniformly rigid generic fibers}
We show that the formation of higher direct images of coherent modules under proper morphisms of formal $R$-schemes of locally ff type commutes with the uniformly rigid generic fiber functor. Here and in the following, $R$ is a complete discrete valuation ring; let $K$ denote its field of fractions, let $\m_R$ and $k$ denote its maximal ideal and its residue field, and let $\pi\in\m_R$ be a uniformizer. We introduce the following notation: 
\begin{defi}\label{cohmodgenfibdefi}
If $\sF$ is a coherent sheaf on a formal $R$-scheme $\fX$ of locally ff type, we set
\[
\sF_K\,:=\,\sF\otimes_RK\;,
\]
where we regard $R$ and $K$ as constant sheaves on $\fX$.
\end{defi}
\begin{remark}
In the situation of Definition \ref{cohmodgenfibdefi}, we have $\sF_K(\fU)=\sF(\fU)\otimes_RK$ for every quasi-compact open subset $\fU\subseteq\fX$, because $K$ is $R$-flat and because $\fX$ is locally noetherian. Moreover, it is clear from the definitions that $\sF_K$ is naturally identified with $(\sp_\fX)_*\sF^\urig$, where $\sF^\urig$ denotes the uniformly rigid generic fiber of $\sF$ and where $\sp_\fX:\fX^\urig\rightarrow\fX$ is the specialization map, cf.\ \cite{urigspaces} Section 3.
\end{remark}

\begin{lem}\label{cohkacycliclem}
Let $\fX$ be an affine formal $R$-scheme of ff type, and let $\sF$ be a coherent $\O_\fX$-module. Then 
\[
H^q(\fX,\sF_K)\,=\,\check{H}^q(\fX,\sF_K)\,=\,0
\]
for all $q\geq 1$.
\end{lem}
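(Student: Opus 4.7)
The strategy is to deduce the vanishing from the classical vanishing of coherent cohomology on affine noetherian formal schemes (cf.\ \cite{egaiii} 1.3.1 and 1.4.1) via the identification
\[
\sF_K\;=\;\sF\otimes_RK\;=\;\varinjlim_n\sF,
\]
where the transition maps are multiplication by $\pi$; this realises $\sF_K$ as a filtered colimit of coherent $\O_\fX$-modules in the category of sheaves of abelian groups on $\fX$.

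First I would verify that the underlying topological space $|\fX|$ is noetherian. Writing $\fX=\Spf A$ with $A$ a quotient of some $R[[S_1,\ldots,S_m]]\langle T_1,\ldots,T_n\rangle$, the largest ideal of definition $\a$ of $A$ contains $\pi$ and the $S_j$, so $A/\a$ is a finitely generated $k$-algebra and $|\fX|=|\Spec(A/\a)|$ is a noetherian space. In particular every open subset of $\fX$ is quasi-compact, so by the remark following Definition \ref{cohmodgenfibdefi} one has $\sF_K(\fU)=\sF(\fU)\otimes_RK=\varinjlim_n\sF(\fU)$ on every open $\fU\subseteq\fX$, validating the identification of $\sF_K$ as a filtered colimit in the category of sheaves (rather than merely of presheaves).

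For $H^q$ I would then invoke the fact that on a noetherian topological space sheaf cohomology commutes with filtered colimits of abelian sheaves, yielding
\[
H^q(\fX,\sF_K)\;=\;\varinjlim_n H^q(\fX,\sF)\;=\;0\qquad(q\geq 1),
\]
where the final vanishing is \cite{egaiii} 1.3.1 applied to the coherent sheaf $\sF$ on the affine noetherian formal scheme $\fX$. For Čech cohomology I would argue by cofinality: refine an arbitrary cover by a finite cover $\mathcal{U}$ of $\fX$ by basic affine opens. Since each finite intersection in $\mathcal{U}$ is quasi-compact, the Čech complex $\check{C}^\bullet(\mathcal{U},\sF_K)$ identifies with $\check{C}^\bullet(\mathcal{U},\sF)\otimes_RK$, and by flatness of $K$ over $R$ we obtain
\[
\check{H}^q(\mathcal{U},\sF_K)\;=\;\check{H}^q(\mathcal{U},\sF)\otimes_RK\;=\;0\qquad(q\geq 1)
\]
from \cite{egaiii} 1.4.1. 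Passing to the colimit over refinements concludes.

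The only substantive point to verify is the identification of $\sF_K$ as a colimit of sheaves on the nose, which is what the noetherianness of $|\fX|$ secures; once this is in place, both vanishing statements are purely formal consequences of the classical coherent results on affine noetherian formal schemes, together with flatness of $K$ over $R$.
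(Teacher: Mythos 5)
Your proof is correct, but it takes a genuinely different route from the paper's for the vanishing of $H^q(\fX,\sF_K)$. The paper argues entirely through \v{C}ech theory: it first establishes $\check{H}^q(\fU,\sF)=0$ for affine opens $\fU$ via the \v{C}ech-to-derived spectral sequence, tensors the \v{C}ech complex with $K$ to obtain $\check{H}^q(\fU,\sF_K)=0$ on all affine opens, and then invokes Cartan's theorem (\v{C}ech-acyclicity on a basis stable under finite intersection implies derived-functor acyclicity) to conclude $H^q(\fX,\sF_K)=0$. You instead observe that $|\fX|$ is a noetherian topological space, identify $\sF_K$ on the nose with the filtered colimit of the system $\sF\overset{\pi}{\to}\sF\overset{\pi}{\to}\cdots$ of abelian sheaves, and invoke Grothendieck's theorem that sheaf cohomology commutes with filtered colimits on noetherian spaces. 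This is shorter for the $H^q$ statement and decouples it from the \v{C}ech analysis, while the paper's route has the collateral benefit of exhibiting $\sF_K$ as \v{C}ech-acyclic on a basis of affine opens, which is precisely what Lemma \ref{genfibbclem} exploits next (the paper remarks there that one "may work with \v{C}ech cohomology instead of derived functor cohomology"). Your \v{C}ech argument is essentially the same as the paper's once the affine-to-general refinement is spelled out. One small caveat on references: \cite{egaiii} 1.3.1 and 1.4.1 are stated for (pre)schemes, not for formal schemes; the correct citation for $H^q(\fX,\sF)=0$ with $\fX$ an affine noetherian formal scheme and $\sF$ coherent is \cite{egaiii} 3.4.4, as the paper uses. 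The underlying facts you appeal to are all true in the formal setting, so this is a citation slip rather than a gap.
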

\begin{proof}
By \cite{egaiii} 3.4.4, we know that $H^q(\fU,\sF)=0$ for every affine open part $\fU$ of $\fX$ and for all $q\geq 1$. Thus for every affine open part $\fU$ of $\fX$ and every affine open covering of $\fU$, the associated \v{C}ech-to-derived functor spectral sequence (cf.\ \cite{godement} 5.9) degenerates, which shows that $\check{H}^q(\fU,\sF)=H^q(\fU,\sF)$ for all $q\geq 0$; it follows that $\check{H}^q(\fU,\sF)=0$ for all $q\geq 1$. Since $\cdot\otimes_RK$ is an exact functor which commutes with direct limits, we obtain that $\check{H}^q(\fU,\sF_K)=0$ for all affine opens $\fU$ in $\fX$ and all $q\geq 1$. By Cartan's theorem (cf.\ \cite{godement} 5.9.2), it follows that $H^q(\fU,\sF_K)=0$ for all affine opens $\fU$ in $\fX$ and all $q\geq 1$.
\end{proof}

\begin{lem}\label{genfibbclem}
Let $\phi\colon\fY\rightarrow\fX$ be a proper morphism of formal $R$-schemes of locally ff type, and let $\sF$ be a coherent $\O_\fY$-module. Then for any $q\geq 0$, the natural homomorphism of $\O_{\fX,K}$-modules
\[
(R^q\phi_*\sF)_K\rightarrow (R^q\phi_*)(\sF_K)
\]
is an isomorphism.
\end{lem}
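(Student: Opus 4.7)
\textit{Proof sketch.} The statement is local on $\fX$, so I would reduce to the case where $\fX=\Spf A$ is affine. Then $\fY$ is quasi-compact, and since $\phi$ is proper it is separated, so for every affine open $\fU\subseteq\fX$ the preimage $\phi^{-1}(\fU)$ is a quasi-compact separated formal scheme in which intersections of affines are again affine. Since affine opens form a basis of the topology of $\fX$ and both sides of the asserted isomorphism are sheaves, it suffices to check that the two sheaves agree (compatibly with the natural map) after evaluation on such a $\fU$.

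On the left-hand side, coherence of $R^q\phi_*\sF$ (by \cite{egaiii} 3.4.2), together with the identification $(\cdot)_K=(\cdot)\otimes_RK$ on quasi-compact opens and flat base change for the open immersion $\fU\hookrightarrow\fX$ (Theorem \ref{mainflatbasechangethm}), gives
\[
(R^q\phi_*\sF)_K(\fU)\,=\,H^q(\phi^{-1}(\fU),\sF)\otimes_RK\;.
\]
On the right-hand side, $R^q\phi_*(\sF_K)$ is the sheafification of $\fU\mapsto H^q(\phi^{-1}(\fU),\sF_K)$, and so on the basis of affine opens it is computed by these cohomology groups. It therefore remains to exhibit a natural isomorphism
\[
H^q(\phi^{-1}(\fU),\sF)\otimes_RK\;\xrightarrow{\ \sim\ }\;H^q(\phi^{-1}(\fU),\sF_K)\;.
\]

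To produce this, I would choose a finite affine covering $\{V_i\}$ of $\phi^{-1}(\fU)$; by the separatedness remark above, all intersections $V_{i_0}\cap\cdots\cap V_{i_q}$ are affine. By \cite{egaiii} 3.4.4 such affines are $\sF$-acyclic, and by Lemma \ref{cohkacycliclem} they are $\sF_K$-acyclic; hence Cartan's theorem identifies both $H^q(\phi^{-1}(\fU),\sF)$ and $H^q(\phi^{-1}(\fU),\sF_K)$ with the respective \v{C}ech cohomologies of the covering $\{V_i\}$. The \v{C}ech complex for $\sF_K$ is term-wise the \v{C}ech complex for $\sF$ tensored with $K$ over $R$, so exactness of $\cdot\otimes_RK$ lets cohomology pass through the tensor product and yields the desired isomorphism; a check against the construction of the natural map shows the two sides are identified by it. The main obstacle is really bookkeeping—pinning down the two sheaves on affine opens and verifying that the Čech comparison produces the natural map of the statement—once Lemma \ref{cohkacycliclem} and the separatedness of $\phi$ are in hand, the rest is formal.
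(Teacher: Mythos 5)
Your proof is correct and takes essentially the same route as the paper: reduce to affine $\fX$, compute both sides by Čech cohomology with respect to a finite affine cover of $\fY$ (using EGA~III 3.4.4 and Lemma~\ref{cohkacycliclem} for acyclicity), and conclude from exactness of $\cdot\otimes_RK$. The paper compresses the reduction to the global-sections statement by invoking EGA~III 3.4.6 directly, whereas you carry out the sheaf-level bookkeeping explicitly; the one spot you should tighten is the assertion that $R^q\phi_*(\sF_K)$ agrees with the presheaf $\fU\mapsto H^q(\phi^{-1}(\fU),\sF_K)$ on affine opens — that identification is not automatic for a sheafification, but it does follow once you have established the comparison isomorphism $H^q(\phi^{-1}(\fU),\sF)\otimes_RK\cong H^q(\phi^{-1}(\fU),\sF_K)$ on affines (since the presheaf then coincides on a basis with the sheaf $(R^q\phi_*\sF)_K$, which forces the sheafification to agree with it there as well). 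So the order of exposition should be: prove the cohomology comparison on affines first, then deduce the sheaf identification, rather than asserting the latter up front.
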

\begin{proof}
By \cite{egaiii} 3.4.6, it suffices to show that in the case where $\fX$ is affine, the natural homomorphism
\[
H^q(\fY,\sF)\otimes_RK\rightarrow H^q(\fY,\sF_K)
\]
is an isomorphism. By Lemma \ref{cohkacycliclem}, we may work with \v{C}ech cohomology instead of derived functor cohomology, and hence the statement follows from the fact that the functor $\cdot\otimes_RK$ is exact and that it commutes with direct limits, together with the fact that $\fY$ is quasi-compact when $\fX$ is affine.
\end{proof}

Let us recall that a formal blowup in the sense of \cite{temkin_desing} 2.1 is called admissible if it can be defined by a $\pi$-adically open ideal. Following the terminology in \cite{urigspaces}, a formal blowup of a locally noetherian formal scheme will, this paper, simply be called a blowup of that formal scheme. The following proposition shows that in the situation of Lemma \ref{genfibbclem}, the $\O_{\fX,K}$-modules $R^q\phi_*\sF_K$ are unaffected by admissible blowups of $\fX$. Let us point out that the notation $R^q\phi_*\sF_K$ is unambiguous by Lemma \ref{genfibbclem}. In the following, we will use without further comments the fact (cf.\ \cite{egaiii} 3.4.2) that higher direct images of coherent sheaves under proper morphisms of locally noetherian formal schemes are coherent.

\begin{prop}\label{blowupdirimcompprop}
Let $\phi\colon\fY\rightarrow\fX$ be a proper morphism of formal $R$-schemes of locally ff type, and let $\psi\colon\fX'\rightarrow\fX$ be an admissible blowup. Let
\[
\begin{diagram}
\fY'&\rTo^{\phi'}&\fX'\\
\dTo<{\psi'}&&\dTo>{\psi}\\
\fY&\rTo^{\phi}&\fX
\end{diagram}
\]
be the induced cartesian diagram in the category of flat formal $R$-schemes of locally ff type. Then for every coherent $\O_\fY$-module $\sF$ and every $q\geq 0$, the natural comparison morphism $\psi^*R^q\phi_*\sF\rightarrow R^q\phi'_*(\psi')^*\sF$
induces an isomorphism
\[
\psi^*R^q\phi_*\sF_K\rightarrow R^q\phi'_*(\psi')^*\sF_K\;.
\]
\end{prop}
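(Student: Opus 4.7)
The claim is local on $\fX$, so I reduce to $\fX=\Spf A$ affine. By \cite{egaiii} 3.4.6, $R^q\phi_*\sF$ then corresponds to the finite $A$-module $H^q(\fY,\sF)$. The statement is also local on $\fX'$, so I further restrict to a standard affine chart $\fU=\Spf A''$ of the admissible blowup, where $A''=A\langle\sI/f\rangle$ modulo $\pi$-torsion for some local generator $f$ of the invertible ideal $\sI\O_{\fU}$. Setting $\fY_{\fU}:=\fY\times_\fX\fU$, which is proper over $\fU$ as the base change of the proper morphism $\phi$, the comparison morphism restricts on $\fU$ to the natural base change map
\[
H^q(\fY,\sF)\otimes_A A''\;\longrightarrow\;H^q\bigl(\fY_{\fU},\,(\psi')^*\sF|_{\fY_{\fU}}\bigr).
\]
Tensoring with $K$ over $R$ and applying Lemma \ref{genfibbclem} to the proper morphism $\phi'|_{\fY_{\fU}}$, the right-hand side becomes $H^q(\fY_{\fU},((\psi')^*\sF)_K|_{\fY_{\fU}})$. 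It therefore suffices to prove that the induced map of $A''_K$-modules
\[
H^q(\fY,\sF)\otimes_A A''_K\;\longrightarrow\;H^q\bigl(\fY_{\fU},((\psi')^*\sF)_K|_{\fY_{\fU}}\bigr)
\]
is an isomorphism, where $A''_K:=A''\otimes_R K$.

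I would then compute both sides by \v{C}ech cohomology with respect to a finite affine open cover $\{\fV_j=\Spf B_j\}$ of $\fY$ and its base change $\{\fV_j\times_\fX\fU\}$ to $\fY_{\fU}$; all intersections are affine since $\fY$ is separated over $\fX$. Lemma \ref{cohkacycliclem} ensures that on each affine open the higher cohomology of $\sF_K$ (respectively of $((\psi')^*\sF)_K$) vanishes, so both covers are acyclic and \v{C}ech cohomology agrees with derived-functor cohomology. Unwinding the definitions, the \v{C}ech complex for $((\psi')^*\sF)_K$ on $\fY_{\fU}$ is obtained from the \v{C}ech complex for $\sF_K$ on $\fY$ by tensoring term by term with $A''_K$ over $A_K$.

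The crucial point is thus the flatness of the ring homomorphism $A_K\to A''_K$. This follows from the explicit presentation $A''=A\langle\sI/f\rangle$: upon inverting $\pi$, the defining relations $fT_i=f_i$ force $T_i=f_i/f$ with $f$ invertible on the chart, so $A''_K$ is a rational localization of $A_K$. Equivalently, $\fU$ corresponds to a rational subdomain of the uniformly rigid generic fiber $\fX^{\urig}$, and the inclusion of associated semi-affinoid algebras is flat. Given this flatness, tensoring the \v{C}ech complex for $\sF_K$ with $A''_K$ over $A_K$ commutes with taking cohomology, and the resulting isomorphism on $H^q$ is precisely the base-change map in question. The main technical obstacle is to justify this flatness rigorously and to reconcile complete versus ordinary tensor products when identifying the \v{C}ech complex of the base change with the termwise tensor product in the noetherian formal setting.
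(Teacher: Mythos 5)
Your reduction to an affine chart $\fU=\Spf A''$ is legitimate (the statement is local on $\fX'$), and the observation that $A_K\to A''_K$ is flat is correct — inclusions of semi-affinoid subdomains are flat (\cite{urigspaces} Rem.\ 2.49). The problem is the step you yourself flag at the end as a "technical obstacle": identifying the \v{C}ech complex for $((\psi')^*\sF)_K$ on $\fY_\fU$ with the termwise tensor of the \v{C}ech complex for $\sF_K$ by $A''_K$ over $A_K$. This is not a detail to be reconciled; it is false. Writing $\fV_J=\Spf B_J$ and $M_J=\Gamma(\fV_J,\sF)$, the \v{C}ech terms for the base-changed sheaf are $M_J\otimes_{B_J}(B_J\hat\otimes_A A'')\otimes_RK$, while the termwise tensor gives $M_J\otimes_{B_J}(B_J\otimes_A A'')\otimes_RK$. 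The completed tensor product strictly enlarges the ordinary one, and inverting $\pi$ does not repair this (take e.g.\ $B_J=R[[S]]$, $A=R$, $A''=R\langle T\rangle$: then $R[[S]]\langle T\rangle\otimes_RK$ is strictly larger than $R[[S]]\otimes_RR\langle T\rangle\otimes_RK$). So the natural map of complexes you want to use is injective with nonzero cokernel in each degree, and there is no argument offered for why it should nonetheless be a quasi-isomorphism. This is exactly the phenomenon the paper warns against in the prelude to Section \ref{higherimbcsec} ("the appearance of complete tensor products in \v{C}ech complexes") as the reason the algebraic-geometry proof of flat base change does not transport to the formal setting.

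The paper's proof avoids \v{C}ech computations entirely. It stays global, reduces via the ff type version of \cite{luetkebohmertformalrigid} 2.1 to showing $R^q\phi_*\sF_K\to\psi_*R^q\phi'_*(\psi')^*\sF_K$ is an isomorphism, and then uses the two Leray spectral sequences for $\psi\circ\phi'=\phi\circ\psi'$. The decisive input, replacing your flatness-of-$A_K\to A''_K$ argument, is the vanishing of $R^q\psi_*$ and $R^q\psi'_*$ for $q\geq1$ on sheaves of the form $\sG_K$ with $\sG$ coherent — this is the formal-geometric form of Kiehl's/L\"utkebohmert's theorem on acyclicity of admissible blowups on generic fibers. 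That nontrivial cohomological vanishing is precisely the information your \v{C}ech approach would need to extract from the flatness statement, and it is not visible at the level of individual terms of the complex; it only emerges after the spectral-sequence collapse. To salvage your approach you would essentially need to re-prove this vanishing, at which point you would be re-deriving the paper's argument in disguise.
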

\begin{proof}
By \cite{urigspaces} 2.24, $\psi'$ is an admissible blowup, and $\phi'$ is the strict transform of $\phi$ under $\psi$. By the ff type version of \cite{luetkebohmertformalrigid} 2.1, it suffices to show that the induced homomorphism
\[
R^q\phi_*\sF_K\rightarrow \psi_*R^q\phi'_*(\psi')^*\sF_K
\]
is an isomorphism. By Lemma \ref{genfibbclem} and the ff type version of \cite{luetkebohmertformalrigid} 2.1, the functors $R^q\psi_*$ and $R^q\psi'_*$ vanish, for $q\geq 1$, on sheaves obtained from coherent sheaves via $\cdot\otimes_RK$. Hence, the spectral sequences
\begin{eqnarray*}
R^p\psi_*R^q\phi'_*&\Rightarrow&R^{p+q}(\psi_*\phi'_*)\\
R^p\phi_*R^q\psi'_*&\Rightarrow&R^{p+q}(\phi_*\psi'_*)
\end{eqnarray*}
yield, for all $q\geq 0$, edge morphisms
\begin{eqnarray*}
R^q(\psi_*\phi'_*)&\rightarrow&\psi_*R^q\phi'_*\\
(R^q\phi_*)\psi'_*&\rightarrow&R^q(\phi_*\psi'_*)
\end{eqnarray*}
which are isomorphisms when evaluated in a sheaf that is obtained from a coherent sheaf via $\cdot\otimes_RK$. Since $\psi_*\phi'_*=\phi_*\psi'_*$, we obtain natural identifications
\begin{eqnarray*}
\psi_*R^q\phi'_*(\psi')^*\sF_K&\cong&R^q\psi_*\phi'_*(\psi')^*\sF_K\\
&\cong&R^q(\phi_*\psi'_*)(\psi')^*\sF_K\\
&\cong&R^q(\phi_*)\psi'_*(\psi')^*\sF_K\\
&\cong&R^q\phi_*\sF_K\;,
\end{eqnarray*}
where the last equality is again the ff type version of \cite{luetkebohmertformalrigid} 2.1. One checks that the resulting isomorphism agrees with the homomorphism that is induced by the comparison homomorphism.
\end{proof}

\begin{cor}\label{highdirimsamaffpbcor}
The statement of Proposition \ref{blowupdirimcompprop} still holds when $\psi$ is a composition of admissible blowups, open immersions and completion morphisms.
\end{cor}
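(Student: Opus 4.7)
The plan is to reduce the corollary to three elementary cases---admissible blowups, open immersions and completion morphisms---and then handle a general composition by decomposing the ambient cartesian square into successive cartesian pieces. The blowup case is exactly Proposition \ref{blowupdirimcompprop}, so the real work is to dispose of the two remaining types and to verify that isomorphisms of comparison morphisms pass through compositions.

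First I would treat the case where $\psi:\fX'\hookrightarrow\fX$ is an open immersion. In the induced cartesian diagram the morphism $\psi':\fY':=\phi^{-1}(\fX')\hookrightarrow\fY$ is again an open immersion and $\phi'$ is the restriction of $\phi$. Since higher direct images are of a local nature on the base (they are the sheaves associated to the presheaves $\fU\mapsto H^q(\phi^{-1}(\fU),\sF)$), the natural comparison morphism $\psi^*R^q\phi_*\sF\to R^q\phi'_*(\psi')^*\sF$ is already an isomorphism of coherent $\O_{\fX'}$-modules before tensoring with $K$; applying $\cdot\otimes_RK$ then yields the assertion.

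Next I would dispose of the case of a completion morphism. By \cite{ajr1} 4.4 completion morphisms are \'etale, hence flat; Theorem \ref{mainflatbasechangethm} then directly yields that $\psi^*R^q\phi_*\sF\to R^q\phi'_*(\psi')^*\sF$ is an isomorphism of coherent $\O_{\fX'}$-modules. Since $K$ may be regarded as a constant sheaf and since $\fX'$ is locally noetherian, the functor $\cdot\otimes_RK$ is exact and commutes with pullback along $\psi$; applying it gives the desired isomorphism on $\sF_K$. For the general case, one writes $\psi=\psi_n\circ\cdots\circ\psi_1$ with each $\psi_i$ of one of the three elementary types, decomposes the ambient cartesian square into $n$ successive cartesian squares with intermediate bases $\fX_0=\fX,\fX_1,\ldots,\fX_n=\fX'$ and total spaces $\fY_i:=\fY\times_\fX\fX_i$, and observes that the comparison morphism for $\psi$ factors as the composition of the comparison morphisms for the $\psi_i$ evaluated on the pullback of $\sF_K$ to the preceding stage. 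Each factor is an isomorphism by Proposition \ref{blowupdirimcompprop} or by one of the two cases settled above, hence so is the composition.

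The main obstacle I anticipate is purely bookkeeping: one must check that every intermediate cartesian pullback stays within flat formal $R$-schemes of locally ff type (in particular, that $R$-flatness and the property of being locally noetherian propagate through the successive pullbacks, which is guaranteed here because $\phi$ is of locally ff type, cf.\ the discussion preceding Lemma \ref{flatnessbasechangeprop}), that properness of $\phi_i$ is preserved by base change so that Theorem \ref{mainflatbasechangethm} and Proposition \ref{blowupdirimcompprop} are applicable at every stage, and that the comparison morphism for a composition factors compatibly as the composition of the comparison morphisms for its factors. These are all routine functoriality checks and pose no genuine difficulty.
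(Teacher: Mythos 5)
Your argument is correct and is essentially the same as the paper's: reduce to the three elementary cases, observe that the open immersion case is immediate by locality of higher direct images, handle completion morphisms via flat base change (Theorem \ref{mainflatbasechangethm}), and invoke Proposition \ref{blowupdirimcompprop} for admissible blowups, then compose. The paper's proof is just a terser version of the same reduction.
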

\begin{proof}
We may assume that $\psi$ is an open immersion, a completion morphism or an admissible blowup. If $\psi$ is an open immersion, there is nothing to show, and if $\psi$ is a completion morphism, the statement follows from Theorem \ref{mainflatbasechangethm}, completion morphisms of locally noetherian formal schemes being flat. In the remaining case where $\psi$ is an admissible blowup, the desired statement is provided by Proposition \ref{blowupdirimcompprop}.
\end{proof}

We conclude that the formation of higher direct images commutes with passage to uniformly rigid generic fibers:

\begin{cor}\label{propdirimgenfibercor}
Let $\ul{\phi}\colon\fY\rightarrow\fX$ be a proper morphism of formal $R$-schemes of locally ff type, let $\phi\colon Y\rightarrow X$ denote its uniformly rigid generic fiber, and let $\sF$ be a coherent $\O_\fY$-module. Then $R^q\phi_*(\sF^\srig)$ is a coherent $\O_X$-module for each $q\geq 0$, and the morphism 
\[
\sp_\fY^\sharp\colon\O_\fY\rightarrow\sp_{\fY,*}\O_Y
\]
(cf.\ \cite{urigspaces} Section 2.4) induces natural isomorphisms
\[
(R^q\ul{\phi}_*\sF)^\srig\overset{\sim}{\rightarrow}R^q\phi_*(\sF^\srig)
\]
for $q\geq 0$.
\end{cor}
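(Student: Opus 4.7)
The plan is to check the isomorphism locally on $X$ by evaluating both sides on a cofinal family of admissible opens coming from formal models of $\fX$, so that the formal flat base change machinery of Corollary \ref{highdirimsamaffpbcor} can be brought to bear. Coherence of the right-hand side will then follow for free from coherence of the left, since the uniformly rigid generic fiber of a coherent $\O_\fX$-module is coherent on $X$ by construction (and $R^q\ul{\phi}_*\sF$ is coherent by \cite{egaiii} 3.4.2).

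First I would reduce to the case $\fX = \Spf A$ affine of ff type; the statement is local on $\fX$ and both sides behave well under open restriction on the target. Then $X = \sSp(A \otimes_R K)$, and the uniform G-topology on $X$ has a basis of admissible opens of the form $\fU^{\urig}$, where $\psi\colon \fU \to \fX$ is a composition of admissible blowups, open immersions and completion morphisms (this is essentially the definition of the uniformly rigid structure, cf.\ \cite{urigspaces} Sections 2.3--2.4). For any such $\psi$, form the cartesian square
\[
\begin{diagram}
\fY_\fU & \rTo^{\ul{\phi}_\fU} & \fU \\
\dTo<{\psi'} & & \dTo>\psi \\
\fY & \rTo^{\ul{\phi}} & \fX
\end{diagram}
\]
The morphism $\ul{\phi}_\fU$ is again proper, and Corollary \ref{highdirimsamaffpbcor} gives a canonical isomorphism $\psi^* R^q\ul{\phi}_*\sF\otimes_R K \cong R^q\ul{\phi}_{\fU,*}((\psi')^*\sF)\otimes_R K$.

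Second, I would identify the global sections of both sheaves over $\fU^{\urig}$. On the left, by definition of the uniformly rigid generic fiber of a coherent $\O_\fX$-module (cf.\ \cite{urigspaces} Section 3), $(R^q\ul{\phi}_*\sF)^{\srig}(\fU^{\urig}) = (\psi^* R^q\ul{\phi}_*\sF)(\fU) \otimes_R K$; on the right, $R^q\phi_*(\sF^{\srig})(\fU^{\urig}) = H^q(\fY_\fU^{\urig}, \sF^{\srig}|_{\fY_\fU^{\urig}})$. Via the base change isomorphism above and Lemma \ref{genfibbclem} applied to $\ul{\phi}_\fU$, the left side is identified with $H^q(\fY_\fU, (\psi')^*\sF) \otimes_R K$. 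So the whole claim reduces to a uniformly rigid GAGA-type comparison
\[
H^q(\fY_\fU, (\psi')^*\sF) \otimes_R K \;\overset{\sim}{\longrightarrow}\; H^q(\fY_\fU^{\urig}, \sF^{\srig}|_{\fY_\fU^{\urig}})\,,
\]
natural in the evident sense. Since $\ul{\phi}_\fU$ is proper and $\fU$ is quasi-compact, $\fY_\fU$ admits a finite affine open covering; the corresponding uniform covering of $\fY_\fU^{\urig}$ by semi-affinoid opens is admissible. I would compute both sides via \v Cech cohomology for these coverings and observe that the uniformly rigid \v Cech complex is obtained from the formal \v Cech complex by applying $\cdot\otimes_R K$. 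The Čech-to-derived spectral sequence degenerates on the formal side by Lemma \ref{cohkacycliclem} and on the uniformly rigid side by the uniformly rigid analogue of Tate's acyclicity theorem for coherent sheaves on semi-affinoid spaces (\cite{urigspaces}), which yields the isomorphism.

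Finally, one checks that the map so produced on sections over each $\fU^{\urig}$ is functorial in $\fU$ and agrees with the canonical morphism $(R^q\ul{\phi}_*\sF)^{\srig} \to R^q\phi_*(\sF^{\srig})$ induced by $\sp_\fY^\sharp$, so that the two sheaves coincide. Coherence of $R^q\phi_*(\sF^{\srig})$ is then inherited from the coherent $\O_\fX$-module $R^q\ul{\phi}_*\sF$ via the uniformly rigid generic fiber functor on coherent sheaves. \textbf{The main obstacle} I expect is precisely the GAGA-style identification of formal and uniformly rigid Čech cohomology above: one must match affine open covers of formal schemes with admissible semi-affinoid covers of their uniformly rigid generic fibers, and invoke acyclicity of the latter for coherent sheaves. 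Everything else is a formal consequence of the flat base change and blowup-invariance results already established in Section \ref{higherimbcsec} and in Proposition \ref{blowupdirimcompprop}.
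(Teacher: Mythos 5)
Your proposal is correct and follows essentially the same route as the paper: reduce to $\fX$ affine, use Corollary~\ref{highdirimsamaffpbcor} to handle semi-affinoid subdomains in the target, reduce to the single natural isomorphism $H^q(\fY,\sF)\otimes_RK\cong H^q(Y,\sF^\srig)$, and compare finite affine \v{C}ech complexes on the two sides using that the formal covering is Leray (\cite{egaiii}~3.4.4) and that its uniformly rigid generic fiber is Leray (\cite{urigspaces}~Cor.~2.43), then apply the exact functor $\cdot\otimes_RK$. The only minor imprecision is that you cite Lemma~\ref{cohkacycliclem} for the formal-side Leray property, whereas the relevant fact is really the vanishing $H^q(\fU,\sF)=0$ from \cite{egaiii}~3.4.4 (Lemma~\ref{cohkacycliclem} is the $K$-tensored consequence); this does not affect the argument.
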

\begin{proof}
Let $q\geq 0$ be fixed. By \cite{tohoku} 3.7.2, whose proof also works for general Grothendieck topologies, $R^q\phi_*(\sF^\srig)$ is associated to presheaf 
\[
V\mapsto H^q(\phi^{-1}(V),\sF^\srig)\;,
\]
where $V$ varies among the admissible subsets of $X$. Let $(\fX_i)_{i\in I}$ be an affine open covering of $\fX$; then the semi-affinoid subdomains in the semi-affinoid subspaces $X_i\mathrel{\mathop:}=\fX_i^\srig$ of $X$ form a basis of the uniformly rigid G-topology of $X$. Let $U$ be a semi-affinoid subdomain of $X_i$ for some $i\in I$, and let $\fU\rightarrow\fX_i$ be a morphism which represents $U$ as a semi-affinoid subdomain in $X$ in the sense of \cite{urigspaces} Definition 2.22; by Corollary \ref{highdirimsamaffpbcor}, we have a natural identification
\[
(R^q\ul{\phi}_*\sF)^\srig|_U\,\cong\,R^q\ul{\phi}_{\fU}^*(\sF|_{\fY_\fU})^\srig\;,
\]
where $\ul{\phi}_{\fU}$ and $\sF|_{\fY_\fU}$ are defined via pullback to $\fU$ and $\fY_\fU\mathrel{\mathop:}=\fY\times'_\fX\fU$ respectively and where $\times'$ denotes fibered product in the category of flat formal $R$-schemes of locally ff type, cf.\ \cite{urigspaces} Lemma 2.24 and the subsequent remark. It thus suffices to observe, in the case where $\fX$ is affine, that there is a natural isomorphism
\[
H^q(\fY,\sF)\otimes_RK\overset{\sim}{\rightarrow}H^q(Y,\sF^\srig)\;.
\]
We may use the \v{C}ech cohomology groups attached to an affine covering $(\fY_i)_{i\in I}$ of $\fY$ to calculate the derived functor cohomology groups. Indeed, $(\fY_i)_{i\in I}$ is a Leray covering of $\fY$ for $\sF$ by \cite{egaiii} 3.4.4, and $(\fY_i^\srig)_{i\in I}$ is a Leray covering for $Y$ by \cite{urigspaces} Corollary 2.43. Since $\cdot\otimes_RK$ is exact, we thus obtain the desired natural isomorphisms.
\end{proof}

\subsection{Schematic images and schematic closures}

If $\fX$ is a formal $R$-scheme of locally ff type and if $Z\subseteq X$ is a closed uniformly rigid subspace (cf.\ \cite{urigspaces} Def.\ 3.9 and Prop.\ 3.11), then according to the discussion in \cite{urigspaces} Section 3.1 there exists a unique closed $R$-flat formal subscheme $\fZ$ of $\fX$ such that $\urig$ identifies $\fZ^\urig$ with $Z$; the closed formal subscheme $\fZ\subseteq\fX$ is called the schematic closure of $Z\subseteq X$ in $\fX$.

\begin{lem}\label{schemcldirimlem}
Let $\ul{\phi}\colon\fY\rightarrow\fX$ be a proper morphism of formal $R$-schemes of locally ff type whose uniformly rigid generic fiber $\phi\colon Y\hookrightarrow X$ is a closed immersion. Then the schematic closure of $Y$ in $\fX$ coincides with the schematic image of $\ul{\phi}$. Moreover, the natural morphism from $\fY$ to the schematic image of $\ul{\phi}$ is surjective on physical points.
\end{lem}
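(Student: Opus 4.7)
The plan is to write $\fV$ for the schematic image of $\ul{\phi}$ in $\fX$ and $\fZ$ for the schematic closure of $Y$ in $\fX$, prove $\fV=\fZ$ by a two-sided inclusion argument, and then deduce the surjectivity statement from properness together with coherence of $\ul{\phi}_*\O_\fY$. Since $\ul{\phi}$ is proper, \cite{egaiii} 3.4.2 makes $\ul{\phi}_*\O_\fY$ coherent, so $\sI_\fV:=\ker(\O_\fX\to\ul{\phi}_*\O_\fY)$ is a coherent ideal cutting out $\fV$, and the formation of $\fV$ is local on $\fX$. Throughout I would take $\fY$ to be $R$-flat (otherwise $\Spf R/\pi^2\hookrightarrow\Spf R$ supplies a counterexample); by the remark following the existence of schematic images, $\fV$ is then $R$-flat as well.

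The first main step is to show that $\ul{\phi}$ factors through $\fZ$. This is local on $\fX$, so I would reduce to the case $\fX=\Spf A$, $\fZ=\Spf A/I_Z$, and cover $\fY$ by affine open parts $\Spf B$ with $B$ flat over $R$. For $f\in I_Z$, the pullback $\ul{\phi}^\sharp(f)\in B$ has image zero in $B\otimes_RK$, because this image is the uniformly rigid pullback of $f$ and $\phi$ factors through $\fZ^\urig=Y$, on which $f$ vanishes. Flatness of $B$ over $R$ gives $B\hookrightarrow B\otimes_RK$, so $\ul{\phi}^\sharp(f)=0$ already in $B$. Minimality of $\fV$ then yields $\fV\subseteq\fZ$. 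Passing to uniformly rigid generic fibers, the factorization $\ul{\phi}=(\fV\hookrightarrow\fX)\circ(\fY\to\fV)$ gives $Y\subseteq\fV^\urig$, while $\fV\subseteq\fZ$ yields $\fV^\urig\subseteq\fZ^\urig=Y$; hence $\fV^\urig=Y$. Since $\fV$ is then $R$-flat with uniformly rigid generic fiber $Y$, the uniqueness statement recalled in \cite{urigspaces} Section 3.1 forces $\fV=\fZ$.

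For the surjectivity of $\fY\to\fV$ on physical points, I would use that properness of $\ul{\phi}$ makes $\ul{\phi}(|\fY|)$ closed in $|\fX|$ and hence in $|\fV|$. If some $v\in|\fV|$ lay outside this image, one could pick an affine open $\fU\subseteq\fX$ containing $v$ with $\fU\cap\ul{\phi}(|\fY|)=\emptyset$; then $\ul{\phi}^{-1}(\fU)=\emptyset$, so $(\ul{\phi}_*\O_\fY)(\fU)=0$, whence $\sI_\fV|_\fU=\O_\fU$ and $|\fV\cap\fU|=\emptyset$, contradicting $v\in\fV\cap\fU$. The only delicate point I expect is the $R$-flatness reduction in the factorization step; once $\fV$ is known to be $R$-flat, everything else is formal.
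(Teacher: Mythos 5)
Your proof is correct, but it follows a genuinely different and more elementary route than the paper's, and it also flags a point the paper leaves implicit. For the equality of $\fV$ and $\fZ$, the paper argues sheaf-theoretically: it applies the exact functor $\srig$ to the defining sequence $0\to\sI\to\O_\fX\to\ul{\phi}_*\O_\fY$ and invokes Corollary~\ref{propdirimgenfibercor} (which rests on the flat base change machinery of Section~\ref{higherimbcsec}) to identify $(\ul{\phi}_*\O_\fY)^\srig$ with $\phi_*\O_Y$, thereby reading off $\fV^\urig=Y$ directly. You instead do an element-wise two-sided inclusion: using $R$-flatness of the affine pieces $\Spf B\subseteq\fY$ to push the vanishing of $\ul{\phi}^\sharp(f)$ in $B\otimes_RK$ down to $B$, you show $\ul{\phi}$ factors through $\fZ$, so $\fV\subseteq\fZ$; the reverse inclusion on generic fibers forces $\fV^\urig=Y$, and uniqueness of $R$-flat models of closed subspaces gives $\fV=\fZ$. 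This avoids the flat base change theorem entirely, at the cost of not producing the sheaf-level identification $\sI^\srig=\ker(\O_X\to\phi_*\O_Y)$ that the paper's argument yields for free. Your surjectivity argument is also simpler and cleaner than the paper's: you use that the proper map $\ul{\phi}$ has closed image and that $\sI_\fV$ is coherent to derive a contradiction on an affine open missing the image, whereas the paper goes via surjectivity of $\sp_\fV$ onto closed points, local constructibility of the image (\cite{egaiv}~1.8.5), and very-density (\cite{egain}~0.2.6.2(b)). Finally, you correctly observe that the lemma implicitly needs $\fY$ to be $R$-flat (the paper's proof opens with that assumption without it appearing in the statement, though it is in force in the section where the lemma is applied), and your $\Spf R/\pi^2\hookrightarrow\Spf R$ example shows the hypothesis is not redundant.
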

\begin{proof}
Since $\fY$ is $R$-flat, the schematic image $\fV$ of $\ul{\phi}$ is $R$-flat as well. To show that $\fV$ is the schematic closure of $Y$ in $\fX$, it remains to see that the unique factorization $\ul{\phi}'\colon\fY\rightarrow\fV$ of $\ul{\phi}$ induces an isomorphism of uniformly rigid generic fibers. Let $\sI\subseteq\O_\fX$ be the coherent ideal defining $\fV$ as a closed formal subscheme of $\fX$. The exact sequence of coherent $\O_\fX$-modules
\[
0\rightarrow\sI\rightarrow\O_\fX\rightarrow\ul{\phi}_*\O_\fY
\]
induces an exact sequence of coherent $\O_X$-modules
\[
0\rightarrow\sI^\srig\rightarrow\O_X\rightarrow(\ul{\phi}_*\O_\fY)^\srig\;,
\]
the functor $\srig$ being exact. By Corollary \ref{propdirimgenfibercor}, it follows that
\[
0\rightarrow\sI^\srig\rightarrow\O_X\rightarrow \phi_*\O_Y\rightarrow 0
\]
is exact. Hence, $Y$ and the uniformly rigid generic fiber of $\fV$ coincide as closed uniformly rigid subspaces of $X$, as stated.

It remains to see that $\ul{\phi}'$ is surjective. Since $\fV$ is $R$-flat, the specalization map $\sp_{\fV}$ is surjective onto the set of closed points of $\fV$;  since $\ul{\phi}'$ is an isomorphism on uniformly rigid generic fibers, it follows that the image of $\ul{\phi}'$ contains every closed point of $\fV$. Since $\ul{\phi}'$ is of locally ff type, the image of $\ul{\phi}'$ is locally constructible by \cite{egaiv} 1.8.5. Hence, its complement its locally constructible as well.
Since the closed points of $\fV$ lie very dense, we conclude from \cite{egain} 0.2.6.2 (b) that $\ul{\phi}'$ is surjective, as desired.
\end{proof}

\subsection{Properness of graphs}\label{graphpropersec}
Let $\fX$ and $\fY$ be flat formal $R$-schemes of locally ff type with uniformly rigid generic fibers $X$ and $Y$. Let us assume that $\fX$ is separated; then $X$ is separated as well. Let $\phi\colon Y\rightarrow X$ be a uniformly rigid morphism, and let $\ul{\Gamma}_\phi$ denote the schematic closure of the graph of $\phi$ in $\fY\times\fX$, where now and in the following fibered products without indication of the base should be understood over $R$ or $K$, depending on the context. In this section, we prove that the projection $p_\fY|_{\ul{\Gamma}_\phi}$ from $\ul{\Gamma}_\phi$ to $\fY$ is proper.

\begin{lem}\label{graphclosureadiclem}
The projection $p_\fY|_{\ul{\Gamma}_\phi}$ is adic.
\end{lem}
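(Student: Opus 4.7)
The plan is to reduce adicness of $p_\fY|_{\ul{\Gamma}_\phi}$ to a concrete local divisibility statement and then to extract it from topological nilpotence of the $\phi$-pullbacks of the open-disc coordinates of $\fX$. Since adicness can be checked locally on both source and target, I would first pass to the affine situation $\fY = \Spf B$ and $\fX = \Spf A$ with $A = R[[S_1,\ldots,S_m]]\langle T_1,\ldots,T_n\rangle/I$. An ideal of definition of the product $\fY\times\fX$ is obtained by adjoining $(S_1,\ldots,S_m)$ to an ideal of definition of $B$, so adicness of $p_\fY|_{\ul{\Gamma}_\phi}$ is equivalent to the assertion that each $S_i$ has a power lying in the pullback of an ideal of definition of $B$ modulo the coherent ideal $\ul{J}$ that cuts out $\ul{\Gamma}_\phi$.

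To obtain such relations, I would use $R$-flatness of $\ul{\Gamma}_\phi$ together with the fact that $\ul{\Gamma}_\phi^\urig = \Gamma_\phi \cong Y$ to identify $\O(\ul{\Gamma}_\phi)$ with an $R$-flat subring of the semi-affinoid algebra $B_K = B\otimes_R K$ that contains the image of $B$ and in which the class of $S_i$ corresponds to the pullback $\phi^*(S_i) \in B_K$. Since $S_i$ lies in the radical of the ideal of definition of $A$ and $\phi$ is a morphism of semi-affinoid spaces, $\phi^*(S_i)$ is topologically nilpotent in $B_K$. Concretely, after an admissible formal blowup $\fY' = \Spf B' \to \fY$ along which $\phi$ extends to a formal morphism $\tilde\phi\colon\fY'\to\fX$, some power $\tilde\phi^*(S_i)^{N_i}$ lies in the ideal of definition of $B'$, which by adicness of the blowup equals the pullback of the ideal of definition of $B$ to $B'$. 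Combining this with Lemma~\ref{schemcldirimlem}, which identifies $\ul{\Gamma}_\phi$ with the scheme-theoretic image of the composition $\fY' \cong \Gamma_{\tilde\phi} \hookrightarrow \fY'\times\fX \to \fY\times\fX$, I would then descend the divisibility back to $\O(\ul{\Gamma}_\phi)$ using the resulting surjection $\fY'\to\ul{\Gamma}_\phi$ and the common inclusions into $B_K$ provided by $R$-flatness.

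The main obstacle I anticipate is converting the topological nilpotence of $\phi^*(S_i)$ in the semi-affinoid algebra $B_K$ into an effective divisibility on a specific formal model: uniformly rigid morphisms between semi-affinoid spaces generally only lift to formal morphisms after an admissible blowup of the source, so the $\pi$-divisibility must first be established on such a blowup and then transferred to $\ul{\Gamma}_\phi$. This descent is handled via the results of \cite{urigspaces} on semi-affinoid morphisms and their formal models, together with $R$-flatness of the schematic closures involved, which ensures that relations coming from $B'$ and detected in $B_K$ genuinely reflect relations in $\O(\ul{\Gamma}_\phi)$.
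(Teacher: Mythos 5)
Your approach is genuinely different from the paper's proof, which simply observes that $p_\fY|_{\ul{\Gamma}_\phi}$ induces an isomorphism of rigid generic fibers, hence a quasi-compact morphism, and then invokes Proposition~\ref{rqcadicprop}. Your plan to verify adicness directly via topological nilpotence of the coordinate pullbacks is sound in outline, and the reductions you set up are correct: one may assume $\fY=\Spf B$ and $\fX=\Spf A$ affine, extend $\phi$ to a formal morphism $\tilde{\phi}$ on a finite admissible blowup $\fY'=\Spf B'$ of $\fY$, identify $C:=\O(\ul{\Gamma}_\phi)$ with the image of $B\hat{\otimes}_RA$ in $B'$ via Lemma~\ref{schemcldirimlem}, and observe that $\tilde{\phi}^*(S_i)$ is topologically nilpotent in $B'$, so that $\tilde{\phi}^*(S_i)^N\in\a B'$ for some $N$, where $\a$ denotes an ideal of definition of $B$. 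However, the crucial descent step is wrong as justified. You appeal to $R$-flatness and the inclusions $C\subseteq B'\subseteq B_K$ to transfer the relation back to $C$, but $R$-flatness only controls $\pi$-torsion, and since $\pi\in\a$ is a unit in $B_K$, nothing useful is "detected in $B_K$". What you actually need is an inclusion of the form $\a^nB'\cap C\subseteq\a C$, and for a bare inclusion of $R$-flat noetherian rings this fails: with $C=k[[x,y]]/(y^2-x^3)\hookrightarrow B'=k[[t]]$ via $x=t^2$, $y=t^3$, and $\a=(x)$, one has $y\in\a B'\cap C$ but $y\notin\a C$.

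The step can be repaired, but with a different tool than you cite. Because $C\subseteq B'$ is an inclusion of $R$-models of ff type of the same semi-affinoid $K$-algebra, it corresponds to a finite admissible blowup (\cite{urigspaces} Cor.~2.14~(ii)), so $B'$ is a finite $C$-module. The Artin--Rees Lemma applied to the ideal $\a C\subseteq C$, the finite module $B'$ and the submodule $C$ then yields a constant $c$ with $\a^nB'\cap C\subseteq\a^{n-c}C$ for all $n\geq c$; since $\tilde{\phi}^*(S_i)^{Nn}\in\a^nB'\cap C$, one obtains $\tilde{\phi}^*(S_i)^{N(c+1)}\in\a C$, which gives the desired adicness. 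It is worth noting that finiteness together with Artin--Rees is also the engine behind the paper's Proposition~\ref{rqcadicprop}, via Lemma~\ref{dJlem}, so the two routes ultimately rest on the same mechanism; the paper just packages it once and for all in that proposition rather than rerunning it locally with coordinates.
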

\begin{proof}
Indeed, the morphism $p_\fY|_{\ul{\Gamma}_\phi}$ induces an isomorphism of uniformly rigid generic fibers and, hence, an isomorphism of rigid-analytic generic fibers. Since isomorphisms are quasi-compact, we conclude with Proposition \ref{rqcadicprop} that $p_\fY|_{\ul{\Gamma}_\phi}$ is adic.
\end{proof}

\begin{lem}\label{graphproperlem}
Let us assume that there exists a proper morphism $\ul{\psi}\colon\fY'\rightarrow\fY$ inducing an isomorphism of uniformly rigid generic fibers such that $\phi$ extends to a morphism $\ul{\phi}\colon\fY'\rightarrow\fX$. Then $p_\fY|_{\ul{\Gamma}_\phi}$ is proper.
\end{lem}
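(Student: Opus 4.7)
The plan is to realize $p_\fY|_{\ul{\Gamma}_\phi}$ as a quotient of the proper morphism $\ul{\psi}$ by a proper surjection built from the assumed extension $\ul{\phi}$, and then to transfer properness through this surjection.

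Concretely, I would first form the combined morphism $\alpha\colon=(\ul{\psi},\ul{\phi})\colon\fY'\to\fY\times\fX$. Since $\fX$ is separated, the graph $(\id_{\fY'},\ul{\phi})\colon\fY'\to\fY'\times\fX$ is a closed immersion, and composing with the base change $\ul{\psi}\times\id\colon\fY'\times\fX\to\fY\times\fX$ of the proper morphism $\ul{\psi}$ shows that $\alpha$ itself is proper. The uniformly rigid generic fiber of $\alpha$ is $(\psi,\phi)$, which under the identification $Y'=Y$ afforded by $\psi$ is precisely the graph embedding of $\phi$; this embedding is a closed immersion into $Y\times X$ because $X$ is separated. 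Applying Lemma \ref{schemcldirimlem} to $\alpha$ then identifies the schematic image of $\alpha$ with the schematic closure of $\Gamma_\phi$, namely $\ul{\Gamma}_\phi$, and yields a factorization $h\colon\fY'\to\ul{\Gamma}_\phi$ which is surjective on physical points. Moreover $h$ is proper, since $\alpha$ is proper and the closed immersion $\ul{\Gamma}_\phi\hookrightarrow\fY\times\fX$ is separated. By construction $p_\fY|_{\ul{\Gamma}_\phi}\circ h=\ul{\psi}$.

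It remains to deduce that $g\colon=p_\fY|_{\ul{\Gamma}_\phi}$ is proper. By Lemma \ref{graphclosureadiclem}, $g$ is adic; combined with $g$ being locally of ff type as a restriction of the projection $p_\fY$ (itself a base change of the locally-ff type $\fX\to\Spf R$), this forces $g$ to be locally of tf type. Separatedness of $g$ is inherited from that of $p_\fY$, which is a base change of the separated $\fX\to\Spf R$. For quasi-compactness and universal closedness one exploits the surjectivity of $h$: for any quasi-compact open $\fU\subseteq\fY$ we have $g^{-1}(\fU)=h(\ul{\psi}^{-1}(\fU))$, the continuous image of the quasi-compact set $\ul{\psi}^{-1}(\fU)$; and for any base change $T\to\fY$, $h_T$ remains surjective, so the $g_T$-image of a closed set $C$ equals the $(g\circ h)_T$-image of $h_T^{-1}(C)$, which is closed because $\ul{\psi}$ is proper. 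Altogether $g$ is adic, of tf type, separated, and universally closed, hence proper.

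The principal technical hinge is the application of Lemma \ref{schemcldirimlem}: one must check that $\alpha$ is genuinely proper and that its uniformly rigid generic fiber is a closed immersion onto $\Gamma_\phi$, both of which rely crucially on the separatedness of $\fX$. Once the proper surjection $h$ has been produced, the descent of properness from $\ul{\psi}$ to $g$ is essentially formal, handled by the stability of closedness and quasi-compactness under continuous surjective maps.
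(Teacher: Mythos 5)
Your proof is correct and takes essentially the same route as the paper's: your $\alpha=(\ul{\psi},\ul{\phi})$ is exactly the composite $(\ul{\psi}\times\id_\fX)\circ\Gamma_{\ul{\phi}}$ to which the paper applies Lemma~\ref{schemcldirimlem}, and your factorization $h$ is the paper's $\ul{\phi}'$. The only difference is in the last step, where you verify quasi-compactness and universal closedness of $p_\fY|_{\ul{\Gamma}_\phi}$ by hand using surjectivity of $h$, whereas the paper reduces the triangle $p_\fY|_{\ul{\Gamma}_\phi}\circ\ul{\phi}'=\ul{\psi}$ modulo an ideal of definition and invokes \cite{egaii}~5.4.3~(ii), which packages precisely the descent argument you unwind.
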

\begin{proof}
By Lemma \ref{schemcldirimlem}, $\ul{\Gamma}_\phi$ coincides with the schematic image of the proper morphism
\[
(\ul{\psi}\times\id_\fX)\circ\Gamma_{\ul{\phi}}\quad\colon\quad\fY'\hookrightarrow\fY'\times\fX\rightarrow\fY\times\fX\quad.
\]
Let $\ul{\phi}'\colon\fY'\rightarrow\ul{\Gamma}_\phi$ denote the unique factorization of this morphism; then $\ul{\phi}'$ is an isomorphism on uniformly rigid generic fibers. Let us consider the commutative diagram
\[
\begin{diagram}
\fY'&\rTo^{\ul{\phi}'}&\ul{\Gamma}_\phi\\
&\rdTo<{\ul{\psi}}&\dTo>{p_\fY|_{\ul{\Gamma}_\phi}}\\
&&\fY
\end{diagram}
\]
of morphisms inducing isomorphisms of uniformly rigid generic fibers. As in the proof of Lemma \ref{graphclosureadiclem}, we use Proposition \ref{rqcadicprop} to see that the morphisms $\ul{\phi}'$ and $p_\fY|_{\ul{\Gamma}_\phi}$ are of tf type. By Lemma \ref{schemcldirimlem}, $\ul{\phi}'$ is surjective onto the physical points of $\ul{\Gamma}_\phi$. Since $\fX$ is separated, $p_\fY$ is separated, and hence $p_\fY|_{\ul{\Gamma}_\phi}$ is separated as well. After reducing the above diagram of adic morphisms of locally noetherian formal schemes modulo an ideal of definition, we conclude from \cite{egaii} 5.4.3 ($ii$) that $p_\fY|_{\ul{\Gamma}_\phi}$ is proper, as desired.
\end{proof}


The conclusion of Lemma \ref{graphproperlem} holds without the assumption on the existence of the morphism $\ul{\psi}$, which is the statement of the following theorem. To prove it, we use induction over a treelike covering of $Y$ (cf.\ \cite{urigspaces} Definition 2.31).

\begin{thm}\label{graphproperthm}
The morphism $p_\fY|_{\ul{\Gamma}_\phi}\colon\ul{\Gamma}_\phi\rightarrow\fY$ is proper. 
\end{thm}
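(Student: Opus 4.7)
The plan is to reduce Theorem \ref{graphproperthm} to Lemma \ref{graphproperlem} by producing, locally on $\fY$, a proper morphism $\ul{\psi}\colon\fY'\rightarrow\fY$ that induces an isomorphism of uniformly rigid generic fibers and along which $\phi$ extends to a formal morphism $\ul{\phi}\colon\fY'\rightarrow\fX$. Since properness of a morphism of formal schemes is local on the target, I would first assume that $\fY$ is affine; then $Y=\fY^{\urig}$ is semi-affinoid, and the restriction $\ul{\Gamma}_\phi|_{\fY}$ remains the schematic closure of the graph of $\phi$ inside $\fY\times\fX$.

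I would then fix a treelike admissible covering of $Y$ by semi-affinoid subdomains in the sense of \cite{urigspaces} Definition 2.31, chosen fine enough that on each leaf $U$ of the tree the restriction of $\phi$ factors through an affine open part of $\fX$. For each such $U\subseteq Y$, I would exhibit a flat formal $R$-model $\fU'$ of $U$ of ff type, together with a proper morphism $\fU'\rightarrow\fY$ arising from an admissible blowup (proper by Lemma \ref{qctftlem} and Proposition \ref{rqcadicprop}) and a formal extension $\fU'\rightarrow\fX$ of $\phi|_U$ obtained from the universal property of formal spectra applied to the bounded ring homomorphism defining $\phi|_U$ on semi-affinoid algebras.

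The induction over the tree then glues these local extensions into a single proper morphism $\ul{\psi}\colon\fY'\rightarrow\fY$ together with a formal extension $\ul{\phi}\colon\fY'\rightarrow\fX$ of $\phi$. The gluing is legitimate because on overlaps, which are themselves semi-affinoid subdomains, the various formal extensions of $\phi$ must coincide: separatedness of $\fX$ (hence of $p_\fY$) together with the universal property of schematic closures encoded in Lemma \ref{schemcldirimlem} forces any two formal morphisms into $\fX$ with the same uniformly rigid generic fiber to agree wherever they are simultaneously defined. Once the global $\ul{\psi}$ and $\ul{\phi}$ are constructed, Lemma \ref{graphproperlem} immediately yields the desired properness of $p_\fY|_{\ul{\Gamma}_\phi}$.

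The principal obstacle will be the construction of the local extension at each leaf of the covering: this is an ff type analogue of Raynaud's extension theorem, asserting that a semi-affinoid morphism $\phi|_U\colon U \rightarrow X$ whose image lies in an affine open of $\fX$ extends to a formal morphism of ff type models after a suitable admissible blowup. The uniform rigid structure is indispensable here --- the superabundance of morphisms on rigid generic fibers of ff type models highlighted in the Introduction would otherwise preclude such an extension --- and it is at this step that the entire uniformly rigid framework justifies itself within the proof.
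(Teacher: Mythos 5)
Your overall reduction---take a treelike covering of $Y$ fine enough that $\phi$ extends on the leaves, and feed the result to Lemma \ref{graphproperlem}---is precisely the paper's strategy. However, there are two concrete gaps in how you implement it, and they are closely related.

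First, the leaf models $\fY_j$ in a treelike covering are open formal subschemes of iterated admissible blowups of $\fY$; the open inclusions are \emph{not} proper, so the composites $\fY_j\rightarrow\fY$ (and a fortiori blowups $\fU'\rightarrow\fY_j\rightarrow\fY$) are not proper. Lemma \ref{qctftlem} and Proposition \ref{rqcadicprop} give adic-ness and tf type, not properness, so your parenthetical justification does not deliver what you need. Second, the gluing step is left hanging: if at each leaf you take $\fU'$ to be some finite admissible blowup of $\fY_j$ along which $\phi$ extends, then over an overlap $\fY_j\cap\fY_{j'}$ the restrictions of $\fU'_j$ and $\fU'_{j'}$ are in general \emph{different} models of the same semi-affinoid subdomain, and they do not agree as formal subschemes of anything---it is not a matter of two morphisms disagreeing, but of two morphisms having incompatible sources. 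Your appeal to separatedness of $\fX$ and Lemma \ref{schemcldirimlem} says that extensions agree where both are defined, but it does not produce a common refinement that remains proper over $\fY$.

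The paper's proof fixes both points at once by propagating the canonical object through the induction: at each node $j$ of the tree it works with the schematic closure $\ul{\Gamma}_{\phi_j}\subseteq\fY_j\times\fX$ of the local graph. Because schematic closure commutes with localization, these glue automatically over the children of any interior node $r$, yielding a closed formal subscheme of $\fY_r'\times\fX$ that is proper over the admissible blowup $\fY_r'$ and hence over $\fY_r$; Lemma \ref{graphproperlem} is then invoked at \emph{each} interior node, not once at the end. The base case at a leaf is immediate because there the graph closure is isomorphic to $\fY_j$ itself. If you replace your loosely specified $\fU'$ by the graph closures and run the induction node by node, your proposal becomes the paper's proof.
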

\begin{proof}
By Lemma \ref{graphclosureadiclem}, $\ul{\Gamma}_\phi$ is adic over $\fY$. To show that $p_\fY|_{\ul{\Gamma}_\phi}$ is proper, we may assume that $\fY$ is affine such that $Y$ is semi-affinoid. Let $(\fX_i)_{i\in I'}$ be an affine open covering of $\fX$, and let $(X_i)_{i\in I'}$ be the induced admissible covering of $X$.  Let $(Y_i)_{i\in I}$ be a leaflike refinement of $(\phi^{-1}(X_i))_{i\in I'}$, and let $(Y_j)_{j\in J}$ be a treelike covering of $Y$ enlarging $(Y_i)_{i\in I}$ together with a suitable rooted tree structure on $J$ such that $I$ is identified with the set of leaves of $J$. Let us choose a formal presentation 
\[
(\fY_j,\ul{\phi}_j,\ul{\beta}_j)_{j\in J}
\]
of $(Y_j)_{j\in J}$ with respect to $\fY$ according to the discussion in \cite{urigspaces} following Def.\ 2.31. By \cite{urigspaces} Cor.\ 2.14 ($iv$) and ($ii$), we may replace the $\fY_j$ with $j\in\leaves(J)$ by finite admissible blowups such that for each leaf $j$ of $J$, the restriction 
\[
\phi|_{Y_j}\colon Y_j\rightarrow X
\]
extends uniquely to a morphism $\ul{\phi}_j\colon\fY_j\rightarrow\fX$. Let us forget that the morphisms 
\[
\ul{\beta}_j\colon\ul{\fY}_j'\rightarrow\ul{\fY}_j
\]
are admissible blowups, and let us merely retain the information that they are proper and that they induce isomorphisms of uniformly rigid generic fibers. Let us argue by induction along $J$. The statement is trivially true if we replace $\fY$ be $\fY_j$ for some $j\in\leaves(J)$, which gets our inductive argument off the ground. By the induction hypothesis, we may now assume that for any child $j$ of the root $r$ of $J$, the schematic closure $\ul{\Gamma}_{\phi_j}$ of the graph of $\phi_j\mathrel{\mathop:}=\phi|_{Y_j}$ in $\fY_j\times\fX$ is proper over $\fY_j$. By uniqueness, the formation of schematic closures commutes with localization; hence the $\ul{\Gamma}_{\phi_j}$ glue to a closed formal subscheme $\ul{\Gamma}'_{\phi_r}\subseteq\fY_r'\times\fX$ that is proper over $\fY_r'$ and, hence, proper over $\fY_r$. We are now in the situation of Lemma \ref{graphproperlem}, with $\fY'=\ul{\Gamma}'_{\phi_r}$, so we may conclude that $\ul{\Gamma}_\phi$ is indeed proper over $\fY=\fY_r$.
\end{proof}

\begin{cor}\label{graphpropercor}
If $\fX$ and $\fY$ are formal $R$-schemes of locally ff type, if $\fX$ is separated and if $\phi:\fY^\urig\rightarrow\fX^\urig$ is a morphism of uniformly rigid generic fibers, then exists a proper morphism $\ul{\psi}\colon\fY'\rightarrow\fY$ inducing an isomorphism of uniformly rigid generic fibers such that $\phi$ extends to a morphism $\ul{\phi}$ from $\fY'$ to $\fX$.
\end{cor}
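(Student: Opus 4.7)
The plan is to construct $\fY'$ directly as the schematic closure of the graph of $\phi$ and then read off both the proper morphism $\ul{\psi}$ and the extension $\ul{\phi}$ from the two projections of the ambient product. Concretely, I would set
\[
\fY'\,:=\,\ul{\Gamma}_\phi\,\subseteq\,\fY\times\fX\;,
\]
where $\ul{\Gamma}_\phi$ denotes the schematic closure in $\fY\times\fX$ of the graph $\Gamma_\phi\subseteq Y\times X$ of $\phi$. Note that $\fY\times\fX$ is a locally noetherian formal $R$-scheme of locally ff type, so the schematic closure is defined as in Section \ref{schemimsec}; and $\Gamma_\phi$ is a closed uniformly rigid subspace of $Y\times X$ because $\fX$ is separated and hence $X$ is separated. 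Let $\ul{\psi}:\fY'\rightarrow\fY$ and $\ul{\phi}:\fY'\rightarrow\fX$ denote the restrictions of the two canonical projections.

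Next I would verify the required properties. That $\ul{\psi}$ is proper is the content of Theorem \ref{graphproperthm}. That $\ul{\psi}$ induces an isomorphism on uniformly rigid generic fibers follows from the definition of the schematic closure: $(\fY')^\urig=\Gamma_\phi$ as closed uniformly rigid subspaces of $Y\times X$, and the first projection $\Gamma_\phi\rightarrow Y$ is an isomorphism of uniformly rigid spaces with inverse $(\id_Y,\phi)$. Finally, $\ul{\phi}$ extends $\phi$ in the desired sense, since by construction $\ul{\phi}^\urig\circ(\ul{\psi}^\urig)^{-1}$ is the second projection of $\Gamma_\phi$ pre-composed with $(\id_Y,\phi)$, which is exactly $\phi$.

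The only point that requires any real input beyond unpacking definitions is the properness of $\ul{\psi}$, and that is precisely what Theorem \ref{graphproperthm} provides; everything else is a formal consequence of the compatibility of the schematic closure with the uniformly rigid generic fiber functor (cf.\ the discussion preceding Lemma \ref{schemcldirimlem}) together with the universal property of graphs. So there is no serious obstacle; the corollary is essentially a reformulation of Theorem \ref{graphproperthm}.
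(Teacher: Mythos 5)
Your proof is correct and is essentially the paper's own proof: the paper simply states that by Theorem \ref{graphproperthm} one may take $\fY'=\ul{\Gamma}_\phi$ and $\ul{\psi}=p_\fY|_{\ul{\Gamma}_\phi}$, and your write-up just spells out the routine verifications that the paper leaves implicit.
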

\begin{proof}
Indeed, by Theorem \ref{graphproperthm} we may take $\fY'=\ul{\Gamma}_\phi$ and $\ul{\psi}=p_\fY|_{\ul{\Gamma}_\phi}$.
\end{proof}

\begin{remark}Let us make the following remarks:
\begin{enumerate}
\item The proper morphism $p_\fY|_{\ul{\Gamma}_\phi}$ will in general not be induced, locally on $\fY$, by proper morphisms of schemes via formal completion. 
\item Corollary \ref{graphpropercor} may be viewed as a partial uniformly rigid analog of \cite{bosch_frgnotes} Theorem 2.8/3, which says that if $\fX$ and $\fY$ are quasi-paracompact and of locally tf type over $R$, then every morphism of rigid-analytic generic fibers $\phi:\fY^\rig\rightarrow\fX^\rig$ extends to a morphism $\fY'\rightarrow\fX$ for some admissible blowup $\fY'$ of $\fY$.
\item As we have pointed out in the introduction, the example of an unbounded function on the open unit disc shows that the statement of Corollary \ref{graphpropercor} does not hold if we consider rigid-analytic generic fibers instead of uniformly rigid generic fibers.
\end{enumerate}
\end{remark}

\subsection{Change of base field}

Let $K'/K$ be a possibly non-finite discrete analytic field extension, and let $R'$ denote its valuation ring. We define a base extension functor
\[
\cdot\hat{\otimes}_KK'\colon\sAff_K\rightarrow\sAff_{K'}
\]
as follows: if $A$ is a semi-affinoid $K$-algebra, then by \cite{urigspaces} Corollary 2.14 ($iv$) the system of $R$-model of ff type $\ul{A}\subseteq A$ is directed and functorial in $A$; we set
\[
A\hat{\otimes}_KK'\,\mathrel{\mathop:}=\,\varinjlim_{\ul{A}}(\ul{A}\hat{\otimes}_RR')\otimes_{R'}K'\;,
\]
where $\ul{A}\subseteq A$ varies amongst the $R$-models of ff type for $A$. By \cite{urigspaces} Corollary 2.14 ($ii$), an inclusion $\ul{A}\subseteq\ul{A}'$ of $R$-models of ff type for $A$ corresponds to a finite admissible blowup of formal spectra; by \cite{urigspaces} Lemma 2.24, the induced inclusion $\ul{A}\hat{\otimes}_RR'\subseteq\ul{A}'\hat{\otimes}_RR'$ of flat $R'$-algebras of ff type corresponds to a finite admissible blowup as well and, hence, yields an isomorphism after applying $\cdot\otimes_{R'}K'$. Thus all transition morphisms in the above inductive system of semi-affinoid $K'$-algebras are in fact isomorphisms, which shows that the functor $\cdot\hat{\otimes}_KK'$ is well-defined.

\begin{remark}
It is possible to define base extension functors for non-semi-affinoid uniformly rigid spaces satisfying certain finiteness conditions.
\end{remark}

\subsection{Galois descent}

We outline some aspects of Galois descent theory for uniformly rigid $K$-spaces. Let $K'/K$ be a finite Galois field extension with Galois group $\Gamma$, and let $R'$ denote the normalization of $R$ in $K'$.

By \cite{urigspaces} 2.2 ($iv$), the category of semi-affinoid $K$-spaces is anti-equivalent to the category of semi-affinoid $K$-algebras. Hence, Galois descent for $K$-algebras shows that the natural functor from semi-affinoid $K$-spaces to semi-affinoid $K'$-spaces equipped with a semi-linear $\Gamma$-action is fully faithful.

\begin{lem}\label{semaffdesclem}
Let $A'$ be a semi-affinoid $K'$-algebra together with an action of  $\Gamma=\Gal(K'/K)$ which extends the tautological action on $K'$. Then $A\mathrel{\mathop:}=(A')^\Gamma$ is a semi-affinoid $K$-algebra, and the natural $K'$-homomorphism
\[
A\otimes_KK'\rightarrow A'
\]
is a $\Gamma$-equivariant isomorphism.
\end{lem}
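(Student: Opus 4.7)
The strategy is to descend a formal $R'$-model of $A'$ to an $R$-model of $A:=(A')^\Gamma$ along the finite faithfully flat extension $R\to R'$, and to deduce the base change isomorphism from faithfully flat Galois descent.

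First I would produce a $\Gamma$-stable $R'$-model $\ul{A}'\subseteq A'$ of ff type. Starting from any formal generating system $(s_1,\ldots,s_m;t_1,\ldots,t_n)$ of $A'$ as a semi-affinoid $K'$-algebra, I would enlarge it to the $\Gamma$-orbit $(\gamma(s_i),\gamma(t_j))_{\gamma\in\Gamma}$. Since $\Gamma$ acts by continuous $K$-algebra automorphisms on $A'$, it preserves topological nilpotence and boundedness, so the enlarged tuple is still a formal generating system. The $R'$-subalgebra it generates (as a quotient of a mixed power-series ring) is a $\Gamma$-stable $R'$-model of ff type, using the directedness/enlargement properties from \cite{urigspaces} Cor.~2.14.

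Next set $\ul{A}:=(\ul{A}')^\Gamma$. The extension $R\to R'$ is finite faithfully flat (a torsion-free finitely generated module over the PID $R$), and $R=(R')^\Gamma$ since $R$ is integrally closed in $K$ and $(K')^\Gamma=K$. The $\Gamma$-equivariant natural map
\[
\ul{A}\otimes_RR'\longrightarrow\ul{A}'
\]
is then an isomorphism by faithfully flat Galois descent for the $R'$-module $\ul{A}'$ with its semilinear $\Gamma$-action: one checks injectivity via $R$-flatness of $\ul{A}$ (inherited from flatness of $\ul{A}'$ over $R'$ on the invariant subring) and surjectivity because the trace/equalizer description of descent data applies here. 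Tensoring with $K$ yields $A\otimes_KK'\xrightarrow{\sim}A'$, proving the second assertion.

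Then I would check that $\ul{A}$ is of ff type over $R$. Since $\ul{A}'=\ul{A}\otimes_RR'$ with $R'$ finite free over $R$, $\ul{A}'$ is a finite free $\ul{A}$-module, and hence noetherianity and the $\m_R$-adic-plus-power-series topology descend from $\ul{A}'$ to $\ul{A}$. Concretely, taking the $\Gamma$-orbits of the generators above and forming their elementary symmetric functions (or, equivalently, taking traces under $\Gamma$), one obtains $\Gamma$-invariant small and bounded elements in $\ul{A}$; Galois theory shows they topologically generate $\ul{A}$ as an $R$-algebra, so a continuous surjection from a mixed power-series ring $R[[S_1,\ldots]]\langle T_1,\ldots\rangle\twoheadrightarrow\ul{A}$ exists, exhibiting $\ul{A}$ as a quotient of the required form.

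\textbf{Main obstacle.} The delicate point is the descent of the ff-type structure from $\ul{A}'/R'$ to $\ul{A}/R$, because in general $R'/R$ may be ramified: classical Galois descent in its sharpest form presumes étaleness, and one must verify that a $\Gamma$-stable ideal of definition of $\ul{A}'$ restricts to an ideal of definition of $\ul{A}$ giving the correct completion. Up to this care in tracking topologies, the remaining algebraic steps are routine.
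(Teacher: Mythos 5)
Your descent step fails: defining $\ul{A}:=(\ul{A}')^\Gamma$ and claiming $\ul{A}\otimes_R R'\to\ul{A}'$ is an isomorphism by faithfully flat Galois descent would require $R\to R'$ to be \'etale, but $R'/R$ may be ramified. You flag this under ``Main obstacle'' but underestimate it: the problem is not ``tracking topologies,'' the claimed isomorphism is simply false in general. Take $R=\Z_p$ with $p$ odd, $R'=R[\pi]$ with $\pi^2=p$, $\Gamma=\{1,\sigma\}$ with $\sigma(\pi)=-\pi$, and $\ul{A}'=R'\langle S\rangle$ with $\sigma(S)=-S$. Then $(\ul{A}')^\Gamma=R\langle S^2,\pi S\rangle$, whose base change to $R'$ is $R'\langle S^2,\pi S\rangle$, a proper subring of $R'\langle S\rangle$ (already visible modulo $\pi$, where it reduces to $k\langle S^2\rangle\subsetneq k\langle S\rangle$). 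The same example shows your assertion that the elementary symmetric functions of $\Gamma$-orbits ``topologically generate $\ul{A}$'' is also false: for $S$ they produce only $0$ and $-S^2$, generating $R\langle S^2\rangle\subsetneq(\ul{A}')^\Gamma$. So neither of the two bridges you try to build from $\ul{A}'$ to $\ul{A}$ is sound. (The second assertion of the lemma, $A\otimes_K K'\cong A'$, is fine and is indeed immediate from field-level Galois descent, as both you and the paper note.)

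The paper avoids integral descent entirely. It also starts from a $\Gamma$-stable $R'$-model $\ul{A}'$ and forms the elementary symmetric functions $u_{il},v_{jl}$ of the $\Gamma$-orbits of a formal generating system, but it uses them only to build an $R$-algebra $\ul{B}$ of ff type sitting inside the invariants, and then proves \emph{finiteness} of $\ul{A}'$ over $\ul{B}':=\ul{B}\otimes_R R'$ rather than equality: each generator satisfies a monic Vieta equation over $\ul{B}'$, so the ideal $\b'$ of definition of $\ul{B}'$ generates an ideal of definition in $\ul{A}'$, and the formal Nakayama lemma finishes. Inverting $\pi$ gives that $A'$ is finite over $B':=B\otimes_K K'$; combined with $A\otimes_K K'\cong A'$, this gives that $A$ is finite over the semi-affinoid $K$-algebra $B:=\ul{B}\otimes_R K$, and hence semi-affinoid. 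In the example above, $B=K\langle S^2\rangle$ and $A=K\langle S^2,\pi S\rangle$ is a finite extension --- not equal. Finiteness, rather than the integral isomorphism you aimed for, is both attainable in the ramified case and sufficient for the conclusion; that replacement is the key idea your proof is missing.
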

\begin{proof}
The last statement follows by means of Galois descent for $K$-algebras; it suffices to see that $A$ is semi-affinoid over $K$. Let $\ul{A}'$ be an $R'$-model of ff type for $A'$. The $\Gamma$-translates of a power-bounded element of $A'$ are again power-bounded; by \cite{urigspaces} Corollary 2.12, we may thus assume that $\ul{A}'$ admits a $\Gamma$-stable formal generating system over $R'$ and, hence, is $\Gamma$-stable. Let 
\[
((s_1,\ldots,s_m),(t_1,\ldots,t_n))
\]
be any formal generating system of $\ul{A}'$ over $R'$, and let $r$ denote the cardinality of $\Gamma$. For each $1\leq i\leq m$, let $u_{i,1},\ldots,u_{i,r}$ be the elementary symmetric polynomials in $(\gamma(s_i))_{\gamma\in \Gamma}$. Similarly, for each $1\leq j\leq n$ let $v_{j,1},\ldots,v_{j,r}$ be the elementary symmetric polynomials in $(\gamma(t_j))_{\gamma\in\Gamma}$. Then the $u_{il}\in\ul{A}'$ are topologically nilpotent. Since $R'/R$ is finite, we may consider $\ul{A}'$ as an $R$-algebra of ff type. Let $\ul{B}\subseteq\ul{A}'$ denote the $R$-algebra of ff type generated by the $u_{il}$ and the $v_{jl}$. That is, $\ul{B}$ is the image of the $R$-homomorphism
\[
R[[(X_{il})_{1\leq i\leq m\,,\,1\leq l\leq r}]]\langle (Y_{jl})_{1\leq j\leq n\,,\,1\leq l\leq r}\rangle\rightarrow\ul{A}'
\]
sending $X_{il}$ to $u_{il}$ and $Y_{jl}$ to $v_{jl}$. The elements of $\ul{B}$ are $\Gamma$-stable, and hence $B\mathrel{\mathop:}=\ul{B}\otimes_RK$ is a $K$-subalgebra of $A$. Since $B$ is semi-affinoid, it suffices to see that the inclusion $B\subseteq A$ is finite. This may be verified after the base change $\cdot\otimes_KK'$; hence it suffices to see that $\ul{A}'$ is finite over $\ul{B}'\mathrel{\mathop:}=\ul{B}\otimes_RR'$, which is the $R'$-subalgebra of ff type of $\ul{A}'$ generated by the $u_{il}$ and the $v_{jl}$. Let $\b'$ denote the ideal of definition of $\ul{B}'$ induced by the given formal presentation of $\ul{B'}$. By the formal version of Nakayama's Lemma, cf.\ \cite{eisenbudca} Ex. 7.2, it suffices to see that $\ul{A}'/\b'\ul{A}'$ is finite over $\ul{B}'/\b'$. Since each $s_i\in\ul{A}'$ satisfies an integral equation over $\ul{B}'$ with coefficients in $\b'$, the ideal $\b'\ul{A}'$ is an ideal of definition for $\ul{A}'$. Hence $\ul{A}'/\b'\ul{A}'$ modulo its nilradical is generated by the $t_j$ as a $k$-algebra, and the $t_j$ are likewise integral over $\ul{B}'$. We conclude that $\ul{A}'/\b'\ul{A}'$ modulo its nilradical is finite over $\ul{B}'$, and again by \cite{eisenbudca} Ex. 7.2, it follows that $\ul{A}'/\b'\ul{A}'$ is finite over $\ul{B}'$, as desired.
\end{proof}

\begin{cor}\label{maindescentcor}
The functor $\cdot\otimes_KK'$ is an equivalence from the category of semi-affinoid $K$-spaces to the category of semi-affinoid $K'$-spaces  with a semi-linear $\Gamma$-action.
\end{cor}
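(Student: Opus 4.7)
The plan is to reduce the statement to Lemma \ref{semaffdesclem} together with standard Galois descent for algebras. By the anti-equivalence of \cite{urigspaces} 2.2 ($iv$), the category of semi-affinoid $K$-spaces is anti-equivalent to the category of semi-affinoid $K$-algebras, and likewise for $K'$. Under this anti-equivalence, $\cdot\otimes_KK'$ on spaces corresponds contravariantly to $\cdot\otimes_KK'$ on algebras, and semi-linear $\Gamma$-actions on $\sSp A'$ correspond bijectively to semi-linear $\Gamma$-actions on $A'$ extending the tautological action on $K'$. It therefore suffices to show that $\cdot\otimes_KK'$ induces an equivalence between semi-affinoid $K$-algebras and semi-affinoid $K'$-algebras equipped with such a semi-linear $\Gamma$-action.

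For full faithfulness I would use, as already indicated in the paragraph preceding Lemma \ref{semaffdesclem}, classical Galois descent for $K$-vector spaces: given semi-affinoid $K$-algebras $A_1$ and $A_2$, applying descent to the $K$-vector space $\Hom_{K'}(A_1\otimes_KK',A_2\otimes_KK')$ yields the identity
\[
\Hom_K(A_1,A_2)\,=\,\Hom_{K'}(A_1\otimes_KK',A_2\otimes_KK')^\Gamma\;,
\]
and the right-hand side is precisely the set of $\Gamma$-equivariant $K'$-algebra homomorphisms.

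For essential surjectivity, Lemma \ref{semaffdesclem} supplies exactly the required preimage: given a semi-affinoid $K'$-algebra $A'$ with a compatible semi-linear $\Gamma$-action, the ring $A:=(A')^\Gamma$ is semi-affinoid over $K$ and the natural map $A\otimes_KK'\rightarrow A'$ is a $\Gamma$-equivariant isomorphism. Translating back via the anti-equivalence, $\sSp A$ is the desired preimage under $\cdot\otimes_KK'$. The substantive content — verifying that $(A')^\Gamma$ remains semi-affinoid over $K$ via elementary symmetric polynomials in $\Gamma$-orbits together with a finiteness argument invoking the formal analog of Nakayama's lemma — has already been handled in the proof of Lemma \ref{semaffdesclem}, so the Corollary itself is a formal repackaging of that lemma with Galois descent; no genuinely new obstacle should appear.
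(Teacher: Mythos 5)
Your proof is correct and follows exactly the paper's implicit argument: the paper establishes full faithfulness via the anti-equivalence with semi-affinoid algebras and classical Galois descent in the paragraph before Lemma \ref{semaffdesclem}, and that lemma supplies essential surjectivity, from which the corollary is immediate. You have simply made explicit what the paper leaves as a one-line deduction.
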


Descent of morphisms works for semi-affinoid targets:

\begin{prop}\label{descffprop}
The functor $X\mapsto X\otimes_KK'$ from the category of uniformly rigid $K$-spaces to the category of uniformly rigid $K'$-spaces together with a semi-linear $\Gamma$-action is faithful; that is, the natural map
\[
\Hom_K(Y,X)\rightarrow\Hom_{K'}(Y_{K'},X_{K'})^\Gamma
\]
is injective for all uniformly rigid $K$-spaces $X$ and $Y$. If $X$ is semi-affinoid, then the above map is even bijective.
\end{prop}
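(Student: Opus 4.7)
The plan is to reduce both assertions to Corollary \ref{maindescentcor} by working locally with admissible semi-affinoid coverings.

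First I would prove faithfulness. Suppose $f,g\colon Y\to X$ satisfy $f\hat{\otimes}_K K' = g\hat{\otimes}_K K'$. Equality of morphisms of uniformly rigid spaces can be checked on an admissible covering of the source, so I may assume $Y$ is semi-affinoid and, in particular, quasi-compact. I then choose an admissible semi-affinoid covering $(X_j)_{j\in J}$ of $X$; the preimages $f^{-1}(X_j)$ and $g^{-1}(X_j)$ form two admissible open coverings of $Y$. By quasi-compactness, I pass to a common finite refinement by semi-affinoid subdomains $(Y_l)_l$ such that each $Y_l$ maps into some $X_{j_l}$ under $f$ and into some $X_{j'_l}$ under $g$. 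A further refinement, taken from an admissible semi-affinoid covering of the admissible open $X_{j_l}\cap X_{j'_l}\subseteq X$, allows me to assume that $f|_{Y_l}$ and $g|_{Y_l}$ both factor through a common semi-affinoid subdomain of $X$. These restrictions are then morphisms between semi-affinoid $K$-spaces whose base changes to $K'$ coincide, and Corollary \ref{maindescentcor} yields $f|_{Y_l}=g|_{Y_l}$, hence $f=g$.

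For the bijectivity assertion when $X$ is semi-affinoid, let $h\colon Y\hat{\otimes}_K K'\to X\hat{\otimes}_K K'$ be $\Gamma$-equivariant, and choose an admissible semi-affinoid covering $(Y_i)_{i\in I}$ of $Y$. Each base change $Y_i\hat{\otimes}_K K'$ is a $\Gamma$-stable semi-affinoid subspace of $Y\hat{\otimes}_K K'$, and the restriction $h|_{Y_i\hat{\otimes}_K K'}\colon Y_i\hat{\otimes}_K K'\to X\hat{\otimes}_K K'$ is a $\Gamma$-equivariant morphism of semi-affinoid $K'$-spaces. Corollary \ref{maindescentcor} then provides a unique morphism $f_i\colon Y_i\to X$ with $f_i\hat{\otimes}_K K'=h|_{Y_i\hat{\otimes}_K K'}$. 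On an overlap $Y_i\cap Y_j$, the morphisms $f_i|_{Y_i\cap Y_j}$ and $f_j|_{Y_i\cap Y_j}$ both land in the semi-affinoid target $X$ and agree after base change to $K'$; by the faithfulness just proved they coincide, so the $f_i$ glue to a morphism $f\colon Y\to X$ with $f\hat{\otimes}_K K'=h$.

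The main obstacle will be the foundational bookkeeping concerning the base-change functor $\cdot\hat{\otimes}_K K'$: namely that it sends admissible open subspaces to admissible open subspaces and admissible coverings to admissible coverings, and that the canonical $\Gamma$-action on $Y\hat{\otimes}_K K'$ restricts correctly to each $Y_i\hat{\otimes}_K K'$ that arises from a $K$-structure. These compatibilities must be verified directly from the inductive-limit description of the base-change functor introduced in the preceding subsection, using the semi-affinoid equivalence of Corollary \ref{maindescentcor} at the level of individual charts.
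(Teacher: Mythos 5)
There is a genuine gap in your faithfulness argument. After reducing to semi-affinoid $Y$, you choose a semi-affinoid covering $(X_j)$ of $X$, find $Y_l$ with $f(Y_l)\subseteq X_{j_l}$ and $g(Y_l)\subseteq X_{j'_l}$, and then claim a further refinement of $Y_l$ by preimages of a covering of $X_{j_l}\cap X_{j'_l}$ forces $f$ and $g$ to land in a common semi-affinoid. But if $f$ and $g$ are not yet known to agree on physical points, there may be $y\in Y_l$ with $f(y)\notin X_{j'_l}$ and $g(y)\notin X_{j_l}$, in which case $f^{-1}(X_{j_l}\cap X_{j'_l})\cap g^{-1}(X_{j_l}\cap X_{j'_l})\cap Y_l$ does not cover $Y_l$ (it can even be empty), and the refinement step has no content. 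You are implicitly using the very statement you want to prove.

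The missing ingredient is the observation, made at the start of the paper's proof, that the projection $Y\hat{\otimes}_KK'\rightarrow Y$ is surjective on physical points (because $K'/K$ is finite), so that $f_{K'}=g_{K'}$ already forces $f$ and $g$ to coincide pointwise on $Y$. Once this is in hand, one can find, for each $y\in Y$, a semi-affinoid open neighborhood of the common image $f(y)=g(y)$, pull it back under both $f$ and $g$, and thereby reduce to the case where both $X$ and $Y$ are semi-affinoid; at that point your use of Corollary \ref{maindescentcor} is fine (the paper instead uses the coincidence subspace $V\subseteq Y$ defined as the pullback of the diagonal $X\times_KX$ under $(f,g)$, whose formation commutes with base change, so $V_{K'}=Y_{K'}$ gives $V=Y$). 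Your bijectivity argument, which covers $Y$, descends each chart by Corollary \ref{maindescentcor}, and glues via faithfulness, is essentially the same as the paper's and is correct as written.
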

\begin{proof}
We use a subscript $K'$ to indicate the image under the functor $\cdot\times_KK'$. Let $X$ and $Y$ be uniformly rigid $K$-spaces, and let $\phi,\psi\colon Y\rightarrow X$ be morphisms such that $\phi_{K'}=\psi_{K'}$. The projection $Y_{K'}\rightarrow Y$ being surjective, $\phi$ and $\psi$ must coincide on physical points; we may thus assume that $X$ and $Y$ are semi-affinoid. Let $V\subseteq Y$ be the closed uniformly rigid coincidence subspace of $\phi$ and $\psi$, that is, the pullback of the diagonal of $X\times_K X$ under the morphism $(\phi,\psi)$. Its formation commutes with base change, which implies that $V_{K'}=Y_{K'}$; we conclude that $V=Y$, which means that $\phi=\psi$. 

Let us now assume that $X$ is semi-affinoid, and let $\phi'\colon Y_{K'}\rightarrow X_{K'}$ be a morphism that is equivariant with respect to the $\Gamma$-actions. By what we have shown so far, we may assume that $Y$ is semi-affinoid as well. Since morphisms of semi-affinoid $K$-spaces correspond to homomorphisms of rings of global functions, finite Galois descent for $K$-algebras shows that $\phi'$ descends to a morphism $\phi\colon Y\rightarrow X$.
\end{proof}

\begin{remark}
Since the category of semi-affinoid $K$-spaces is closed under the formation of fibered products, a semi-affinoid $K'$-group equipped with a semi-linear $\Gamma$-action descends uniquely to a semi-affinoid $K$-group.
\end{remark}

Let us conclude this paragraph by listing some open questions on Galois descent for uniformly rigid spaces:

\begin{enumerate}
\item Can a semi-affinoid $K'$-space with a semi-linear $\Gamma$-action possibly descend to a uniformly rigid $K$-space which is not semi-affinoid?
\item Let $X'$ be a semi-affinoid $K'$-space equipped with a semi-linear $\Gamma$-action, and let $U'\subseteq X'$ be a $\Gamma$-stable semi-affinoid subdomain. By Corollary \ref{maindescentcor} and Proposition \ref{descffprop}, $U'$ descends to a semi-affinoid $K$-space $U$, and the given open immersion $U'\hookrightarrow X'$ descends to a morphism $U\rightarrow X$ which defines $U$ as a representable subset of $X$. Is $U$ necessarily admissible open in $X$?
\item Let $X$ and $Y$ be uniformly rigid $K$-spaces, and let $\phi'\colon Y\otimes_KK'\rightarrow X\otimes_KK'$ be a $\Gamma$-equivariant morphism; then there exists a unique map on physical points $|\phi|\colon|Y|\rightarrow|X|$ such that $|\phi|$ spans a commutative square with $|\phi'|$ and with the maps of physical points that are associated to the projections $Y\otimes_KK'\rightarrow Y$ and $X\otimes_KK'\rightarrow X$. Is $|\phi|$ is continuous? If $U\subseteq X$ is admissible open, then $|\phi|^{-1}(U)\otimes_KK'$ is admissible open in $Y\otimes_KK'$, but we do not know whether this implies the admissibility of $|\phi|^{-1}(U)$ in $Y$. 
\end{enumerate}

\subsection{Tubes around closed uniformly rigid subspaces}

We prove that if $X$ is a semi-affinoid $K$-space and if $Z\subseteq X$ is a closed uniformly rigid subspace which is contained in a finite union $U$ of retrocompact semi-affinoid subdomains in $X$ (cf.\ \cite{urigspaces} Def.\ 2.22), then $U$ contains a tube around $Z$ in $X$. The corresponding statement in rigid geometry is established in \cite{kiehlderham} Satz 1.6 and in \cite{kisinlocconst} Lemma 2.3. We follow the approach in \cite{kisinlocconst}; in doing so, we have to overcome the problem that Kisin's arguments use a nontrivial existence result concerning formal models of tf type for certain admissible open subspaces (\cite{frg1} 4.4) which is not available in the uniformly rigid setting.

The following Proposition generalizes \cite{urigspaces} Prop.\ 2.40; the corresponding statement for affinoid spaces is provided by \cite{kisinlocconst} Lemma 2.1. For the notion of retrocompact coverings for semi-affinoid spaces, we refer to \cite{urigspaces} Def.\ 2.33. In the proof of the following proposition, we will use rooted trees to describe coverings of semi-affinoid spaces, cf.\ \cite{urigspaces} Section 2.3.3. If $I$ is a rooted tree, we write $v(I)$ to denote cardinality or volume of $I$, and we let $\leaves(I)$ denote its set of leaves. Moreover, if $i\in I$ is a vertex, we let $\children(i)$ denote the set of children of $i$, we write $\subtree(i)$ to denote the rooted subtree with root $i$, and if $i$ is different from the root of $I$, we let $\parent(i)$ denote the parent of $i$. A rooted tree $I$ is called linear if each vertex has at most one child, and $I$ is called almost linear if the same holds except possibly for the root.

\begin{prop}\label{zarretromixadmprop}
Let $X$ be a semi-affinoid $K$-space, and let $(X_i)_{i\in I}$ be a finite covering of $X$, where for each $i\in I$, the admissible open subspace $X_i\subseteq X$ is
\begin{packed_enum}
\item Zariski-open in the sense of \cite{urigspaces} Prop.\ 2.40 or
\item a retrocompact semi-affinoid subdomain.
\end{packed_enum}
Then the covering $(X_i)_{i\in I}$ of $X$ has a retrocompact refinement; in particular, it is $\sT_\srig$-ad\-mis\-si\-ble.
\end{prop}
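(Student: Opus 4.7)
The plan is to refine each Zariski-open piece of the covering by a finite family of retrocompact semi-affinoid subdomains, using the given retrocompact pieces $(X_j)_{j\in I_R}$ as a fixed quasi-compact core, and then to extract a finite subcovering. Write $X=\sSp A$, and split $I=I_Z\sqcup I_R$ according to whether $X_i$ is of type (i) or (ii).

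First, I would reduce to the case where every Zariski-open $X_i$, $i\in I_Z$, is basic of the form $\{g_i\neq 0\}$ for some $g_i\in A$. Since $A$ is noetherian, the ideal cutting out the complement of $X_i$ in $X$ is finitely generated, so $X_i$ decomposes as a finite union of basic Zariski-opens $\{g_{i,k}\neq 0\}$; replacing each such $X_i$ in the covering by this finite family yields a covering of the same mixed type and does not affect the existence of a retrocompact refinement of the original.

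Second, for each basic Zariski-open $\{g_i\neq 0\}$, I would use the admissible covering
\[
\{g_i\neq 0\}\;=\;\bigcup_{n\geq 0}\{|g_i|\geq|\pi|^n\}
\]
by retrocompact semi-affinoid subdomains of $X$ (this is the uniformly rigid analogue of the standard covering in rigid geometry and is available from the results of \cite{urigspaces}). Combined with the $(X_j)_{j\in I_R}$, this produces an admissible covering of $X$ by retrocompact semi-affinoid subdomains, infinite a priori in the Zariski directions but nested in $n$ for each fixed $i\in I_Z$.

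The crux is to extract a finite subcovering, i.e.\ to find exponents $n_i\in\N$ with
\[
X\;=\;\bigcup_{i\in I_Z}\{|g_i|\geq|\pi|^{n_i}\}\;\cup\;\bigcup_{j\in I_R}X_j\;.
\]
My plan here is to adapt the partition-of-unity argument underlying the purely Zariski case \cite{urigspaces} Prop.\ 2.40: in that setting, one uses that the $g_i$ generate the unit ideal of $A$ and writes $1=\sum h_ig_i$ with the $h_i$ lying in $\pi^{-N}\ul{A}$ for a common $R$-model $\ul{A}$ of ff type, which forces $\max_i|g_i(x)|\geq|\pi|^N$ at every physical point $x$. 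In the mixed case, one can choose, for each $j\in I_R$, a formal $R$-model of ff type for $X_j$ compatible with a fixed formal model $\fX=\Spf\ul{A}$ of $X$, so that after a single admissible formal blowup of $\fX$, the $X_j$ become generic fibers of open formal subschemes and the $g_i$ become global sections of a suitable ideal on the blowup. The partition-of-unity argument can then be run on the quotient by the ideal cutting out $\bigcup_{j\in I_R}X_j$ at the formal level, yielding a uniform choice of exponents $n_i$. The resulting finite retrocompact refinement is then $\sT_\srig$-admissible by \cite{urigspaces} Def.\ 2.33.

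The main obstacle is precisely this finite-subcovering step, and in particular avoiding circularity with the tube statement that is the main goal of the present subsection. The partition-of-unity/formal-blowup approach above side-steps this circularity because it works directly on the formal model level and only invokes the Zariski-only case \cite{urigspaces} Prop.\ 2.40, which is a prerequisite rather than a consequence of the tube theorem.
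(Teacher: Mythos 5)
Your reduction to basic Zariski-opens is the same as the paper's first move, but the finite-subcovering step — which you yourself flag as "the crux" — has a genuine gap, and the proposed mechanism for closing it does not work. Two points.

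First, the claim that after a \emph{single} admissible formal blowup of $\fX$ all the $X_j$, $j\in I_R$, can simultaneously be represented by open formal subschemes is exactly what fails in the formally-finite-type setting. A retrocompact semi-affinoid subdomain is represented (\cite{urigspaces} Def.\ 2.22, Cor.\ 2.27) by a chain of open immersions and admissible blowups, interleaved; these operations do not commute, and one cannot in general collapse several such chains to a common blowup of $\fX$. This is precisely the reason the paper's proof introduces the rooted tree $J'$ and the recursive gluing of formal models $(\fX_j)_{j\in J'}$ — the tree records the iteration that a single blowup cannot absorb.

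Second, the phrase "run the partition-of-unity argument on the quotient by the ideal cutting out $\bigcup_{j\in I_R}X_j$" does not make sense: the $X_j$ are admissible \emph{opens}, not closed analytic subsets, so their union is not cut out by an ideal; and the set-theoretic complement of $\bigcup_j X_j$ in $X$ is a closed subset of the underlying space that is in general not the physical space of a closed semi-affinoid subspace. Consequently there is no ring $A/I$ on which to write $1=\sum h_i g_i$ with uniform bounds, and the exponents $n_i$ you want cannot be extracted this way. The paper avoids this entirely: it first shows, by induction on $|I_z|$ combined with the purely Zariski case of \cite{urigspaces} Prop.\ 2.40, that one may refine by finitely many retrocompact pieces so that the Zariski-closed sets $Z_j=V(f_j)$ are contained in $\bigcup_{i\in I_r}X_i$; then it passes to the tree of formal models, takes schematic closures $\fZ_{ij}$ of $Z_i\cap X_j$ in $\fX_j$, and uses surjectivity of the specialization maps onto closed points to establish that the complements $\fX_j\setminus\fZ_{ij}$ together with the leaves of the tree give a retrocompact covering. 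These are the two ideas your sketch is missing.
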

\begin{proof}
Let us write $I$ as a disjoint union of subsets $I_z$ and $I_r$ such that $X_i$ is Zariski-open in $X$ for every $i\in I_z$ and such that $X_i$ is a retrocompact semi-affinoid subdomain in $X$ for every $i\in I_r$. After passing to a refinement, we may assume that for each $i\in I_z$, $X_i=D(f_i)$ for some semi-affinoid function $f_i$ on $X$, where $D(f_i)$ denotes the non-vanishing locus of $f_i$. We proceed by induction on the cardinality of $I_z$. If $I_z$ is empty, there is nothing to show, so let us assume that $I_z$ is nonempty.

In a first step, we show that we can refine the covering $(X_i)_{i\in I}$ by adding finitely many retrocompact subdomains such that, after passing to this refinement, the inclusion
\[
\bigcup_{j\in I_z}Z_j\subseteq\bigcup_{i\in I_r}X_i\quad\quad(*)
\]
holds: let us fix an element $j\in I_z$, and let $Z_j=V(f_j)\subseteq X$ denote the Zariski-closed complement of $X_j$ in $X$, which we consider as a reduced closed semi-affinoid subspace of $X$. For each $i\in I_z$, the intersection $X_i\cap Z_j$ is the non-vanishing locus of the restriction of $f_i$ to $Z_j$; hence by our induction hypothesis, the covering $(X_i\cap{Z_j})_{i\in I\setminus\{j\}}$ of $Z_j$ has a retrocompact refinement $(Z_{jl})_{l\in L_j}$. Let $\alpha_j\colon L_j\rightarrow I\setminus\{j\}$ be a corresponding refinement map, and let $l$ be an element of $\alpha_j^{-1}(I_z\setminus\{j\})$, so that the retrocompact subdomain $Z_{jl}\subseteq Z_j$ is contained in the Zariski-open subset $Z_j\cap X_{\alpha_j(l)}$ of $Z_j$. We claim that there exists a retrocompact semi-affinoid subdomain $X_{jl}'$ in $X$ that is contained in $X_{\alpha_j(l)}$ and that contains $Z_{jl}$. Indeed, by \cite{urigspaces} Prop.\ 2.40 and the subsequent discussion, there exists a $\sT_\srig$-admissible covering of $X_{\alpha_j(l)}$ by retrocompact semi-affinoid subdomains $X_{\alpha_j(l),\geq\varepsilon}\subseteq X$, with $\varepsilon\in\sqrt{|K^*|}$ tending to zero. It follows that $(X_{\alpha_j(l),\geq\varepsilon}\cap Z_j)_{\varepsilon\rightarrow 0}$ is an admissible covering of $Z_j\cap X_{\alpha_j(l)}$; since $Z_{jl}$ is quasi-compact, it suffices to take $X'_{jl}=X_{\alpha_j(l),\geq\varepsilon}$ with $\varepsilon>0$ small enough. If we enlarge the covering $(X_i)_{i\in I}$ by adding the finitely many retrocompact subdomains $X'_{jl}$ with varying $l$ and $j$, then the new covering which we obtain is a finite refinement of of the original one since $X'_{jl}\subseteq X_{\alpha_j(l)}$,  this refinement satisfies ($*$) holds, as desired. Thus from now on, we may and do assume that ($*$) holds for $(X_i)_{i\in I}$.

Let $\fX$ be a flat affine formal $R$-model of ff type for $X$. For each $i\in I_r$, we consider a representation $\fX_i\rightarrow\fX$ of $X_i$ as a retrocompact semi-affinoid subdomain of $X$ with respect to $\fX$ in the sense of \cite{urigspaces} Def.\ 2.22, together with a factorization into open immersions and admissible blowups; the existence of such formal data is a consequence of \cite{urigspaces} Cor.\ 2.27 and the remark following \cite{urigspaces} Cor.\ 2.29. Let $J$ denote the almost linear rooted tree obtained by glueing the resulting linear rooted trees along their roots, let $r\in J$ be the root, and let $(\fX_j, \phi_j,\beta_j)_{j\in J}$ be the associated formal data, where for each $j\in J$ which is not a leaf, $\beta_j:\fX_j'\rightarrow\fX_j$ is an admissible blowup, and where $\phi_j:\fX_j\hookrightarrow\fX_{\parent(j)}'$ is an open immersion for each $j\in J\setminus\{r\}$. We construct a rooted tree $J'$ as follows: let $J'$ first be a copy of $J$, and let $r'\in J'$ denote its root. For each ordered pair $(j,j')\in\children(r)\times\children(r)$ with $j'\neq j$, we glue a copy of $\subtree(j')$ to the vertex in $J'$ corresponding to $j$. Now $J'$ is a rooted tree such that $\children(r)$ is naturally identified with $\children(r')$ and such that for each $j\in\children(r')$, $\subtree(j)$ is a linear rooted tree with volume $v(J)-1$. We can now recursively apply this construction to each such $\subtree(j)$, for $j\in\children(r')$; since the volumes strictly decrease, this process must terminate. Let $J'$ denote the rooted tree that we obtain in the end. Via pullback in the category of flat formal $R$-schemes of ff type (cf.\ \cite{urigspaces} Lemma 2.24), $(\fX_j,\phi_j,\beta_j)_{j\in J}$ induces a formal structure $(\fX_j,\phi_j,\beta_j)_{j\in J'}$ on $J'$. In particular, for each $j\in J'$ we have an attached flat formal $R$-scheme of ff type $\fX_j$ with a uniformly rigid generic fiber $X_j$. If $j\in\leaves(J')$, then by \cite{urigspaces} Cor.\ 2.25, $X_j$ is a finite intersection of the $X_i$, with $i$ varying in $I_r$; moreover,
\[
\bigcup_{i\in I_r}X_i\,=\,\bigcup_{j\in\leaves(J')}X_j\quad\quad(**)
\]
by construction.

For each $i\in I_z$ and each $j\in J'$, we let $\fZ_{ij}$ denote the schematic closure of $Z_i\cap X_j$ in $\fX_j$. Since $\fX_j$ is noetherian, the open formal subscheme $\fX_j\setminus\fZ_{ij}$ has a finite affine open covering. Hence the uniformly rigid generic fiber $(\fX_j\setminus\fZ_{ij})^\urig$ is a finite union of retrocompact semi-affinoid subdomains in $X$ which is contained in the complement $X_i$ of $Z_i$. It thus suffices to see that the $X_j$, $j\in \leaves(J')$, together with the $(\fX_j\setminus\fZ_{ij})^\srig$ with $i\in I_z$, $j\in J'$ cover $X$, because by what we have just seen, we then obtain a retrocompact refinement of $(X_i)_{i\in I}$. Let $x\in X$ be a point; we must show that
\[
x\in \bigcup_{j\in\leaves(J')}X_j\quad\cup\quad\bigcup_{j\in J',i\in I_z}(\fX_j\setminus\fZ_{ij})^\srig\;.
\]
By ($*$) and ($**$), we have an inclusion $\bigcup_{i\in I_z} Z_i\subseteq\bigcup_{j\in \children(r')} X_j$. Since the $\fZ_{ij}$ are $R$-flat, the associated specialization maps $\sp_{\fZ_{ij}}$ are surjective onto closed points; since the closed points lie very dense, we thus obtain an inclusion $\bigcup_{i\in I_z}\fZ_{ir'}\subseteq\bigcup_{j\in\children(r')}\fX_j$. Hence, $(\fX_j)_{j\in\children(r')}$ together with $(\fX_{r'}\setminus\fZ_{ir})_{i\in I_z}$ covers $\fX_{r'}$, and it follows that the corresponding uniformly rigid generic fibers cover $X$. If $x\in(\fX_{r'}\setminus\fZ_{ir})^\srig$ for some $i\in I_z$, there is nothing more to show. If this is not the case, then $x\in X_j$ for some $j\in\children(r')$, and we may replace $J'$ by $\subtree(j)$. By induction on the volume of $J'$, the statement follows.
\end{proof}

Let us recall from \cite{urigspaces} Cor.\ 2.37 ($ii$) that finite unions of retrocompact semi-affinoid subdomains in semi-affinoid $K$-spaces are admissible. We can now prove the uniformly rigid analog of \cite{kisinlocconst} Lemma 2.3:

\begin{cor}\label{tubecor}
Let $X=\sSp A$ be a semi-affinoid $K$-space, and let $Z\subseteq X$ be a closed uniformly rigid subspace which is defined by functions $f_1,\ldots,f_n\in A$ and which is contained in a finite union $U$ of retrocompact semi-affinoid subdomains in $X$. There exists some $\varepsilon\in\sqrt{|K^*|}$ such that the tube\index{tube} 
\[
X_{\leq\varepsilon}\,\mathrel{\mathop:}=\,X(\varepsilon^{-1}f_1,\ldots,\varepsilon^{-1}f_n)\,\mathrel{\mathop:}=\,\{x\in X\,;\,|f_i(x)|\leq\varepsilon\,\,\forall\, i=1,\ldots,n\}
\]
is contained in $U$.
\end{cor}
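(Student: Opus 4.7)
The plan is to follow the strategy of \cite{kisinlocconst} Lemma 2.3, using Proposition \ref{zarretromixadmprop} in place of the rigid-analytic mixed-admissibility result invoked there. First, I would reduce the problem to a retrocompact covering. For each $i$, let $D(f_i)\subseteq X$ denote the Zariski-open non-vanishing locus of $f_i$ in the sense of \cite{urigspaces} Prop.\ 2.40, so that $X\setminus Z=\bigcup_i D(f_i)$. Write $U=U_1\cup\cdots\cup U_m$ as a finite union of retrocompact semi-affinoid subdomains. Since $Z\subseteq U$ by assumption, the mixed family
\[
(U_1,\ldots,U_m,D(f_1),\ldots,D(f_n))
\]
covers $X$, and by Proposition \ref{zarretromixadmprop} it admits a finite retrocompact refinement $(W_\alpha)_{\alpha\in A}$ together with a refinement map sending each $\alpha$ either to some $U_{j(\alpha)}$ (call $\alpha$ \emph{good}) or to some $D(f_{i(\alpha)})$ (call $\alpha$ \emph{bad}).

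The next step is to extract, for each bad $\alpha$, a lower bound $\varepsilon_\alpha>0$ with $|f_{i(\alpha)}(x)|\geq\varepsilon_\alpha$ on $W_\alpha$. Because $W_\alpha\subseteq D(f_{i(\alpha)})$, the restriction of $f_{i(\alpha)}$ is a unit in the semi-affinoid algebra $B_\alpha$ of global functions on $W_\alpha$, so its inverse lies in $B_\alpha$. The spectral seminorm on a semi-affinoid algebra is finite-valued, hence this inverse is bounded by some $M_\alpha<\infty$; the pointwise identity $|f_{i(\alpha)}(x)|\cdot|f_{i(\alpha)}^{-1}(x)|=1$ in the residue field of $x$ then yields $|f_{i(\alpha)}(x)|\geq\varepsilon_\alpha:=M_\alpha^{-1}$. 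Since there are only finitely many bad indices, this produces a uniform positive bound $\varepsilon_0:=\min_\alpha\varepsilon_\alpha>0$.

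Finally, I would choose any $\varepsilon\in\sqrt{|K^*|}$ with $\varepsilon<\varepsilon_0$. For $x\in X_{\leq\varepsilon}$ one has $|f_i(x)|\leq\varepsilon<\varepsilon_\alpha$ for every $i$ and every bad $\alpha$, so $x$ cannot lie in any bad $W_\alpha$. Since $(W_\alpha)_{\alpha\in A}$ covers $X$, it follows that $x\in W_\alpha\subseteq U_{j(\alpha)}\subseteq U$ for some good $\alpha$, giving $X_{\leq\varepsilon}\subseteq U$ as desired.

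The step I expect to be the main obstacle is the boundedness of the inverse of a semi-affinoid unit, i.e.\ the finiteness of the spectral seminorm on a general semi-affinoid algebra. In the affinoid setting this is the maximum modulus principle; in the semi-affinoid setting it has to be deduced from the description of semi-affinoid algebras as quotients of mixed power-series/Tate algebras equipped with a Gauss-type norm, which should be available via the formal $R$-models of ff type used throughout \cite{urigspaces}. Once this input is secured, Proposition \ref{zarretromixadmprop} supplies the combinatorial reduction and the rest of the argument is a direct transcription of the rigid-analytic proof in \cite{kisinlocconst}.
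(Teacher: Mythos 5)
Your argument is correct, and it takes a genuinely different route from the paper's. The paper also invokes Proposition \ref{zarretromixadmprop}, but only to establish that $(X_f,U)$ (with $X_f=X\setminus Z$) is an admissible covering of $X$. It then combines this with the admissible covering $(X_{\geq\delta})_\delta$ of $X_f$ coming from \cite{urigspaces} Prop.\ 2.40, uses transitivity of admissibility, and finally invokes quasi-compactness of $X$ to extract a finite subcovering $X=U\cup X_{\geq\delta}$, from which the result follows by disjointness of $X_{\leq\varepsilon}$ and $X_{\geq\delta}$ once $\varepsilon<\delta$. You instead extract a concrete retrocompact refinement $(W_\alpha)$ of the mixed covering $(U_1,\ldots,U_m,D(f_1),\ldots,D(f_n))$ and bound $|f_{i(\alpha)}|$ from below on each ``bad'' piece via the supremum norm of the inverse of the unit $f_{i(\alpha)}|_{W_\alpha}$. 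Your approach buys an explicit $\varepsilon$ (the reciprocal of a maximum of finitely many sup-norms) at the cost of needing the finiteness of the supremum seminorm on semi-affinoid algebras; this input is indeed available, since any semi-affinoid $K$-algebra is $\ul{A}\otimes_RK$ for an $R$-model $\ul{A}$ of ff type whose elements are power-bounded, so every element is $\pi^{-n}g$ with $\|g\|_{\sup}\leq 1$. The paper's approach avoids any metric estimate by working purely with the G-topology, trading your explicit bound for the quasi-compactness argument. One point worth stating explicitly in your write-up, which you gloss over: a nowhere-vanishing function on a semi-affinoid $K$-space is a unit of its algebra because the physical points correspond to the maximal ideals (so a non-unit would vanish somewhere). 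With that noted, the proposal is complete and correct.
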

\begin{proof}
Let $X_f\subseteq X$ denote the complement of $Z$ in $X$; then Proposition \ref{zarretromixadmprop} shows that $(X_f,U)$ is an admissible covering of $X$. By \cite{urigspaces} Prop.\ 2.40 and the subsequent remark, $(X_{\geq\delta})_\delta$ is an admissible covering of $X_f$, where $\delta$ varies in $\sqrt{|K^*|}$ and where 
\[
X_{\geq\delta}\,=\,\bigcup_{i=1}^n\,\{x\in X\,;\,|f_i(x)|\geq\delta\}\;.
\]
By transitivity of admissibility for coverings, it follows that $(X_{\geq\delta})_\delta\cup U$ is an admissible covering of $X$. Since $X$ is quasi-compact, this covering admits a finite subcovering, which means that $X=U\cup X_{\geq\delta}$ for $\delta$ small enough. If we choose $\varepsilon<\delta$, then $X_{\leq\varepsilon}$ and $X_{\geq\delta}$ are disjoint, and it follows that $X_{\leq\varepsilon}\subseteq U$, as desired.
\end{proof}

The following lemma shows that Corollary \ref{tubecor} can be applied in interesting situations:

\begin{lem}\label{tubeauxlem}
Let $Y$ be a semi-affinoid $K$-space, let $\fX$ be a flat formal $R$-scheme of locally tf type with uniformly rigid generic fiber $X$, let $\fU\subseteq\fX$ be an affine open subscheme, and let $\phi\colon Y\rightarrow X$ be a uniformly rigid morphism. Then $\phi^{-1}(\fU^\srig)$ is a finite union of retrocompact semi-affinoid subdomains in $Y$.
\end{lem}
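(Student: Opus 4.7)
The plan is to use the graph-properness results of Section \ref{graphpropersec} to extend $\phi$ to a formal morphism on a proper modification of an affine model of $Y$, then take the formal preimage of $\fU$ and cover it by affine opens. The first step is to reduce to the case where $\fX$ is affine (in particular separated). Cover $\fX$ by affine opens $\{\fX_\alpha\}$ and obtain the admissible cover $\{\phi^{-1}(\fX_\alpha^\urig)\}$ of $Y$; since the retrocompact semi-affinoid subdomains of $Y$ form a basis for the uniformly rigid G-topology and $Y$ is quasi-compact, I can cover $Y$ by finitely many retrocompact semi-affinoid subdomains $V_1,\dots,V_m$ each satisfying $\phi(V_k)\subseteq\fX_{\alpha(k)}^\urig$ for some $\alpha(k)$. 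Each $V_k$ is itself semi-affinoid, and $\phi|_{V_k}$ factors through the affine $\fX_{\alpha(k)}^\urig$; applying the lemma (inductively) with $Y$ replaced by $V_k$, $\fX$ replaced by $\fX_{\alpha(k)}$ and $\fU$ replaced by $\fU\cap\fX_{\alpha(k)}$ gives the general statement, because a retrocompact semi-affinoid subdomain of a retrocompact semi-affinoid subdomain of $Y$ is again such a subdomain of $Y$. Thus I may assume $\fX$ is affine.

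Now let $\fY$ be a flat affine formal $R$-model of ff type for $Y$, and apply Corollary \ref{graphpropercor}: there exists a proper morphism $\ul{\psi}\colon\fY'\to\fY$ inducing an isomorphism on uniformly rigid generic fibers, together with a formal morphism $\ul{\phi}\colon\fY'\to\fX$ whose uniformly rigid generic fiber is $\phi$. Then $\ul{\phi}^{-1}(\fU)\subseteq\fY'$ is an open formal subscheme with uniformly rigid generic fiber $\phi^{-1}(\fU^\urig)$. Since $\fY$ is affine and $\ul{\psi}$ is proper, $\fY'$ is quasi-compact and locally noetherian, hence noetherian; therefore $\ul{\phi}^{-1}(\fU)$ is quasi-compact, and I cover it by finitely many affine opens $\fW_1,\dots,\fW_n$. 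Their uniformly rigid generic fibers $\fW_j^\urig$ then cover $\phi^{-1}(\fU^\urig)$.

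It remains to show that each $\fW_j^\urig$ is a retrocompact semi-affinoid subdomain of $Y=\fY^\urig$ in the sense of \cite{urigspaces} Definition 2.22, i.e.\ that the composition $\fW_j\hookrightarrow\fY'\xrightarrow{\ul{\psi}}\fY$ can be presented as a chain of admissible blowups and affine open immersions. The inclusion $\fW_j\hookrightarrow\fY'$ is already an affine open immersion, so I plan to dominate $\ul{\psi}$ by a finite admissible blowup of $\fY$ — an ff-type analogue of Raynaud's theorem extending Lemma \ref{fingenisoisblowuplem} from finite to proper morphisms of flat ff-type formal $R$-schemes inducing isomorphisms on rigid generic fibers. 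Pulling $\fW_j$ back to this blowup identifies it with an affine open therein, yielding the desired presentation. The principal technical difficulty lies in this last step, namely constructing the dominating admissible blowup in the ff-type setting; the initial refinement step, which requires the basis property of retrocompact subdomains, also needs careful bookkeeping.
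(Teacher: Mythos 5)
Your approach is genuinely different from the paper's. The paper reduces to the case where $\fX$ is affine (essentially as you do, by covering $\fX$ by affine opens $\fX_i$ and refining $(\phi^{-1}(\fX_i^\srig))_i$ by retrocompact subdomains) and then simply cites \cite{urigspaces} Cor.\ 2.25 ($i$) together with the remark after Cor.\ 2.29, which already say that the preimage of a retrocompact semi-affinoid subdomain under a morphism of semi-affinoid spaces is weakly retrocompact, i.e.\ a finite union of retrocompact semi-affinoid subdomains. No formal models, no properness. Your detour through Corollary \ref{graphpropercor} replaces a one-line citation by a much heavier piece of machinery.

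The real problem, which you flag yourself, is that your argument is not actually complete, and the gap is not a bookkeeping issue. You need every affine open $\fW_j\subseteq\fY'$ to have $\fW_j^\urig$ a retrocompact semi-affinoid subdomain of $Y$, and for that you want to dominate the proper morphism $\ul{\psi}\colon\fY'\to\fY$ by an admissible blowup of $\fY$. That is Raynaud's flattening theorem, which is established in \cite{frg1} and \cite{bosch_frgnotes} only for formal schemes of locally \emph{tf} type; nothing in this paper (nor, as far as the paper cites, in \cite{urigspaces}) establishes the ff-type analogue, and the paper goes out of its way to emphasize that ff-type modifications are more delicate (cf.\ Remark ($1$) following Corollary \ref{graphpropercor}: the proper morphism $p_\fY|_{\ul{\Gamma}_\phi}$ "will in general not be induced, locally on $\fY$, by proper morphisms of schemes via formal completion"). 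Even if that domination were available, the subsequent claim that "pulling $\fW_j$ back to this blowup identifies it with an affine open therein" is false as stated: the pullback of an affine open under a blowup need not be affine, so you would still have to cover it by affines and re-examine the resulting chains. Separately, in your initial reduction you replace $\fU$ by $\fU\cap\fX_{\alpha(k)}$, but this intersection need not be affine since $\fX$ is not assumed separated, so you would again have to pass to an affine cover before invoking the lemma inductively. In short: the skeleton of your reduction matches the paper's, but the core step is an unproved ff-type flattening statement whose truth is precisely the kind of thing one should not assume, and the paper sidesteps it entirely by using the uniformly rigid subdomain calculus already developed in \cite{urigspaces}.
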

\begin{proof}
Let $(\fX_i)_{i\in I}$ be an affine open covering of $\fX$, and let $(X_i)_{i\in I}$ be the induced admissible semi-affinoid covering of $X$ (cf.\ \cite{urigspaces} Def.\ 2.46 ($i$). Let $(Y_j)_{j\in J}$ be a retrocompact refinement of $(\phi^{-1}(X_i))_{i\in I}$ (cf.\ \cite{urigspaces} 2.36 ($ii$)), and let $\psi\colon J\rightarrow I$ be a corresponding refinement map. Let us write $U\mathrel{\mathop:}=\fU^\srig$, and for each $i\in I$, let us write $U_i\mathrel{\mathop:}=U\cap X_i$. Then
\[
\phi^{-1}(U)\,=\,\bigcup_{j\in J}(\phi|_{Y_j})^{-1}(U_{\psi(j)})\;.
\]
Since $Y_j$ is retrocompact in $Y$ for all $j$, it suffices to show that for any $j\in J$, the preimage $(\phi|_{Y_j})^{-1}(U_{\psi(j)})$ is a finite union of retrocompact semi-affinoid subdomains in $Y_j$. Hence we may thus assume that $\fX$ is affine, which implies that $X$ is semi-affinoid and that $U$ is a retrocompact semi-affinoid subdomain in $X$. By \cite{urigspaces} Cor.\ 2.25 ($i$) and the remark following \cite{urigspaces} Cor.\ 2.29, $\phi^{-1}(U)$ is weakly retrocompact in $Y$, as desired.
\end{proof}

\section{Algebraizations and $R$-Envelopes}\label{envsection}

As we have pointed out in the introduction, smooth formal $R$-schemes of locally ff type $\fX$ lack points with values in finite unramified extensions of $R$. If we replace the Jacobson-adic topology on $R[[S]]$, say, by the $\pi$-adic topology, then new unramified points appear: for instance, if $R[[S]]^\pi$ denotes the ring $R[[S]]$ with its $\pi$-adic topology, then the stalk of the formal $R$-scheme $\Spf R[[S]]^\pi$ in the generic point of its special fiber $\Spec k[[S]]$ is a discrete valuation ring which is a formally unramified local extension of $R$; it corresponds to the Gauss point in $M(R[[S]]\otimes_RK)$, \cite{urigspaces} Section 4. This observation motivates the following definition:

\begin{defi}\label{envdefi}\index{envelope!proper}
Let $\fS$ be an affine formal $R$-scheme of ff type, let $A$ denote its topological ring of global sections, and let $\fX$ be a proper formal $\fS$-scheme. We write $A^\pi$ and $A^\textup{discrete}$ to denote the ring $A$ equipped with its $\pi$-adic and its discrete topology respectively.
\begin{enumerate}
\item The algebraization of $\fS$ is the completion morphism $\fS\rightarrow\fS_\alg$ of affine noetherian formal schemes which, on the level of topological rings of global sections, corresponds to the completion homomorphism $A^\textup{discrete}\rightarrow A$.
\item The $R$-envelope of $\fS$ is the completion morphism $\fS\rightarrow\fS_\pi$ of affine noetherian formal schemes which, on the level of topological rings of global sections, corresponds to the completion homomorphism $A^\pi\rightarrow A$. 
\item An algebraization of $\fX\rightarrow\fS$ is a proper $\fS_\alg$-scheme $\fX_\alg$ such that $\fX\rightarrow\fS$ is obtained from $\fX_\alg\rightarrow\fS_\alg$ via formal completion along a subscheme of definition of $\fS$.
\item An $R$-envelope of $\fX\rightarrow\fS$ is a proper formal $\fS_\pi$-scheme $\fX_\pi$ such that $\fX\rightarrow\fS$ is obtained from $\fX_\pi\rightarrow\fS_\pi$ via formal completion along a subscheme of definition of $\fS$.
\end{enumerate}
\end{defi}

\begin{remark}\label{envremarks}
Let us make the following remarks:
\begin{enumerate}
\item By \cite{egain} 0.7.2.4, the ring $A$ is complete and separated in its $\pi$-adic topology; hence $\Spf A^\pi$ is well-defined.
\item There is a natural factorization $\fS\rightarrow\fS_\pi\rightarrow\fS_\alg$ of completion morphisms, and every algebraization $\fX_\alg\rightarrow\fS_\alg$ induces an $R$-envelope via $\pi$-adic formal completion, that is, via pullback under $\fS_\pi\rightarrow\fS_\alg$.
\item The algebraization and the $R$-envelope of $\fX\rightarrow\fS$ are unique up to unique isomorphism if they exist. This follows from Grothendieck's Formal Existence Theorem (cf.\ \cite{egaiii} 5.4.1) which, in the case of $R$-envelopes, applies after reducing modulo powers of $\pi$.
\item Blowups algebraize to blowups, and admissible blowups algebraize to admissible blowups.
\item The concept of	 $R$-envelopes of affine formal $R$-schemes of ff type has already appeared in \cite{strauch_deformation} and \cite{huber_finiteness_ii}.
\item The formation of algebraizations of $R$-envelopes does not commute with localization or with the formation of fibered products; moreover, fibered products of algebraizations or $R$-envelopes need not be locally noetherian. 
\item Special fibers of $R$-envelopes are $k$-schemes, and generic fibers of $R$-envelopes exist within Huber's category of adic spaces. The complement of the adic generic fiber of $\fS$ within the adic generic fiber of $\fS_\pi$ will be called the boundary of $\fS^\rig$.
\end{enumerate}
\end{remark}

Letf $\fY$ be an affine smooth formal $R$-scheme of ff type, let $\fX$ be a separated formal $R$-scheme of locally tf type, let $\phi\colon Y\rightarrow X$ be a morphism of uniformly rigid generic fibers, and let $\ul{\Gamma}_\phi$ denote the schematic closure of the graph of $\phi$ in $\fY\times\fX$. In Section \ref{graphpropersec}, we have seen, under more general assumptions, that the projection $p_\fY|_{\ul{\Gamma}_\phi}:\ul{\Gamma}_\phi\rightarrow\fY$ is proper. In this section, we will prove that
\begin{enumerate}
\item the $R$-envelope $(p_\fY|_{\ul{\Gamma}_\phi})_\pi:\ul{\Gamma}_\pi\rightarrow\fY_\pi$ of $p_\fY|_{\ul{\Gamma}_\phi}$ exists, and
\item this $R$-envelope is an isomorphism above the generic points of the special fiber $\fY_{\pi,k}$. 
\end{enumerate}

A corollary of this result may colloquially be stated as follows: for $\fX$ and $\fY$ as above, if $\phi$ is a morphism of rigid generic fibers respecting the natural uniform structures, then, on the level of adic spaces, $\phi$ extends to certain formally unramified boundary points of the generic fiber of $\fY$.

\subsection{Generic fibers of $R$-envelopes}\label{genfibenvsec}

In the proof of Theorem \ref{mainweilextthm}, we will need to use generic fibers\index{generic fiber!of an envelope} of $R$-envelope, which exist in Huber's category of adic spaces. For an introduction to the theory of adic spaces, we refer to \cite{huber_adicsppaper} and \cite{huberbook} Chapter 1. The adic space $t(\fX)$ associated to a locally noetherian formal scheme $\fX$ is defined in \cite{huber_adicsppaper} 4.1. By \cite{huber_adicsppaper}, the functor  $t:\fX\mapsto t(\fX)$ is fully faithful and commutes with open immersions. If $\fX=\Spf A$ is affine, then $t(\fX)\mathrel{\mathop:}=\Spa(A,A)$ is affinoid. By \cite{huberbook} 1.2.2, the fibered product 
\[
t(\fX)\otimes_RK\,\mathrel{\mathop:}=\,t(\fX)\times_{\Spa(R,R)}\Spa(K,R)
\]
exists for any locally noetherian formal $R$-scheme $\fX$. Moreover, it is seen from the explicit construction in the proof of \cite{huberbook} 1.2.2 that $t(\fX)\otimes_RK$ is affinoid whenever $\fX$ is affine and adic over $R$. Let us point out that $t(\fX)\otimes_RK$ needs not even be quasi-compact when $\fX$ is not adic over $R$. Indeed, if $\fX$ is of locally ff type over $R$, it is seen from the construction in the proof of \cite{huberbook} 1.2.2 that $t(\fX)\otimes_RK$ is the adic space $(\fX^\rig)^\ad$ associated to the Berthelot generic fiber  of $\fX$. Here $\ad$ denotes the natural fully faithful functor from the category of rigid $K$-spaces to the category of adic $\Spa(K,R)$-spaces. We refer to \cite{huber_adicsppaper} 4.3 for the definition of $\ad$.

If $\fX$ is a locally noetherian formal $R$-scheme, not necessarily of locally ff type, the adic space $t(\fX)\otimes_RK$ is called the generic fiber of $\fX$. We will briefly write
\[
\fX_K\,\mathrel{\mathop:}=\,t(\fX)\otimes_RK\quad.
\]


\subsection{Existence of $R$-envelopes of graphs}\label{graphenvexsec}

Let $\fX$ be a separated formal $R$-scheme of locally tf type, let $\fY$ be an affine formal $R$-scheme of ff type, let $X$ and $Y$ denote the uniformly rigid generic fibers of $\fX$ and $\fY$, and let $\phi:Y\rightarrow X$ be a morphism. By Theorem \ref{graphproperthm}, which holds under more general assumptions, the projection $p_\fY|_{\ul{\Gamma}_\phi}$ from the schematic closure $\ul{\Gamma}_\phi$ of the graph of $\phi$ in $\fY\times\fX$ to $\fY$ is proper. 

\begin{prop}\label{graphenvexprop}
There exists a unique closed formal subscheme 
\[
(\ul{\Gamma}_\phi)_\pi\,\subseteq\,\fY_\pi\times\fX
\]
such that  the restriction $p_{\fY_\pi}|_{(\ul{\Gamma}_\phi)_\pi}$ of the projection $p_{\fY_\pi}$ is the $R$-envelope of $p_\fY|_{\ul{\Gamma}_\phi}$.
\end{prop}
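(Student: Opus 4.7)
My plan is to construct $(\ul{\Gamma}_\phi)_\pi$ locally on $\fX$ and to glue using uniqueness. The key observation is that when $\fX=\Spf C$ is affine, the composition $\ul{\Gamma}_\phi\hookrightarrow\fY\times\fX\to\fY$ is both a closed immersion into an affine formal scheme (hence affine) and proper (by Theorem \ref{graphproperthm}), hence finite. Writing $\fY\times\fX=\Spf B$ with $B$ the $\m$-adic completion of $A\otimes_RC$, we obtain $\ul{\Gamma}_\phi=\Spf B/I$ with $B/I$ a finite $A$-algebra.

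Let $B_\pi$ denote the $\pi$-adic completion of $A\otimes_RC$. Both $B$ and $B_\pi$ are noetherian, and the natural homomorphism $B_\pi\to B$ identifies $B$ with the $\m$-adic completion of $B_\pi$; it is injective by Krull's intersection theorem, since $\m$ lies in the Jacobson radical of $B_\pi$. Define $I_\pi:=I\cap B_\pi$, a finitely generated ideal by noetherianity, and set $(\ul{\Gamma}_\phi)_\pi:=\Spf B_\pi/I_\pi\subseteq\fY_\pi\times\fX$. The identity $I_\pi B=I$---equivalent to saying that the $\m$-adic formal completion of $(\ul{\Gamma}_\phi)_\pi$ is $\ul{\Gamma}_\phi$---follows from a density argument: the injection $B_\pi/I_\pi\hookrightarrow B/I$ has $\m$-adically dense image, and its target is complete and Hausdorff (as $I$ is closed in $B$), so $B/I$ is identified with the $\m$-adic completion of $B_\pi/I_\pi$, which by noetherian completion equals $B/I_\pi B$.

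The main obstacle, properness of $(\ul{\Gamma}_\phi)_\pi\to\fY_\pi$, becomes transparent given the finiteness of $\ul{\Gamma}_\phi$ over $\fY$. Indeed, $B_\pi/I_\pi$ embeds into its $\m$-adic completion $B/I$ (by Krull, since $\m$ lies in the Jacobson radical); since $B/I$ is finite over $A$ and $A$ is noetherian, the $A$-submodule $B_\pi/I_\pi$ of $B/I$ is itself finite over $A=A^\pi$. Hence $(\ul{\Gamma}_\phi)_\pi\to\fY_\pi$ is finite and in particular proper.

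Uniqueness follows from Grothendieck's formal existence theorem (\cite{egaiii} 5.4.1) applied modulo powers of $\pi$, as indicated in Remark \ref{envremarks} (iii); by uniqueness, the local constructions over different affine opens of $\fX$ agree on overlaps and glue to a global closed formal subscheme $(\ul{\Gamma}_\phi)_\pi\subseteq\fY_\pi\times\fX$ with the required property.
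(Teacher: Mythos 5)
Your construction is fine in the special case where $\fX$ is itself affine, but the passage to general $\fX$ by localizing on $\fX$ and gluing does not go through, and this is exactly the content of the proposition that requires work. If $\fX_i\subseteq\fX$ is an affine open, then the piece $\ul{\Gamma}_\phi\cap(\fY\times\fX_i)$ is only an \emph{open} formal subscheme of $\ul{\Gamma}_\phi$, so its projection to $\fY$ is in general neither proper nor finite. Consequently, your ``hence finite'' step, and the finiteness of $B/I$ over $A$ on which the whole Artin--Rees/density mechanism rests, is not available for the local pieces. For the same reason your appeal to uniqueness to glue is vacuous: uniqueness of $R$-envelopes (Remark \ref{envremarks}~(iii)) is a statement about proper $\fS_\pi$-schemes, and the local constructions over $\fX_i$ are not $R$-envelopes of any proper morphism. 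Compatibility on overlaps would thus need a separate argument that the assignment $I\mapsto I\cap B_\pi$ commutes with localization in the $\fX$-direction, and nothing you write establishes this; the general warning that $R$-envelopes do not behave well under localization (Remark \ref{envremarks}~(vi)) should make one suspicious here. The paper avoids the problem entirely by reducing modulo $\pi^{n+1}$ and invoking \cite{egaiii}~5.1.8, which algebraizes closed formal subschemes that are \emph{proper over the base} inside an ambient that need only be separated of finite type; this is precisely tailored to a non-affine $\fX$.

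Two smaller points. First, the assertion that $\a$ (your $\m$) lies in the Jacobson radical of $B_\pi$ is false: take $A=R[[S]]$, $C=R\langle T\rangle$, so $B_\pi/\pi B_\pi=k[[S]][T]$, where $(1-ST)$ is maximal and does not contain $S$. You don't actually need injectivity of $B_\pi\to B$ -- taking $I_\pi$ to be the preimage of $I$ gives the injection $B_\pi/I_\pi\hookrightarrow B/I$ for free -- but the justification you give is wrong. Second, the density argument as stated is incomplete: dense image in a complete Hausdorff target shows the map from the completion is surjective, not injective; to conclude that the $\a$-adic completion of $B_\pi/I_\pi$ \emph{is} $B/I$ (rather than merely surjecting onto it), you need the $\a$-adic topology on $B_\pi/I_\pi$ to coincide with the subspace topology from $B/I$, which follows from Artin--Rees once both are known to be finite $A$-modules. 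In other words, the finiteness you prove in the next paragraph has to come first; as written, the two steps are logically circular.
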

\begin{proof}
For $n\in\N$, we let a subscript $n$ indicate reduction modulo $\pi^{n+1}$. The affine formal scheme $\fY_n$ is the completion of the affine scheme $\fY_{\pi,n}$ along a subscheme of definition of $\fY$, and the corresponding completion morphism corresponds to an isomorphism of non-topological rings of global sections. The closed formal subscheme $\ul{\Gamma}_\phi\subseteq\fY\times\fX$ induces closed formal subschemes $(\ul{\Gamma}_\phi)_n\subseteq\fY_n\times\fX_n$, and $\fY_n\times\fX_n$ is the completion of $\fY_{\pi,n}\times\fX_n$. By Corollary \ref{graphpropercor}, $(\ul{\Gamma}_\phi)_n$ is proper over $\fY_n$; hence by \cite{egaiii} 5.1.8, $(\ul{\Gamma}_\phi)_n$ algebraizes to a uniquely determined closed subscheme 
\[
(\ul{\Gamma}_\phi)_{\pi,n}\subseteq\fY_{\pi,n}\times\fX_n\;.
\]
Uniqueness imlies that $(\ul{\Gamma}_\phi)_{\pi,n}=(\ul{\Gamma}_\phi)_{\pi,n+1}\cap(\fY_{\pi,n}\times\fX_n)$. By \cite{egain} 10.6.3, 
\[
(\ul{\Gamma}_\phi)_\pi\,\mathrel{\mathop:}=\,\varinjlim_n\,(\ul{\Gamma}_\phi)_{\pi,n}
\]
is a formal scheme, and one immediately verifies that it is a closed formal subscheme of $\fY_\pi\times\fX$ which is proper over $\fY_\pi$ via $p_{\fY_\pi}$ and whose completion is $\ul{\Gamma}_\phi$.
\end{proof}

\subsection{Open coverings induce fpqc-coverings of algebraizations}

In Remark \ref{envremarks} ($v$), we have mentioned the fact that the formation of algebraizations does not commute with localization: if $\fX$ is an affine formal $R$-scheme of ff type and if $\fU\subseteq\fX$ is an affine open formal subscheme, then $\fU_\alg$ needs not be an open formal subscheme of $\fX_\alg$. However, if $(\fU_i)_{i\in I}$ is an affine open covering of $\fX$, then induced family of morphisms $(\fU_{i,\alg}\rightarrow\fX_\alg)_{i\in I}$ is faithfully flat; that is, an open covering of $\fX$ induces an fpqc-covering (cf.\ \cite{sga3} I Exp.\ IV 6.3.1) of $\fX_\alg$. We prove an analog statement in the proper case:
 
\begin{prop}\label{thefaithflatprop}
Let $\fX$ be an affine formal $R$-scheme of ff type, and let $\phi:\fX'\rightarrow\fX$ be a proper morphism whose algebraization $\phi_\alg:\fX'_\alg\rightarrow\fX_\alg$ exists. Let $(U_i)_{i\in I}$ be a finite affine open covering of $\fX'_\alg$, and let $(\fU_i)_{i\in I}$ be the induced affine open covering of $\fX'$; then the canonical family of morphisms of schemes
\[
(\psi_i\colon \fU_{i,\alg}\rightarrow U_i\subseteq \fX'_\alg)_{i\in I}
\]
is faithfully flat.
\end{prop}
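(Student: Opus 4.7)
The plan is to reduce the proposition to elementary facts about completions of noetherian rings. Let $A$ denote the ring of global sections of $\fX$, and fix an ideal of definition $\mathfrak{a}\subseteq A$; by Definition \ref{envdefi}, $\fX'$ is the formal completion of the scheme $\fX'_\alg$ along the closed subscheme $\phi_\alg^{-1}(V(\mathfrak{a}))$, and in particular $|\fX'|=\phi_\alg^{-1}(V(\mathfrak{a}))$ as a topological subspace of $|\fX'_\alg|$. Since $\phi_\alg$ is proper, $\fX'_\alg$ is of finite type over the noetherian ring $A$ and hence noetherian, so each $B_i\mathrel{\mathop:}=\mathcal{O}_{\fX'_\alg}(U_i)$ is a noetherian ring. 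Moreover, $\fU_i$ is the formal completion of $U_i$ along $V(\mathfrak{a} B_i)$, so $\fU_{i,\alg}=\Spec\widehat{B}_i$, where $\widehat{B}_i$ is the $\mathfrak{a} B_i$-adic completion of $B_i$, and $\psi_i$ is induced by the canonical completion homomorphism $B_i\to\widehat{B}_i$. Flatness of each $\psi_i$ is then immediate, as the completion of a noetherian ring with respect to an arbitrary ideal is flat.

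For the joint surjectivity, I would argue by a specialization trick. Pick an arbitrary point $x\in\fX'_\alg$. Since $\fX'_\alg$ is noetherian, the nonempty closed set $\overline{\{x\}}$ contains a closed point $x_0$. Properness of $\phi_\alg$ then forces $\phi_\alg(x_0)$ to be a closed point of $\Spec A$; completeness of $A$ in its $\mathfrak{a}$-adic topology places $\mathfrak{a}$ inside the Jacobson radical of $A$, so every maximal ideal of $A$ contains $\mathfrak{a}$, whence $\phi_\alg(x_0)\in V(\mathfrak{a})$. Consequently $x_0\in\phi_\alg^{-1}(V(\mathfrak{a}))=|\fX'|$, so $x_0$ belongs to some $\fU_i$, and a fortiori to $U_i$. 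Since $U_i$ is open in $\fX'_\alg$ and $x$ generalizes $x_0$, it follows that $x\in U_i$ as well. Now denote by $\mathfrak{p}\subseteq\mathfrak{m}_0\subseteq B_i$ the primes corresponding to $x$ and $x_0$. Then $\mathfrak{m}_0$ contains $\mathfrak{a} B_i$, so $B_i/\mathfrak{m}_0$ is annihilated by $\mathfrak{a}$ and hence already $\mathfrak{a}$-adically complete; the natural map $B_i/\mathfrak{m}_0\to\widehat{B}_i/\mathfrak{m}_0\widehat{B}_i$ is therefore an isomorphism, and in particular $\mathfrak{m}_0\widehat{B}_i$ is a proper ideal, which places $\mathfrak{m}_0$ in the image of $\psi_i$. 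Flatness of $\psi_i$ yields the going-down property, so its image is closed under generalization; the generalization $\mathfrak{p}$ of $\mathfrak{m}_0$ therefore lies in the image of $\psi_i$ as well, exhibiting $x$ as an element of that image.

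The most delicate ingredient is the specialization step that locates a closed-point specialization $x_0$ of $x$ inside $|\fX'|$: here properness of $\phi_\alg$ (to force $\phi_\alg(x_0)$ to be closed in $\Spec A$) and $\mathfrak{a}$-adic completeness of $A$ (to place $\phi_\alg(x_0)$ in $V(\mathfrak{a})$) both play an essential role. Once $x_0\in|\fX'|$ has been secured, the remainder is routine bookkeeping about adic completions, flatness, and going-down.
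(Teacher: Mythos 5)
Your proof is correct, and it takes a genuinely different and noticeably simpler route than the paper's. The paper reduces to showing that every closed point $y$ of each $U_i$ lies in $\im\psi_j$ for some $j$, and then has to treat separately the cases where $\phi_\alg(y)$ is or is not a closed point of $\fX_\alg$; in the second case the closed point $y$ of $U_i$ is \emph{not} closed in $\fX'_\alg$, and the paper needs the Artin--Tate theorem, an excellence argument, and a reduction to a complete discrete valuation ring to lift $y$. Your argument sidesteps this entirely: you specialize an arbitrary $x\in\fX'_\alg$ to a closed point $x_0$ \emph{of the total space} $\fX'_\alg$; properness pushes $\phi_\alg(x_0)$ to a closed point of $\Spec A$, which lies in $V(\mathfrak{a})$ because $\mathfrak{a}\subseteq J(A)$, so $x_0\in|\fX'|$, hence $x_0\in\fU_i\subseteq U_i$ for some $i$, and $x_0\in\im\psi_i$ since $V(\mathfrak{a}\widehat{B}_i)\to V(\mathfrak{a}B_i)$ is a homeomorphism; going-down for the flat map $B_i\to\widehat{B}_i$ then lifts the generalization $x$ of $x_0$. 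The paper implicitly also uses the stability under generalization of $\im\psi_j$ in its initial reduction to closed points of $U_i$, but does not exploit it to specialize in $\fX'_\alg$ directly; your observation that one can specialize in the ambient proper scheme rather than inside the affine chart eliminates the difficult case and yields an elementary argument. (One small remark: it is worth spelling out that $\fX'_\alg$ is quasi-compact -- being of finite type over the noetherian $\Spec A$ -- so that $\overline{\{x\}}$ indeed has a closed point which is also closed in $\fX'_\alg$.)
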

\begin{proof}
The $\psi_i$ are flat, adic completions of noetherian rings being flat. Since the $U_i$ cover $\fX'_\alg$, it suffices to show that for any $i\in I$, the restricted family is faithfully flat over $U_i$; hence we must show that for every $i\in I$ and every closed point $y\in U_i$, there exists an index $j\in I$ such that $y\in\im\psi_j$. We must beware of the fact that the closed point $y\in U_i$ needs not be closed in $\fX_\alg'$. Let $x\in \fX_\alg$ denote the $\phi_\alg$-image of $y$, let $A$ be the ring of functions on $\fX$, let $\a$ be an ideal of definition for $A$, and let $\p_x\subseteq A$ be the prime ideal corresponding to $x$. If $x$ is a closed point of $\fX_\alg$, then $x\in V(\a)$, and hence $y\in\phi_\alg^{-1}(V(\a))$ which is a subscheme of definition of $\fX'$. Since $\fX'$ is covered by the $\fU_j$, there exists an index $j\in J$ such that $y\in\fU_j$, and we obtain $y\in\im\psi_j$. Let us now consider the case where $x\in \fX_\alg$ is not closed.

Let $B_i$ denote the ring of functions on $U_i$, and let $\m_y\subseteq B_i$ be the maximal ideal corresponding to $y$. Let $L_y$ and $L_x$ denote the residue fields of $B_i$ and $A$ in $y$ and $x$ respectively. Since $L_y=B_i/\m_y$ is of finite type over $A/\p_x$, it is of finite type over $L_x=Q(A/\p_x)$ and, hence, finite over $L_x$. By \cite{artin_tate} Theorem 1, it follows that $L_x$ is of finite type over $A/\p_x$. By \cite{artin_tate} Theorem 4, we conclude that $A/\p_x$ has only a finite number of prime ideals and that each nonzero prime ideal of $A/\p_x$ is maximal. Since $\p_x$ is not maximal in $A$ by our assumption on $x$, $A/\p_x$ has Krull dimension $1$. The maximal ideals of $A/\p_x$ correspond to the maximal ideals of $A$ which contain $\p_x$; hence they are open in the separated and complete $\a(A/\p_x)$-adic quotient topology on $A/\p_x$. By \cite{urigspaces} Lemma 2.1, this topology coincides with the Jacobson-adic topology. Since $A/\p_x$ is integral, we conclude from \cite{bourbakica} III.2.13 Corollary of Proposition 19 that the one-dimensional domain $A/\p_x$ is local and complete in its maximal-adic topology.

Let $R'$ denote the integral closure of $A/\p_x$ in $L_x$. Since $A$ is excellent, $R'$ is finite over $A/\p_x$, and it follows that $R'$ is a complete discrete valuation ring whose valuation topology coincides with the $\a$-adic topology. Since $R'$ is finite over $A$, the rings of functions on the affine $R'$-schemes $\fU_{i,\alg}\otimes_AR'$ are $\a$-adically complete. Hence, we may perform the base change $\cdot\otimes_AR'$ and thereby assume that $A=R'$ is a complete discrete valuation ring such that $x\in\Spec R'$ is the generic point. Let $K'$ denote the field of fractions of $R'$. We are now in a classical rigid-analytic situation over $R'$.

 Since the generic fiber $\fX'_{\alg,K'}$ of $\fX'_{\alg}$ is a $K'$-scheme of finite type, $y\in \fX'_{\alg,K'}$ is a closed point; moreover, the closed points of $\fX'_{\alg,K'}$ correspond to the points of its rigid analytification $(\fX'_{\alg,K'})^\an$. Since $\fX'_\alg$ is proper, $(\fX'_{\alg,K'})^\an$ coincides with the rigid generic fiber of the completion $\fX'$ of $\fX'_\alg$, and hence $(\fX'_{\alg,K'})^\an$ is covered by the maximal spectra of the rings of functions on the generic fibers of the $R'$-schemes $\fU_{j,\alg}$, with $j$ varying in $I$. It follows that there exist a $j\in I$ and a point in $\fU_{j,\alg}$ mapping to $y$ under $\psi_j$, as desired.
\end{proof}

\begin{remark}\label{thefaithflatrem}
If, in the situation of Proposition \ref{thefaithflatprop}, $(\fU_i)_{i\in I}$ is any finite affine open covering of $\fX'$, then there always exists a finite affine open refinement $(\fU'_j)_{j\in J}$ of $(\fU_i)_{i\in I}$ such that $(\fU'_j)_{j\in J}$ is induced by a finite affine open covering of $\fX'_\alg$ via formal completion. 
\end{remark}

\subsection{$R$-regular points in algebraizations}

\begin{defi}
A point in a locally noetherian $R$-scheme is called $R$-regular if it admits an affine open neighborhood whose ring of functions is $R$-regular in the sense of \cite{egaiv} 6.8.1.
\end{defi}

\begin{lem}\label{genisolem}
Let $X$ be a flat locally noetherian $R$-scheme, let $\phi\colon X'\rightarrow X$ be an admissible blowup, and let $\eta\in X_k$ be a generic point. On any quasi-compact open neighborhood of $\eta$ in $X$ whose special fiber is integral, $\phi$ is defined by a $\pi$-adically open coherent ideal whose vanishing locus does not contain $\eta$. In particular, if $X_k$ is reduced in $\eta$, then $\phi$ is an isomorphism around an open neighborhood of $\eta$ in $X$.
\end{lem}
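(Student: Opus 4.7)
The plan is to work on the given quasi-compact open $U$ directly, constructing the desired ideal sheaf $\sJ$ from the defining ideal $\sI$ of the admissible blowup $\phi|_U$ by a local computation together with a global divisibility argument. Since $U_k$ is integral with generic point $\eta$, every affine open $W \subseteq U$ either satisfies $W \cap U_k = \emptyset$ (so $\pi$ is invertible on $W$) or contains $\eta$, because every nonempty open subset of an irreducible scheme contains its generic point.

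Consider an affine open $W = \Spec A$ of $U$ with $\eta \in W$, and let $I \subseteq A$ denote the sections of $\sI$ on $W$; then $A$ is flat over $R$ with $A/\pi$ a domain, and $I$ contains $\pi^n$ for some $n$. The localization $A_{(\pi)}$ is a discrete valuation ring with uniformizer $\pi$: it is one-dimensional Noetherian local with principal maximal ideal $(\pi)$ generated by the non-zero-divisor $\pi$, and reduced because $A/\pi$ is a domain. Consequently $I A_{(\pi)} = (\pi^k)$ for some $0 \leq k \leq n$; this $k$ is independent of $W$ since it is determined by $\sI_\eta$.

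The key step is to promote this stalk computation to the global divisibility $\sI \subseteq \pi^k \O_U$. For $s \in I$, writing $s \in \pi^k A_{(\pi)}$ furnishes some $b \in A \setminus (\pi)$ with $bs \in \pi^k A$, so it suffices to show that multiplication by $b$ is injective on $A/\pi^k A$. I would prove this by induction on $k$: the base case $k=1$ uses that $A/\pi$ is a domain in which $\overline{b} \neq 0$; the inductive step uses that $\pi$ is a non-zero-divisor on $A$ to reduce from $k$ to $k-1$. The divisibility being trivial on affines not meeting $U_k$, it holds globally as subsheaves on $U$. Setting $\sJ := \sI : \pi^k$, this gives $\pi^k \sJ = \sI$; hence the blowups of $\sI$ and $\sJ$ on $U$ coincide (since $(\pi^k)$ is an invertible ideal), $\sJ \supseteq \sI$ is $\pi$-adically open, and $\sJ_\eta = (\pi^k):(\pi^k) = \O_{U,\eta}$, so $\eta \notin V(\sJ)$.

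For the ``in particular'' clause, reducedness of $X_k$ at $\eta$ combined with $\eta$ being a generic point yields, after shrinking, a quasi-compact affine neighborhood $U$ with $U_k$ integral; the first part then produces $\sJ$ with $\sJ_\eta = \O_{U,\eta}$, so by coherence $\sJ = \O_V$ on a smaller open $V \ni \eta$, making $\phi|_V$ the blowup of the unit ideal, hence an isomorphism. The main obstacle is the global divisibility $\sI \subseteq \pi^k \O_U$: the stalk computation by itself only produces the auxiliary inclusion $bI \subseteq \pi^k A$ for some $b \notin (\pi)$, and one must exploit the domain hypothesis on $A/\pi$ via the non-zero-divisor argument on $A/\pi^k A$ to eliminate the factor~$b$.
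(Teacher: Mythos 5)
Your proof is correct, and it takes a somewhat different route from the paper's. The paper iteratively divides the defining ideal $\sI$ by $\pi$ as long as $\sI\subseteq\pi\O_X$, argues termination from $\pi$-adic openness together with $\pi$ being a non-zero-divisor, and then reads off $\eta\notin V(\sI)$ directly from the global non-containment $\sI\not\subseteq\pi\O_X$ combined with integrality of $X_k$; this is a global-to-local passage. You instead compute the exponent $k$ upfront as the valuation of $\sI_\eta$ in the discrete valuation ring $\O_{X,\eta}$, and then upgrade the stalk containment to the global containment $\sI\subseteq\pi^k\O_U$ --- a local-to-global passage --- which is exactly where your extra lemma that multiplication by $b\notin\pi A$ is injective on $A/\pi^k A$ is needed. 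Both proofs hinge on the domain property of $A/\pi$, but use it in opposite directions. Your version is more explicit, identifying $k$ as a valuation at $\eta$; the paper's iterative saturation is shorter precisely because it never has to pass from local to global divisibility. The two proofs are closely parallel despite the reversal: the paper's division process terminates after exactly $k$ steps, implicitly computing the same invariant you compute at the outset from the DVR stalk.
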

\begin{proof}
We may assume that $X$ is quasi-compact and that $X_k$ is integral. Let $\sI\subseteq\O_X$ be a $\pi$-adically open ideal defining $\phi$. If $\sI\subseteq\pi \O_\fX$, we may uniquely divide $\sI$ by $\pi$ without changing $\phi$ and without changing the fact that $\sI$ is $\pi$-adically open. If the resulting coherent ideal still lies in $\pi \O_X$, we repeat the procedure. This process terminates after finitely many steps. Indeed, if it did not terminate, then $\pi$ would be a unit in $\O_X$, $\sI$ being $\pi$-adically open. However, $X_k$ contains $\eta$ and, hence, is nonempty. We may therefore assume that $\sI$ is not contained in $\pi \O_X$. Since $X_k$ is integral, this means precisely that $\eta\notin V(\sI)$. Since $\phi$ is an isomorphism on the open complement of $V(\sI)$, the claim has been shown.
\end{proof}

\begin{prop}\label{liftrregpointsprop}
Let $\fX$ be an affine flat formal $R$-scheme of ff type, let $\phi:\fX'\rightarrow\fX$ be an admissible blowup, and let $\phi_\alg:\fX'_\alg\rightarrow\fX_\alg$ denote its algebraization. Let $(U_i)_{i\in I}$ be a finite affine open covering of $\fX'_\alg$, and let $(\fU_i)_{i\in I}$ be the finite affine open covering of $\fX'$ which is induced via formal completion. Then for every generic point $\eta\in \fX_{\alg,k}$ that is $R$-regular in $\fX_\alg$, there exist 
an index $i\in I$ and a generic point $\eta'\in \fU_{i,\alg,k}$ above $\eta$ that is $R$-regular in $\fU_{i,\alg}$.
\end{prop}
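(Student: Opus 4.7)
The strategy is a three-stage reduction: first use Lemma \ref{genisolem} to produce a generic $R$-regular point $\tilde\eta$ of $\fX'_{\alg,k}$ above $\eta$; then invoke the faithful flatness of Proposition \ref{thefaithflatprop} to lift $\tilde\eta$ to some $\fU_{j,\alg}$; and finally descend within the fiber of $\psi_j$ over $\tilde\eta$ to a minimal prime over $(\pi)$, while transporting $R$-regularity through the completion $\psi_j$.

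For the first stage, $R$-regularity of $\eta$ forces the special fiber of $\fX_\alg$ to be reduced in a neighborhood of $\eta$, so after shrinking to an affine open $V\ni\eta$ with $\O(V)$ an $R$-regular $R$-algebra and with $V_k$ integral (available by openness of the reduced locus and finiteness of irreducible components), Lemma \ref{genisolem}, applied to the admissible blowup $\phi_\alg$ (cf.\ Remark \ref{envremarks} ($iv$)), yields an open $V'\subseteq V$ containing $\eta$ such that $\phi_\alg^{-1}(V')\to V'$ is an isomorphism. Its unique preimage $\tilde\eta\in\fX'_\alg$ of $\eta$ is then a generic point of $\fX'_{\alg,k}$ at which $\fX'_\alg$ is $R$-regular.

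For the lifting stage, Proposition \ref{thefaithflatprop} supplies $j\in I$ and a prime $\tilde\eta_0$ of $\hat B_j := \O(\fU_{j,\alg})$ contracting, along the completion $B_j := \O(U_j)\to\hat B_j$, to $\tilde\eta$. Since $\tilde\eta_0$ need not be minimal over $(\pi)$, choose $\eta'\subseteq\tilde\eta_0$ minimal among primes of $\hat B_j$ contracting to $\tilde\eta$ (exists by noetherianity of $\hat B_j$). Then $\eta'$ is in fact minimal over $(\pi)$ in $\hat B_j$, hence a generic point of $\fU_{j,\alg,k}$ lying above $\eta$: any strictly smaller prime of $\hat B_j$ containing $\pi$ would contract in $B_j$ to a prime still containing $\pi$ and contained in $\tilde\eta$, forcing equality with $\tilde\eta$ by the minimality of $\tilde\eta$ over $(\pi)$ in $B_j$ and contradicting the minimality of $\eta'$. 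For $R$-regularity at $\eta'$, restrict to a principal affine open $D(f)\subseteq\phi_\alg^{-1}(V')\cap U_j$ with $f\in B_j$ containing $\tilde\eta$; then $(B_j)_f$ is $R$-regular. Since the ring $\O(\fX)$ of global functions of $\fX$ is excellent (cf.\ the proof of Proposition \ref{regprop}), so is its finite-type algebra $B_j$ by \cite{egaiv} 7.8.3 ($ii$), and consequently the completion $B_j\to\hat B_j$ is regular by \cite{egaiv} 7.8.3 ($v$). Its flat base change $(B_j)_f\to(\hat B_j)_f$ is still regular, and composing with the regular $R\to(B_j)_f$ (cf.\ \cite{egaiv} 6.8.3) yields that $(\hat B_j)_f$ is $R$-regular. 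Since $\eta'\in\Spec(\hat B_j)_f=\psi_j^{-1}(D(f))$, this is the required affine open neighborhood.

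\textbf{Main obstacle.} The delicate point is the descent within the fiber of $\psi_j$: faithful flatness alone only produces a lift $\tilde\eta_0$ of $\tilde\eta$, and since the fibers of the completion $\psi_j$ over generic points of $(U_j)_k$ can have positive dimension, $\tilde\eta_0$ need not itself be a generic point of $\fU_{j,\alg,k}$. One must therefore extract a suitable minimal prime over $(\pi)$ from inside the fiber and verify, via the $R$-regular principal affine open $\psi_j^{-1}(D(f))$, that $R$-regularity is not lost when passing to this generalization.
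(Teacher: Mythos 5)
Your proof is correct and follows essentially the same route as the paper's: $R$-regularity at $\eta$ forces reducedness of the special fiber, so Lemma \ref{genisolem} gives a local isomorphism onto $\fX'_\alg$ near $\eta$, Proposition \ref{thefaithflatprop} supplies faithful flatness of the family $(\psi_i)$, and excellence together with \cite{egaiv} 7.8.3 (v) and 6.8.3 transports $R$-regularity along $\psi_j$. The only packaging difference is in how the generic point $\eta'$ is produced: the paper takes a maximal generalization inside the special fiber of $\psi_i^{-1}(U)$ (which is automatically a generic point of $\fU_{i,\alg,k}$), whereas you take a minimal prime of $\hat B_j$ over $\tilde\eta$ and then verify by hand that it is minimal over $(\pi)$ — both are sound and amount to the same thing.
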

\begin{proof}
Since $\fX_{\alg,k}$ is regular and, hence, reduced in $\eta$, Lemma \ref{genisolem} shows that there exists an open subscheme $U\subseteq \fX'_\alg$ that maps, via $\phi_\alg$, isomorphically onto an $R$-regular open neighborhood of $\eta$. By Proposition \ref{thefaithflatprop}, the family of morphisms
\[
(\psi_i\colon \fU_{i,\alg}\rightarrow U_i\subseteq \fX'_\alg)_{i\in I}
\]
is faithfully flat. Since the schemes $U_i$ are of finite type over $X$, they are excellent; hence by \cite{egaiv} 7.8.3 (v), the morphisms $\psi_i$ are regular. For each $i\in I$, we let $V_i\subseteq \fU_{i,\alg}$ denote the $\psi_i$-preimage of $U$; then the disjoint union of the $V_i$ is faithfully flat over $U$, and by \cite{egaiv} 6.8.3 ($i$) it is $R$-regular. Let $i\in I$ be an index such that $\eta$ lies in the image of $V_i$. Let $\xi\in V_{i,k}$ be a point above $\eta$, and let $\eta'$ be a maximal generalization of $\xi$ in $V_{i,k}$. Then $\eta'$ is a generic point of $\fU_{i,\alg,k}$ that is $R$-regular in $\fU_{i,\alg}$ and that lies above $\eta$.
\end{proof}

\begin{remark}\label{normdvrrem}
In the situation of the Proposition \ref{liftrregpointsprop}, let us assume that both $\fX$ and $\fX'$ are normal. Then by \cite{egaiv} 7.8.3 (v) the schemes $\fU_{i,\alg}$ are normal as well, and it follows that the completed stalks of $\fU_{i,\alg}$ and $\fX_\alg$ in $\eta'$ and $\eta$ respectively are complete discrete valuation rings. Of course, the completed stalks of the algebraizations in $\eta$ and $\eta'$ coincide with the completed stalks of the associated $R$-envelopes in these points.
\end{remark}

\subsection{Extensions of morphisms to $R$-envelopes}

In Section \ref{graphenvexsec}, we considered a situation where $\fY$ is an affine flat formal $R$-scheme of ff type with uniformly rigid generic fiber $Y$, where $\fX$ is a separated flat formal $R$-scheme of locally tf type with uniformly rigid generic fiber $X$ and where $\phi\colon Y\rightarrow X$ is a morphism of uniformly rigid spaces. In Theorem \ref{graphproperthm} and Proposition \ref{graphenvexprop}, we showed that  the projection $p_\fY|_{\ul{\Gamma}_\phi}$ from the schematic closure $\ul{\Gamma}_\phi$ of the graph of $\phi$ in $\fY\times\fX$ to $\fY$ is proper and that its $R$-envelope $(p_\fY|_{\ul{\Gamma}_\phi})_\pi$ exists as a closed subscheme of $\fY_\pi\times\fX$. We now show that if $\fY$ is smooth, then $(p_\fY|_{\ul{\Gamma}_\phi})_\pi$ is an isomorphism above the generic points of $\fY_\pi$.

\begin{thm}\label{complfiberisothm}
Let us assume that $\fY^\rig$ is normal, and let $\eta$ be a generic point in $\fY_{\pi,k}$ that is $R$-regular in the spectrum of the ring of functions on $\fY$. Then $(p_\fY|_{\ul{\Gamma}_\phi})_\pi$ induces an isomorphism under the base change 
\[
\cdot\times_{\fY_\pi}\Spf\hat{\O}_{\fY_\pi,\eta}\;.
\]
\end{thm}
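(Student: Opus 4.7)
Set $R' := \hat{\O}_{\fY_\pi, \eta}$ and $K' := R'[\pi^{-1}]$. The plan is to construct a section $\tilde s$ of
\[
\theta\,:=\, (p_\fY|_{\ul{\Gamma}_\phi})_\pi \times_{\fY_\pi} \Spf R' \,\colon\, \ul{\Gamma}' \,:=\, (\ul{\Gamma}_\phi)_\pi \times_{\fY_\pi} \Spf R' \longrightarrow \Spf R'
\]
and then to conclude that $\theta$ is an isomorphism by an $R'$-flatness argument. First I would verify that $R'$ is a complete discrete valuation ring with uniformizer $\pi$, $R$-flat and formally unramified over $R$: the prime $\p_\eta \subseteq A := \Gamma(\fY,\O_\fY)$ has height one (being minimal over $\pi A$ in an $R$-flat ring), and the $R$-regularity of $\eta$ forces the special fiber $A_{\p_\eta}/\pi$ to be geometrically regular over $k$, hence a separable field extension, so that $\pi$ generates the maximal ideal of $A_{\p_\eta}$.

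Next I would construct a morphism $u \colon \Spf R' \to \fX$. The natural continuous $R$-homomorphism $A \to R'$ induces a morphism $j \colon \sSp K' \to Y$ of semi-affinoid spaces, and composition with $\phi$ gives a $K'$-point of $X$. As $\sSp K'$ is a single point, this factors through some semi-affinoid open $\fX_i^\urig \subseteq X$ attached to an affine open $\fX_i = \Spf B_i \subseteq \fX$; and since every element of the topologically finite type $R$-algebra $B_i$ is power-bounded, the resulting $K$-algebra map $B_i \otimes_R K \to K'$ carries $B_i$ into $R'$. Continuity for the $\pi$-adic topologies then gives $u_i \colon \Spf R' \to \fX_i$, and the separatedness of $\fX$ ensures that the $u_i$ glue to an $i$-independent morphism $u \colon \Spf R' \to \fX$. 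Together with the canonical $v \colon \Spf R' \to \fY_\pi$, this defines a candidate section $s := (v, u) \colon \Spf R' \to \fY_\pi \times_R \fX$.

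The key step is to verify that $s$ factors through $(\ul{\Gamma}_\phi)_\pi$. Modulo $\pi^{n+1}$, the closed subscheme $(\ul{\Gamma}_\phi)_{\pi, n} \subseteq \fY_{\pi, n} \times \fX_n$ is characterized as the unique algebraization of the proper $\fY_n$-scheme $(\ul{\Gamma}_\phi)_n$ across the inclusion $\fY_n \hookrightarrow \fY_{\pi, n}$, and it inherits $R_n$-flatness from $(\ul{\Gamma}_\phi)_n$, flatness of a coherent sheaf on a Noetherian scheme propagating from closed points to generalizations. By construction of $u$, the $K'$-point of $Y \times X$ obtained by tensoring $s$ with $K$ over $R$ is the graph point $(j, \phi \circ j)$, which lies in $\Gamma_\phi$ and hence in the schematic closure $\ul{\Gamma}_\phi$. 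Translating this generic-fiber factorization to each finite thickening via the uniqueness of algebraization produces a factorization of $s_n$ through $(\ul{\Gamma}_\phi)_{\pi, n}$; passing to the inverse limit yields the desired section $\tilde s \colon \Spf R' \to \ul{\Gamma}'$ of $\theta$.

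To conclude, observe that $\ul{\Gamma}'$ is $R'$-flat (base change of the $R$-flat formal scheme $(\ul{\Gamma}_\phi)_\pi$), so the coherent ideal $\mathcal{J} \subseteq \O_{\ul{\Gamma}'}$ cutting out the closed immersion $\tilde s$ is $\pi$-torsion-free. It therefore suffices to show $\mathcal{J}[\pi^{-1}] = 0$, i.e., that $\tilde s$ becomes an isomorphism after inverting $\pi$. This holds because, after inverting $\pi$, the morphism $\theta$ is the base change, along the $K'$-point $j \in Y(K')$, of the uniformly rigid isomorphism $(p_\fY|_{\ul{\Gamma}_\phi})^\urig \colon \Gamma_\phi \xrightarrow{\sim} Y$. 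Hence $\mathcal{J} = 0$ and $\theta$ is an isomorphism. The main obstacle will be the factorization step of the third paragraph: as noted in Remark \ref{envremarks} ($vi$), the formation of $R$-envelopes is incompatible with localization and with fibered products, so the factorization of $s$ has to be propagated carefully through each finite thickening, where the uniqueness statement in Grothendieck's Formal Existence Theorem takes over.
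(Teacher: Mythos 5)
Your proposal has a genuine gap at its very first structural step: the construction of the morphism $j\colon\sSp K'\to Y$ does not make sense. There are two intertwined problems.

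First, $R'=\hat{\O}_{\fY_\pi,\eta}$ is in general \emph{not} finite over $R$: its residue field is the residue field of the generic point $\eta$ of $\fY_{\pi,k}$, which is typically a transcendental extension of $k$ (for $\fY=\Spf R[[S]]$, $R'=R[[S]]_{(\pi)}^{\wedge}$ has residue field $k((S))$). Thus $K'$ is not a semi-affinoid $K$-algebra, $\sSp K'$ is not a semi-affinoid $K$-space, and there is no ``morphism $j\colon\sSp K'\to Y$ of semi-affinoid spaces'' in the category $\uRig_K$.

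Second, even working with the base-extension $Y\hat\otimes_KK'$ to sidestep this, the key obstruction is that the homomorphism $A\to R'$ (via localization at $\p_\eta$ and $\pi$-adic completion) is \emph{not continuous} for the ff-type topology on $A$: a formal parameter $S\in\a\smallsetminus\pi A$ has nonzero image in $A/\pi A$ at the generic point $\eta$, hence becomes a \emph{unit} in $A_{\p_\eta}$ and in $R'$, so $S^n\not\to 0$. The point $\eta$ corresponds to a genuine boundary point of $Y$ in the adic generic fiber $\fY_{\pi,K}$, not to a point of the uniformly rigid space $Y$ itself. This is precisely the phenomenon the paper isolates in the introduction: the natural map $\Spf R[[S]]_{(\pi)}^\wedge\to\Spf(R[[S]]^\pi)$ pulls $S$ back to a non-$\pi$-divisible element, whereas any morphism $\sSp(R[[S]]_{(\pi)}^\wedge[\pi^{-1}])\to(\Spf R[[S']])^{\rig,\ur}$ pulls $S'$ back to a $\pi$-divisible element. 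Composing the (nonexistent) $j$ with $\phi$ to produce a $K'$-point of $X$, and then a map $u\colon\Spf R'\to\fX$, is therefore circular: the existence of such an extension of $\phi$ to the formally unramified boundary point $\eta$ is \emph{exactly} what Theorem~\ref{complfiberisothm} asserts, and it is far from automatic. The final flatness argument (that $\mathcal J$ is $\pi$-torsion-free and vanishes after inverting $\pi$) is sound in principle, but there is no section $\tilde s$ to feed into it.

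This explains why the paper's proof cannot take the short route you propose, and instead proceeds by induction along a treelike formal covering of $Y$: locally on a suitable admissible blowup, $\phi$ genuinely extends to a formal model, Proposition~\ref{liftrregpointsprop} lifts $\eta$ to an $R$-regular generic point $\eta'$ upstairs, the $\pi$-torsion ideal relates $(\ul\Gamma_{\phi,i})_\pi$ to $(\ul\Gamma_\phi)_\pi\times_{\fY_\pi}\fY_{i,\pi}$, and the conclusion descends along the faithfully flat extension $R_{\eta'}/R_\eta$. The crucial input that is invisible in your argument is that one must first move into a blowup on which $\phi$ is actually defined before one can extend anything to the boundary.
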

\begin{proof}
If $\phi$ extends to a morphism from $\fY$ to $\fX$, then $p_\fY|_{\ul{\Gamma}_\phi}$ is an isomorphism, and there is nothing to show. Let $(Y_i)_{i\in I}$ be a treelike covering of $Y$ together with a model $(\fY_i)_{i\in I}$ in the sense of the discussion following \cite{urigspaces} Def.\ 2.31 such that $\fY_r=\fY$ for the root $r\in I$ and such that for each leaf $i\in I$, the restriction of $\phi$ to $Y_i$ extends to a morphism from $\fY_i$ to $\fX$; such a covering exists by \cite{urigspaces} Cor.\ 2.14 ($iv$). Since $\fY$ is generically normal, Corollary \ref{normisblowupcor} shows that we may assume all $\fY_i$, $i\in I$, to be normal. After possibly passing to a refinement, by the argument in Remark \ref{thefaithflatrem} we may assume that all affine open coverings in this treelike formal covering are induced by affine open coverings on the respective admissible algebraic blowups. By Proposition \ref{liftrregpointsprop}, there exists a child $i$ of the root of $I$ together with a point $\eta'$ in the spectrum of the ring of functions on $\fY_i$ such that $\eta'$ is $R$-regular in this spectrum, such that $\eta'$ is generic in the special fiber $\fY_{i,\pi,k}$ and such that $\eta'$ lies above $\eta$. Let $R_{\eta'}$ and $R_\eta$ denote the completed stalks of $\fY_{i,\pi}$ and $\fY_\pi$ in $\eta'$ and $\eta$ respectively; according to Remark \ref{normdvrrem}, $R_{\eta'}/R_\eta$ is a local extension of complete discrete valuation rings. Let $\ul{\Gamma}_{\phi,i}$ denote the schematic closure of the graph of $\phi|_{Y_i}$ in $\fY_i\times\fX$, and let $(\ul{\Gamma}_{\phi,i})_\pi\subseteq\fY_{i,\pi}\times\fX$ denote its $R$-envelope.
Using induction on the volume of $I$, we may assume that $p_{\fY_{i,\pi}}|_{(\ul{\Gamma}_{\phi,i})_\pi}$ becomes an isomorphism under the base change
\[
\cdot\times_{\fY_{i,\pi}}\Spf R_{\eta'}\;.
\]
The pullback $\ul{\Gamma}_\phi\times_\fY\fY_i$ is a closed formal subscheme of $\fY_i\times\fX$ which is proper over $\fY_i$, and its $R$-envelope $(\ul{\Gamma}_\phi\times_\fY\fY_i)_\pi=(\ul{\Gamma}_\phi)_\pi\times_{\fY_\pi}\fY_{i,\pi}\rightarrow\fY_{i,\pi}$ exists. Since $\ul{\Gamma}_\phi\times_\fY\fY_i$ is a closed formal subscheme of $\fY_i\times\fX$ whose generic fiber is the graph of $\phi|_{Y_i}$,
\[
\ul{\Gamma}_{\phi,i}\,\subseteq\,\ul{\Gamma}_\phi\times_\fY\fY_i
\]
is the closed formal subscheme defined by the $\pi$-torsion ideal. Since completions of locally noetherian schemes are flat and since $R$-envelopes are unique, it follows that the closed formal subscheme in $(\ul{\Gamma}_\phi)_\pi\times_{\fY_\pi}\fY_{i,\pi}$ defined by the $\pi$-torsion ideal is the $R$-envelope $(\ul{\Gamma}_{\phi,i})_\pi$ of $\ul{\Gamma}_{\phi,i}$ over $\fY_{i,\pi}$. 
We have to show that the morphism $(\ul{\Gamma}_\phi)_\pi\times_{\fY_\pi}\Spf R_\eta\rightarrow \Spf R_\eta$ is an isomorphism. Since it is of tf type and since $R_{\eta'}$ is faithfully flat over $R_\eta$, descent theory reduces the problem to proving that
\[
(\ul{\Gamma}_\phi)_\pi\times_{\fY_\pi}\Spf R_{\eta'}\rightarrow\Spf R_{\eta'}
\]
is an isomorphism. Since $\Spf R_{\eta'}$ is flat over $\Spf R_\eta$, it is flat over $\fY_\pi$, and it follows that the domain of the above morphism is flat over $R$. Moreover, since $(\ul{\Gamma}_{\phi,i})_\pi\subseteq(\ul{\Gamma}_\phi)_\pi\times_{\fY_\pi}\fY_{i,\pi}$ is defined by the $\pi$-torsion ideal and since $R_{\eta'}$ is flat over $\fY_{i,\pi}$, the closed formal subscheme
\begin{eqnarray*}
(\ul{\Gamma}_{\phi,i})_\pi\times_{\fY_{i,\pi}}\Spf R_{\eta'}&\subseteq&((\ul{\Gamma}_\phi)_\pi\times_{\fY_\pi}\fY_{i,\pi})\times_{\fY_{i,\pi}}\Spf R_{\eta'}\\
&\cong&(\ul{\Gamma}_\phi)_\pi\times_{\fY_\pi}\Spf R_{\eta'}
\end{eqnarray*}
is defined by $\pi$-torsion as well. We conclude that the above closed immersion is in fact an isomorphism.  By our induction hypothesis, the projection to $\Spf R_{\eta'}$ is an isomorphism, as desired.
\end{proof}

In the above proof, the $\pi$-torsion in $(\ul{\Gamma}_\phi)_\pi\times_{\fY_\pi}\fY_{i,\pi}$ is due to the fact that admissible blowups are in general not flat. We have used the fact that under the above assumptions, they are flat locally on their $R$-envelopes.

We can strengthen the statement of Theorem \ref{complfiberisothm} as follows:

\begin{cor}\label{genisocor}
In the situation of Theorem \ref{complfiberisothm}, the morphism $(p_\fY|_{\ul{\Gamma}_\phi})_\pi$ is an isomorphism over an open neighborhood of $\eta$.
\end{cor}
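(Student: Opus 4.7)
The plan is to propagate the isomorphism at the formal completion $\Spf\hat{\O}_{\fY_\pi,\eta}$ supplied by Theorem \ref{complfiberisothm} to a Zariski open neighborhood of $\eta$, by combining properness with Zariski's Main Theorem and faithful flatness of stalk-completion. Write $f\colon(\ul{\Gamma}_\phi)_\pi\to\fY_\pi$ for the morphism in question; by the construction in Proposition \ref{graphenvexprop}, $f$ is proper and adic. I will proceed in two steps: first establish finiteness of $f$ over an open neighborhood of $\eta$, then upgrade this finiteness to an isomorphism by a Nakayama-type argument for the push-forward of the structure sheaf.

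First, I would use Theorem \ref{complfiberisothm} to deduce that the fiber $f^{-1}(\eta)$ consists of a single reduced point. Reducing modulo a sufficiently high power of $\pi$ turns $f$ into a proper morphism of locally noetherian schemes with zero-dimensional fiber above $\eta$. By upper semi-continuity of fiber dimension (\cite{egaiv} 13.1.3) together with Zariski's Main Theorem (\cite{egaiv} 18.12.13), this reduction is finite on an open neighborhood of $\eta$; lifting this through $\pi$-adic formal completion using \cite{egaiii} 4.8.1 and the adicness of $f$, I obtain that $f$ itself is finite on some open neighborhood $\fU\subseteq\fY_\pi$ of $\eta$.

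Next I would analyze the natural morphism $\sigma\colon\O_\fU\to f_*\O_{(\ul{\Gamma}_\phi)_\pi}$. Since $f$ is finite on $\fU$, the push-forward is a coherent $\O_\fU$-algebra, and flat base change commutes with push-forward in the finite case; hence Theorem \ref{complfiberisothm} shows that $\sigma$ becomes an isomorphism after pulling back along the faithfully flat morphism $\Spf\hat{\O}_{\fY_\pi,\eta}\to\Spf\O_{\fY_\pi,\eta}$. Thus the kernel and cokernel of $\sigma$ have zero stalks at $\eta$, and being coherent $\O_\fU$-modules, they vanish on an open neighborhood of $\eta$ inside $\fU$. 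On this neighborhood, $f$ is a finite morphism inducing an isomorphism on structure sheaves, hence an isomorphism of formal schemes; this is the desired open neighborhood.

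The hard part will be the first step, namely transferring an isomorphism on the formal completion at a generic point of the special fiber to finiteness on a genuine Zariski open neighborhood. The delicate point is that $\fY_\pi$ is only $\pi$-adic noetherian, not of ff type over $R$; one must therefore carefully descend $f$ to a morphism of schemes modulo a power of $\pi$ before invoking Zariski's Main Theorem, and then lift back via the equivalence of categories between adic proper morphisms and their compatible reductions modulo ideals of definition. Once finiteness is in hand, the second step is essentially formal, since coherence plus faithful flatness of completion make the vanishing of kernel and cokernel a local-to-open propagation.
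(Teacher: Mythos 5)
Your argument is correct and follows essentially the same route as the paper: spread the finiteness of $(p_\fY|_{\ul{\Gamma}_\phi})_\pi$ out from the fiber over $\eta$ to a formal neighborhood using properness together with \cite{egaiii} 4.8.1, and then propagate the isomorphism from the completed stalk at $\eta$ to an honest open neighborhood via coherence and faithful flatness of completion. The paper's only cosmetic difference is that it invokes \cite{egaiv} 18.12.7 to get a closed immersion of special fibers outright, promotes this to a closed immersion of formal schemes with a $\pi$-adic Nakayama argument, and then localizes the defining ideal, whereas you stop at finiteness and treat the kernel and cokernel of $\O_\fU\to f_*\O$ symmetrically; the two are interchangeable.
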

\begin{proof}
Let us abbreviate $\phi\mathrel{\mathop:}=(p_\fY|_{\ul{\Gamma}_\phi})_\pi$; we have to show that there exists an open neighborhood $\fU$ of $\eta$ in $\fY_\pi$ such that the restriction $\phi^{-1}(\fU)\rightarrow \fU$ of $\phi$ is an isomorphism. Let $k(\eta)$ denote the residue field of $\fY_\pi$ in $\eta$. By Theorem \ref{complfiberisothm}, we know that the special fiber $\phi_k$ of $\phi$ becomes an isomorphism under the base change $\cdot\times_{\fY_{\pi,k}}\Spec k(\eta)$. Since $\phi_k$ is proper, it follows from \cite{egaiv} 18.12.7 that there exists an open neighborhood $U_k$ of $\eta$ in $\fY_{\pi,k}$ such that the restriction $\phi_k^{-1}(U_k)\rightarrow U_k$ of $\phi_k$ is a closed immersion. Let $\fU\subseteq\fY_\pi$ be the open formal subscheme whose special fiber is $U_k$. By \cite{egaiii} 4.8.1, the restriction $\phi^{-1}(\fU)\rightarrow \fU$ of $\phi$ is a finite morphism. By \cite{eisenbudca} Ex. 7.2, a finite homomorphism of $\pi$-adically complete and separated $R$-algebras that is surjective modulo $\pi$ is surjective; hence $\phi^{-1}(\fU)\rightarrow \fU$ is a closed immersion as well. We may shrink $\fU$ and thereby assume that $\fU$ as affine, $\fU=\Spf A$ for some $\pi$-adically complete $R$-algebra $A$. Then the above restriction of $\phi$ corresponds to an ideal $I$ in $A$. Let $\p\subseteq A$ be the prime ideal corresponding to $\eta\in U_k$. By Theorem \ref{complfiberisothm} and using faithfully flat descent, we see that $A_\p/IA_\p=A_\p$, that is, $I A_\p=0$. Since $A$ is noetherian, $I$ is finitely generated. After shrinking $\fU$ further, we may thus assume that the restriction of $\phi$ is an isomorphism, as desired.
\end{proof}

\subsection{$R$-envelopes of Hartogs figures and analytic continuation}\label{ancontsec}

In the proof of our main Theorem \ref{mainweilextthm}, we will need an ff type analog of analytic continuation theory for Hartogs figures, cf.\ \cite{luetkebohmert_mer} Satz 6: let $\fX=\Spf R[[S_1,\ldots,S_n,Z]]$ denote the $n+1$-dimensional open formal unit disc over $R$, and let $f$ be a $Z$-distinguished global function on $\fX$, that is, a formal power series whose reduction in $k[[Z]]$ is a nonzero non-unit. While the non-vanishing locus $\fX_f$ of $f$ on $\fX$ is empty, the non-vanishing locus $\fX_{\pi,f}$ of $f$ on the $R$-envelope of $\fX$ is nonempty. Let $r$ be a natural number, and let $\fX'$ denote the affine part of the admissible blowup of $\fX$ in the ideal $(\pi^{r+1},S_1,\ldots,S_n)$ where the pullback ideal is generated by $\pi^{r+1}$. The natural morphism of affine flat formal $R$-schemes of ff type $\fX'\rightarrow\fX$ is induced by a unique morphism of $R$-envelopes $(\fX')_\pi\rightarrow\fX_\pi$ which restricts to a morphism $(\fX')_{\pi,f}\rightarrow\fX_{\pi,f}$. The resulting cartesian diagram
\[
\begin{diagram}
(\fX')_{\pi,f}&\rInto&(\fX')_\pi\\
\dTo&&\dTo\\
\fX_{\pi,f}&\rInto&\fX_\pi
\end{diagram}
\]
may be considered as an analog of a Hartogs figure; cf.\ \cite{luetkebohmert_mer} Section 3 for the notion of Hartogs figures in rigid geometry. The following statement is the aforementioned analog of \cite{luetkebohmert_mer} Satz 6:

\begin{thm}\label{ancontthm}
Let $g$ and $h$ be global functions on $\fX_{\pi,f}$ and on $(\fX')_\pi$ respectively whose pullbacks to $(\fX')_{\pi,f}$ coincide. Then $g$ and $h$ are the pullbacks of a unique global function on $\fX_\pi$.
\end{thm}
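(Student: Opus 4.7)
The plan is to reduce to the case where $f$ is a Weierstrass polynomial, then decompose the sections $g,h$ into ``integral'' and ``polar'' parts relative to $f$, and show that the polar part of $g$ must vanish because of the compatibility with $h$ on the overlap.

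First I would apply Weierstrass preparation to $f$, writing $f=e\cdot\omega$ with $e\in A:=R[[S_1,\ldots,S_n,Z]]$ a unit and $\omega$ a monic polynomial in $Z$ of some degree $d$ whose non-leading coefficients lie in $(\pi,S_1,\ldots,S_n)$. Then $A[1/f]=A[1/\omega]$, and replacing $f$ by $\omega$ costs nothing. Let $A'=R\langle T_1,\ldots,T_n\rangle[[Z]]$ denote the ring of global sections of $(\fX')_\pi$, via the identification $S_i=\pi^{r+1}T_i$; the pullback $\omega'$ of $\omega$ to $A'$ is again monic of degree $d$ in $Z$, with non-leading coefficients in $\pi R\langle T_1,\ldots,T_n\rangle$, so $\omega'\equiv Z^d\pmod\pi$.

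Uniqueness is clean: the ring map $A\to A'$ sending $S_i\mapsto\pi^{r+1}T_i$ is injective because $R$ is $\pi$-torsion-free, and $A\to(A[1/\omega])^\wedge_\pi\times A'$ is then injective as well.  For existence, Weierstrass division gives a canonical direct-sum decomposition $A[1/\omega]=A\oplus P$, where $P=\bigoplus_{k\geq 1}\omega^{-k}R[[S_1,\ldots,S_n]][Z]_{<kd}$ is the ``polar part'', and similarly $A'[1/\omega']=A'\oplus P'$.  These decompositions are compatible modulo each $\pi^{m+1}$, so passing to $\pi$-adic limits yields decompositions
\[
(A[1/\omega])^\wedge_\pi\;=\;A\,\oplus\,\widehat P,\qquad(A'[1/\omega'])^\wedge_\pi\;=\;A'\,\oplus\,\widehat P',
\]
where elements of $\widehat P$ are formal sums $\sum_{k,i}c_{k,i}(S)\,\omega^{-k}Z^i$ with $c_{k,i}\in R[[S]]$ tending to zero $\pi$-adically.

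Now write $g=g_+ + g_-$ with $g_+\in A$ and $g_-\in\widehat P$.  Since $g$ pulls back to $h\in A'$ in $(A'[1/\omega'])^\wedge_\pi$ and $h$ has no polar part, uniqueness of the Laurent decomposition on the $A'$-side forces $g_-$ to restrict to $0$ in $\widehat P'$.  Spelling this out, each coefficient $c_{k,i}(\pi^{r+1}T)$ vanishes in $R\langle T\rangle$; writing $c_{k,i}(S)=\sum_\alpha a_{k,i,\alpha}S^\alpha$, this means $a_{k,i,\alpha}\pi^{(r+1)|\alpha|}=0$ in $R$, so $a_{k,i,\alpha}=0$ and hence $c_{k,i}=0$.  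Thus $g_-=0$ and we set $u:=g_+\in A$, which restricts to $g$ by construction and to $h$ in $A'$ via the injection $A'\hookrightarrow(A'[1/\omega'])^\wedge_\pi$ (which holds since $(\pi,\omega')$ is a regular sequence in $A'$, as $A'/\pi=k[T_1,\ldots,T_n][[Z]]$ is a domain containing $\omega'\equiv Z^d$).

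The main obstacle is making the Laurent decomposition of the $\pi$-adic completions rigorous and showing that the decomposition of $g$ survives taking the inverse limit over $m$; this requires combining Weierstrass division inside $A/\pi^{m+1}[1/\omega]$ with Mittag--Leffler-type arguments in the spirit of Section~\ref{higherimbcsec} to control the convergence of the polar part as $k$ grows.  Once that is in place, the vanishing of the polar part reduces, as above, to the elementary observation that $c\mapsto c(\pi^{r+1}T)$ is injective on $R[[S]]$ because $R$ has no $\pi$-torsion.
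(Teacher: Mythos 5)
Your approach is genuinely different from the paper's, though both ultimately rest on Weierstrass-theoretic input. The paper avoids polar-part bookkeeping entirely: Lemma \ref{distfinlem} shows that the self-map $Z\mapsto f$ of $A=R[[S_1,\ldots,S_n,Z]]$ is finite and free, and by pulling the whole Hartogs diagram back along this coordinate change (after checking it commutes with the blowup $\fX'\to\fX$ and with the formation of $R$-envelopes, which is where the finiteness of $Z\mapsto f$ is needed) one reduces to the case $f=Z$. There the ring of functions on $\fX_{\pi,Z}$ is the explicit Laurent ring $R[[S,Z]]^\pi\langle Z^{-1}\rangle$, and the theorem collapses to the injectivity of $R[[S]]\to R[[S]]\langle S_1/\pi^{r+1},\ldots,S_n/\pi^{r+1}\rangle$, which is the same final observation you reach. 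Your route --- Weierstrass preparation $f=e\omega$, the polar-part splitting $A[1/\omega]=A\oplus P$ and its analogue over $A'=R\langle T\rangle[[Z]]$, then passage to $\pi$-adic completions --- is a valid alternative that trades the auxiliary coordinate change for explicit partial-fraction bookkeeping.

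That said, your proposal as written has a gap that you yourself flag but do not close: the explicit descriptions of $\widehat P$ and $\widehat P'$ and the compatibility of the decompositions under $A\to A'$ are asserted and then deferred to unspecified Mittag--Leffler-type arguments. Two remarks. First, you should not need Mittag--Leffler here: $\pi$-adic completion commutes with finite direct sums, so $(A\oplus P)^\wedge_\pi=A\oplus\widehat P$ is immediate, and the convergence description of $\widehat P$ follows by tracking partial-fraction expansions in each $P/\pi^{m+1}P$. Second, and more substantively, you do need to verify Weierstrass division by $\omega'\equiv Z^d\pmod\pi$ in $R\langle T_1,\ldots,T_n\rangle[[Z]]$ --- the coefficient ring is not local, so the classical local Weierstrass theorem does not apply verbatim, and one must argue by reduction modulo $\pi$ (or by another device) --- and to check that the induced map $\widehat P\to\widehat P'$ respects the polar gradings, so that vanishing of the image forces each $q_k$ to vanish. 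None of this is difficult, but the proposal leaves it as an acknowledged obstacle rather than an argument. A minor point: $P=\bigoplus_{k\geq 1}\omega^{-k}R[[S]][Z]_{<kd}$ is not actually a direct sum as written (the summands overlap under multiplication by $\omega$); the correct partial-fraction statement, which is also what you need to run the $\pi$-adic convergence argument, is $P\cong\bigoplus_{k\geq 1}\omega^{-k}R[[S]][Z]_{<d}$.
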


We first establish an elementary statement on distinguished power series: let $A$ be a complete noetherian local ring with residue field $\kappa$. A formal power series $f\in A[[Z]]$ is called distinguished if its reduction in $\kappa[[Z]]$ is a nonzero non-unit. 

\begin{lem}\label{distfinlem}
If $f\in A[[Z]]$ is a distinguished power series, then the continuous $A$-endomorphism 
\[
\phi\colon A[[Z]]\rightarrow A[[Z]]\quad,\quad Z\mapsto f
\]
is finite and free.
\end{lem}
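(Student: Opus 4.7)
The plan is to derive the result from Weierstrass preparation and division over the complete noetherian local ring $A$, combined with the local flatness criterion and topological Nakayama. Let $\mathfrak{m} \subseteq A$ denote the maximal ideal, and let $s \geq 1$ be the $Z$-order of $\bar{f} \in \kappa[[Z]]$; this is well-defined since $\bar{f}$ is a nonzero non-unit. First I would invoke Weierstrass preparation (cf.\ Bourbaki, \emph{Commutative Algebra} VII.3.8) to write $f = u \cdot P$ with $u \in A[[Z]]^\times$ and $P = Z^s + a_{s-1}Z^{s-1} + \cdots + a_0 \in A[Z]$ a Weierstrass polynomial, i.e., $a_i \in \mathfrak{m}$ for all $i$. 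Weierstrass division would then yield the direct sum decomposition of $A$-modules
\[
A[[Z]] \,=\, P \cdot A[[Z]] \,\oplus\, \bigoplus_{i=0}^{s-1} A \cdot Z^i,
\]
from which I would extract two facts: the quotient $A[[Z]]/(f) = A[[Z]]/(P)$ is free of rank $s$ over $A$ with basis $1, Z, \ldots, Z^{s-1}$, and $f$ (being $u$ times the monic polynomial $P$) is a non-zero-divisor on $A[[Z]]$.

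Next I would view $A[[Z]]$ as an $A[[W]]$-module via $\phi$, where $W \mapsto f$. A preliminary topological observation is that $(\mathfrak{m}, f)$ and $(\mathfrak{m}, Z^s)$ coincide as ideals of $A[[Z]]$, since $P - Z^s \in \mathfrak{m} \cdot A[Z]$. The sandwich inclusions $(\mathfrak{m}, Z)^{sn} \subseteq (\mathfrak{m}, Z^s)^n \subseteq (\mathfrak{m}, Z)^n$ then show that the $(\mathfrak{m}, f)$-adic topology on $A[[Z]]$ coincides with its maximal-adic topology. In particular, $A[[Z]]$ is complete and separated for the topology induced by the maximal ideal of $A[[W]]$.

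Two parallel moves would finish the proof. For finite generation, topological Nakayama applied to the reduction $A[[Z]]/(\mathfrak{m}, f) = \kappa[[Z]]/(Z^s)$, which is a $\kappa$-vector space with basis $1, Z, \ldots, Z^{s-1}$, shows that these $s$ elements generate $A[[Z]]$ as an $A[[W]]$-module; hence $\phi$ is finite. For flatness, I would invoke the local criterion of flatness (cf.\ Bourbaki, \emph{Commutative Algebra} III.5.2 Theorem 1, or EGA $0_{\mathrm{III}}$ 10.2.2) with respect to the ideal $(W) \subseteq A[[W]]$: the quotient $A[[Z]]/W \cdot A[[Z]] = A[[Z]]/(f)$ is free over $A = A[[W]]/(W)$, and multiplication by $W$ acts as multiplication by $f$, which is injective by the preceding step. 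A finitely generated flat module over a noetherian local ring is free, so $\phi$ is indeed finite and free, of rank $s$.

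The main subtlety lies in invoking Weierstrass preparation and division in the generality of complete noetherian local rings rather than the more familiar setting of complete discrete valuation rings or fields; however, these are classical tools, and once they are in hand the remainder is a routine combination of the local flatness criterion with topological Nakayama.
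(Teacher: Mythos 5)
Your proof is correct, but it organizes the flatness argument differently from the paper. For finiteness both you and the paper rely on topological Nakayama, and both ultimately invoke Weierstrass division over $\kappa[[Z]]$ to see that $A[[Z]]/(\mathfrak{m},f)\cong\kappa[[Z]]/(Z^s)$ is finite free over $\kappa$; the main difference is in how flatness is obtained. The paper reduces modulo $\mathfrak{m}\subseteq A$: it observes that the induced endomorphism $\tilde\phi$ of $\kappa[[Z]]$ is injective, hence flat because $\kappa[[Z]]$ is a PID, and then applies the Bourbaki flatness criterion with respect to the ideal $\mathfrak{m}\cdot A[[Z]]$ of the source, the $\mathrm{Tor}$-type condition following from $A$-flatness of $A[[Z]]$. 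You instead reduce modulo $W$: you invoke Weierstrass preparation and division over $A$ itself to get the explicit decomposition $A[[Z]]=P\cdot A[[Z]]\oplus\bigoplus_{i<s}A\cdot Z^i$, from which you read off that $A[[Z]]/(f)$ is $A$-free of rank $s$ and that $f$ is a non-zero-divisor, and then apply the local criterion with respect to the principal ideal $(W)\subseteq A[[W]]$. The paper's route is more economical in that it only needs Weierstrass theory over the residue field, while yours requires Weierstrass preparation in the generality of complete noetherian local rings but rewards you with an explicit free basis and the exact rank $s$ without passing through the implication ``finite $+$ flat over local noetherian $\Rightarrow$ free'' as a black box (though you still use it to conclude). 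Both arguments are sound, and your preliminary topological step verifying that the $(\mathfrak{m},f)$-adic and $(\mathfrak{m},Z)$-adic topologies coincide is a worthwhile clarification of a point the paper treats somewhat tersely.
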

\begin{proof}
We first show that $\phi$ is finite. Let $\m$ denote the maximal ideal of $A$. Since $\phi$ is $A$-linear, $\phi(\m)A[[Z]]=\m A[[Z]]$, and it follows that $(\m,f)$ is an ideal of definition of $A[[Z]]$. Since $A[[Z]]$ is $(\m,Z)$-adically separated and complete, it follows from \cite{eisenbudca} Ex. 7.2 that $\phi$ is finite if and only if $A/(\m,f)$ is finite over the residue field $\kappa=A/(\m,Z)$ of $A$. By the Weierstrass Division Theorem for formal power series in $\kappa[[Z]]$, $A/(\m,f)$ is finite free over $\kappa$ of rank $n$, as desired, where $n$ is the $Z$-adic valuation of the reduction of $f$ modulo $\m$. We have thus shown that $\phi$ is finite.

To show that $\phi$ is free, it suffices to prove that $\phi$ is flat, the noetherian ring $A[[Z]]$ being local. Let $\tilde{\phi}$ denote the reduction of $\phi$ modulo $\m$. We easily see that $\tilde{\phi}$ is injective: if $\ker\tilde{\phi}$ was nontrivial, then $\ker\tilde{\phi}=Z^nk[[Z]]$for some $n\in\N_{\geq 1}$, which would imply that $\tilde{\phi}(Z)^n=0$ in $k[[Z]]$ and, hence, $f\equiv0$ modulo $\m$, contrary to our assumption that $f$ is distinguished. Since $\kappa[[Z]]$ is a principal ideal domain and since $\tilde{\phi}$ is injective, it follows that $\tilde{\phi}$ is flat. By the Flatness Criterion (\cite{bourbakica} Chap. III \S 5.2 Theorem 1 ($i$)$\Leftrightarrow$($iii$)), it suffices to show that $\m A[[Z]]\otimes_{A[[Z]],\phi}A[[Z]]\rightarrow \phi(\m)A[[Z]]$ is bijective, which follows from the fact that $\phi$ is an $A$-homomorphism and that $A[[Z]]$ is $A$-flat.
\end{proof}

Let us now give the \emph{proof} of Theorem \ref{ancontthm}. Let us write $\fY=\fX$. By Lemma \ref{distfinlem}, the $R$-morphism $\phi\colon\fY\rightarrow\fX$ given by $\phi^*S_i=S_i$ for $1\leq i\leq n$ and $\phi^*Z=f$ is finite and free, and it induces a finite free morphism of $R$-envelopes $\phi_\pi:\fY_\pi\rightarrow\fX_\pi$ that restricts to a finite and free morphism $\fY_{\pi,f}\rightarrow\fX_{\pi,Z}$, where $\fX_{\pi,Z}$ denotes the non-vanishing locus of $Z$ on $\fX_\pi$. Let $\fX''$ denote the admissible blowup of $\fX$ in $(\pi^{r+1},S_1,\ldots,S_n)$; since $\phi^*S_i=S_i$, the pullback of $\fX''$ under the flat morphism $\phi$ is the admissible blowup of $\fY$ in the ideal generated by $\pi^{r+1}$ and the $S_i$ on $\fY$. By \cite{nicaise_traceformula} Lemma 2.18, the affine locus $\fY'$ in $\fY''$ where the pullback ideal is generated by $\pi^{r+1}$ is obtained from the corresponding locus $\fX'\subseteq\fX''$ via $\phi$-pullback. Since $\phi$ is finite, base change with respect to $\phi$ commutes with the formation of $R$-envelopes; that is, $\phi_\pi^*(\fX'_\pi)=(\phi^*\fX')_\pi$. It follows that the cartesian diagram of affine noetherian formal $R$-schemes
\[
\begin{diagram}
(\fY')_{\pi,f}&\rInto&(\fY')_\pi\\
\dTo&&\dTo\\
\fY_{\pi,f}&\rInto&\fY_\pi
\end{diagram}
\]
is obtained from the cartesian diagram
\[
\begin{diagram}
(\fX')_{\pi,Z}&\rInto&(\fX')_\pi\\
\dTo&&\dTo\\
\fX_{\pi,Z}&\rInto&\fX_\pi
\end{diagram}
\]
via $\phi_\pi$-pullback. A choice of module basis for the ring of functions on $\fY$ over the ring of functions on $\fX$ thus reduces the problem to the case where $f=Z$.

The ring of functions on $\fX_{\pi,Z}$ is easily identified with
\[
R[[S_1,\ldots,S_n,Z]]^\pi\langle Z^{-1}\rangle\,=\,\{\sum_{i=-\infty}^\infty a_i Z^i\,;\,a_i\in R[[S_1,\ldots,S_n]]\,,\,|a_i|\overset{i\rightarrow-\infty}{\longrightarrow} 0\}
\]
with uniquely determined coefficients $a_i$ where, as in Definition \ref{envdefi}, the superscript $\pi$ indicates that a ring is being equipped with the $\pi$-adic topology, and where $|\cdot|$ denotes the Gauss norm with respect to the absolute value on $R$. The ring of functions on $(\fX')_{\pi,Z}$ is given by
\[
R[[S_1,\ldots,S_n]]\langle\frac{S_1}{\pi^{r+1}},\ldots,\frac{S_n}{\pi^{r+1}}\rangle[[Z]]^\pi\langle Z^{-1}\rangle\quad,
\]
which admits the analogous explicit description. Let us write the function $g$ on $\fX_{\pi,Z}$ uniquely as $g=\sum_{i\in\Z} a_i Z^i$; we must show that $a_i=0$ for $i<0$. Since the natural pullback of $g$ to $(\fX')_{\pi,f}$ coincides with the natural pullback of
\[
h\,\in\,R[[S_1,\ldots,S_n]]\langle\frac{S_1}{\pi^{r+1}},\ldots,\frac{S_n}{\pi^{r+1}}\rangle[[Z]]\quad,
\]
it suffices to note that the natural homomorphism
\[
R[[S_1,\ldots,S_n]]\rightarrow R[[S_1,\ldots,S_n]]\langle \frac{S_1}{\pi^{r+1}},\ldots,\frac{S_n}{\pi^{r+1}}\rangle
\]
is injective, which is clear from the fact that after inverting $\pi$, it is a flat homomorphism of domains. \qed

\section{Models for uniformly rigid morphisms}\label{weilsec}

Let $\fX$ and $\fY$ be flat formal $R$-schemes of locally ff type, and let $\phi\colon Y\rightarrow X$ be a morphism of uniformly rigid generic fibers. In this section, we discuss criteria for the existence of a model $\ul{\phi}:\fY\rightarrow\fX$ of $\phi$ with respect to the given models $\fX$ and $\fY$. If in this situation $\fU\subseteq\fY$ is an open formal subscheme such that the restriction $\phi|_U$ of $\phi$ to the uniformly rigid generic fiber $U$ of $\fU$ extends to a morphism $\ul{\phi}|_\fU:\fU\rightarrow\fX$, we say that $\phi$ extends to $\fU$ and that $\ul{\phi}|_\fU$ extends $\phi$.

\subsection{Affine formal targets containing all points}
If the given model $\fX$ of the target $X$ of $\phi$ is affine, then the situation is rather simple, because morphism to affine formal schemes correspond to continuous homomorphisms of topological rings of global sections:

\begin{lem}\label{affineextlem}
If $\fX$ is affine and if $\fY$ is normal or if $\fY_k$ is reduced, then $\phi$ extends uniquely to a morphism $\ul{\phi}\colon\fY\rightarrow\fX$.
\end{lem}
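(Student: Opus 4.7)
The plan is to produce an extension via a proper modification furnished by Corollary \ref{graphpropercor} and then use the affineness of $\fX$ together with the hypotheses on $\fY$ to push this extension down. Uniqueness is routine: any two extensions correspond to continuous homomorphisms from $A\mathrel{\mathop:}=\Gamma(\fX,\O_\fX)$ to $\Gamma(\fY,\O_\fY)$ agreeing after $\otimes_RK$, and $R$-flatness of $\fY$ forces them to coincide.

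For existence, since $\fX$ is separated, Corollary \ref{graphpropercor} yields a proper morphism $\ul{\psi}\colon\fY'\rightarrow\fY$ which induces an isomorphism of uniformly rigid generic fibers, together with an extension $\ul{\phi}\colon\fY'\rightarrow\fX$ of $\phi$. Because $\fX$ is affine, $\ul{\phi}$ corresponds to a continuous homomorphism $\ul{\phi}^*\colon A\rightarrow\Gamma(\fY',\O_{\fY'})$, and the existence of the desired extension reduces to showing that the natural comparison morphism $\ul{\psi}^\sharp\colon\O_\fY\rightarrow\ul{\psi}_*\O_{\fY'}$ is an isomorphism, since $\ul{\phi}^*$ will then factor through $\Gamma(\fY,\O_\fY)$.

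This reduction is local on $\fY$, so I would pass to the case $\fY=\Spf B$. Then $B'\mathrel{\mathop:}=\Gamma(\fY',\O_{\fY'})$ is finite over $B$ by properness of $\ul{\psi}$ and coherence of $\ul{\psi}_*\O_{\fY'}$ (cf.\ \cite{egaiii} 3.4.2), and the schematic closure $\fY'$ of the graph is $R$-flat by construction. Since $\ul{\psi}$ induces an isomorphism on uniformly rigid generic fibers, $B\otimes_RK\rightarrow B'\otimes_RK$ is an isomorphism; combined with $R$-flatness of $B$ and $B'$, this exhibits an inclusion $B\subseteq B'$ as subrings of $B\otimes_RK=B'[\pi^{-1}]$, with $B'$ integral over $B$.

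The main obstacle is then to show that $B=B'$ under either hypothesis, which amounts to $B$ being integrally closed in $B[\pi^{-1}]$. In the normal case, $B$ decomposes as a finite product of normal domains, each of which is integrally closed in its field of fractions and, a fortiori, in its localization at $\pi$. In the reduced special fibre case, the following elementary argument suffices: writing an element $b/\pi^n\in B[\pi^{-1}]$ that is integral over $B$ with $n\geq 1$ minimal, clearing denominators in an integral equation yields $b^m\in\pi B$, whence the reducedness of $B/\pi B$ forces $b\in\pi B$, contradicting the minimality of $n$. In either case $B=B'$, so $\ul{\phi}^*$ factors through $B=\Gamma(\fY,\O_\fY)$, yielding the required extension $\ul{\phi}\colon\fY\rightarrow\fX$.
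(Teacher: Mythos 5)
Your proof is correct, and it takes a genuinely different route from the paper's. The paper, after the same reduction to affine $\fY=\Spf\ul{B}$, invokes the considerably lighter tool \cite{urigspaces}~Cor.~2.14~($iv$) to extend $\phi$ across a \emph{finite admissible blowup} $\fY'=\Spf\ul{B}'$; it then uses the power-bounded-function machinery, showing $\ul{B}\subseteq\ul{B}'\subseteq\mathring{B}$ and proving $\ul{B}=\mathring{B}$ (citing \cite{urigspaces}~Prop.~2.9 in the normal case and running a sup-norm argument in the reduced-special-fiber case). You instead invoke Corollary~\ref{graphpropercor} — which rests on the full strength of Theorem~\ref{graphproperthm} — to get a proper modification $\ul{\psi}\colon\fY'\to\fY$ with an extension, and then verify the modification is trivial by bare commutative algebra: $B\subseteq B'\subseteq B\otimes_RK$ is an integral extension of $R$-flat noetherian rings inducing an isomorphism after $\otimes_RK$, and $B$ is integrally closed in $B\otimes_RK$ under either hypothesis (via normality, or via the $b^m\in\pi B$ argument using reducedness of $B/\pi B$). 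The trade is: you avoid the semi-affinoid sup-norm apparatus entirely at the cost of appealing to the graph-properness theorem, which is more than is strictly needed for a finite modification (the paper's Cor.~2.14~($iv$) already delivers a \emph{finite} admissible blowup directly, making finiteness of $B'$ over $B$ immediate rather than a consequence of properness plus coherence of direct images). One small point worth making explicit: the minimality in your reduced-fiber argument should be phrased as "let $n\geq 0$ be minimal with $\pi^n x\in B$ and set $b:=\pi^n x$", so that the derived $b\in\pi B$ visibly contradicts that minimality; as written, the element "$b/\pi^n$" and "the minimality of $n$" leave this implicit. With that clarification, the argument is complete.
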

\begin{proof}
We may work locally on $\fY$ and thereby assume that $\fY$ is affine as well; then both $X$ and $Y$ are semi-affinoid. Let us write $\fX=\Spf \ul{A}$, $\fY=\Spf \ul{B}$, $X=\sSp A$ and $Y=\sSp B$. By \cite{urigspaces} Cor.\ 2.14 ($iv$), $\phi$ extends to a morphism $\ul{\phi}'\colon\fY'\rightarrow\fX$, where $\fY'=\Spf \ul{B}'$ is a finite admissble blowup of $\fY$. We obtain inclusions 
\[
\ul{B}\subseteq\ul{B}'\subseteq \mathring{B}\;,
\]
where $\mathring{B}$ denotes the ring of power-bounded functions in $B$, cf.\ \cite{urigspaces} Def.\ 2.6. It suffices to show that $\ul{B}=\mathring{B}$. If $\fY$ is normal, this equality is given by \cite{urigspaces} Prop.\ 2.9. If $\fY_k$ is reduced, we argue as follows: let $f\in B$ be power-bounded, and let $n$ be the natural number with the property that $\pi^nf\in \ul{B}-\pi \ul{B}$. If $n\geq 1$, then $|\pi^n f|_\sup<1$ because $|f|_\sup\leq 1$, and hence the class of $\pi^n f$ modulo $\pi$ is nilpotent. However, this cannot be the case, since $\ul{B}_k$ is reduced and since $\pi^nf\notin\pi \ul{B}$. It follows that $n=0$, which means that $f\in \ul{B}$.
\end{proof}

\subsection{Reduction to the case where the model of the domain is local}
We prove that if $\fY$ is normal or if $\fY_k$ is reduced, then $\phi$ extends to an open neighborhood of a closed point $y\in\fY$ if and only if $\phi$ extends to the completion of $\fY$ in $y$. 

Adopting the notation introduced by Berthelot in the rigid-analytic setting in cf.\ \cite{berthelot_rigcohpreprint} 1.1.2, we define:

\begin{defi} If $V$ is a locally closed subset in a formal $R$-scheme of ff type $\fZ$, we write $\fZ|_V$ to denote the completion of $\fZ$ along $V$, and we set
\[
]V[_\fZ\,\mathrel{\mathop:}=\,\sp_\fZ^{-1}(V)\;,
\]
where $\sp_\fZ:\fZ^\urig\rightarrow\fZ$ denotes the specialization map attached to $\fZ$, cf.\ \cite{urigspaces} Section 2.4. The set $]V[_\fZ$ is called the formal fiber of $V$. If $T$ is any topological space, we write $T_\cl$ to denote the set of locally closed points in $T$.
\end{defi}

\begin{remark}
It is easily seen that $]V[_\fZ\,=\,(\fZ|_V)^\srig$; in particular, $]V[_\fZ$ is admissible open in $\fZ^\urig$, and if $V$ is affine, then $]V[_\fZ$ is semi-affinoid. 
\end{remark}

If $\fU$ is an open formal subscheme in the given model $\fX$ of the target $X$ of $\phi$, then the set of (locally) closed points $y\in\fY$ whose formal fiber $]y[_\fY$ maps to the formal fiber $]\fU[_\fX$ of $\fU$ via $\phi$ is open:

\begin{prop}\label{mainweilresopenprop}
Let $\fU\subseteq\fX$ be an open formal subscheme; then
\[
\fY_\cl\setminus\sp_\fY(Y\setminus\phi^{-1}(\fU^\srig))
\]
is open in $\fY_\cl$.
\end{prop}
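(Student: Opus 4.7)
The plan is to realize $\sp_\fY(V)\cap\fY_\cl$, with $V:=Y\setminus\phi^{-1}(\fU^\srig)$, as the trace on $\fY_\cl$ of the image of a proper morphism, hence as a closed subset of $\fY_\cl$. To this end, I would introduce the schematic closure $\ul{\Gamma}_\phi\subseteq\fY\times\fX$ of the graph of $\phi$. By Theorem \ref{graphproperthm} the first projection $p_\fY|_{\ul{\Gamma}_\phi}$ is proper, and the natural identification $\ul{\Gamma}_\phi^\urig\cong Y$ is compatible with specialization in the sense that $\sp_\fY=p_\fY\circ\sp_{\ul{\Gamma}_\phi}$ and $\sp_\fX\circ\phi=p_\fX\circ\sp_{\ul{\Gamma}_\phi}$.

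Setting $W:=\ul{\Gamma}_\phi\setminus\bigl((\fY\times\fU)\cap\ul{\Gamma}_\phi\bigr)$, a closed subset of the proper $\fY$-scheme $\ul{\Gamma}_\phi$, the image $p_\fY(W)$ is closed in $|\fY|$; so it will suffice to prove the set-theoretic equality
\[
p_\fY(W)\cap\fY_\cl \;=\; \sp_\fY(V)\cap\fY_\cl\;.
\]
Granting this, the right-hand side is closed in $\fY_\cl$, and its complement, which is precisely the set in the proposition, is open.

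The inclusion $\supseteq$ will be immediate from the compatibility above: for $y_0\in V$, the specialization of the corresponding point of $\ul{\Gamma}_\phi^\urig$ lands in $\ul{\Gamma}_\phi$ with first coordinate $\sp_\fY(y_0)$ and second coordinate $\sp_\fX(\phi(y_0))\notin\fU$, hence in $W$. For $\subseteq$, given $y\in p_\fY(W)\cap\fY_\cl$, properness of $W$ over $\fY$ yields a point $w\in W\cap p_\fY^{-1}(y)$ closed in this fiber; since $y$ is locally closed in $\fY$, $w$ is locally closed in $\ul{\Gamma}_\phi$. Because $\ul{\Gamma}_\phi$ is $R$-flat and of locally ff type, the formal fiber $]w[_{\ul{\Gamma}_\phi}$ is nonempty, and any point $y_0$ in it, viewed in $Y$, satisfies $\sp_\fY(y_0)=p_\fY(w)=y$ and $\sp_\fX(\phi(y_0))=p_\fX(w)\notin\fU$, placing $y_0$ in $V$ over $y$.

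The main obstacle is the careful bookkeeping: verifying that the specialization maps intertwine as stated under the identification $\ul{\Gamma}_\phi^\urig\cong Y$ and the embedding $\ul{\Gamma}_\phi\hookrightarrow\fY\times\fX$, and checking the point-set-topological points that a closed point of the fiber of $W/\fY$ over a locally closed $y\in\fY$ is genuinely locally closed in $\ul{\Gamma}_\phi$ and admits a nonempty formal fiber. Once these routine but delicate verifications are in place, the argument reduces to the standard facts that images of proper morphisms of formal schemes are closed and that closed subschemes of proper morphisms remain proper.
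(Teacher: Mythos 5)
Your proposal is correct and takes a genuinely different route from the paper's proof, and in fact a cleaner one. The paper's own argument reduces to the case where $\phi$ extends to a morphism $\fY\rightarrow\fX$ (where the set in question is just $\ul{\phi}^{-1}(\fU)_\cl$) via an induction along a treelike covering of $Y$; the key topological input there is that the individual admissible blowups $\ul{\psi}\colon\fY'\rightarrow\fY$ in the chosen treelike model are closed maps. That induction essentially re-runs part of the combinatorics already carried out in the proof of Theorem~\ref{graphproperthm}. You instead invoke Theorem~\ref{graphproperthm} directly: the projection $p_\fY|_{\ul{\Gamma}_\phi}$ is proper, so the image $p_\fY(W)$ of the closed subset $W=\ul{\Gamma}_\phi\setminus\bigl((\fY\times\fU)\cap\ul{\Gamma}_\phi\bigr)$ is closed, and the proposition reduces to the set-theoretic identity $p_\fY(W)\cap\fY_\cl=\sp_\fY(V)\cap\fY_\cl$. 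This factors the argument through the statement that actually carries the difficulty and avoids duplicating the covering induction, which is a real gain.

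Two small points you should tighten. First, the identity you need from functoriality of specialization is the pair $p_\fY\circ\sp_{\fY\times\fX}=\sp_\fY\circ p_\fY^\urig$ and $p_\fX\circ\sp_{\fY\times\fX}=\sp_\fX\circ p_\fX^\urig$, applied to the closed formal subscheme $\ul{\Gamma}_\phi\subseteq\fY\times\fX$; this is what lets you compute $p_\fX(\sp_{\ul{\Gamma}_\phi}(y_0))=\sp_\fX(\phi(y_0))$ without addressing the underlying topological space of a product of formal schemes. Second, and more importantly, the step ``$w$ is locally closed in $\ul{\Gamma}_\phi$, hence $]w[_{\ul{\Gamma}_\phi}\neq\emptyset$'' is a little too quick as written: nonemptiness of the formal fiber (for an $R$-flat formal $R$-scheme of locally ff type, cf.\ the usage around Remark~2.5 of~\cite{urigspaces}) is a property of \emph{closed} points, and a merely locally closed point can have empty formal fiber. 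What rescues the argument is that the underlying topological space of a formal $R$-scheme of locally ff type is a Jacobson space (it is a $k$-scheme of locally finite type), so every locally closed point is in fact closed; this is exactly the observation the paper itself makes in the proof of Corollary~\ref{redtoffcor}. With that remark in place, $y\in\fY_\cl$ is a closed point, a closed point of the nonempty closed subset $W\cap p_\fY^{-1}(y)$ of the finite-type $k(y)$-scheme $p_\fY^{-1}(y)$ is a closed point of $\ul{\Gamma}_\phi$, and the rest of your argument goes through verbatim.
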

\begin{proof}
Let us first assume that $\phi$ extends to a morphism $\ul{\phi}\colon\fY\rightarrow\fX$. In this case, the set in the statement of the proposition is identified with $\ul{\phi}^{-1}(\fU)_\cl$, and there is nothing to show. Indeed, if $y$ is a closed point in $\fY$ such that $\ul{\phi}(y)\in\fU$, then $\sp_\fY^{-1}(y)$ maps to $\fU^\srig$ under $\phi$ since the specialization map is functorial. Conversely, if $\sp_\fY^{-1}(y)$ maps to $\fU^\srig$ via $\phi$, then $\ul{\phi}(y)\in\fU$, again by functoriality of the specialization map and because $\sp_\fY^{-1}(y)$ is non-empty, $\fY$ being $R$-flat, cf.\ \cite{urigspaces} Remark 2.5.

In the general case, we choose  a treelike covering  $(Y_i)_{i\in I}$ of $Y$ together with a model $(\fY_i)_{i\in I}$ in the sense of the discussion following \cite{urigspaces} Def.\ 2.31 such that $\fY_r=\fY$ for the root $r\in I$ and such that for each leaf $i\in I$, the restriction of $\phi$ to $Y_i$ extends to a morphism 
\[
\ul{\phi}_i\colon\fY_i\rightarrow\fX\;;
\]
 such a covering exists by \cite{urigspaces} Cor.\ 2.14 ($iv$). By what we have seen so far, the set
\[
S_i\,\mathrel{\mathop:}=\,(\fY_i)_\cl\setminus \sp_{\fY_i}(Y_i\setminus(Y_i\cap\phi^{-1}(\fU^\srig)))
\]
is open in $(\fY_i)_\cl$ for each leaf $i\in I$. We claim that the same statement holds in fact for all $i\in I$. Using induction on the volume of $I$, we may assume that it holds for all children $i$ of the root $r$ of $I$; it remains to show that the statement holds for the root $r$ itself. Let $\ul{\psi}\colon\fY'\rightarrow\fY$ denote the admissible blowup of $\fY$ that is part of the chosen model of the chosen treelike  covering; for each $i\in\children(r)$, we consider $\fY_i$ as an open formal subscheme of $\fY'$. It is clear that
\[
S'\,\mathrel{\mathop:}=\,\fY'_\cl\setminus\sp_{\fY'}(Y\setminus\phi^{-1}(\fU^\srig))
\]
is the union of the sets $S_i$ considered above, where $i$ varies in $\children(r)$. Let us write
\[
S\,\mathrel{\mathop:}=\,\fY_\cl\setminus\sp_\fY(Y\setminus\phi^{-1}(\fU^\srig))
\]
to denote the subset of $\fY_\cl$ that we are interested in. We claim that 
\[
S=\fY_\cl\setminus\ul{\psi}(\fY'_\cl\setminus S')\;;
\]
then the openness of $S$ follows from the fact that $\ul{\psi}$ is proper and, hence, a closed map of underlying topological spaces. It remains to justify the above claim. Let $y$ be a closed point in $\fY$. Since the specialization map is functorial and since $\ul{\psi}^\srig$ is bijective, the formal fiber of $y$ is identified with union of the formal fibers of the closed points $y'$ in $\ul{\psi}^{-1}(y)$. Hence, $y\in S$ is equivalent to $\ul{\psi}^{-1}(y)\subseteq S'$, where by abuse of notation we consider $\ul{\psi}$ as a map on sets of closed points. The claim is now obvious.
\end{proof} 

\begin{cor}\label{redtoffcor}
Let $y\in \fY$ be a closed point such that $\fY$ is normal in $y$ or such that $\fY_k$ is reduced in $y$. Then the following are equivalent:
\begin{packed_enum}
\item $\phi|_{]y[_\fY}$ extends to a morphism $\ul{\phi}_y\colon\fY|_y\rightarrow\fX$.
\item $\phi$ extends to an open neighborhood of $y$ in $\fY$.
\end{packed_enum}
\end{cor}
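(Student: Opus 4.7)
The implication (ii)$\Rightarrow$(i) is immediate: if $\phi$ extends to $\ul{\phi}_\fV\colon\fV\to\fX$ on some open neighborhood $\fV$ of $y$, composition with the canonical completion morphism $\fY|_y\to\fV$ yields the required $\ul{\phi}_y$. The real content is the converse, and my plan is to deduce it from Proposition \ref{mainweilresopenprop} combined with Lemma \ref{affineextlem}.

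I would first exploit that the underlying topological space of $\fY|_y$ is the singleton $\{y\}$, so that $\ul{\phi}_y$ has its image concentrated in a single point of $\fX$. Choosing an affine open $\fU\subseteq\fX$ containing that image point, the morphism $\ul{\phi}_y$ factors through $\fU$; passing to uniformly rigid generic fibers, this says that the formal fiber $]y[_\fY=\sp_\fY^{-1}(y)$ is carried by $\phi$ into $\fU^\urig$, i.e.\ $]y[_\fY\subseteq\phi^{-1}(\fU^\urig)$.

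Next I would apply Proposition \ref{mainweilresopenprop} with this $\fU$: the set
\[
S\,:=\,\fY_\cl\setminus\sp_\fY(Y\setminus\phi^{-1}(\fU^\urig))
\]
is open in $\fY_\cl$ and contains $y$ by the previous paragraph. Since locally closed points are very dense in the noetherian formal scheme $\fY$ (cf.\ \cite{egain} 0.2.6.2, used in the same spirit at the end of the proof of Lemma \ref{schemcldirimlem}), $S$ lifts to an open subset $\fV\subseteq\fY$ with $\fV\cap\fY_\cl=S$, and $y\in\fV$. Using that $\sp_\fY$ takes values in $\fY_\cl$, for every $x\in\fV^\urig=\sp_\fY^{-1}(\fV)$ one has $\sp_\fY(x)\in S$, hence $x\in\,]\sp_\fY(x)[_\fY\,\subseteq\phi^{-1}(\fU^\urig)$; thus $\phi|_{\fV^\urig}$ factors through the affine $\fU^\urig$.

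Finally, since normality of $\fY$ and reducedness of $\fY_k$ are open conditions and hold at $y$ by assumption, after shrinking $\fV$ I may arrange that the hypothesis of Lemma \ref{affineextlem} is satisfied on $\fV$; that lemma then extends $\phi|_{\fV^\urig}\colon\fV^\urig\to\fU^\urig$ to a morphism $\fV\to\fU\subseteq\fX$, which is the desired extension, and its uniqueness is also guaranteed by Lemma \ref{affineextlem}. The main obstacle I anticipate is the passage from an open subset of $\fY_\cl$ to a genuine open subscheme of $\fY$ whose uniformly rigid generic fiber retains the inclusion into $\phi^{-1}(\fU^\urig)$; this is precisely the point where the very density of locally closed points and the fact that $\sp_\fY$ lands in $\fY_\cl$ are essential.
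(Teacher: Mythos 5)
Your argument is correct and follows the paper's proof in all essentials: reduce to an affine target $\fU$ picked around the image of the closed physical point, invoke Proposition \ref{mainweilresopenprop} to get openness of the locus of closed points $y'$ with $]y'[_\fY\subseteq\phi^{-1}(\fU^\srig)$, pass to an open formal subscheme of $\fY$ through the Jacobson/very-dense property, and finish with Lemma \ref{affineextlem}. The only respect in which you are slightly more explicit than the paper is in observing that normality of $\fY$ (resp.\ reducedness of $\fY_k$) is an open condition, so after shrinking $\fV$ the hypotheses of Lemma \ref{affineextlem} are met; the paper leaves this step implicit.
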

\begin{proof}
The implication ($ii$)$\Rightarrow$($i$) is trivial; we need to prove its converse. Let $\fU\subseteq\fX$ be an affine open neighborhood of the $\ul{\phi}_y$-image of the unique physical point of $\fY|_y$; then 
\[
]y[_\fY\,\subseteq\,\phi^{-1}(]\fU[_\fX)\;.
\] 
By Lemma \ref{affineextlem}, it suffices to show that there exists an open neighborhood $\fV\subseteq\fY$ of $y$ such that $]\fV[_\fY\subseteq\phi^{-1}(]\fU[_\fX)$. Since the underlying topological space of $\fY$ is a Jacobson space, it thus suffices to see that the set of closed points $y'\in \fY$ satisfying
\[
]y'[_\fY\subseteq\phi^{-1}(]\fU[_\fX)
\]
 is open in the set $\fY_\cl$ of closed points of $\fY$. This set is identified with
\[
\fY_\cl\setminus\sp_\fY(Y\setminus \phi^{-1}(]\fU[_\fX))\;,
\]
and hence this statement is provided by Proposition \ref{mainweilresopenprop}.
\end{proof}

\subsection{A descent argument} We show that $\ul{\phi}$ exists if it exists fpqc-locally on $\fY$; this statement may be viewed as an analog of \cite{blr} 2.5/5.

\begin{lem}\label{extensiondesclem}
If $\fX$ is separated and if there exist a flat formal $R$-scheme of locally ff type $\fY'$ together with a faithfully flat quasi-compact morphism $\ul{\psi}\colon\fY'\rightarrow\fY$ and a morphism $\ul{\phi}'\colon\fY'\rightarrow\fX$ such that the induced diagram of uniformly rigid generic fibers
\[
\begin{diagram}
Y'&&\\
\dTo>\psi&\rdTo>{\phi'}&\\
Y&\rTo^\phi&X
\end{diagram}
\]
commutes, then $\phi$ extends uniquely to a morphism $\ul{\phi}\colon\fY\rightarrow\fX$.
\end{lem}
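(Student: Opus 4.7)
Uniqueness is immediate from the separatedness of $\fX$ together with the $R$-flatness of $\fY$: two extensions $\ul{\phi}_1,\ul{\phi}_2\colon\fY\rightarrow\fX$ of $\phi$ give rise to a closed equalizer subscheme $\fV\subseteq\fY$, namely the pullback of $\Delta_\fX$ under $(\ul{\phi}_1,\ul{\phi}_2)$, whose uniformly rigid generic fiber is all of $Y$ since $\ul{\phi}_1^\urig=\ul{\phi}_2^\urig=\phi$. But since $\fY$ is $R$-flat, the defining ideal of $\fV$ has no $\pi$-torsion, so it must vanish, and $\fV=\fY$.

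For existence, the plan is to apply faithfully flat descent along $\ul{\psi}$. Form the fibered product $\fY''\mathrel{\mathop:}=\fY'\times_\fY\fY'$ with projections $p_1,p_2\colon\fY''\rightarrow\fY'$; working locally on $\fY$ and using the quasi-compactness of $\ul{\psi}$, one may arrange that $\ul{\psi}$ is of locally ff type so that $\fY''$ is locally noetherian (cf.\ Lemma \ref{flatnessbasechangeprop} and the surrounding discussion), and it is $R$-flat since $\fY'$ is. The descent datum to verify is that $\ul{\phi}'\circ p_1=\ul{\phi}'\circ p_2$: on uniformly rigid generic fibers both equal $\phi\circ\psi\circ p_i^\urig$, and $\psi\circ p_1^\urig=\psi\circ p_2^\urig$ follows from $\ul{\psi}\circ p_1=\ul{\psi}\circ p_2$; the equalizer argument from the uniqueness step, applied to the $R$-flat $\fY''$, then upgrades this coincidence on generic fibers to equality of morphisms of formal schemes.

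It remains to effectively descend $\ul{\phi}'$ along $\ul{\psi}$. Working locally on $\fX$ and $\fY$, one reduces to the affine situation $\fX=\Spf C$, $\fY=\Spf B$, $\fY'=\Spf B'$ with $B\rightarrow B'$ a faithfully flat continuous $R$-homomorphism. The descent datum then translates into the equality of the two continuous homomorphisms $C\rightrightarrows B'\hat{\otimes}_BB'$, and standard faithfully flat descent for modules presents $B$ as the equalizer of this pair, producing the required continuous factorization $C\rightarrow B$, whence the desired morphism $\ul{\phi}\colon\fY\rightarrow\fX$. The main obstacle lies in this reduction to the affine setting: to glue local descents into a global morphism $\ul{\phi}$, one must verify that for an affine open $\fX_i\subseteq\fX$, the preimage $(\ul{\phi}')^{-1}(\fX_i)\subseteq\fY'$ has the form $\ul{\psi}^{-1}(\fU_i)$ for some open $\fU_i\subseteq\fY$. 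Separatedness of $\fX$ is decisive here: any two points of $\fY'$ mapping to the same point of $\fY$ correspond to a point of $\fY''$ on which $\ul{\phi}'\circ p_1$ and $\ul{\phi}'\circ p_2$ already coincide, forcing their $\ul{\phi}'$-images into a common affine part of $\fX$; combined with the openness of the faithfully flat quasi-compact morphism $\ul{\psi}$, this produces the open subset $\fU_i$ needed for gluing.
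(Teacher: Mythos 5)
Your proposal takes a genuinely different route than the paper. You attempt to descend the morphism $\ul{\phi}'$ directly along the fpqc cover $\ul{\psi}$: form $\fY''=\fY'\times_\fY\fY'$, verify the cocycle identity $\ul{\phi}'\circ p_1=\ul{\phi}'\circ p_2$ (your equalizer argument is correct once one knows $\fY''$ is locally noetherian and $R$-flat), and then apply effective descent of morphisms. The paper instead forms the schematic closure $\ul{\Gamma}_\phi\subseteq\fY\times\fX$ of the graph of $\phi$, observes that $\ul{\psi}^*\ul{\Gamma}_\phi$ coincides with the graph of $\ul{\phi}'$ (so that $p_\fY|_{\ul{\Gamma}_\phi}$ becomes an isomorphism after base change along $\ul{\psi}$), establishes properness of $p_\fY|_{\ul{\Gamma}_\phi}$ via Theorem \ref{graphproperthm}, and then invokes the descent-of-isomorphism Lemma \ref{isodesclem}. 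This reformulation converts descent of a \emph{morphism} into descent of a \emph{property} of a morphism which is moreover of tf type over $\fY$, so that the only fibered products that enter are along an adic (indeed proper) morphism and are automatically well-behaved.

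The genuine gap is your effective-descent step. You assert that "standard faithfully flat descent for modules presents $B$ as the equalizer of the pair" $B'\rightrightarrows B'\hat{\otimes}_BB'$. But this involves a \emph{completed} tensor product, and $\ul{\psi}$ is not assumed adic (nor is it adic in the paper's own application of this lemma in the proof of Theorem \ref{ffteverywheredefinedthm}, where $\ul{\psi}$ is a projection $\fV'\rightarrow\fY$ from an open piece of $\fY\times\ul{\Gamma}_\phi$). Ordinary faithfully flat descent identifies $B$ with $\ker(B'\rightrightarrows B'\otimes_BB')$; here $B'\otimes_BB'$ need not be noetherian, and the natural map $B'\otimes_BB'\rightarrow B'\hat{\otimes}_BB'$ is not obviously injective or flat, so the two equalizers cannot simply be identified. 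This passage is exactly what would need a real argument, and it is not "standard." Likewise, the gluing step rests on "openness of the faithfully flat quasi-compact morphism $\ul{\psi}$," which is automatic for adic morphisms (where one can pass to subschemes of definition and cite scheme theory) but not for a general non-adic flat morphism of formal schemes. Finally, the assertion that one may "arrange $\ul{\psi}$ to be of locally ff type" is true (factor $\ul{\psi}$ through its graph to express it as a closed immersion followed by a base change of $\fY'\rightarrow\Spf R$), but it is a lemma your argument relies on, not a given. The graph-closure argument in the paper is designed precisely to avoid every one of these difficulties.
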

\begin{proof}
We may work locally on $\fY$ and, hence, assume that $\fY$ is quasi-compact. Let $\ul{\Gamma}_\phi\subseteq\fY\times\fX$ denote the schematic closure of the graph of $\phi$ in $\fY\times\fX$. Then $\ul{\psi}^*\ul{\Gamma}_\phi$ coincides with the graph $\Gamma_{\ul{\phi}'}$ of $\ul{\phi}'$, and hence the restricted projection 
\[
p_\fY|_{\ul{\Gamma}_\phi}\colon\ul{\Gamma}_\phi\rightarrow\fY
\]
becomes an isomorphism after base change with respect to $\ul{\psi}$. By Theorem \ref{graphproperthm}, $p_\fY|_{\ul{\Gamma}_\phi}$ is proper, hence adic and separated, so we deduce from Lemma \ref{isodesclem} that $p_\fY|_{\ul{\Gamma}_\phi}$ is an isomorphism, as desired.
\end{proof}

\subsection{Formal targets which are groups containing all points}\label{groupcasesec}

In the following, we assume that $\fY$ is $R$-smooth and that $\fX$ is a smooth formal $R$-group scheme of locally ff type. Under these assumptions, we show that the unique extension $\ul{\phi}$ of $\phi$ always exists. This statement is an analog of the Weil extension theorem for rational maps to group schemes, cf.\ \cite{weil} \S II n. 15 Prop. 1, \cite{blr} 4.4 and \cite{bosch_schloeter} 2.6. In order to emphasize that $\fX$ is a group, we write $\fG$ instead of $\fX$ and $G$ instead of $X$. Let us note that rigid and uniformly rigid $K$-groups and formal $R$-groups of ff type are automatically separated. Let us first consider the situation where $\fY$ is of locally tf type; in this case we can reduce to the diagonal as in the proof of \cite{bosch_schloeter} 2.6 and then apply our descent Lemma \ref{extensiondesclem}.

\begin{prop}\label{tftypedomainprop}
If $\fY$ is of locally tf type, then the unique extension $\ul{\phi}$ of $\phi$ exists.
\end{prop}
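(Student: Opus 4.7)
The plan is to show that the schematic closure $\ul{\Gamma}_\phi \subseteq \fY \times_R \fG$, whose projection to $\fY$ is already proper by Theorem \ref{graphproperthm}, is actually an isomorphism onto $\fY$; this will yield the required $\ul{\phi}$ as the composition $\fY \xrightarrow{\sim} \ul{\Gamma}_\phi \hookrightarrow \fY \times_R \fG \to \fG$. Uniqueness is automatic from $R$-flatness of $\fY$ and separatedness of $\fG$, and since both properties to be established are local on $\fY$, I would pass to an affine, quasi-compact $\fY$ at the outset.

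Following the Weil-type argument of \cite{bosch_schloeter} 2.6, I would next introduce the auxiliary morphism
$$\Psi \colon Y \times_K Y \to G, \qquad (y_1, y_2) \mapsto \phi(y_1) \phi(y_2)^{-1},$$
whose restriction to the diagonal $\Delta_Y$ is the constant morphism to the unit $e \in G$ and therefore extends trivially to $\fY \to \fG$ along the formal diagonal. The schematic closure $\ul{\Gamma}_\Psi \subseteq \fY \times_R \fY \times_R \fG$ of the graph of $\Psi$ is again proper over $\fY \times_R \fY$ by Theorem \ref{graphproperthm}, and it admits the canonical section $y \mapsto (y, y, e)$ over the diagonal $\ul{\Delta}(\fY)$. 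The central task is to show that $\Psi$ extends to a morphism $\ul{\Psi} \colon \fY \times_R \fY \to \fG$; the section along the diagonal propagates via the cocycle identity $\Psi(y_1, y_3) = \Psi(y_1, y_2) \cdot \Psi(y_2, y_3)$, so that pulling back the local extension of $\Psi$ on a neighborhood of the diagonal along the two "composable" projections from the triple product $\fY \times_R \fY \times_R \fY$ yields the extension of $\Psi$ on a faithfully flat quasi-compact cover of $\fY \times_R \fY$, to which Lemma \ref{extensiondesclem} applies.

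With $\ul{\Psi}$ constructed, the final step is to recover $\ul{\phi}$ itself. Near any closed point $y_0 \in \fY_k$, Proposition \ref{smoothlocalstructureprop} provides a finite extension of discrete valuation rings $R'/R$ of ramification index $1$ such that $\fY|_{y_0} \times_R R'$ splits into formal power-series discs, each carrying a distinguished $R'$-valued section $\ul{y}_0$. Since $\Spf R'$ is normal, the composition of $\phi$ with the generic fiber of $\ul{y}_0$ extends to a point of $\fG(R')$ by Lemma \ref{affineextlem}, and then the formula $y \mapsto \ul{\Psi}(y, \ul{y}_0) \cdot \ul{\phi}(\ul{y}_0)$ defines a morphism $\fY_{R'}|_{y_0} \to \fG_{R'}$ whose generic fiber recovers the base change of $\phi$. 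Galois descent (in the spirit of Proposition \ref{descffprop}) descends this morphism to $\fY|_{y_0} \to \fG$, and Corollary \ref{redtoffcor} then produces an extension on an open neighborhood of $y_0$ in $\fY$. Varying $y_0$ and invoking uniqueness to glue yields the global $\ul{\phi}$.

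The main obstacle is the extension of $\Psi$ in the middle step: one has to verify that the candidate extension obtained from the triple product is genuinely a descent datum, independent of the choice of the intermediate variable $y_2$, and that the resulting cover of $\fY \times_R \fY$ is faithfully flat. The cocycle identity in $\fG$ combined with the smoothness of $\fY$ (which makes the diagonal a regular closed immersion, so that its formal neighborhood is flat over either factor) is what makes the descent go through. In the tf type setting one has the alternative option of first extending $\phi$ to some admissible blowup $\fY' \to \fY$ via \cite{urigspaces} Cor.\ 2.14 (iv) and then descending the "difference" morphism $\ul{\phi}' \circ p_1 \cdot (\ul{\phi}' \circ p_2)^{-1}$ on $\fY' \times_\fY \fY'$ which is trivial generically; this may well be the route actually followed in the paper.
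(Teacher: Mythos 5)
Your overall Weil-extension template---reduce to the twisted morphism $\Psi(y_1,y_2)=\phi(y_1)\phi(y_2)^{-1}$, extend it near the diagonal, and then descend via Lemma \ref{extensiondesclem}---is the same shape as the paper's, but there is a genuine gap at precisely the point that carries all the weight. You observe that $\ul{\Gamma}_\Psi\to\fY\times_R\fY$ is proper and admits the constant section $e$ over the closed formal subscheme $\Delta_\fY$, and you then speak of ``pulling back the local extension of $\Psi$ on a neighborhood of the diagonal.'' But a section over $\Delta_\fY$ does \emph{not} give an extension of $\Psi$ on an \emph{open} neighborhood of $\Delta_\fY$ in $\fY\times_R\fY$: the proper morphism $\ul{\Gamma}_\Psi\to\fY\times\fY$ has no a priori reason to be an isomorphism anywhere near the diagonal, and the cocycle identity $\Psi(y_1,y_3)=\Psi(y_1,y_2)\Psi(y_2,y_3)$ cannot bootstrap anything off a set of measure zero---it only produces new values once you already have $\Psi$ defined on an open set large enough to feed it. So the statement you describe as the ``main obstacle'' (verifying the descent datum from the triple product) is actually the easy part; the real obstacle, which you never address, is establishing the extension near $\Delta_\fY$ in the first place.

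That step is where the paper uses the tf-type hypothesis essentially. One first extends $\phi$ on a nonempty $R$-dense open $\fW\subseteq\fY$ (via \cite{urigspaces} 2.40, \cite{frg1} 4.3--4.4, Lemma \ref{genisolem}, and Lemma \ref{affineextlem}), so that $\psi$ is a priori defined only on the $R$-dense open $\fW\times\fW$, and the question becomes one of analytic continuation from $\fW\times\fW$ across the bad locus to a neighborhood of a given diagonal point $(y,y)$. The paper then chooses a regular parameter system around $(y,y)$ as in \cite{bosch_schloeter} Thm.\ 2.6, applies the tube lemma \cite{kisinlocconst} 2.3 to bring a tubular neighborhood of a one-dimensional transversal into an affine chart $\fH\subseteq\fG$, and invokes the rigid Hartogs theorem \cite{luetkebohmert_mer} Theorem 7 to continue $\psi^*h$ for $h\in\O(\fH)$ across the complement. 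Your sketch contains none of this machinery. (Your parenthetical ``alternative option'' at the end also does not close the gap: an admissible blowup $\fY'\to\fY$ is generally not flat, so $\fY'\times_\fY\fY'\to\fY$ is not fpqc and Lemma \ref{extensiondesclem} does not apply to it.) Once the neighborhood-of-the-diagonal extension is in hand, the paper's descent step is moreover a bit leaner than yours: rather than first globalizing $\Psi$ on all of $\fY\times\fY$ via the triple product, it takes the fpqc cover $\fV\cap(\fY\times\fW)\to\fY$ directly, using the formula $(y_1,y_2)\mapsto\ul{\psi}|_\fV(y_1,y_2)\cdot\ul{\phi}|_\fW(y_2)$.
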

\begin{proof}
We may assume that $\fY$ is affine and connected. By \cite{urigspaces} 2.40 and \cite{frg1} 4.3 and 4.4, every admissible covering of $Y$ is refined by an affine open covering of a suitable admissible blowup of $\fY$. By Lemma \ref{genisolem} and Lemma \ref{affineextlem}, it follows that there exists some nonempty open formal subscheme $\fW\subseteq\fY$ such that $\phi$ extends to a morphism 
\[
\ul{\phi}|_\fW\colon\fW\rightarrow\fG\;.
\]
Let 
\[
\psi\colon Y\times Y\rightarrow G
\]
denote the morphism sending $(x_1,x_2)$ to $\phi(x_1)\cdot \phi(x_2)^{-1}$. The product $\fW\times\fW$ is an open formal subscheme of $\fY\times\fY$, and the morphism 
\[
\ul{\psi}|_{\fW\times\fW}\colon\fW\times\fW\rightarrow\fG
\]
sending $(y_1,y_2)$ to $\ul{\phi}|_\fW(y_1)\cdot\ul{\phi}|_\fW(y_2)^{-1}$ extends $\psi$. We claim that $\psi$ extends to an open formal subscheme of $\fY\times\fY$ containing the diagonal $\Delta_\fY$. Let $\fH\subseteq\fG$ be an affine open neighborhood of the identify section, let $H$ denote its semi-affinoid generic fiber, and let $\fU\subseteq\fW\times\fW$ be the $\ul{\psi}|_{\fW\times\fW}$-preimage of $\fH$. Let $(y,y)$ be any closed point of $\Delta_\fY$; we must show that $\psi$ extends to an open neighborhood of $(y,y)$ in $\fY\times\fY$. If $(y,y)\in\fW\times\fW$, there is nothing to show, so we may assume that $(y,y)\notin\fW\times\fW$. Let $\fV$ be any connected affine open neighborhood of $(y,y)$ in $\fY\times\fY$. Then $\fU\cap\fV$ is $R$-dense in $\fV$, cf.\ Corollary \ref{schemdensecor}. Indeed, since $\fV_k$ is an integral scheme, it suffices to observe that $\fU\cap\fV$ is nonempty, and already $\fU\cap\fV\cap\Delta_\fY$ is nonempty since $(\Delta_\fY)_k\cong\fY_k$ is an integral scheme and since both $\fU\cap\Delta_\fY$ and $\fV\cap\Delta_\fY$ are nonempty open formal subschemes of $\Delta_\fY$, the former containing $\Delta_\fW$ and the latter containing $(y,y)$. 

Let $d$ denote the relative dimension of $\fY$ in $y$. After shrinking $\fV$, we may assume that conditions ($i$)--($iii$) in the proof of \cite{bosch_schloeter} Theorem 2.6 are satisfied on $\fV$. That is, $\fV$ does not meet any component of $(\fY\times\fY)\setminus\fU$ not containing $(y,y)$, and there exists an $R$-regular sequence of global functions $f_1,\ldots,f_{2d-1}$ on $\fV$ such that $\Delta_\fY\cap\fV$ is defined by $f_1,\ldots,f_d$, such that the vanishing locus of the $f_i$ is a formal subscheme $\fC\subseteq\fV$ of relative dimension one and such that $\fC\cap(\fV\setminus\fU)=\{(y,y)\}$. Indeed, this follows from the fact that $\fY$ is $R$-smooth and that $\fU\cap\fV\cap\Delta_\fY$ is $R$-dense in $\Delta_\fY$ by choosing a suitable regular parameter system for the stalk of $\fY\times\fY$ in $(y,y)$.

Let $V$ denote the semi-affinoid generic fiber of $\fV$. Since $\psi$ maps $\Delta_Y$ to the unit section of $G$, \cite{kisinlocconst} Lemma 2.3 shows that there exists an $\varepsilon>0$ in $\sqrt{|K^*|}$ such that the tube
\[
V^\r(\varepsilon^{-1}f_1,\ldots,\varepsilon^{-1}f_{2d-1})
\]
in the affinoid $K$-space $V^\r$ maps to $H^\r$ via $\psi^\r$. Hence, $V(\varepsilon^{-1}f_1,\ldots,\varepsilon^{-1}f_{2d-1})$ maps to $H$ via $\psi$, and so the same holds for the Hartogs figure
\[
V(\varepsilon^{-1}f_1,\ldots,\varepsilon^{-1}f_{2d-1})\cup(\fV\cap\fU)^\srig\;.
\]
By \cite{luetkebohmert_mer} Theorem 7, the $\psi$-pullback of any global function on $H$ extends to a global function on $V$. We thus obtain a morphism of semi-affinoid $K$-spaces $\psi'_V\colon V\rightarrow H$ coinciding with $\psi|_V$ on the above Hartogs figure. The coincidence subspace $(\psi'_V,\psi|_V)^{-1}(\Delta_G)$ is a closed semi-affinoid subspace of $V$ which contains the above Hartogs figure and, hence, coincides with $V$. It follows that $\psi'_V$ and $\psi|_V$ coincide, which implies that $\psi|_V$ factors through $H$. Since the affine formal $R$-schemes $\fV$ and $\fH$ are smooth, they are normal by Corollary \ref{smoothimpliesnormalcor}, and by Lemma \ref{affineextlem} it follows that $\psi|_V$ extends uniquely to a morphism $\ul{\phi}|_\fV\colon\fV\rightarrow\fH$, as desired. Our claim has been shown.

Let $\fV$ now denote any open neighborhood of $\Delta_\fY$ in $\fY\times\fY$ such that the restriction of $\psi$ to the uniformly rigid generic fiber $V$ of $\fV$ extends to a morphism $\ul{\psi}|_\fV$ from $\fV$ to $\fG$. Let us consider the morphism $\fV\cap(\fY\times\fW)\rightarrow\fG$ sending $(y_1,y_2)$ to $\ul{\psi}|_\fV(y_1,y_2)\cdot\ul{\phi}|_\fW(y_2)$. On uniformly rigid generic fibers, it spans a commutative triangle together with $\phi$ and the generic fiber of the projection $p_1\colon\fY\times\fY\rightarrow\fY$ to the first factor. Moreover, the restriction of $p_1$ to $\fV\cap(\fY\times\fW)$ is faithfully flat. Indeed, flatness is clear, and surjectivity follows from the fact that $\fV$ contains $\Delta_\fY$ and that $\fW\subseteq\fY$ is $R$-dense. It now follows from Lemma \ref{extensiondesclem} that $\phi$ extends uniquely to a morphism $\ul{\phi}\colon\fY\rightarrow\fG$, as desired.
\end{proof}

The statement of Proposition \ref{tftypedomainprop} generalizes to the case where $\fY$ is of locally ff type over $R$:

\begin{thm}\label{ffteverywheredefinedthm}
The unique extension $\ul{\phi}:\fY\rightarrow\fG$ of $\phi$ always exists.
\end{thm}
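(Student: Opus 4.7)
The plan is to adapt the Weil-type extension argument of Proposition \ref{tftypedomainprop} to the ff-type setting, with two key replacements: Kisin's affinoid tube lemma gives way to the semi-affinoid tube Corollary \ref{tubecor}, and L\"utkebohmert's rigid Hartogs continuation theorem is replaced by the ff-type version Theorem \ref{ancontthm}. The role of the partial extension on an $R$-dense open part of $\fY$ (obtained in the tf case from admissible blowups via Lemma \ref{genisolem}) is played here by Corollary \ref{genisocor}, which supplies an extension of $\phi$ on an open neighborhood of generic points of the special fiber of the $R$-envelope $\fY_\pi$ of $\fY$ -- that is, on a neighborhood of formally unramified boundary points of the adic generic fiber of $\fY$.

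First, some geometric reductions. Smoothness of $\fY$ implies normality (Corollary \ref{smoothimpliesnormalcor}), so Corollary \ref{redtoffcor} reduces to the case where $\fY$ is local. By Proposition \ref{smoothlocalstructureprop} there is a finite extension $R'/R$ of ramification index one such that $\fY\otimes_RR'$ becomes a disjoint union of formal open unit polydiscs $\Spf R'[[T_1,\ldots,T_d]]$, and the descent Lemma \ref{extensiondesclem} applied to the faithfully flat morphism $\fY\otimes_RR'\rightarrow\fY$ allows us to construct the extension after this base change. Hence we may assume $\fY=\Spf R[[T_1,\ldots,T_d]]$; in particular $\fV\mathrel{\mathop:}=\fY\times\fY=\Spf R[[T_1,\ldots,T_d,T_1',\ldots,T_d']]$ is again a formal open unit polydisc -- the structural ingredient that makes iterated use of Theorem \ref{ancontthm} feasible.

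The Weil argument proper takes place on $\fV$. Consider the auxiliary morphism $\psi\colon Y\times Y\rightarrow G$, $(x_1,x_2)\mapsto\phi(x_1)\phi(x_2)^{-1}$, which sends $\Delta_Y$ to the unit $\{e\}$. Choose an affine open $\fH\subseteq\fG$ around the identity and an $R$-regular sequence $f_1,\ldots,f_{2d-1}$ of global functions on $\fV$ defining a smooth one-dimensional closed subscheme $\fC\subseteq\fV$ with $\Delta_\fY=V(f_1,\ldots,f_d)$. The generic fiber $V$ of $\fV$ is semi-affinoid, and since $\psi^{-1}(H)$ -- with $H\mathrel{\mathop:}=\fH^\urig$ -- is an admissible open in $V$ containing the closed uniformly rigid subspace $\Delta_Y$, Corollary \ref{tubecor} provides $\varepsilon\in\sqrt{|K^*|}$ such that the tube $V(\varepsilon^{-1}f_1,\ldots,\varepsilon^{-1}f_{2d-1})$ maps into $H$ under $\psi$; this furnishes the ``interior'' data of a Hartogs figure of the shape appearing in Theorem \ref{ancontthm}. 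On the ``exterior'' side, Corollary \ref{graphpropercor} combined with Corollary \ref{genisocor} -- applied locally on $\fG$ so that the tf-type hypothesis of Proposition \ref{graphenvexprop} is met -- extends the $\psi$-pullback of every global function on $\fH$ to an open neighborhood of the generic points of $\fV_{\pi,k}$ in $\fV_\pi$. Iterated application of Theorem \ref{ancontthm} in the polydisc coordinates of $\fV$ glues these compatible data to a global function on $\fV_\pi$, equivalently on $\fV$. Running over all global functions on $\fH$ and invoking the schematic-closure/coincidence-subspace argument of Proposition \ref{tftypedomainprop} to force agreement with $\psi|_V$, one obtains a semi-affinoid morphism $V\rightarrow H$ that Lemma \ref{affineextlem} (applicable since $\fV$ is smooth hence normal) lifts to a formal morphism $\ul{\psi}\colon\fV\rightarrow\fH\subseteq\fG$.

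Finally, the descent step produces $\ul{\phi}$ from $\ul{\psi}$ together with the partial boundary extension $\ul{\phi}|_\fW\colon\fW\rightarrow\fG$ of $\phi$ supplied by Corollary \ref{genisocor}, where $\fW$ is an open of the $R$-envelope $\fY_\pi$ that is $R$-dense by Corollary \ref{schemdensecor}. The formula $(y_1,y_2)\mapsto\ul{\psi}(y_1,y_2)\cdot\ul{\phi}|_\fW(y_2)$ defines -- after suitable interpretation through the completion morphisms relating $\fV$, $\fV_\pi$, $\fY$ and $\fY_\pi$ -- a morphism on a faithfully flat quasi-compact cover whose uniformly rigid generic fiber factors $\phi$ through the first projection, and Lemma \ref{extensiondesclem} then delivers the desired extension $\ul{\phi}\colon\fY\rightarrow\fG$. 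The principal technical obstacle is the iterated application of Theorem \ref{ancontthm} to the $2d$-dimensional Hartogs figure in $\fV$ -- which the reduction to formal open unit polydiscs makes possible -- together with the careful management of $R$-envelopes during the descent argument.
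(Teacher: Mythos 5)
Your proposal and the paper's actual proof diverge at the heart of the argument, and the route you propose has a genuine gap.

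The paper's proof is considerably lighter than yours. After the reduction to the local case via Corollary~\ref{redtoffcor} and a finite (possibly ramified) base extension to reach $\fY\cong\Spf R[[T_1,\ldots,T_d]]$, it does \emph{not} run any Hartogs or $R$-envelope argument. Instead it exploits the fact that $Y$ (and hence $Y\times Y$) is a non-admissible union of closed polydiscs $\B^d_{\leq\varepsilon}$ with $\varepsilon\to 1$. For any such $\varepsilon$, after a further ramified base extension to put $\varepsilon$ and a slightly larger $\varepsilon'$ in the value group, the restriction of $\phi$ to $\B^d_{\leq\varepsilon'}$ lives on a smooth \emph{tf-type} model $\fY'=\B^d_R$, and Proposition~\ref{tftypedomainprop} (the tf case) already extends it to $\ul{\phi}'\colon\fY'\to\fG$. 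The inclusion $\B^d_{\leq\varepsilon}\hookrightarrow\B^d_{\leq\varepsilon'}$ gives a formal morphism $\ul{\tau}\colon\B^d_R\to\B^d_R$ that collapses the special fiber to the origin; since $\ul{\psi}':=\ul{\delta}_\fG\circ(\ul{\phi}'\times\ul{\phi}')$ sends the origin (being on the closure of the diagonal) to the identity of $\fG$, composing with $\ul{\tau}\times\ul{\tau}$ forces $\psi|_{\B^{2d}_{\leq\varepsilon}}$ to factor through $H$. Running $\varepsilon\to 1$ gives the full factorization, and then Lemma~\ref{affineextlem} and a descent step along the lines you describe finish the job. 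In short: the ff case is reduced to the tf case by scaling, not by analytic continuation.

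The gap in your argument is the appeal to Corollary~\ref{genisocor} (via Proposition~\ref{graphenvexprop} and Theorem~\ref{complfiberisothm}). That entire $R$-envelope package is stated and proved only for a target $\fX$ that is separated and of locally \emph{tf} type. The hypothesis is not cosmetic: the existence proof for $R$-envelopes of graphs (Proposition~\ref{graphenvexprop}) algebraizes $(\ul{\Gamma}_\phi)_n$ inside $\fY_{\pi,n}\times\fX_n$ using \cite{egaiii}~5.1.8, and this requires $\fX_n=\fX/\pi^{n+1}$ to be a \emph{scheme}, which fails precisely when $\fX$ is of ff but not tf type. Your parenthetical fix — applying these results "locally on $\fG$ so that the tf-type hypothesis is met" — does not help: a formal scheme of locally ff type is not locally of tf type, and restricting to an affine open $\fH\subseteq\fG$ still leaves $\fH$ of ff type. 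So you cannot obtain the "exterior" data of your Hartogs figure with the target $\fG$ as given, and Theorem~\ref{ancontthm} has nothing to glue. The full $R$-envelope/Hartogs machinery you are invoking is exactly what the paper deploys later for Theorem~\ref{mainweilextthm}, where the target $\fG''$ is a \emph{tf-type} model by hypothesis — and note that that proof actually cites Theorem~\ref{ffteverywheredefinedthm} (Steps 4 and 19), so trying to establish the present statement by that route risks circularity even if the tf-type obstruction were removed.

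A smaller point: once reduced to $\fV=\Spf R[[T_1,\ldots,T_{2d}]]$, Theorem~\ref{ancontthm} as stated already handles $n+1=2d$ variables in a single application — the "iterated application in the polydisc coordinates" you describe is unnecessary even in the setting where the theorem applies. And the final descent step needs to take place on $\fY$ itself, not on $\fY_\pi$ or a mix of the two; the paper does this by pulling back $\fV$ along $(\id_\fY\times p_\fY|_{\ul{\Gamma}_\phi})$ so that all objects stay of locally ff type and Lemma~\ref{extensiondesclem} applies cleanly, rather than by the formula involving $\ul{\phi}|_\fW$ on the $R$-envelope that you sketch.
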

\begin{proof}
Let
\[
\psi\colon Y\times Y\rightarrow G
\]
be the morphism sending $(y_1,y_2)$ to $\phi(y_1)\cdot \phi(y_2)^{-1}$.  We claim that $\psi$ extends to an open neighborhood of $\Delta_\fY$ in $\fY\times\fY$. 

Let $\fH\subseteq\fG$ be an affine open neighborhood of the identity section, and let $H\subseteq G$ denote the uniformly rigid generic fiber of $\fH$. The formal $R$-scheme $\fY\times\fY$ is $R$-smooth and, hence, normal by Corollary \ref{smoothimpliesnormalcor}. By Corollary \ref{redtoffcor}, it thus suffices to show that for every closed point $(y,y)$ in $\Delta_\fY$, the formal fiber $](y,y)[_{\fY\times\fY}$ maps to $H$ under $\psi$. To establish our claim, we may thus replace $\fY$ by its completion along a closed point $y$ and thereby assume that $\fY$ is local. Moreover, after some finite possibly ramified base field extension we may assume that the residue field of $\fY$ naturally coincides with $k$, cf.\ Proposition \ref{smoothlocalstructureprop} and its proof. By Lemma \ref{localstructurerationalpointlem}, $\fY$ is then isomorphic to $\D^d_R$, where $d$ denotes the relative dimension of $\fY$ over $R$ and where $\D_R=\Spf R[[S]]$. Then $\fY\times\fY=\D^{2d}_R$, and we must show that $\psi$ factors through $H$. Since $X$ is the (non-admissible) union of the admissible open subspaces 
\[
\B^d_{\leq\varepsilon,K}\times\B^d_{\leq\varepsilon,K}\quad\textup{for}\quad\varepsilon\rightarrow 1\quad\textup{in}\quad\sqrt{|K^*|}\;,
\]
where for $\sqrt{|K^*|}$ we let $\B_{\leq\varepsilon}$ denote the closed unit disc of radius $\varepsilon$, we may, after some finite ramified base field extension, assume that $\fY=\B^d_R$, where $\B_R=\Spf R\langle S\rangle$, and that $\phi$ extends to a morphism $\phi'$ on a strictly larger closed polydisc $Y'$ whose radius lies in the value group of the base field. Let $\fY'=\B^d_R$ be the natural smooth $R$-model of $Y'$, and let $\ul{\tau}\colon\fY\rightarrow\fY'$ be the $R$-morphism corresponding to the inclusion of $Y$ into $Y'$. Then the the physical image of $\ul{\tau}\times\ul{\tau}$ is a single point, so suffices to show that $\phi'$ extends to a morphism $\fY'\rightarrow\fG$. This follows from Proposition \ref{tftypedomainprop}, so our claim has been shown.

Let us now return to the situation where $\fY$ is a general smooth formal $R$-scheme of locally ff type. Let $\fV\subseteq\fY\times\fY$ be an open neighborhood of $\Delta_\fY$ with uniformly rigid generic fiber $V$ such that $\psi|_V$ extends to a morphism $\ul{\psi}|_\fV\colon\fV\rightarrow\fG$, let $\ul{\Gamma}_\phi$ denote the schematic closure of the graph of $\phi$ in $\fY\times\fG$, and let $\ul{\phi}'\colon\ul{\Gamma}_\phi\rightarrow\fG$ denote the restriction of the projection $p_\fG\colon\fY\times\fG\rightarrow\fG$. Let $\fV'$ denote the preimage of $\fV$ under the morphism $(\id_\fY\times p_\fY|_{\ul{\Gamma}_\phi})\colon\fY\times\ul{\Gamma}_\phi\rightarrow\fY\times\fY$; we claim that $\fV'$ is faithfully flat over $\fY$ via the first projection $p_1$. Flatness follows from the fact that $\ul{\Gamma}_\phi$ is $R$-flat, from the fact that flatness is preserved under base change, cf.\ Proposition \ref{flatnessbasechangeprop}, and from the fact that open immersions of locally noetherian formal schemes are flat. To establish surjectivity, we argue as follows: Since images of morphisms of $k$-schemes of finite type are constructible, cf.\ \cite{egaiv} 1.8.5, and since the closed points in a $k$-scheme of finite type lie very dense, if suffices to see that the image of $p_1|_{\fV'}$ contains all closed points in $\fY$. Let $x\in\fY$ be a closed point; since $p_\fY|_{\ul{\Gamma}_\phi}$ induces an isomorphism of uniformly rigid generic fibers and since $\sp_\fY$ is surjective, there exists a closed point $x'\in\ul{\Gamma}_\phi$ above $x$; then $(x,x')$ is a point in $\fV'$ projecting to $x$. Surjectivity has thus been shown. Let us now consider the morphism $\fV'\rightarrow\fG$ sending a point $(x,x')$ to $\ul{\psi}|_\fV(x,p_\fY|_{\ul{\Gamma}_\phi}(x'))\cdot\ul{\psi}'(x')$. Together with $p_1$ and $\phi$, we obtain a commutative triangle on uniformly rigid generic fibers. It follows from Lemma \ref{extensiondesclem} that $\phi$ extends to a morphism $\ul{\phi}\colon\fY\rightarrow\fG$, as desired.
\end{proof}

\subsection{Formal targets which are groups containing all formally unramified points}

We now modify the setup which we described at the beginning of Section \ref{weilsec} and in Section \ref{groupcasesec}. As before, we let $\fY$ be a smooth formal $R$-scheme of locally ff type with uniformly rigid generic fiber $Y$. Let $\fG$ be a smooth quasi-paracompact formal $R$-group scheme of locally tf type, let $G$ be a smooth quasi-paracompact rigid $K$-group, and let $\iota:\fG^\rig\hookrightarrow G$ be a retrocompact open immersion respecting the given group structures. In this situation, we show that every uniformly rigid morphism $\phi$ from $Y$ to the uniform rigidification $G^\ur$ of $G$ factors through $\fG^\urig$ (and, by Theorem \ref{ffteverywheredefinedthm}, thus extends to a morphism $\ul{\phi}:\fY\rightarrow\fG$), provided that the image of $\iota$ contains all formally unramified points of $G$.

\begin{defi}
A not necessarily finite discrete analytic extension field $K'$ of $K$ is called formally unramified over $K$ if the induced local extension of complete discrete valuation rings is formally unramified. If $X$ is a quasi-separated rigid space and if $U\subseteq X$ is an open rigid subspace, we say that $U$ contains all formally unramified points\index{point!formally unramified} of $X$ if for every formally unramified discrete analytic extension field $K'$ over $K$, every $K'$-valued point of $X\hat{\otimes}_KK'$ lies in the admissible open subspace $U\hat{\otimes}_KK'$; cf.\ \cite{bgr} 9.3.6 for the definition of the functor $\cdot\,\hat{\otimes}_KK'$.
\end{defi}

\begin{thm}\label{mainweilextthm}
If the image of $\iota$ contains all formally unramified points of $G$, then $\phi$ extends to a morphism $\ul{\phi}:\fY\rightarrow\fG$.
\end{thm}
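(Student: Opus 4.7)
The plan is to reduce the theorem to an application of Theorem \ref{ffteverywheredefinedthm}. Concretely, I would first show that $\phi$ factors through the open immersion $\iota^\ur:\fG^\urig\hookrightarrow G^\ur$; then, since $\fG$ is a smooth formal $R$-group of locally tf type and hence of locally ff type, Theorem \ref{ffteverywheredefinedthm} applied to the resulting morphism $Y\rightarrow\fG^\urig$ produces the desired extension $\ul{\phi}:\fY\rightarrow\fG$.

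To establish the factorization, I would work locally and assume $\fY=\Spf B$ is affine. By Raynaud's theory, pick a quasi-paracompact formal model $\fG'$ of $G$ of locally tf type such that, after a suitable admissible blowup, $\fG^\rig\hookrightarrow G$ is represented by an open formal subscheme $\fG''\subseteq\fG'$ with $(\fG'')^\urig=\fG^\urig$. Form the schematic closure $\ul{\Gamma}_\phi$ of the graph of $\phi$ in $\fY\times\fG'$; by Theorem \ref{graphproperthm} the projection $p_\fY|_{\ul{\Gamma}_\phi}$ is proper, and by Proposition \ref{graphenvexprop} its $R$-envelope $(\ul{\Gamma}_\phi)_\pi\subseteq\fY_\pi\times\fG'$ exists.

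Next I would analyze the situation at each generic point $\eta$ of $\fY_{\pi,k}$. Smoothness of $\fY$ implies normality by Corollary \ref{smoothimpliesnormalcor}, and Proposition \ref{smoothlocalstructureprop} together with Remark \ref{normdvrrem} shows that the completed stalk $R_\eta:=\hat{\O}_{\fY_\pi,\eta}$ is a complete discrete valuation ring which is a formally unramified local extension of $R$, with fraction field $K_\eta$. By Theorem \ref{complfiberisothm} and Corollary \ref{genisocor}, the $R$-envelope projection is an isomorphism over an open neighborhood of $\eta$, giving a morphism $\Spf R_\eta\rightarrow\fG'$ whose generic fiber is a $K_\eta$-point of $G$. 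Since $K_\eta/K$ is formally unramified, the hypothesis forces this point into $\iota(\fG^\rig)$; its specialization hence lies in $\fG''\subseteq\fG'$, so $\Spf R_\eta\rightarrow\fG'$ factors through $\fG''$, and by openness the factorization spreads to an open neighborhood of $\eta$ in $\fY_\pi$.

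The main obstacle will be propagating this open, dense factorization to all of $\fY_\pi$, that is, showing that the image of $(\ul{\Gamma}_\phi)_\pi$ in $\fG'$ globally avoids the closed complement $\fZ:=\fG'\setminus\fG''$. For a point $z\in\fY_\pi$ lying outside the locus already obtained, Proposition \ref{smoothlocalstructureprop} and Lemma \ref{localstructurerationalpointlem} let me reduce, after a finite possibly ramified base change, to the local picture $\fY\cong\Spf R[[S_1,\ldots,S_d]]$, so that $\fY_\pi$ fits the Hartogs-type geometry of Section \ref{ancontsec}. Combining the analytic continuation provided by Theorem \ref{ancontthm} with Corollary \ref{tubecor} (applied locally on $\fG'$ to realize $\fZ$ as sitting outside a suitable tube on the source side) propagates the factorization across $z$. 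One concludes $\ul{\Gamma}_\phi\subseteq\fY\times\fG''$, so that $\phi$ factors through $\fG^\urig$, and Theorem \ref{ffteverywheredefinedthm} finishes the proof.
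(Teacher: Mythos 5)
Your proposal correctly identifies the overall shape of the argument: factor $\phi$ through the open immersion $\iota^\ur$ and then invoke Theorem~\ref{ffteverywheredefinedthm}. It also correctly captures the opening moves that the paper uses (reduce to an affine, in fact local, domain $\fY$; form the schematic closure of the graph and pass to $R$-envelopes; at a generic point $\eta$ of $\fY_{\pi,k}$ use Corollary~\ref{genisocor}, Remark~\ref{normdvrrem} and Proposition~\ref{regprop} to see that $\hat{\O}_{\fY_\pi,\eta}$ is a formally unramified complete discrete valuation ring, so that the hypothesis on $\iota$ forces the induced point of $G$ into the image of $\iota$). That part is a faithful reconstruction of the paper's Steps 1--3.

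The gap lies precisely where you flag the "main obstacle", and the gap is not merely technical. You propose to push the factorization from the $R$-dense open part $\fW\subseteq\fY_\pi$ to all of $\fY_\pi$ by applying the Hartogs continuation Theorem~\ref{ancontthm} together with Corollary~\ref{tubecor}, with the tube "realized locally on $\fG'$" so that the bad locus $\fZ=\fG'\setminus\fG''$ sits outside it. But there is no such tube available for $\phi$ itself. The continuation Theorem~\ref{ancontthm} needs \emph{two} matching pieces of data: a function on $\fX_{\pi,f}$ (which you have, from the dense open extension) and a function on the dilatation $(\fX')_\pi$ — and the latter is obtained in the paper by showing that a tube \emph{around the diagonal} maps into an affine neighborhood $\fH$ of the identity of $\fG$. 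This only works after replacing $\phi$ by its twist $\psi(y_1,y_2)=\phi(y_1)\phi(y_2)^{-1}$, because $\psi$ sends $\Delta_Y$ to the identity section, which gives a canonical anchor for Corollary~\ref{tubecor}. For $\phi$ alone, $\phi(Y)$ may specialize anywhere in the special fiber of $\fG'$, and there is no closed formal subscheme of $\fY$ whose tube is guaranteed to map into any fixed affine. The twist is exactly what the Weil extension method (going back to \cite{bosch_schloeter} Thm.\ 2.6) supplies here, and it is not dispensable. Once the twist $\psi$ is in play, one also needs the additional descent step — the paper's Step 19, using the morphism $\gamma:(\ul{\Gamma}_{\phi\times\phi})_\pi\rightarrow(\ul{\Gamma}_\phi)_\pi\times(\ul{\Gamma}_\phi)_\pi$ and a multiplication identity on adic generic fibers — to pass from an extension of $\psi$ near the diagonal back to a factorization of $\phi$ through $\iota$. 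That step is entirely absent from your sketch. So while your endpoints are right, the middle of the argument would fail as proposed.
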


The \textit{proof} of Theorem \ref{mainweilextthm} is rather elaborate, so we divide it into several steps.

\textit{Step 1: Choice of a model for $\iota$.} By \cite{bosch_frgnotes} 2.8/3, there exist a quasi-paracompact $R$-model of locally tf type $\fG''$ for $G$ and an admissible blowup $\ul{b}_1:\fG'\rightarrow\fG$ such that the retrocompact open immersion $\iota$ of $\fG^\rig$ into $G$ is induced by a morphism $\fG'\rightarrow\fG''$. Since $\fG^\rig\subseteq G$ is retrocompact, \cite{frg2} Corollary 5.4 and \cite{bosch_frgnotes} 2.6/13--14 show that after replacing $\fG''$ and $\fG'$ by suitably chosen admissible blowups, we may assume that $\fG'\subseteq\fG''$ is an open formal subscheme. 

\textit{Step 2: Reduction to the case of a local formal domain.} By Corollary \ref{redtoffcor}, we may assume that $\fY$ is local. Clearly we may assume that $\fY$ is nonempty.

\textit{Step 3: Extension of $\phi$ to an open $R$-dense part of the $R$-envelope.} Now and only now we use the fact that the image of $\iota$ contains all formally unramified points of $G$. Let $\ul{\Gamma}_\phi$ denote the schematic closure of the graph of $\phi$ in $\fY\times\fG''$. By Corollary \ref{genisocor}, the $R$-envelope $(p_\fY|_{\ul{\Gamma}_\phi})_\pi$ of the projection $p_\fY|_{\ul{\Gamma}_\phi}$ from $\ul{\Gamma}_\phi$ to $\fY$ is an isomorphism over an open neighborhood $\fW\subseteq\fY_\pi$ of the generic point $\eta$ in $\fY_{\pi,k}$. Let us abbreviate
\[
\ul{\phi}'_\pi\,\mathrel{\mathop:}=\,(p_{\fG''})_\pi|_{(\ul{\Gamma}_\phi)_\pi}\;,
\]
where 
\[
(p_{\fG''})_\pi\colon\fY_\pi\times\fG''\rightarrow\fG''
\]
denotes the projection to the second factor. 
By Corollary \ref{smoothimpliesnormalcor} and Remark \ref{normdvrrem}, the completed stalk $R_\eta$ of $\fY_\pi$ in $\eta$ is a complete discrete valuation ring; by Proposition \ref{regprop}, it is a formally unramified local extension of $R$. Since $\fG^\rig$ contains all formally unramified points of $G$, $\eta\in(\ul{\phi}'_\pi)^{-1}(\fG')$. After shrinking $\fW$, we may therefore assume that 
\[
\fW\,\subseteq\,(\ul{\phi}'_\pi)^{-1}(\fG')
\]
so that we obtain a morphism $\ul{\phi}'_\pi|_\fW\colon\fW\rightarrow\fG'\rightarrow\fG$, where $\fW\subseteq\fY_\pi$ is open and $R$-dense.

\textit{Step 4: Reduction to the case where the formal domain is an open formal polydisc.} By Theorem \ref{ffteverywheredefinedthm}, it suffices to show that $\phi$ factors through the image of $\iota$, which may be verified after an extension of the base field. After some finite possibly ramified base field extension $K'/K$, the residue field of $\fY$ naturally coincides with $k$, cf.\ Proposition \ref{smoothlocalstructureprop} and its proof. By Lemma \ref{localstructurerationalpointlem}, $\fY$ is then isomorphic to $\fY=\Spf R[[T_1,\ldots,T_d]]$. The outcome of Step 7 is preserved under the possibly ramified finite local extension of discrete valuation rings $R'/R$ corresponding to $K'/K$ because $(\fY\times_RR')_{\pi'}=\fY_\pi\times_RR'$, where $\pi'$ denotes a uniformizer of $R'$, and because $\fW\times_RR'\subseteq\fY_\pi\times_RR'$ is $R'$-dense. Since the initial assumption that the image of $\iota$ contains all formally unramified points of $G$ is only used in Step 3, we may and will thus assume in the rest of the proof that $\fY=\Spf R[[T_1,\ldots,T_d]]$.

\textit{Step 5: The twist of $\phi$.} Let $\delta: G^\sr\times G^\sr\rightarrow G^\sr$ and $\ul{\delta}_\fG:\fG\times\fG\rightarrow\fG$ denote the twisted multiplication morphisms sending $(g_1,g_2)$ to $g_1\cdot g_2^{-1}$, and let 
\[
\psi\colon Y\times Y\rightarrow G^\sr
\]
denote the morphism $\delta\circ(\phi\times\phi)$ sending $(y_1,y_2)$ to $\phi(y_1)\cdot \phi(y_2)^{-1}$; we say that $\psi$ is the twist of $\phi$. Since $\iota$ respects group structures, the generic fiber $\delta_\fG:=\ul{\delta}_\fG^\urig:\fG^\urig\times\fG^\urig\rightarrow\fG^\urig$ of $\ul{\delta}_\fG$ is obtained from $\delta$ via restriction.

\textit{Step 6: An affine open neighborhood $\fH$ of the identity section.}  Let $\fH\subseteq\fG$ be an affine open neighborhood of the identity section, and let $H\subseteq\fG^\urig$ denote its semi-affinoid generic fiber. 

\textit{Step 7: Preimages under twisted multiplication.} We have a possibly strict inclusion of preimages
\[
(\delta_\fG)^{-1}(H)\,\subseteq\,\delta^{-1}(H)\;.
\]
A natural $R$-model for $(\delta_\fG)^{-1}(H)$ is given by 
\[
\ul{\delta}_\fG^{-1}(\fH)\subseteq\fG\times\fG\;.
\]
Let $\fH'\subseteq\fG'$ denote the strict transform of $\fH$ under $\fG'\rightarrow\fG$; then $\fH'$ is an open formal subscheme of $\fG''$ via the inclusion $\fG'\subseteq\fG''$. By \cite{bosch_frgnotes} 2.8/3, there exists an admissible blowup $\ul{b}_1:(\fG''\times\fG'')'\rightarrow\fG''\times\fG''$ such that $\delta$ extends to a morphism $\ul{\delta}:(\fG''\times\fG'')'\rightarrow\fG''$; then 
\[
\ul{\delta}^{-1}(\fH')\,\subseteq\,(\fG''\times\fG'')'
\]
is an $R$-model of $\delta^{-1}(H)$. In fact, $(\delta_\fG)^{-1}(H)$ has a model which is an open formal subscheme of $\ul{\delta}^{-1}(\fH')$: let $\ul{\delta}_\fG^{-1}(\fH)'\subseteq\fG'\times\fG'$ denote the strict transform of $\ul{\delta}_\fG^{-1}(\fH)$ under $(\ul{b}_1\times \ul{b}_1):\fG'\times\fG'\rightarrow\fG\times\fG$; we view it as an open formal subscheme of $\fG''\times\fG''$ via the inclusion $\fG'\times\fG'\subseteq\fG''\times\fG''$. Let $\ul{\delta}_\fG^{-1}(\fH)''\subseteq(\fG''\times\fG'')'$ be the strict transformation of $\ul{\delta}_\fG^{-1}(\fH)'$ under $\ul{b}_1$; then $\ul{\delta}_\fG^{-1}(\fH)''$ is a model of $(\delta_\fG^\urig)^{-1}(H)$, and it is contained in $\ul{\delta}^{-1}(\fH')$ because the specialization map of $(\fG''\times\fG'')'$ is surjective onto closed points.


\textit{Step 8: Formulation of an intermediate claim.}  We claim that the twist $\psi$ of $\phi$ extends to a morphism $\ul{\psi}:\fV\rightarrow\fH$, where $\fV$ is an open neighborhood of the diagonal in $\fY\times\fY$. Steps 9 till 18 will yield the proof of this claim. The idea is to simply mimic the analytic continuation argument which was used in the proof of \cite{bosch_schloeter} Theorem 2.6; however, life is obfuscated by the fact the we need to work on $R$-envelopes.

\textit{Step 9: Reduction of the intermediate claim to a statement on analytic continuation.} Since $\fY$ is an open formal polydisc, the product $\fY\times\fY$ is local. In this situation, the intermediate claim thus says that $\psi$ extends to a morphism $\ul{\psi}$ from $\fY\times\fY$ to $\fH$. To establish this claim, it suffices to show that the following holds:
\begin{enumerate}
\item $\psi^{-1}(H)$ contains a semi-affinoid subdomain $T$ such that the restriction homomorphism $\Gamma(Y\times Y,\O_{Y\times Y})\rightarrow\Gamma(T,\O_{Y\times Y})$ is injective, and
\item the $\psi$-pullback $\psi^*h\in\Gamma(\psi^{-1}(H),\O_{Y\times Y})$ of any semi-affinoid function $h$ on $H$ extends uniquely to a function on $Y\times Y$.
\end{enumerate}
Indeed, since $\fY\times\fY$ is normal, condition ($ii$) implies that $\psi^*$ induces a morphism $\ul{\psi}:\fY\times\fY\rightarrow\fH$ such that the restriction of $\ul{\psi}^\urig$ to any open semi-affinoid subspace of $\psi^{-1}(H)$ coincides with the restriction of $\psi$ to that subspace. Let $V\subseteq Y\times Y$ be the closed uniformly rigid coincidence subspace of $\psi$ and $\ul{\psi}^\urig$, that is, the pullback of the diagonal of $G^\ur\times G^\ur$ under $\psi\times\ul{\psi}^\urig$. Then $V$ contains $T$ because $T\subseteq\psi^{-1}(H)$, so condition ($i$) shows that $V=Y\times Y$, which implies that $\psi=\ul{\psi}^\urig$. To show the intermediate claim, it thus suffices to establish ($i$) and ($ii$) above.

\textit{Step 10: Extension of $\psi$ to an open part of the $R$-envelope which has $R$-dense intersection with the diagonal.}  Let
\[
\fY_\pi\overset{(\Delta_\fY)_\pi}{\rightarrow}(\fY\times\fY)_\pi\overset{\ul{p}_{1,\pi},\ul{p}_{2,\pi}}{\rightrightarrows}\fY_\pi
\]
be the diagram which is obtained from the diagram
\[
\fY\overset{\Delta_\fY}{\rightarrow}\fY\times\fY\overset{\ul{p}_1,\ul{p}_2}{\rightrightarrows}\fY
\]
by passing to rings of global sections, where $\Delta_\fY$ is the diagonal morphism and where the $\ul{p}_i$ are the projections. Then $(\Delta_\fY)_\pi$ is a closed immersion, and $\ul{p}_{i,\pi}\circ(\Delta_\fY)_\pi=\id_{\fY_\pi}$ for $i=1,2$. By abuse of notation, we will not distinguish between the closed immersion $(\Delta_{\fY})_\pi$ and the closed formal subscheme of $(\fY\times\fY)_\pi$ that it defines. We set
\begin{eqnarray*}
\fU&:=&(\ul{p}_{1,\pi})^{-1}(\fW)\cap (\ul{p}_{2,\pi})^{-1}(\fW)\\
&\subseteq&(\fY\times\fY)_\pi\;,
\end{eqnarray*}
where $\fW\subseteq\fY_\pi$ was obtained in Step 3; then $(\Delta_{\fY})_\pi^{-1}(\fU)=\fW$ is $R$-dense in $\fY_\pi$. We define a morphism $\ul{\psi}_\pi|_\fU\colon\fU\rightarrow\fG$ by setting 
\[
\ul{\psi}_\pi|_\fU\,\mathrel{\mathop:}=\,\ul{\delta}\circ(\ul{\phi}'_\pi|_\fW\circ (\ul{p}_{1,\pi})|_\fU,\ul{\phi}'_\pi|_\fW\circ (\ul{p}_{2,\pi})|_\fU)\;,
\]
where for $i=1,2$, $(\ul{p}_{i,\pi})|_\fU\colon\fU\rightarrow\fW$ denotes the restriction of $\ul{p}_{i,\pi}$ and where $\ul{\phi}'|_\fW:\fW\rightarrow\fG$ was defined in Step 3. The restriction of $\ul{\psi}_\pi|_\fU$ to $\fU\cap(\Delta_{\fY})_\pi$ factors through the unit section of $\fG$; hence in particular
\[
\fU\cap(\Delta_{\fY})_\pi\,\subseteq\,(\ul{\psi}_\pi|_\fU)^{-1}(\fH)\;.
\]
We may thus shrink $\fU$ such that $\ul{\psi}_\pi|_\fU$ factors through $\fH$ without losing the property that $\fU\cap(\Delta_{\fY})_\pi$ is $R$-dense in $(\Delta_{\fY})_\pi$. Since the special fiber of $(\Delta_{\fY})_\pi\cong\fY_\pi$ is integral, we may shrink $\fU$ further such that $\fU$ is a basic open subset in $(\fY\times\fY)_\pi$, again without losing the property that $\fU\cap(\Delta_{\fY})_\pi$ is $R$-dense in $(\Delta_{\fY})_\pi$. Let $f$ be a global function on $(\fY\times\fY)_\pi$ such that $\fU$ is the complement of the formal hypersurface $V(f)$ defined by $f$. We may assume that $f$ is nonzero, for if $\fU$ was empty, then $(\Delta_{\fY})_\pi$ and, hence, $Y$ would be empty, and there would be nothing to show. Moreover, we may assume that $f$ is not a unit: indeed, let us assume that $f$ is a unit. Then $\fU$ is all of $(\fY\times\fY)_\pi$, and hence $\fW$ is all of $\fY_\pi$. In the situation of Step 3, the morphism $(p_\fY|_{\ul{\Gamma}_\phi})_\pi$ is then an isomorphism, and hence $p_\fY|_{\ul{\Gamma}_\phi}$ is an isomorphism as well. It follows that $\phi$ extends to a morphism $\ul{\phi}':\fY\rightarrow\fG''$ which extends to a morphism $\ul{\phi}'_\pi:\fY_\pi\rightarrow\fG''$. Since $\fW\subseteq(\ul{\phi}'_\pi)^{-1}(\fG')$, the equality $\fW=\fY_\pi$ implies that $\ul{\phi}'_\pi$ and, hence, $\ul{\phi}'$ factor over $\fG'$. This shows that $\phi$ factors over the image of $\iota$, and Theorem \ref{ffteverywheredefinedthm} shows that $\phi$ extends to a morphism $\ul{\phi}:\fY\rightarrow\fG$, which yields the conclusion of Theorem \ref{mainweilextthm} and in particular the intermediate claim. We have thus reduced to the case where $f$ is a nonzero non-unit. 

\textit{Step 11: A tube containing the diagonal.} We now choose the semi-affinoid subdomain $T\subseteq\psi^{-1}(H)$ of condition ($i$) in Step 9 as a tubular neighborhood of a smooth closed formal $R$-subscheme $\fT\subseteq\fY\times\fY$ of relative dimension one with the property that $\fT$ contains $\Delta_\fY$ and that $\fT_\pi\subseteq(\fY\times\fY)_\pi$ intersects the complement $V(f)$ of $\fU$ in $(\fY\times\fY)_\pi$ transversely: since $\fU\cap(\Delta_{\fY})_\pi$ is $R$-dense in $(\Delta_{\fY})_\pi$, we may argue as in the proof of \cite{bosch_schloeter} Thm.\ 2.6 to find an $R$-isomorphism 
\[
\fY\times\fY\cong\Spf R[[S_1,\ldots,S_{2d-1},Z]]
\]
such that the diagonal $\Delta_\fY$ is defined by $S_1,\ldots,S_d$ and such that $f$ is $Z$-di\-stin\-guished in the sense of Section \ref{ancontsec}. We briefly write $\ul{S}$ to denote the system $S_1,\ldots,S_{2d-1}$; let $\fT\subseteq\fY\times\fY$ be the closed formal subscheme defined by $\ul{S}$. Since $\psi$ maps $\Delta_Y$ to the identity of $G^\sr$, Corollary \ref{tubecor} and Lemma \ref{tubeauxlem} show that $\psi^{-1}(H)$ contains some tube around the diagonal and, hence, some tube around the vanishing locus $\fT$ of the $S_i$. More precisely speaking, there exists an $r\in\N$ such that 
\[
T\,:=\,(Y\times Y)(\pi^{-r}\ul{S})\subseteq \psi^{-1}(H)\;,
\]
where we use the notation of Corollary \ref{tubecor}. The restriction homomorphism
\[
\Gamma(Y\times Y,\O_{Y\times Y})\rightarrow\Gamma(T,\O_{Y\times Y})
\]
is injective since it is a flat homomorphism of domains, where flatness holds by \cite{urigspaces} Rem.\ 2.49 and where $\Gamma(T,\O_{Y\times Y})$ is a domain by Proposition \ref{formaltubestructureprop}; hence $T$ does satisfy the requirements of condition ($i$) in Step 9. It remains to establish condition ($ii$) in Step 9.

\textit{Step 12: A formal model for the tube.} Let $\ul{\tau}\colon(\fY\times\fY)\langle\pi^{-r}\ul{S}\rangle\rightarrow\fY\times\fY$ be the restriction of the blowup of $\fY\times\fY$ in the ideal $(\pi^r,\ul{S})$ to the affine open part where the pullback of this ideal is generated by $\pi^r$; then $\ul{\tau}$ is a model of the open subspace $T\subseteq Y\times Y$. Let
\[
\ul{\tau}_\pi\colon(\fY\times\fY)\langle\pi^{-r}\ul{S}\rangle_\pi\rightarrow(\fY\times\fY)_\pi
\]
denote the induced morphism of $R$-envelopes. By Proposition \ref{formaltubestructureprop}, the formal $R$-scheme $(\fY\times\fY)\langle\pi^{-r}\ul{S}\rangle$ is $R$-smooth and, hence, normal; cf.\ Corollary \ref{smoothimpliesnormalcor}. It thus follows from Lemma \ref{affineextlem} that the restriction of $\psi$ to the above tube is extends uniquely to a morphism of affine formal $R$-schemes $\ul{\psi}'\colon(\fY\times\fY)\langle\pi^{-r}\ul{S}\rangle\rightarrow\fH$. Let
\[
\ul{\psi}'_\pi\colon(\fY\times\fY)\langle\pi^{-r}\ul{S}\rangle_\pi\rightarrow\fH
\]
denote the induced morphism of $R$-envelopes.

\textit{Step 13: Application of the Continuation Theorem \ref{ancontthm}.} Let $h$ be a function on $H$, let $\psi^*h$ be its $\psi$-pullback to $\psi^{-1}(H)$, and let $(\psi^*h)|_T$ be the restriction of this pullback to $T$. To establish condition ($ii$) in Step 9, we must show that $(\psi^*h)|_T$ extends to a function on $Y\times Y$; such an extension is unique because the restriction homomorphism from $\Gamma(Y\times Y,\O_{Y\times Y})$ to $\Gamma(T,\O_{Y\times Y})$ is injective, as we have seen above. We may assume that $h$ is defined on $\fH$; then $(\ul{\psi}')^*h=(\psi^*h)|_T$, and in particular $(\psi^*h)|_T$ extends to $(\fY\times\fY)\langle\pi^{-1}\ul{S}\rangle$. The homomorphism of flat $R$-algebras $\ul{\tau}^*$ that corresponds to $\ul{\tau}$ is injective because, as we have just recalled, it is injective after inverting $\pi$. We have to show that $(\ul{\psi}')^*h$ extends to $\fY\times\fY$ or, equivalently, that $(\ul{\psi}'_\pi)^*h$ extends to $(\fY\times\fY)_\pi$. By the Continuation Theorem \ref{ancontthm}, it suffices to show that there exists a function $\tilde{h}$ on $\fU=(\fY\times\fY)_{\pi,f}$ such that the pullback of $\tilde{h}$ to $\ul{\tau}_\pi^{-1}(\fU)=(\fY\times\fY)\langle\pi^{-1}\ul{S}\rangle_{\pi,f}$ coincides with the restriction of $(\ul{\psi}'_\pi)^*h$; here we use the fact that $f$ is $Z$-distinguished. Let us abbreviate $\fU':=\ul{\tau}_\pi^{-1}(\fU)$.

\textit{Step 14: A compatibility statement.} We set $\tilde{h}:=(\ul{\psi}_\pi|_\fU)^*h$, where $\ul{\psi}_\pi|_\fU$ was defined in Step 10 in terms of the morphism $\ul{\phi}'_\pi|_\fW$ (cf.\ Step 3) and the twisted multiplication $\ul{\delta}$ of $\fG$. To show that $h'$ satisfies the requirements of Step 13 and hereby settle the intermediate claim, we must show that the compatibility relation 
\[
\ul{\psi}'_\pi|_{\fU'}\,=\,\ul{\psi}_\pi|_\fU\circ\ul{\tau}_\pi|_{\fU'}
\]
holds, where these morphisms have $\fH$ as a target. 

\textit{Step 15: Reduction to a more local compatibility statement.} By Proposition \ref{formaltubestructureprop}, the formal $R$-scheme $(\fY\times\fY)\langle\pi^{-r}\ul{S}\rangle$ is smooth and connected; hence its special fiber $(\fY\times\fY)\langle\pi^{-r}\ul{S}\rangle\times_Rk$ is $k$-smooth and connected. It follows that the ring of global functions on this special fiber is integral, which implies that the special fiber of $(\fY\times\fY)\langle\pi^{-r}\ul{S}\rangle_\pi$ is an integral scheme and, hence, that every open formal subscheme of $(\fY\times\fY)\langle\pi^{-r}\ul{S}\rangle_\pi$ is connected. By Proposition \ref{regprop}, smoothness of $(\fY\times\fY)\langle\pi^{-1}\ul{S}\rangle$ implies that the ring of functions on $(\fY\times\fY)\langle\pi^{-1}\ul{S}\rangle$ and, hence, on $(\fY\times\fY)\langle\pi^{-1}\ul{S}\rangle_\pi$ is $R$-regular; by \cite{egaiv} 7.8.3 ($v$), it follows that the ring of global functions on any affine open formal subscheme $\fU''$ of 
$(\fY\times\fY)\langle\pi^{-r}\ul{S}\rangle_\pi$ $R$-regular as well and, hence, regular. It follows that the ring of functions on every nonempty affine open formal subscheme of $(\fY\times\fY)\langle\pi^{-r}\ul{S}\rangle_\pi$ is a domain. Since flat homomorphisms of domains are injective, the restriction homomorphism 
\[
\Gamma(\fU',\O_{(\fY\times\fY)\langle\pi^{-r}\ul{S}\rangle_\pi})\rightarrow\Gamma(\fU'',\O_{(\fY\times\fY)\langle\pi^{-r}\ul{S}\rangle_\pi})
\]
is injective for any nonempty affine open formal subscheme $\fU''$ of $\fU'$. Hence, it suffices to show that the compatibility relation
\[
\ul{\psi}'_\pi|_{\fU''}\,=\,\ul{\psi}_\pi|_\fU\circ\ul{\tau}_\pi|_{\fU''}
\]
holds for some arbitrarily small nonempty affine open formal subscheme $\fU''$ of $\fU'$, where again the target of these morphisms is $\fH$.

\textit{Step 16: An extension of $\psi|_T$ to an $R$-envelope.} As we have seen in Step 12, the restriction $\psi|_T$ of $\psi$ to the tube $T$ extends to a morphism $\ul{\psi}'$ from the formal tube $(\fY\times\fY)\langle\pi^{-r}\ul{S}\rangle$ to $\fH$ and, hence, to a morphism $\ul{\psi}'_\pi$ from the $R$-envelope of this formal tube to $\fH$. We now consider a different $R$-model of $\psi|_T$ which also extends to an $R$-envelope and which relates both to $\ul{\psi}'_\pi$ and to $\ul{\psi}_\pi|_\fU$. Let us first get back to the notation which we introduced in Step 5. Let
\[
\ul{\Gamma}_{\phi\times\phi}\,\subseteq\,\fY\times\fY\times(\fG''\times\fG'')'
\]
be the schematic closure of the graph of $\phi\times\phi$, and let $\ul{\Gamma}_{\phi\times\phi}\langle\pi^{-r}\ul{S}\rangle$ be its strict transform under $\ul{\tau}$; then 
\[
\ul{\Gamma}_{\phi\times\phi}\langle\pi^{-r}\ul{S}\rangle\,\subseteq\,(\fY\times\fY)\langle\pi^{-r}\ul{S}\rangle\times(\fG''\times\fG'')'
\]
is the schematic closure of the graph of $(\phi\times\phi)|_T$. Since $\sp_{\ul{\Gamma}_{\phi\times\phi}\langle\pi^{-r}\ul{S}\rangle}$ is surjective onto the closed points of $\ul{\Gamma}_{\phi\times\phi}\langle\pi^{-r}\ul{S}\rangle$ and since $(\phi\times\phi)|_T$ factors through $H$, 
\[
\ul{\Gamma}_{\phi\times\phi}\langle\pi^{-r}\ul{S}\rangle\,\subseteq\,(\fY\times\fY)\langle\pi^{-r}\ul{S}\rangle\times\ul{\delta}^{-1}(\fH')\;.
\]
If $\ul{\psi}'':\ul{\Gamma}_{\phi\times\phi}\langle\pi^{-r}\ul{S}\rangle\rightarrow\fH'$ is the morphism which is obtained by projection to $\ul{\delta}^{-1}(\fH')$ and composition with $\ul{\delta}$, then the diagram
\[
\begin{diagram}
\ul{\Gamma}_{\phi\times\phi}\langle\pi^{-r}\ul{S}\rangle&\rTo^{\ul{\psi}''}&\fH'\\
\dTo<{p_{\fY\times\fY\langle\pi^{-r}\ul{S}\rangle}|_{\ul{\Gamma}_{\phi\times\phi}\langle\pi^{-r}\ul{S}\rangle}}&&\dTo>{\ul{b}_1|_{\fH'}}\\
\fY\times\fY\langle\pi^{-r}\ul{S}\rangle&\rTo^{\ul{\psi}'}&\fH
\end{diagram}
\]
commutes because it commutes after passing to generic fibers. By Proposition \ref{graphenvexprop}, the $R$-envelopes $(\ul{\Gamma}_{\phi\times\phi})_\pi$ and $(\ul{\Gamma}_{\phi\times\phi}\langle\pi^{-r}\ul{S}\rangle)_\pi$ exist, and
\[
\ul{\Gamma}_{\phi\times\phi}\langle\pi^{-r}\ul{S}\rangle_\pi\,\subseteq\,(\fY\times\fY)\langle\pi^{-r}\ul{S}\rangle_\pi\times\ul{\delta}^{-1}(\fH')\;.
\]
The resulting diagram
\[
\begin{diagram}
\ul{\Gamma}_{\phi\times\phi}\langle\pi^{-r}\ul{S}\rangle_\pi&\rTo^{\ul{\psi}''_\pi}&\fH'\\
\dTo<{(p_{\fY\times\fY\langle\pi^{-r}\ul{S}\rangle}|_{\ul{\Gamma}_{\phi\times\phi}\langle\pi^{-r}\ul{S}\rangle})_\pi}&&\dTo>{\ul{b}_1|_{\fH'}}\\
((\fY\times\fY)\langle\pi^{-r}\ul{S}\rangle)_\pi&\rTo^{\ul{\psi}'_\pi}&\fH
\end{diagram}
\]
commutes because of the commutativity of the previous diagram: indeed, since $\fH$ is affine, morphisms to $\fH$ correspond to continuous homomorphisms of rings of global sections, and the rings of global sections on $\ul{\Gamma}_{\phi\times\phi}\langle\pi^{-r}\ul{S}\rangle$ and $\ul{\Gamma}_{\phi\times\phi}\langle\pi^{-r}\ul{S}\rangle_\pi$ coincide by Theorem \ref{mainflatbasechangethm} and by flatness of completion. By Proposition \ref{formaltubestructureprop} and Corollary \ref{genisocor}, the morphism 
\[
(p_{\fY\times\fY\langle\pi^{-r}\ul{S}\rangle}|_{\ul{\Gamma}_{\phi\times\phi}\langle\pi^{-r}\ul{S}\rangle})_\pi
\]
is an isomorphism over a sufficiently small nonempty affine open formal subscheme $\fU''\subseteq\fU'$. By the commutativity of the above diagram, the pullback of the restriction of $(\ul{\psi}'_\pi)^*h$ under this isomorphism coincides with the restriction of the $\ul{\psi}''_\pi$-pullback of $(\ul{b}_1|_{\fH'})^*h$. 

\textit{Step 17: A morphism from the $R$-envelope of a product to a product of $R$-envelopes.} In order to relate the constructions in Step 16 with the morphism $\ul{\psi}_\pi|_\fU$, we exhibit a natural morphism of $R$-envelopes
\[
\gamma=(\gamma_1,\gamma_2)\,:\,(\ul{\Gamma}_{\phi\times\phi})_\pi\rightarrow(\ul{\Gamma}_\phi)_\pi\times (\ul{\Gamma}_\phi)_\pi\;.
\]
as follows: the schematic closure $\ul{\Gamma}_{\phi\times\phi}$ of $\phi\times\phi$ in $\fY\times\fY\times(\fG''\times\fG'')'$ is the strict transform of the schematic closure $\ul{\Gamma}_\phi\times\ul{\Gamma}_\phi$ of $\phi\times\phi$ in $\fY\times\fY\times\fG''\times\fG''$ under $\ul{b}_1$, and the induced admissible blowup $\ul{b}_1':\ul{\Gamma}_{\phi\times\phi}\rightarrow\ul{\Gamma}_\phi\times\ul{\Gamma}_\phi$ extends to a an admissible blowup of $R$-envelopes
\[
(\ul{b}_1')_\pi\,:\,(\ul{\Gamma}_{\phi\times\phi})_\pi\rightarrow(\ul{\Gamma}_\phi\times\ul{\Gamma}_\phi)_\pi\;.
\]
For $i=1,2$, we now exhibit a natural morphism
\[
\ul{q}_{i,\pi}\,:\,(\ul{\Gamma}_\phi\times\ul{\Gamma}_\phi)_\pi\rightarrow(\ul{\Gamma}_\phi)_\pi\,,
\]
and we define 
\[
\gamma_i\,:=\,\ul{q}_{i,\pi}\circ(\ul{b}_1')_\pi\;.
\]
Let $p_i:Y\times Y\rightarrow Y$ denote the projection to the $i$-th factor, which coincides with the uniformly rigid generic fiber of the corresponding projection $\ul{p}_i:\fY\times\fY\rightarrow\fY$.  The schematic closure of the graph of $\phi\circ p_i$ in $\fY\times\fY\times\fG''$ is given by $(\ul{p}_i\times\id_{\fG''})^{-1}(\ul{\Gamma}_\phi)\cong\fY\times\ul{\Gamma}_\phi$. By \cite{egaiii} 5.4.1, the natural morphism of proper formal $\fY\times\fY$-schemes
\[
\ul{q}_i'\,:\,\ul{\Gamma}_\phi\times\ul{\Gamma}_\phi\rightarrow (\ul{p}_i\times\id_{\fG''})^{-1}(\ul{\Gamma}_\phi)
\]
sending $(y_1,g_1,y_2,g_2)$ to $(y_1,y_2,g_i)$ extends uniquely to a morphism of $R$-envelopes
\[
(\ul{q}_i')_\pi\,:\,(\ul{\Gamma}_\phi\times\ul{\Gamma}_\phi)_\pi\rightarrow ((\ul{p}_i\times\id_{\fG''})^{-1}(\ul{\Gamma}_\phi))_\pi\;.
\] 
We claim that the two closed formal subschemes
\[
((\ul{p}_i\times\id_{\fG''})^{-1}(\ul{\Gamma}_\phi))_\pi\quad\textup{and}\quad(\ul{p}_{i,\pi}\times\id_{\fG''})^{-1}((\ul{\Gamma}_{\phi})_\pi)
\]
of $(\fY\times\fY)_\pi\times\fG''$ coincide; once we have shown this, we define $\ul{q}_{i,\pi}$ to be $(\ul{q}_i')_\pi$ followed by the projection to $(\ul{\Gamma}_{\phi})_\pi$. And indeed, formal completion with respect to a subscheme of definition of $\fY\times\fY$ corresponds to base change with respect to the completion morphism $\fY\times\fY\rightarrow(\fY\times\fY)_\pi$, and
\begin{eqnarray*}
&&(\ul{p}_{i,\pi}\times\id_{\fG''})^{-1}((\ul{\Gamma}_{\phi})_\pi)\times_{(\fY\times\fY)_\pi}(\fY\times\fY)\\
&=&(\ul{\Gamma}_\phi)_\pi\times_{\fY_\pi}(\fY\times\fY)\\
&=&((\ul{\Gamma}_\phi)_\pi\times_{\fY_\pi}\fY)\times_{\fY}(\fY\times\fY)\;,
\end{eqnarray*}
where the base change $\cdot\times_\fY(\fY\times\fY)$ functor is understood with respect to the morphism $\ul{p}_i$. Hence, the closed formal subscheme $(\ul{p}_{i,\pi}\times\id_{\fG''})^{-1}((\ul{\Gamma}_{\phi})_\pi)$ of $(\fY\times\fY)_\pi\times\fG''$ is indeed the envelope of 
\[
(\ul{p}_i\times\id_{\fG''})^{-1}(\ul{\Gamma}_\phi)\;,
\]
as claimed.

\textit{Step 18: Proof of the compatibility statement and end of proof of the intermediate claim.}
We have chosen a function $h$ on $\fH$. By Step 15, we must show that the pullback of $h$ under $\ul{\psi}'_\pi|_{\fU''}$ coincides with the pullback of $h$ under $\ul{\psi}_\pi|_\fU\circ\ul{\tau}_\pi|_{\fU''}$ for some sufficiently small nonempty open subset $\fU''$ of $\fU'$. We choose a nonempty open subset $\fU''$ of $\fU'$ as in Step 16 such that the projection
\[
(p_{\fY\times\fY\langle\pi^{-r}\ul{S}\rangle}|_{\ul{\Gamma}_{\phi\times\phi}\langle\pi^{-r}\ul{S}\rangle})_\pi\quad(*)
\]
is an isomorphism above $\fU''$; then it suffices to check the above coincidence after pullback with respect to the restriction of this projection, which we use to identify $\fU''$ with an open formal subscheme of $\ul{\Gamma}_{\phi\times\phi}\langle\pi^{-r}\ul{S}\rangle_\pi$.   Let $h'$ denote the $\ul{b}_1|_{\fH'}$-pullback of $h$; then by Step 16, the pullback of $(\ul{\psi}'_\pi|_{\fU''})^*h$ under ($*$) is given by the pullback of $\ul{\delta}^*(h')$ under the projection
\[
\ul{\Gamma}_{\phi\times\phi}\langle\pi^{-r}\ul{S}\rangle_\pi\rightarrow\ul{\delta}^{-1}(\fH')\;.
\]
The closed formal subscheme 
\[
\ul{\Gamma}_{\phi\times\phi}\langle\pi^{-r}\ul{S}\rangle_\pi\subseteq(\fY\times\fY)\langle\pi^{-r}\ul{S}\rangle_\pi\times(\fG''\times\fG'')'
\]
is the schematic preimage of $(\ul{\Gamma}_{\phi\times\phi})_\pi$ under $\ul{\tau}_\pi:(\fY\times\fY)\langle\pi^{-r}\rangle_\pi\rightarrow(\fY\times\fY)_\pi$, because the completion of that schematic preimage with respect to an ideal of definition of $(\fY\times\fY)\langle\pi^{-r}\ul{S}\rangle$ is
\begin{eqnarray*}
&&\left((\ul{\Gamma}_{\phi\times\phi})_\pi\times_{(\fY\times\fY)_\pi}(\fY\times\fY)\langle\pi^{-r}\ul{S}\rangle_\pi\right)\times_{(\fY\times\fY)\langle\pi^{-r}\ul{S}\rangle_\pi}(\fY\times\fY)\langle\pi^{-r}\ul{S}\rangle\\
&=&\left((\ul{\Gamma}_{\phi\times\phi})_\pi\times_{(\fY\times\fY)_\pi}(\fY\times\fY)\right)\times_{(\fY\times\fY)}(\fY\times\fY)\langle\pi^{-r}\ul{S}\rangle\\
&=&\ul{\Gamma}_{\phi\times\phi}\times_{(\fY\times\fY)}(\fY\times\fY)\langle\pi^{-r}\ul{S}\rangle\\
&=&\ul{\Gamma}_{\phi\times\phi}\langle\pi^{-r}\ul{S}\rangle\;.
\end{eqnarray*}
Hence, we have a cartesian diagram
\[
\begin{diagram}
(\ul{\Gamma}_{\phi\times\phi})\langle\pi^{-r}\ul{S}\rangle_\pi&\rTo&(\fY\times\fY)\langle\pi^{-r}\ul{S}\rangle_\pi&\\
\dTo<{\ul{\tau}'_\pi}&&\dTo>{\ul{\tau}_\pi}&\quad\quad(1)\\
(\ul{\Gamma}_{\phi\times\phi})_\pi&\rTo&(\fY\times\fY)_\pi&
\end{diagram}
\]
and a commutative diagram
\[
\begin{diagram}
(\ul{\Gamma}_{\phi\times\phi})\langle\pi^{-r}\ul{S}\rangle_\pi&\rTo&\ul{\delta}^{-1}(\fH')&\\
\dTo<{\ul{\tau}'_\pi}&&\dInto&\quad\quad(2)\\
(\ul{\Gamma}_{\phi\times\phi})_\pi&\rTo&(\fG''\times\fG'')'&,
\end{diagram}
\]
where the horizontal morphisms are induced by the projections and where $\ul{\tau}'_\pi$ is the restriction of $\ul{\tau}_\pi\times\id_{(\fG''\times\fG'')'}$. The restriction of $\ul{\delta}^*(h')$ to $\ul{\delta}_\fG^{-1}(\fH)''$ coincides with the pullback of $\ul{\delta}_\fG^*h$ under the admissible blowup $\ul{\delta}_\fG^{-1}(\fH)''\rightarrow\ul{\delta}_\fG^{-1}(\fH)$ which is induced by $\ul{b}_1$ and $\ul{b}_1$. Let us consider the diagram
\[
\begin{diagram}
(\ul{\Gamma}_{\phi\times\phi})_\pi&\rTo&(\fY\times\fY)_\pi&\\
\dTo<\gamma&&\dTo>{(\ul{p}_{1,\pi},\ul{p}_{2,\pi})}&\quad\quad\quad\quad(3)\\
(\ul{\Gamma}_\phi)_\pi\times(\ul{\Gamma}_\phi)_\pi&\rTo&\fY_\pi\times\fY_\pi&
\end{diagram}
\]
where the horizontal morphisms are again given by the respective projections; it commutes by construction of the morphism $\gamma$ in Step  17. Finally, let us consider the diagram
\[
\begin{diagram}
(\ul{\Gamma}_{\phi\times\phi})_\pi&\rTo&(\fG''\times\fG'')'&\\
\dTo<\gamma&&\dTo>{\ul{b}_1}&\quad\quad(4)\\
(\ul{\Gamma}_\phi)_\pi\times(\ul{\Gamma}_\phi)_\pi&\rTo&\fG''\times\fG''&
\end{diagram}
\]
where the horizontal morphisms are given by the projections such that the lower horizontal morphism is $\ul{\phi}'_\pi\times\ul{\phi}'_\pi$; it commutes, again, by construction of $\gamma$. In $\fY_\pi\times\fY_\pi$ we have the open formal subscheme $\fW\times\fW$. Over $\fW\times\fW$, the lower horizontal map in (3) is an isomorphism, and the restriction $\ul{\phi}'_\pi|_\fW\times\ul{\phi}'_\pi|_\fW$ of the lower horizontal map in (4) to the preimage of $\fW\times\fW$, which we identify with $\fW\times\fW$ via the aforementioned isomorphism, factors through $\ul{\delta}_\fG^{-1}(\fH)'$. The preimage of $\fW\times\fW$ under the right map in (3) is $\fU$, and the preimage of $\fU$ under the right map in (1) is $\fU'$. It follows that the preimage of $\ul{\delta}_\fG^{-1}(\fH)''$ under the lower left part of (2) contains the preimage of $\fU'$ under the upper map in (1). Let $d$ denote the pullback of $h$ under 
\[
\fW\times\fW\overset{\ul{\phi}'_\pi|_\fW\times\ul{\phi}'_\pi|_\fW}{\longrightarrow}\ul{\delta}_\fG^{-1}(\fH)'\overset{\ul{b}_1\times \ul{b}_1}{\rightarrow}\ul{\delta}_\fG^{-1}(\fH)\overset{\ul{\delta}_\fG}{\rightarrow}\fH\;;
\]
then the function $(\ul{\psi}_\pi|_\fU\circ\ul{\tau}_\pi|_{\fU''})^*h$ is the pullback of $d$ under
\[
\fU''\subseteq\fU'\overset{\ul{\tau}_\pi}\rightarrow\fU\overset{(\ul{p}_{1,\pi},\ul{p}_{2,\pi})}{\longrightarrow}\fW\times\fW
\]
which, via the diagrams (3) and (1), is identified with the pullback of $d$ under
\[
\fU''\subseteq(\ul{\tau}'_{\pi})^{-1}(\gamma^{-1}(\fW\times\fW))\overset{\ul{\tau}'_\pi}{\rightarrow}\gamma^{-1}(\fW\times\fW)\overset{\gamma}{\rightarrow}\fW\times\fW
\]
Combining diagrams (4) and (2), we see that this pullback coincides with the pullback of the function $\ul{b}_1^*(\ul{b}_1\times \ul{b}_1)^*\ul{\delta}_\fG^*h$ on $\ul{\delta}_\fG^{-1}(\fH)''$ via the upper map in diagram (2). Since $\ul{b}_1^*(\ul{b}_1\times \ul{b}_1)^*\ul{\delta}_\fG^*h$ coincides with the restriction of $\ul{\delta}^*h$ to $\ul{\delta}_\fG^{-1}(\fH)''$ and since the preimage of $\ul{\delta}_\fG^{-1}(\fH)''$ under the upper (or, equivalently, the lower) part of diagram (2) contains $\fU''$, the desired equality
\[
\ul{\psi}_\pi'|_{\fU''}^*h\,=\,(\ul{\psi}_\pi|_\fU\circ\ul{\tau}|_\pi|_{\fU''})^*h
\]
follows, which concludes the proof of the intermediate claim which we had formulated in Step 8. 

\textit{Step 19: Reduction to the diagonal.} To prove the statement of the theorem, it suffices by Theorem \ref{ffteverywheredefinedthm} to show that $\phi$ factors, set-theoretically, through the image of $\iota$. The morphism $\phi\times\phi$ has a model
\[
(\ul{\phi\times\phi})':\ul{\Gamma}_{\phi\times\phi}\rightarrow\ul{\delta}^{-1}(\fH')\subseteq(\fG''\times\fG'')'\;,
\]
and the morphism $\psi=\delta\circ(\phi\times\phi)$ has the induced model 
\[
\ul{\psi}''=\ul{\delta}\circ(\ul{\phi\times\phi})':\ul{\Gamma}_{\phi\times\phi}\rightarrow\fH'\subseteq\fG'\;.
\]
The morphism $(\ul{\phi\times\phi})'$ extends uniquely to a morphism
\[
(\ul{\phi\times\phi})'_\pi:(\ul{\Gamma}_{\phi\times\phi})_\pi\rightarrow\ul{\delta}^{-1}(\fH')\subseteq(\fG''\times\fG'')'
\]
of $R$-envelopes, and we obtain the induced extension
\[
\ul{\psi}''_\pi=\ul{\delta}\circ(\ul{\phi\times\phi})'_\pi\,:\,(\ul{\Gamma}_{\phi\times\phi})_\pi\rightarrow\fH'\subseteq\fG'
\]
of $\ul{\psi}''$. The morphism
\[
\mu^\ad\circ(\ul{\psi}''_{\pi,K},\ul{\phi}'_{\pi,K}\circ\gamma_{2,K}):(\ul{\Gamma}_{\phi\times\phi})_{\pi,K}\rightarrow G^\ad
\]
coincides with the morphism $\phi'_{\pi,K}\circ\gamma_{1,K}$, where $\mu$ denotes multiplication on $G$. Indeed, if we consider, for $i=1,2$, the adic generic fiber of the $i$-th component of the commutative diagram (4) of Step 18, then for any point $y$ with values in some adic space, $(\ul{\phi\times\phi})'_{\pi,K}(y)_i=\ul{\phi}'_{\pi,K}(\gamma_{i,K}(y))$, and hence for any such functorial point $y$,
\begin{eqnarray*}
&&(\mu^\ad\circ(\ul{\psi}''_{\pi,K},\ul{\phi}'_{\pi,K}\circ\gamma_{2,K}))(y)\\
&=&(\mu^\ad\circ(\delta^\ad\circ(\ul{\phi\times\phi})'_{\pi,K},\ul{\phi}'_{\pi,K}\circ\gamma_{2,K}))(y)\\
&=&\mu^\ad(\delta^\ad((\ul{\phi}'_{\pi,K}\circ\gamma_{1,K})(y),(\ul{\phi}'_{\pi,K}\circ\gamma_{2,K})(y)),(\ul{\phi}'_{\pi,K}\circ\gamma_{2,K})(y))\\
&=&(\ul{\phi}'_{\pi,K}\circ\gamma_{1,K})(y)\;.
\end{eqnarray*}
The multiplication $\mu^\ad$ on $G^\ad$ preserves the adic subgroup space $\fG'_K\subseteq G^\ad$. Moreover, both $\ul{\psi}''_{\pi,K}$  and $\ul{\phi}'_{\pi,K}|_{\fW_K}$ factor through $\fG'_K$. Since $\ul{\phi}'_{\pi,K}$ extends $\phi^\ad$, it suffices to show that for every classical physical point 
\[
y\in\fY_K\cong(\ul{\Gamma}_\phi)_K\subseteq(\ul{\Gamma}_{\phi})_{\pi,K}\;,
\]
there exists a physical point $u\in \gamma_{2,K}^{-1}(\fW_K)$ with the property that $\gamma_{1,K}(u)=y$. Let us recall that for $i=1,2$, we have $\gamma_i=\ul{q}_{i,\pi}\circ(\ul{b}'_1)_\pi$, where
\[
(\ul{b}'_1)_\pi\,:\,(\ul{\Gamma}_{\phi\times\phi})_\pi\rightarrow(\ul{\Gamma}_\phi\times\ul{\Gamma}_\phi)_\pi
\]
is an admissible blowup and where the $\ul{q}_{i,\pi}$ were defined in Step 17. The generic fiber $(\ul{b}'_1)_{\pi,K}$ of $(\ul{b}'_1)_\pi$ is an isomorphism, cf.\ \cite{huberhabil} 3.9.6 and 3.9.18, so we may consider $(\ul{\Gamma}_\phi\times\ul{\Gamma}_\phi)_\pi$ and the $\ul{q}_{i,\pi}$ instead of $(\ul{\Gamma}_{\phi\times\phi})_\pi$ and the $\gamma_i$. Let $y\in\fY_K$ be a classical point; then the valuation ring $R_y$ of its residue field is a finite extension of $R$, and there is a natural morphism $\chi_y:\Spf R_y\rightarrow\ul{\Gamma}_\phi\rightarrow(\ul{\Gamma}_\phi)_\pi$ such that $y$ is the $(\chi_{y})_K$-image of the unique point in $(\Spf R_y)_K$, cf.\ \cite{urigspaces} Lemma 2.3 and its proof. It suffices to find a discrete valuation ring $R_u$ together with a morphism $\chi_u:\Spf R_u\rightarrow\ul{q}_{2,\pi}^{-1}(\fW)$ and a flat morphism $\lambda:\Spf R_u\rightarrow \Spf R_y$ such that $\chi_y\circ\lambda=\ul{q}_{1,\pi}\circ\chi_u$. To do so, it suffices to show that the schematic preimage of $\chi_y$ under $\ul{q}_{1,\pi}$ is identified with $\Spf R_y\times\ul{\Gamma}_{\phi,\pi}$ such that the restriction of $\ul{q}_{2,\pi}$ is given by the projection to the second factor. Indeed, if we intersect with $\ul{q}_{2,\pi}^{-1}(\fW)$ we obtain $\Spf R_y\times\fW$, and it suffices now to find a morphism $\Spf R_u\rightarrow\Spf R_y\times\fW$ such that the composition with the projection to $\Spf R_y$ is flat. The open formal subscheme $\fW\subseteq\fY_\pi$ is faithfully flat and regular over $R$; hence $\Spf R_y\times\fW$ is faithfully flat and regular over $R_y$. Let us look at the special fiber over $k_y$. It is regular, and splits as a disjoint union of integral components. By Remark \ref{normdvrrem}, the complete stalk of $\Spf R_y\times\fW$ in the generic point of any of these components is a complete discrete valuation ring with the desired properties.

It remains to see that the preimage of $\chi_u$ under $\ul{q}_{1,\pi}$ is indeed identified with $\Spf R_y\times\ul{\Gamma}_{\phi,\pi}$. Let us recall from Step 17 that we have identified 
\[
((\ul{p}_1\times\id_{\fG''})^{-1}(\ul{\Gamma}_\phi))_\pi
\]
with the schematic preimage of $(\ul{\Gamma}_\phi)_\pi$ under the morphism 
\[
\ul{p}_{1,\pi}\times\id_{\fG''}:(\fY\times\fY)_\pi\times\fG''\rightarrow\fY_\pi\times\fG''\;;
\]
the schematic preimage of $\chi_y:\Spf R_y\rightarrow(\ul{\Gamma}_\phi)_\pi$ under the natural morphism from $((\ul{p}_1\times\id_{\fG''})^{-1}(\ul{\Gamma}_\phi))_\pi$ to $(\ul{\Gamma}_\phi)_\pi$ is thus identified with the schematic preimage of 
\[
\Spf R_y\overset{\chi_y}{\rightarrow}\ul{\Gamma}_{\phi,\pi}\subseteq\fY_\pi\times\fG''
\]
under $\ul{p}_{1,\pi}\times\id_{\fG''}$. This preimage is $(\Spf R_y)\times\fY_\pi$, because 
\begin{eqnarray*}
R_y\hat{\otimes}_{A^\pi}(A\hat{\otimes}_RA)^\pi &\cong&(R_y\otimes_A(A\hat{\otimes}_RA))^\pi\\
&\cong&(R_y\hat{\otimes}_A(A\hat{\otimes}_RA))^\pi\\
&\cong&(R_y\hat{\otimes}_RA)^\pi\\
&\cong& (R_y\otimes_R A)^\pi
\end{eqnarray*}
since $R_y$ is finite over $R$ and, hence, over $A^\pi$ and $A$. Let us consider the resulting diagram
\[
\begin{diagram}
\Spf R_y\times\ul{\Gamma}_{\phi,\pi}&\rTo&\Spf R_y\times\fY_\pi\\
\dTo&&\dTo\\
(\ul{\Gamma}_\phi\times\ul{\Gamma}_\phi)_\pi&\rTo&((\ul{p}_1\times\id_{\fG''})^{-1}(\ul{\Gamma}_\phi))_\pi
\end{diagram}
\]
where the upper morphism is $\id_{\Spf R_y}$ times the projection $\ul{\Gamma}_{\phi,\pi}\rightarrow\fY_\pi$ and where the left vertical morphism is induced from the morphism
\[
\chi_u\times\id_{\ul{\Gamma}_\phi}:\Spf R_y\times\ul{\Gamma}_\phi\rightarrow\ul{\Gamma}_\phi\times\ul{\Gamma}_\phi
\]
of proper formal $\fY\times\fY$-schemes by passing to $R$-envelopes. It is cartesian, as desired, because it is cartesian after formal completion with respect to an ideal of definition of $\fY\times\fY$ and by uniqueness of $R$-envelopes. We thus obtain the desired description of the schematic preimage of $\chi_u$ under $\ul{q}_{1,\pi}$ and hereby finish the proof of Theorem \ref{mainweilextthm}. \hfill\qed

\section{Formal Néron models}\label{formalnmsec}

In the introduction, we defined Néron models for smooth uniformly rigid $K$-spaces, cf.\ Definition \ref{nmurigdefi}. We can now clarify how these Néron models relate to Néron models for smooth rigid $K$-spaces in the sense of Definition \ref{nmrigdefi}, at least for groups satisfying mild finiteness conditions.

Let us first note that for a smooth quasi-paracompact and quasi-separated rigid $K$-space, equipped with its Raynaud-type uniform structure, the universal property of Definition \ref{nmrigdefi} is weaker than the universal property of Definition \ref{nmurigdefi}. To do so, we choose an isomorphism $\r\circ\urig\cong\rig$ as well as an isomorphism between $\r\circ\ur$ and the identity functor on the category of quasi-paracompact and quasi-separated rigid $K$-spaces. In the following, we will use these isomorphisms implicitly to identify the respective functors.

\begin{prop}\label{neronmodinverseprop}
Let $X$ be a smooth quasi-paracompact and quasi-separated rigid $K$-space. If $(\fX,\phi)$ is the Néron model for $X^\ur$ and if $\fX$ is of locally tf type, then $(\fX,\phi^\r)$ is the Néron model for $X$.
\end{prop}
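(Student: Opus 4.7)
The plan is to deduce Definition \ref{nmrigdefi} for $(\fX,\phi^\r)$ from Definition \ref{nmurigdefi} for $(\fX,\phi)$ by transporting morphisms along the functors $\ur$ and $\r$. Let $\fZ$ be a smooth object in $\TF_R$; since both sides of the correspondence are sheaves on $\fZ$ and since every $R$-morphism from a quasi-paracompact $\fZ$ of locally tf type into $\fX$ factors through a quasi-paracompact open of $\fX$, I may assume that $\fZ$ and $\fX$ are quasi-paracompact and quasi-separated. Under this hypothesis the compatibility $\urig|_{\TF_R}\cong\ur\circ\rig$ recalled in the introduction identifies $\fZ^\urig=(\fZ^\rig)^\ur$, $\fX^\urig=(\fX^\rig)^\ur$ and $\phi=(\phi^\r)^\ur$, and by functoriality $(\ul{\psi}^\rig)^\ur=\ul{\psi}^\urig$ and $(\ul{\psi}^\urig)^\r=\ul{\psi}^\rig$ for every $R$-morphism $\ul{\psi}\colon\fZ\to\fX$.

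For surjectivity, given a rigid morphism $\psi\colon\fZ^\rig\to X$, I apply $\ur$ to obtain a uniformly rigid morphism $\psi^\ur\colon\fZ^\urig\to X^\ur$; the universal property of Definition \ref{nmurigdefi} applied to $\fZ\in\TF_R\subseteq\FF_R$ furnishes a unique $R$-morphism $\ul{\psi}\colon\fZ\to\fX$ satisfying $\phi\circ\ul{\psi}^\urig=\psi^\ur$, and applying $\r$ to this identity together with $\r\circ\ur\cong\id$ yields the desired equality $\phi^\r\circ\ul{\psi}^\rig=\psi$. For injectivity, if two $R$-morphisms $\ul{\psi}_1,\ul{\psi}_2\colon\fZ\to\fX$ satisfy $\phi^\r\circ\ul{\psi}_1^\rig=\phi^\r\circ\ul{\psi}_2^\rig$, then applying $\ur$ gives $\phi\circ\ul{\psi}_1^\urig=\phi\circ\ul{\psi}_2^\urig$, and the uniqueness clause of Definition \ref{nmurigdefi} forces $\ul{\psi}_1=\ul{\psi}_2$.

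The argument is essentially a diagram chase; the only point of care is the bookkeeping of the natural isomorphisms $\urig|_{\TF_R}\cong\ur\circ\rig$ and $\r\circ\ur\cong\id$ from \cite{urigspaces} that were fixed in the paragraph preceding the statement. There is no real obstacle: the content of the proposition is precisely the observation that Raynaud-type uniform rigidification translates the rigid universal property into the uniformly rigid one.
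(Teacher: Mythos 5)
Your surjectivity step is fine: from $\phi\circ\ul{\psi}^\urig=\psi^\ur$ you apply $\r$ and use only $(\ul{\psi}^\urig)^\r=\ul{\psi}^\rig$ and $(\psi^\ur)^\r=\psi$, both of which do follow from the fixed isomorphisms $\r\circ\urig\cong\rig$ and $\r\circ\ur\cong\id$ by naturality. The gap is in the bookkeeping claim $\phi=(\phi^\r)^\ur$, which you then use to get injectivity. That identity is not a formal consequence of the compatibilities you invoke. The isomorphism $\urig|_{\TF_R}\cong\ur\circ\rig$ is natural in $R$-morphisms $\ul{\psi}$ of formal schemes, giving $(\ul{\psi}^\rig)^\ur=\ul{\psi}^\urig$; and $\r\circ\ur\cong\id$ gives $(g^\ur)^\r=g$ for a \emph{rigid} morphism $g$. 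But $\phi$ is an arbitrary morphism $\fX^\urig\to X^\ur$ in $\uRig_K$, and nothing in these natural transformations forces it to lie in the image of $\ur$ on Hom-sets, which is exactly what $(\phi^\r)^\ur=\phi$ would require.

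The missing ingredient is that the map $\alpha_\ur\colon\Hom_K(\fZ^\rig,X)\to\Hom_K(\fZ^\urig,X^\ur)$, $g\mapsto g^\ur$, is bijective, i.e.\ $\ur$ is fully faithful on the relevant Hom-sets; this is \cite{urigspaces} Cor.\ 2.61, and the paper's proof cites it. Your surjectivity computation amounts to $\alpha_\r\circ\alpha_\ur=\id$, and once $\alpha_\ur$ is known to be a bijection this forces $\alpha_\r$ to be its two-sided inverse, which gives both halves of the proposition at once (and also justifies $(\phi^\r)^\ur=\phi$ a posteriori). So the remedy is simply to add that citation; with it, your diagram chase and the paper's proof become essentially the same argument. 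A side remark: the preliminary reduction to quasi-paracompact $\fX$ is a detour, since the paper reduces only to affine $\fZ$ and never needs to apply $\ur$ to $\fX^\rig$.
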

\begin{proof}
It suffices to show that for any affine smooth formal $R$-scheme of tf type $\fZ$, the natural map $\Hom(\fZ^\urig,X^\ur)\rightarrow\Hom(\fZ^{\urig,\r},X^{\ur,\r})$ induced by $\r$ is bijective. Since $\fZ$ is of tf type over $R$, there exists an isomorphism $\fZ^\urig\cong\fZ^{\rig,\ur}$. In view of the identifications $\r\circ\urig=\rig$ and $\r\circ\ur=\id$, we have to show that the map
\[
\alpha_\r\,:\,\Hom(\fZ^{\rig,\ur},X^\ur)\rightarrow\Hom(\fZ^\rig,X)
\]
is bijective. The functor $\ur$ yields a map $\alpha_\ur$ in the opposite direction, which is bijective by \cite{urigspaces} Cor.\ 2.61. Since $\r\circ\ur=\id$, the map $\alpha_\r$ is left inverse to $\alpha_\ur$ and, hence, coincides with the inverse of $\alpha_\ur$; we thus conclude that $\alpha_r$ is bijective.
\end{proof}

\subsection{General existence results}

Combining Lemma \ref{affineextlem} with Corollary \ref{smoothimpliesnormalcor}, we immediately obtain the following statement:

\begin{prop}\label{affinenmprop}
A smooth affine formal $R$-scheme of ff type is the formal Néron model of its uniformly rigid generic fiber.
\end{prop}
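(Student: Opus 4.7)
The plan is to verify the universal property of Definition \ref{nmurigdefi} directly for the pair $(\fX,\id_X)$, where $X=\fX^\urig$. Note that $\fX$ is smooth in $\FF_R$ by hypothesis and $X$ is then smooth as a uniformly rigid $K$-space, so the ambient regularity conditions of the definition are satisfied. Let $\fZ$ be an arbitrary smooth object in $\FF_R$, with uniformly rigid generic fiber $Z=\fZ^\urig$. What must be shown is that the natural map
\[
\Hom_R(\fZ,\fX)\rightarrow\Hom_K(Z,X)\,,\quad \ul{\psi}\mapsto\ul{\psi}^\urig
\]
is bijective.

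First I would observe that $\fZ$ is normal. Indeed, the structure morphism $\fZ\rightarrow\Spf R$ is smooth, and $\Spf R$ is trivially normal since $R$ is a discrete valuation ring; Corollary \ref{smoothimpliesnormalcor} therefore yields the normality of $\fZ$. This sets up the hypothesis needed for the extension lemma below.

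Given any uniformly rigid morphism $\phi\colon Z\rightarrow X$, the hypotheses of Lemma \ref{affineextlem} are then satisfied with $\fY=\fZ$: the target $\fX$ is affine and the source $\fZ$ is normal. The lemma produces a unique formal $R$-morphism $\ul{\phi}\colon\fZ\rightarrow\fX$ whose uniformly rigid generic fiber recovers $\phi$. The existence of $\ul{\phi}$ supplies the surjectivity of the natural map, while its uniqueness is precisely the injectivity.

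There is no real obstacle here; the entire content of the proposition is absorbed by Lemma \ref{affineextlem} once it is combined with the observation that smoothness propagates normality from the base $R$ via Corollary \ref{smoothimpliesnormalcor}. The substantive work lies in those earlier results rather than in the present statement, which is essentially a direct corollary of them.
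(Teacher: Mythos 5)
Your proof is correct and follows exactly the route the paper takes: the paper's proof consists of the single sentence that Proposition \ref{affinenmprop} is obtained by combining Lemma \ref{affineextlem} with Corollary \ref{smoothimpliesnormalcor}, which is precisely the content and structure of your argument.
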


Moreover, Theorem \ref{ffteverywheredefinedthm} yields the following:

\begin{prop}\label{genfibneronprop}
Let $\fG$ be a smooth formal $R$-group scheme of locally ff type; then $\fG$ is the formal Néron model of its uniformly rigid generic fiber.
\end{prop}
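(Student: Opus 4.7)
The plan is to derive Proposition \ref{genfibneronprop} as an essentially immediate consequence of Theorem \ref{ffteverywheredefinedthm}, which has been set up specifically to handle this case. Taking the universal morphism to be $\phi = \id_{\fG^\urig}$, I must verify that for every smooth $\fZ \in \FF_R$, the natural map
\[
\Hom_R(\fZ, \fG) \to \Hom_K(\fZ^\urig, \fG^\urig), \quad \ul{\psi} \mapsto \ul{\psi}^\urig
\]
is bijective.

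First I would argue surjectivity: given a morphism $\phi \colon \fZ^\urig \to \fG^\urig$ of uniformly rigid spaces, Theorem \ref{ffteverywheredefinedthm}, applied with $\fY = \fZ$ (smooth and of locally ff type) and the target the smooth formal $R$-group scheme $\fG$, produces an extension $\ul{\phi} \colon \fZ \to \fG$ whose uniformly rigid generic fiber recovers $\phi$. Next I would observe that injectivity follows from the uniqueness part of the same theorem: two $R$-morphisms $\ul{\psi}_1, \ul{\psi}_2 \colon \fZ \to \fG$ inducing the same morphism on uniformly rigid generic fibers both extend the common morphism $\ul{\psi}_1^\urig = \ul{\psi}_2^\urig$, hence must coincide. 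Combining these two observations yields the bijectivity required by Definition \ref{nmurigdefi}.

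The main intellectual obstacle has already been surmounted in establishing Theorem \ref{ffteverywheredefinedthm}, which in turn rests on the locally tf type Weil-style extension result of Proposition \ref{tftypedomainprop}, the descent Lemma \ref{extensiondesclem}, and the analytic continuation techniques developed for formal polydiscs. Given this, the remaining step is simply to recognize that the definition of N�ron model becomes a triviality when the target uniformly rigid space is already the uniformly rigid generic fiber of a smooth formal group scheme of locally ff type. There is no subtlety to overcome at this stage of the exposition; the proposition should be stated as a corollary of Theorem \ref{ffteverywheredefinedthm}.
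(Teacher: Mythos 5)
Your proposal is correct and follows exactly the paper's route: the paper presents Proposition \ref{genfibneronprop} as an immediate consequence of Theorem \ref{ffteverywheredefinedthm}, offering no further argument. Your spelling out of why existence and uniqueness in that theorem translate to surjectivity and injectivity of the map in Definition \ref{nmurigdefi} (with $\phi=\id_{\fG^\urig}$) is a faithful and correct elaboration of the same step.
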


In the following we use without further notice the fact that the functor $\ur$ takes retrocompact open immersions to open immersions, cf.\ the second last paragraph of Section 2 in \cite{urigspaces}.

Theorem \ref{mainweilextthm} yields the following uniformly rigid analog of the criterion \ref{bsthm} ($i$) of Bosch and Schlöter which we reproduced in the introduction:

\begin{thm}\label{maincompthm}
Let $G$ be a quasi-paracompact smooth rigid $K$-group, and let $\fG$ be a quasi-pa\-ra\-com\-pact smooth formal $R$-group scheme of locally tf type together with a retrocompact open immersion $\fG^\rig\hookrightarrow G$ respecting the group structures. If $\fG^\rig$ contains all formally unramified points of $G$, then $(\fG,\phi^\ur)$ is a formal Néron model of $G^\ur$, and $\phi^\ur$ is an open immersion.
\end{thm}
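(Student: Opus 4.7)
The plan is to derive Theorem \ref{maincompthm} almost directly from the Weil-type extension theorem \ref{mainweilextthm}. First, since $\phi\colon\fG^\rig\hookrightarrow G$ is a retrocompact open immersion of rigid $K$-spaces, its uniform rigidification
\[
\phi^\ur\colon\fG^\urig\,\cong\,(\fG^\rig)^\ur\,\hookrightarrow\,G^\ur
\]
is an open immersion by the functorial property of $\ur$ on retrocompact open immersions recalled at the end of Section 2 of \cite{urigspaces}; this settles the final assertion of the theorem.

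To verify the universal property, I fix a smooth test object $\fZ\in\FF_R$ and examine the map
\[
\alpha\colon\Hom_R(\fZ,\fG)\longrightarrow\Hom_K(\fZ^\urig,G^\ur)\,,\quad\ul{\psi}\mapsto\phi^\ur\circ\ul{\psi}^\urig\,.
\]
Surjectivity follows immediately from Theorem \ref{mainweilextthm}, which applies verbatim with $\fY=\fZ$: given any morphism $\psi\colon\fZ^\urig\rightarrow G^\ur$, the hypothesis that the image of $\phi$ contains all formally unramified points of $G$ yields a formal morphism $\ul{\psi}\colon\fZ\rightarrow\fG$ whose uniformly rigid generic fiber, composed with $\phi^\ur$, recovers $\psi$; thus $\alpha(\ul{\psi})=\psi$.

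For injectivity, suppose $\ul{\psi}_1,\ul{\psi}_2\colon\fZ\rightarrow\fG$ satisfy $\phi^\ur\circ\ul{\psi}_1^\urig=\phi^\ur\circ\ul{\psi}_2^\urig$. Since $\phi^\ur$ is an open immersion, it is a monomorphism, and hence $\ul{\psi}_1^\urig=\ul{\psi}_2^\urig$. Being a formal group scheme over $R$, $\fG$ is separated, so the equalizer
\[
V\,:=\,\fZ\times_{\fG\times\fG}\fG\,\subseteq\,\fZ
\]
(taken along $(\ul{\psi}_1,\ul{\psi}_2)$ and the diagonal $\Delta_\fG$) is a closed formal subscheme of $\fZ$ whose uniformly rigid generic fiber is all of $\fZ^\urig$. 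Since $\fZ$ is smooth and therefore $R$-flat, the uniqueness part of the schematic-closure construction recalled in Section 3.2 identifies $\fZ$ itself with the schematic closure of $\fZ^\urig$ in $\fZ$; as that schematic closure must be contained in $V$, it follows that $V=\fZ$ and $\ul{\psi}_1=\ul{\psi}_2$.

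The substantive content of the theorem is already packaged in Theorem \ref{mainweilextthm}, whose elaborate proof via $R$-envelopes and the analytic continuation theorem \ref{ancontthm} has been carried out in Section \ref{weilsec}. Once that extension result is at our disposal, the present theorem is essentially a formal consequence combining the compatibility of $\ur$ with retrocompact open immersions and schematic-closure arguments for $R$-flat formal subschemes; the only step not covered immediately is injectivity, and it reduces cleanly to the separatedness of $\fG$ together with the $R$-flatness of $\fZ$.
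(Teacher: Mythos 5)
Your proof is correct and fills in exactly what the paper leaves implicit, since the paper gives no detailed argument for Theorem~\ref{maincompthm} beyond the observation that it is "yielded" by Theorem~\ref{mainweilextthm}: surjectivity of $\alpha$ is the content of the extension theorem, and $\phi^\ur$ being an open immersion is the cited fact about $\ur$. Your injectivity argument via the equalizer and schematic closure is sound (one passes to the $R$-flat part of $V$ and invokes the uniqueness characterization of the schematic closure), though it could be shortened by directly citing the uniqueness clause already built into Theorem~\ref{ffteverywheredefinedthm} ("the \emph{unique} extension $\ul{\phi}$ always exists"), which is what the paper presumably intends.
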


Combining Theorem \ref{maincompthm} with \cite{bosch_schloeter} Theorem 1.2, we obtain the following general existence result:

\begin{cor}\label{neronex1cor}
Let $G$ be a smooth and quasi-paracompact rigid $K$-group whose group $G(K^\sh)$ of unramified points is bounded, and let $(\fG,\phi)$ be its Néron model; then $(\fG,\phi^\ur)$ is the Néron model of $G^\ur$, and $\phi^\ur$ is an open immersion.
\end{cor}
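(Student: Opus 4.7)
The proof proceeds by applying Theorem \ref{maincompthm} to the Néron model $(\fG,\phi)$ provided by Theorem \ref{bsthm}($ii$). That theorem, under the boundedness assumption on $G(K^\sh)$, guarantees that $\fG$ is a quasi-compact smooth formal $R$-group scheme of locally tf type and that $\phi\colon\fG^\rig\hookrightarrow G$ is an open immersion respecting the group structures. The target $G$ being quasi-separated and $\fG^\rig$ being quasi-compact, the open immersion $\phi$ is automatically retrocompact. Thus all hypotheses of Theorem \ref{maincompthm} on $\fG$ and on $\phi$ are met; the only non-trivial condition left to verify is that $\fG^\rig$ contains all formally unramified points of $G$.

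For this, I would appeal to the fact that the formal Néron model $\fG$ of $G$ commutes with formally unramified base change (cf.\ \cite{blr} 10.1/3 and \cite{wegel} Theorem 4, as also invoked in the introduction). Concretely, for any formally unramified discrete analytic extension $K'/K$ with valuation ring $R'$, the base changed pair $(\fG\,\hat{\otimes}_R R',\,\phi\,\hat{\otimes}_R R')$ is the Néron model of $G\,\hat{\otimes}_K K'$ over $R'$. Since $\Spf R'$ is trivially a smooth formal $R'$-scheme of tf type over $R'$, the Néron mapping property over $R'$ applied to the test scheme $\Spf R'$ shows that every $K'$-valued point of $G\,\hat{\otimes}_K K'$ factors through $(\fG\,\hat{\otimes}_R R')^\rig=\fG^\rig\,\hat{\otimes}_K K'$. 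In view of the definition of formally unramified points recalled just before Theorem \ref{mainweilextthm}, this is precisely the condition that $\fG^\rig$ contains all formally unramified points of $G$, so Theorem \ref{maincompthm} applies and yields that $(\fG,\phi^\ur)$ is a formal Néron model of $G^\ur$ and that $\phi^\ur$ is an open immersion.

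The main difficulty is the base change step: since the residue extension of a formally unramified $R'/R$ may be infinite, $\Spf R'$ is in general not a formal $R$-scheme of locally ff type, and the compatibility of the Néron model with such a non-ff-type base change is the essential non-formal input, coming from the extensions of classical Néron model theory cited above. Everything else is a direct invocation of Theorems \ref{bsthm} and \ref{maincompthm}.
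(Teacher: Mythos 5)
Your proposal is correct and follows the same route as the paper: invoke Theorem \ref{bsthm}($ii$) (i.e.\ \cite{bosch_schloeter} Theorem 1.2) to get the Néron model $(\fG,\phi)$ with $\phi$ an open immersion, observe retrocompactness from quasi-compactness of $\fG^\rig$ and quasi-separatedness of $G$, invoke compatibility of $\fG$ with formally unramified base change via \cite{wegel} Theorem 4, and then apply Theorem \ref{maincompthm}. The only difference is that you spell out explicitly why base-change compatibility implies that $\fG^\rig$ contains all formally unramified points (by testing the Néron mapping property over $R'$ against $\Spf R'$); the paper leaves this inference implicit, so your version is a slightly more detailed rendering of the same argument.
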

\begin{proof}
By \cite{bosch_schloeter} Theorem 1.2, the Néron model $(\fG,\phi)$ of $\fG$ exists, and $\phi$ is an open immersion respecting the group structures. Since $G$ is quasi-separated and since $\fG^\rig$ is quasi-compact, $\phi$ is automatically retrocompact. By \cite{wegel} Theorem 4, the formation of $\fG$ commutes with base change with respect to formally unramified local extensions of $R$. Hence, $\fG^\rig$ contains all formally unramified points of $G$, and by Theorem \ref{maincompthm} we conclude that $(\fG,\phi^\ur)$ is the Néron model of $G^\ur$. 
\end{proof}

Let us recall from \cite{blr} 10.1.1 that the Néron lft-model of a smooth $K$-scheme $\mathcal{X}$ is a smooth $R$-model $\ul{\mathcal{X}}$ of $\mathcal{X}$ satisfying the universal property that for every smooth $R$-scheme $\ul{\mathcal{Z}}$ with generic fiber $\mathcal{Z}$, every morphism $\mathcal{Z}\rightarrow \mathcal{X}$ extends uniquely to a morphism $\ul{\mathcal{Z}}\rightarrow\ul{\mathcal{X}}$.

\begin{cor}\label{neronex2cor}
Let $\mathcal{G}$ be a quasi-compact smooth $K$-group scheme that has a Néron lft-model $\ul{\mathcal{G}}$, let $\fG$ be the completion of $\ul{\mathcal{G}}$ along its special fiber, let $G$ denote the rigid analytification of $\mathcal{G}$, and let $\phi:\fG^\rig\rightarrow G$ be the morphism that is induced by the given identification $\mathcal{G}\cong\ul{\mathcal{G}}\times_RK$. If $\ul{\mathcal{G}}$ is quasi-compact or if $\mathcal{G}$ is commutative,  then $(\fG,\phi^\ur)$ is the formal Néron model of $G^\ur$, and $\phi^\ur$ is an open immersion.
\end{cor}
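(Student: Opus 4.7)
The plan is to reduce to Theorem~\ref{maincompthm} by verifying its hypotheses, essentially mirroring the proof of Corollary~\ref{neronex1cor}. Under either of the two stated assumptions, Theorem~\ref{bsthm}~(iii) provides the formal N\'eron model $(\fG,\phi)$ of $G$ in the rigid sense, realizes $\fG$ as the formal completion of $\ul{\mathcal{G}}$ along its special fiber, and guarantees that $\phi$ is a retrocompact open immersion respecting the group structures. The rigid $K$-group $G=\mathcal{G}^{\mathrm{an}}$ is quasi-compact and quasi-separated because $\mathcal{G}$ is a quasi-compact $K$-group scheme. The formal group scheme $\fG$ is smooth of locally tf type because $\ul{\mathcal{G}}$ is, and its underlying topological space agrees with that of the $k$-scheme of locally finite type $\ul{\mathcal{G}}\otimes_R k$, so $\fG$ is quasi-paracompact.

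The key remaining point to be checked is that $\fG^\rig$ contains all formally unramified points of $G$. To do this, I would fix a formally unramified discrete analytic extension $K'/K$ with valuation ring $R'$ and take a $K'$-valued point $y$ of $G\hat{\otimes}_KK'=\mathcal{G}_{K'}^{\mathrm{an}}$; since $\mathcal{G}$ is of finite type, $y$ corresponds to a $K'$-valued point of $\mathcal{G}_{K'}$. By the compatibility of N\'eron lft-models with formally unramified base change --- the classical statement \cite{blr}~10.1/3 as extended to possibly non-algebraic formally unramified local extensions of complete discrete valuation rings by \cite{wegel} Theorem~4 --- the N\'eron lft-model of $\mathcal{G}_{K'}$ is canonically $\ul{\mathcal{G}}\otimes_RR'$. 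The N\'eron mapping property then uniquely extends $y$ to an $R'$-valued point of $\ul{\mathcal{G}}\otimes_RR'$; passing to the $\pi$-adic completion of this point along the special fiber gives a point of $\fG\otimes_RR'$ whose uniformly rigid generic fiber realizes $y$ as an element of $\fG^\rig\hat{\otimes}_KK'$. Hence the image of $\phi$ contains all formally unramified points of $G$ in the sense of Theorem~\ref{maincompthm}.

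With all hypotheses of Theorem~\ref{maincompthm} verified, that theorem directly supplies the conclusion: $(\fG,\phi^\ur)$ is the formal N\'eron model of $G^\ur$, and $\phi^\ur$ is an open immersion because $\phi$ is a retrocompact open immersion and $\ur$ sends such immersions to open immersions.

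The main obstacle is the second step, namely ensuring that the formally unramified base change property applies to \emph{infinite} formally unramified extensions $R'/R$ rather than just finite unramified ones. This is precisely the role played by \cite{wegel} Theorem~4 in the proof of Corollary~\ref{neronex1cor}, and I expect the same reference to cover the present situation; everything else is a bookkeeping exercise once that compatibility is in hand.
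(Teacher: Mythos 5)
Your proof follows essentially the same route as the paper's: invoke the Bosch--Schl\"oter result (Theorem~\ref{bsthm}~(iii), i.e.\ \cite{bosch_schloeter} Thm.\ 6.2) to recognize $\phi$ as a retrocompact open immersion making $\fG$ the rigid N\'eron model of $G$, use compatibility of the N\'eron lft-model with formally unramified base change (\cite{blr} 10.1/3) to verify that $\fG^\rig$ contains all formally unramified points of $G$, and then apply Theorem~\ref{maincompthm}. One small slip: $G=\mathcal{G}^{\mathrm{an}}$ is \emph{not} quasi-compact when $\mathcal{G}$ is not proper (e.g.\ $\G_m^{\mathrm{an}}$); what Theorem~\ref{maincompthm} requires is that $G$ be quasi-paracompact, which holds automatically for analytifications of $K$-schemes of locally finite type, and quasi-separated, which holds since the $K$-group $\mathcal{G}$ is separated.
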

\begin{proof}
Analytifications of $K$-schemes of locally finite type are quasi-pa\-ra\-com\-pact by construction (cf.\ \cite{bgr} 9.3.4 Example 2), so $G^\ur$ is well-defined. By \cite{bosch_schloeter} Theorem 6.2, the morphism $\phi$ is a retrocompact open immersion, and $\fG$ is the Néron model of $G$. By \cite{blr} 10.1/3, the formation of $\ul{\mathcal{G}}$ commutes with formally unramified base change; it follows that the same holds for $\fG$ and, hence, that the image of $\phi$ contains all formally unramified points of $G$. The statement is now a consequence of Theorem \ref{maincompthm}.
\end{proof}

\subsection{Construction techniques}\label{constrsec}

We can now construct interesting new examples of uniformly rigid $K$-spaces admitting Néron models by implementing ideas of Chai in the uniformly rigid setting (cf.\ \cite{bisecartpre} 5.4). We thereby obtain formal Néron models $(\fG,\phi)$ where $\fG$ is not of locally tf type over $R$ and where $\phi$ is not surjective.

A morphism of uniformly rigid $K$-spaces is called étale if and only if its underlying morphism of rigid $K$-spaces is étale. It is easily seen (cf.\ \cite{liu_ag} 4/3.26 for the nontrivial implication) that a morphism $\phi$ of rigid or uniformly rigid $K$-spaces is an étale monomorphism if and only if
\begin{enumerate}
\item the morphism $\phi$ is injective on physical points, and
\item it induces isomorphisms of completed stalks.
\end{enumerate}
In particular, a morphism $\phi$ of uniformly rigid $K$-spaces is an étale monomorphism if and only if the same holds for $\phi^\r$, cf.\ \cite{urigspaces} Section 2.3.1 and the discussion following Prop.\ 2.55. 
Hence, \cite{conrad_ampleness} Thm. 4.2.7 shows that that if $K'/K$ is a finite Galois extension, then a morphism $\phi$ of rigid or uniformly rigid $K$-spaces is an étale monomorphism if and only if the same holds for $\phi\times_KK'$.

Let us first explain a construction technique involving formal completion and Galois descent. Let $K'/K$ be a finite Galois extension with Galois group $\Gamma$, let $R'/R$ denote the associated extension of discrete valuation rings, and let $k'/k$ be the induced residue field extension. Let $X$ be a  quasi-paracompact and quasi-separated rigid $K$-space, and let $X':=X\times_KK'$ be the rigid $K'$-space which is induced via base change. Let us assume that the Néron models $(\fX,\phi)$ and $(\fX',\phi')$ of $X^\ur$ and $(X')^\ur$ exist, that $\fX$ and $\fX'$  are separated and of locally tf type over $R$ and that $\phi$ and $\phi'$ are étale monomorphisms. Let
\[
\psi\colon\fX\times_RR'\rightarrow\fX'
\]
denote the base change morphism which is induced by the universal property of $\fX'$; its uniformly rigid generic fiber is then an étale monomorphism. (In general, $\psi^\urig$ will not be surjective: the 'volume' of the Néron model may grow under base change due to the appearance of points which take values in rings that are unramified over $R'$ and which are not induced from points taking values in rings unramified over $R$.) The equivariant $\Gamma$-action on  $X\times_KK'$ induces an equivariant $\Gamma$-action on $\fX'$; let $V_{k'}$ be a $\Gamma$-stable affine closed subscheme of $\fX'_{k'}$ that is contained in an affine open formal subscheme of $\fX'$. Since $\fX'$ is separated, $V_{k'}$ then has a $\Gamma$-stable affine open neighborhood $\fU'$ in $\fX'$. Let us write
\begin{eqnarray*}
Y'&:=&(\fX'|_{V_{k'}})^\srig\quad\textup{and}\\
U'&:=&(\fU')^\srig\;;
\end{eqnarray*}
then $Y'\subseteq U'$ is a $\Gamma$-equivariant open immersion of semi-affinoid $K'$-spaces, where the ring of global functions in $U'$ is actually an affinoid $K$-algebra. By Lemma \ref{semaffdesclem} and Proposition \ref{descffprop}, the open immersion $Y'\hookrightarrow U'$  descends uniquely to a morphism of semi-affinoid $K$-spaces $Y\rightarrow U$, and by the second paragraph of the present section, this morphism is an étale monomorphism. By finite Galois descent for rigid spaces (cf.\ \cite{conrad_ampleness} Thm.\ 4.2.3 and and Thm.\ 4.2.7), the open immersion $(U')^\r\hookrightarrow X'$ descends to a retrocompact open immersion $U^\r\hookrightarrow X$, and we obtain an open immersion $U\cong U^{\r,\ur}\hookrightarrow X^\ur$. We thus see that $Y'\rightarrow (X')^\ur$ descends to a morphism $Y\rightarrow X^\ur$ and that this morphism is an étale monomorphism.

Let $W_{k'}$ denote the $\psi_{k'}$-preimage of $V_{k'}$. There exists a unique reduced closed subscheme $W_k\subseteq\fX_k$ such that the underlying reduced subschemes of $W_{k'}$ and $W_k\times_kk'$ coincide. Indeed, since $k'/k$ is normal and since base change with respect to finite purely inseparable field extensions induces homeomorphisms, this follows from finite Galois descent for schemes in the special fibers. Let
\[
\fY\,\mathrel{\mathop:}=\,\fX|_{W_k}
\]
denote the completion of $\fX$ along $W_k$; then $\fY\otimes_RR'$ is the completion of $\fX\otimes_RR'$ along $W_{k'}$. By Proposition \ref{descffprop}, the morphism $\fY^\srig\otimes_KK'\rightarrow Y'$ descends to a morphism 
\[
\hat{\phi}\,:\,\fY^\srig\rightarrow Y\;,
\]
and by the second paragraph of the present section, this morphism is an étale monomorphism.

\begin{prop}\label{constrnmexprop}
The pair $(\fY,\hat{\phi})$ is the formal Néron model of $Y$.
\end{prop}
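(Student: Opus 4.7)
The plan is to verify the two conditions of Definition \ref{nmurigdefi} for $(\fY,\hat\phi)$. Smoothness of $\fY$ is immediate: $\fX$ is smooth of locally tf type (being a N\'eron model), and the canonical completion morphism $\fY=\fX|_{W_k}\to\fX$ is \'etale by \cite{ajr1} 4.4, so $\fY$ is smooth of locally ff type over $R$. That $\hat\phi$ is an \'etale monomorphism is already recorded in the paragraph preceding the proposition. Hence the real content lies in the universal property.

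For uniqueness, I would observe that $\fY=\fX|_{W_k}\to\fX$ is a monomorphism in the category of formal schemes, by the universal property of completion. Any extension $\fZ\to\fY$ of a given $\ul\psi\colon\fZ^\urig\to Y$ is therefore determined by its composition with $\fY\to\fX$, and the latter is the unique extension of $\fZ^\urig\to Y\hookrightarrow X^\ur$ furnished by the N\'eron property of $(\fX,\phi)$.

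For existence, given smooth $\fZ\in\FF_R$ and $\ul\psi\colon\fZ^\urig\to Y$, I would first apply the N\'eron property of $(\fX,\phi)$ to the composition $\fZ^\urig\to Y\hookrightarrow X^\ur$ to obtain a unique lift $\ul\chi\colon\fZ\to\fX$. The core task is then to show that $\ul\chi$ factors through $\fY=\fX|_{W_k}$; by the universal property of formal completion, combined with the closedness of $W_k$ in $\fX_k$ and the very denseness of closed points in $\fZ$, this reduces to checking that $\ul\chi(z)\in W_k$ for every closed point $z\in\fZ$. This I would verify after base change to $R'$: setting $\fZ':=\fZ\otimes_RR'$ (smooth of ff type over $R'$) and $\ul\chi':=\psi\circ(\ul\chi\otimes_RR')\colon\fZ'\to\fX'$, the uniqueness part of the N\'eron property of $(\fX',\phi')$ identifies $\ul\chi'$ with the extension of the composition $\fZ'^\urig\to Y'\hookrightarrow(X')^\ur$, where $Y\otimes_KK'=Y'$ comes from the Galois-descent construction. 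Since $\phi'$ is injective on physical points (being an \'etale monomorphism), $\ul\chi'^\urig$ factors through the admissible open $Y'=\sp_{\fX'}^{-1}(V_{k'})\subseteq(\fX')^\urig$. For any closed $z'\in\fZ'$, the formal fiber $]z'[_{\fZ'}$ is nonempty by $R'$-flatness, and functoriality of $\sp_{\fX'}$ then forces $\ul\chi'(z')\in V_{k'}$. Pulling back through $\psi$, every closed point of $\fZ'$ maps under $\ul\chi\otimes_RR'$ into $\psi_{k'}^{-1}(V_{k'})=W_{k'}$. Because $W_{k'}=\pi^{-1}(W_k)$ as topological spaces (where $\pi\colon\fX_{k'}\to\fX_k$ is the projection, using that $W_k$ is the descent of $W_{k'}$ to $k$), this descends to $\ul\chi(z)\in W_k$ for every closed $z\in\fZ$, so $\ul\chi$ factors as the desired $\ul\psi^\sharp\colon\fZ\to\fY$.

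The compatibility $\hat\phi\circ\ul\psi^{\sharp,\urig}=\ul\psi$ is then a routine Galois-descent check: both sides agree after base change to $K'$ with $\ul\psi\otimes_KK'\colon\fZ'^\urig\to Y'$ (using that $Y'\hookrightarrow(X')^\ur$ is monic), and Proposition \ref{descffprop}, applied to the semi-affinoid target $Y$, delivers the identity over $K$. The main obstacle I anticipate is the existence step: carefully bridging the analytic fact that $\ul\chi'^\urig$ factors through $Y'$ to the schematic fact that $\ul\chi'(\fZ')\subseteq V_{k'}$ via the specialization map and the very denseness of closed points, and then correctly descending this image constraint from $k'$ to $k$ to recover $\ul\chi(\fZ)\subseteq W_k$.
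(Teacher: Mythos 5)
Your proof is correct and takes essentially the same route as the paper: lift $\alpha$ to $\ul{\tilde\alpha}\colon\fZ\to\fX$ by the N\'eron property of $\fX$, base change to $R'$ to exploit the specialization to $V_{k'}$ (via the fact that $\tilde\alpha\otimes_KK'$ factors through $Y'$), descend the set-theoretic image constraint from $W_{k'}$ to $W_k$ using very density of closed points, and invoke the universal property of formal completion. Your uniqueness argument routes through $\fY\to\fX$ being a monomorphism rather than directly using that $\hat\phi$ is a monomorphism together with $R$-flatness of $\fZ$, but this is a cosmetic variant.
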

\begin{proof}
Let $\fZ$ be a smooth formal $R$-scheme of locally ff type, let $Z$ denote its uniformly rigid generic fiber, and let $\alpha: Z\rightarrow Y$ be a morphism; we must show that $\alpha$ extends uniquely to a morphism $\ul{\alpha}:\fZ\rightarrow\fY$. Since $\hat{\phi}$ is a monomorphism, uniqueness of $\ul{\alpha}$ follows from the fact that $\fZ$ is $R$-flat. Since $(\fX,\phi)$ is the formal Néron model of $X^\ur$, the composition $\tilde{\alpha}$ of $\alpha$ with the monomorphism $Y\rightarrow X^\ur$ extends uniquely to a morphism 
\[
\ul{\tilde{\alpha}}\colon\fZ\rightarrow\fX\quad;
\]
in particular, the image of $\tilde{\alpha}\otimes_KK'$ lies in the image of the étale monomorphism $\psi^\urig\colon(\fX\times_RR')^\srig\rightarrow (\fX')^\urig$. Since the morphism $\tilde{\alpha}\otimes_KK'$ factors through $Y'$, all points in its image specialize to $V_{k'}$ in $\fZ'$. It follows that all points in the image of $\tilde{\alpha}$ specialize to $W_k\subseteq\fZ$. Since $\sp_\fZ$ is surjective onto the closed points of $\fZ$, we conclude that $\ul{\tilde{\alpha}}$ factors uniquely as a morphism $\ul{\alpha}\colon\fZ\rightarrow\fY$ composed with the completion morphism $\fY\rightarrow\fX$. The generic fiber of $\ul{\alpha}$ must coincide with $\alpha$ since $Y\rightarrow X^\ur$ is a monomorphism.
\end{proof}

To conclude our discussion of construction techniques, let us observe that Néron models can be constructed by splitting off direct factors of groups:

\begin{lem}\label{proddecomplem}
Let $G_1$ and $G_2$ be uniformly rigid $K$-groups such that $G_1\times G_2$ has a Néron model $(\fG,\phi)$. Then $G_1$ and $G_2$ have Néron models $(\fG_1,\phi_1)$ and $(\fG_2,\phi_2)$ respectively, and there is a canonical isomorphism $\fG\cong\fG_1\times\fG_2$ such that $\phi=\phi_1\times\phi_2$.
\end{lem}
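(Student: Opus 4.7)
The plan is to induce a formal $R$-group structure on $\fG$, produce complementary idempotent endomorphisms $\epsilon_1,\epsilon_2\colon\fG\to\fG$ whose kernels will be the desired N\'eron models, and then verify the universal property for each factor directly from the N\'eron property of $(\fG,\phi)$.

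First, since smoothness and being of locally ff type are both stable under fibered products in $\FF_R$, the objects $\Spf R$ and $\fG\times_R\fG$ are smooth test objects in $\FF_R$. Applying the universal property of $(\fG,\phi)$ to them, the multiplication, inversion and identity of $G_1\times G_2$ lift uniquely, through $\phi$ and $\phi\times\phi$, to morphisms defining a formal $R$-group structure on $\fG$ for which $\phi$ is a homomorphism; the group axioms transport by N\'eron uniqueness. For $\{i,j\}=\{1,2\}$, let $\iota_i\pi_i\colon G_1\times G_2\to G_1\times G_2$ denote the retraction onto the $i$-th factor via the identity of the other, and let $\epsilon_i\colon\fG\to\fG$ be the unique extension of $(\iota_i\pi_i)\circ\phi$. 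Since $(\iota_i\pi_i)^2=\iota_i\pi_i$, since $(\iota_i\pi_i)\circ(\iota_j\pi_j)$ is the constant-identity morphism for $i\neq j$, and since $\mu\circ(\iota_1\pi_1,\iota_2\pi_2)\circ\Delta=\id$ on $G_1\times G_2$, N\'eron uniqueness forces the corresponding identities
\[
\epsilon_i^2=\epsilon_i,\qquad\epsilon_i\circ\epsilon_j=e\text{ for }i\neq j,\qquad\mu_\fG\circ(\epsilon_1,\epsilon_2)\circ\Delta_\fG=\id_\fG,
\]
where $e\colon\fG\to\fG$ denotes the composition of the structure morphism with the identity section $\Spf R\to\fG$.

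Define $\fG_i:=\epsilon_j^{-1}(e_\fG)\subseteq\fG$ for $\{i,j\}=\{1,2\}$, the closed formal subgroup scheme obtained as the pullback of the identity section along $\epsilon_j$. The relation $\epsilon_j\circ\epsilon_i=e$ shows that $\epsilon_i$ factors as a morphism $\sigma_i\colon\fG\to\fG_i$ followed by the inclusion $\iota_i\colon\fG_i\hookrightarrow\fG$, and for $h\in\fG_i$ the third relation above gives $h=\epsilon_1(h)\epsilon_2(h)=\epsilon_i(h)$, so that $\epsilon_i|_{\fG_i}=\id_{\fG_i}$. Using that the $\epsilon_i$ are group homomorphisms, a short computation then shows that the multiplication morphism $m\colon\fG_1\times\fG_2\to\fG$ and the morphism $\Sigma:=(\sigma_1,\sigma_2)\colon\fG\to\fG_1\times\fG_2$ are mutually inverse, whence $m$ is an isomorphism of formal $R$-schemes. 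Smoothness of each $\fG_i$ is deduced by infinitesimal lifting: given a square-zero thickening $T_0\hookrightarrow T$ and a morphism $f_0\colon T_0\to\fG_i$, pair $f_0$ with the composition $T_0\to\Spf R\to\fG_j$ to form $g_0\colon T_0\to\fG_1\times\fG_2\cong\fG$, lift via smoothness of $\fG$ to $g\colon T\to\fG$, and set $f:=\sigma_i\circ g$; being of locally ff type is automatically inherited by the closed subscheme $\fG_i\subseteq\fG$.

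Finally, set $\phi_i:=\pi_i\circ\phi\circ\iota_i^\urig\colon\fG_i^\urig\to G_i$. For smooth $\fZ\in\FF_R$ and a morphism $\alpha\colon\fZ^\urig\to G_i$, the pair $(\alpha,e_{G_j})\colon\fZ^\urig\to G_1\times G_2$ extends uniquely via the N\'eron property of $\fG$ to a morphism $\beta\colon\fZ\to\fG$; since $\pi_j\circ\phi\circ\beta^\urig=e_{G_j}$, N\'eron uniqueness applied to $\epsilon_j\circ\beta$ and $e$ yields $\epsilon_j\circ\beta=e$, so $\beta$ factors uniquely as $\iota_i\circ\widetilde{\alpha}$ with $\widetilde{\alpha}\colon\fZ\to\fG_i$ satisfying $\phi_i\circ\widetilde{\alpha}^\urig=\alpha$, and uniqueness of $\widetilde{\alpha}$ reduces to uniqueness of $\beta$. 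Hence $(\fG_i,\phi_i)$ is a N\'eron model of $G_i$, and the equality $\phi=\phi_1\times\phi_2$ under $\fG\cong\fG_1\times\fG_2$ follows from
\[
\phi(h_1h_2)=\phi(h_1)\phi(h_2)=(\phi_1(h_1),e_{G_2})\cdot(e_{G_1},\phi_2(h_2))=(\phi_1(h_1),\phi_2(h_2))
\]
for $h_i\in\fG_i^\urig$. The central technical point is the systematic use of N\'eron uniqueness to transport algebraic identities from $G_1\times G_2$ to $\fG$; once the decomposition $\fG\cong\fG_1\times\fG_2$ has been established, the smoothness of the factors and the universal property of each $(\fG_i,\phi_i)$ follow by routine manipulation.
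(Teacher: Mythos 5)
Your proof is correct and follows essentially the same route as the paper: induce a formal group structure on $\fG$ via the N\'eron universal property, transport the complementary idempotent endomorphisms of $G_1\times G_2$ to $\fG$ (the paper calls them $\ul h_1,\ul h_2$; your $\epsilon_i$ agree up to a relabelling of indices), define the factors as kernels, and conclude the product decomposition and the universal property of each factor from N\'eron uniqueness. The paper's proof is terser — it declares the product decomposition and the N\'eron property of the factors to be clear — whereas you usefully spell out the verification of smoothness of the $\fG_i$ and the universal property of each $(\fG_i,\phi_i)$; the only small omission on your side is an explicit appeal to N\'eron uniqueness to transport the elementwise commutativity of the subgroups $\fG_1,\fG_2$, which is what makes the multiplication $m\colon\fG_1\times\fG_2\to\fG$ a group homomorphism and hence inverse to $\Sigma$.
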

\begin{proof}
The product group structure on $G$ induces group endomorphisms $h_1$ and $h_2$ of $G$ such that $G_i$ is identified with the kernel of $h_i$ via the natural morphism $G_i\hookrightarrow G_1\times G_2$, such that $h_i^2=h_i$, $h_1\circ h_2=h_2\circ h_1=e_G$ and $h_1\cdot h_2=h_2\cdot h_1=\id_G$, where $e_G$ denotes the endomorphism of $G$ that is induced by the unit section. By the universal property of $(\fG,\phi)$, we obtain group endomorphisms $\ul{h}_i$ of $\fG$ satisfying the same identities. Let us set $\fG_i\mathrel{\mathop:}=\ker\ul{h}_i$; then $\phi$ induces morphisms $\phi_i:\fG_i^\srig\rightarrow G_i$. Clearly the natural morphism $\fG_1\times\fG_2\rightarrow\fG$ is an isomorphism such that $\phi=\phi_1\times\phi_2$, and $(\fG_i,\phi_i)$ is the formal Néron model of $G_i$ for $i=1,2$.
\end{proof}




\subsection{Uniformly rigid tori}\label{urigtorisec}
In this final section of the present paper, we assume that the residue field $k$ of $K$ has positive characteristic $p>0$. Following ideas of Chai (cf.\ \cite{bisecartpre} Section 4), we give an example for a uniformly rigid $K$-group $T$ which is not induced from a rigid $K$-group via uniform rigidification and which has a Néron model $(\fT,\phi)$ where $\phi$ is not surjective. In fact, we define a category of semi-affinoid $K$-groups, the category of uniformly rigid $K$-tori, containing many objects with these properties. Uniformly rigid $K$-tori provide a link between algebraic $K$-tori and abelian $K$-varieties with potentially ordinary reduction.

Let us recall that the formal multiplicative group $\hat{\G}_{m,R}$ over $R$ is defined to be the completion of the multiplicative group $\G_{m,R}$ along the unit section of its special fiber.

\begin{defi}
A semi-affinoid $K$-group $T$ is called a uniformly rigid $K$-torus if there exist a finite field extension $K'/K$ and an isomorphism of uniformly rigid $K'$-groups
\[
T\times_KK'\,\cong\,(\hat{\G}_{m,R'}^d)^\urig
\]
for some $d\in\N$, where $R'$ denotes the valuation ring of $K'$ and where $\hat{\G}_{m,R'}$ is the formal multiplicative group over $R'$. We say that $K'/K$ is a splitting field for $T$. If we can take $K'=K$, we say that the uniformly rigid $K$-torus $T$ is split. A morphism of uniformly rigid $K$-tori is a homomorphism of uniformly rigid $K$-groups; let $\uTor_K$ denote the category of uniformly rigid $K$-tori.
\end{defi}

By definition, the base change functor $\cdot\times_KK'$ restricts to a base change functor $\uTor_K\rightarrow\uTor_{K'}$ for every finite field extension $K'/K$.

\begin{remark}
If $T$ is a split uniformly rigid $K$-torus, then the Néron model $(\fT,\phi)$ of $T$ exists, and $\phi$ is an isomorphism. Indeed, by Proposition \ref{affinenmprop}, the smooth affine formal $R$-scheme $\hat{\G}_{m,R}^d$ is the Néron model of its uniformly rigid generic fiber for every $d\in\N$.
\end{remark}

We will prove that every uniformly rigid $K$-torus admits a Néron model. To do so, it is useful to describe the category of uniformly rigid $K$-tori in terms of character groups. Let us fix a separable algebraic closure $K^\sep$ of $K$, let $\Gamma$ denote its group of $K$-automorphisms, and let $\Mod_{\Z_p}^{\Gamma,\discr}$ denote the category of finite free $\Z_p$ modules equipped with a $\Gamma$-action that factors through a finite quotient of $\Gamma$. In the following, all separable algebraic extensions of $K$ will be be understood to be subfields of $K^\sep$.

\begin{lem}\label{chargrouplem}
Let $T$ be a uniformly rigid $K$-torus, let $K'/K$ be a Galois splitting field for $T$ with valuation ring $R'$, and let us write $T_{K'}:=T\times_KK'$. Then
\[
X^*(T)\,:=\,\Hom_{\uTor_{K'}}(T_{K'},\hat{\G}_{m,K'}^\urig)
\]
is an object in $\Mod_{\Z_p}^{\Gamma,\discr}$ which does not depend on the choice of $K'$.
\end{lem}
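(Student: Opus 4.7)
The plan is to reduce the statement to a computation of endomorphisms of the formal multiplicative group via the N\'eron model property, and then to use Galois descent to establish independence of the splitting field.

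First I would fix a splitting isomorphism $T_{K'}\cong(\hat{\G}_{m,R'}^d)^\urig$ and use Proposition \ref{genfibneronprop} to identify
\[
X^*(T)\,=\,\Hom_{\uTor_{K'}}((\hat{\G}_{m,R'}^d)^\urig,\hat{\G}_{m,R'}^\urig)\,\cong\,\Hom_{R'\text{-grp}}(\hat{\G}_{m,R'}^d,\hat{\G}_{m,R'})\,=\,\End(\hat{\G}_{m,R'})^d.
\]
Since $k$ has characteristic $p$, the classical computation of endomorphisms of the formal multiplicative group over the $p$-adic valuation ring $R'$ yields $\End(\hat{\G}_{m,R'})=\Z_p$; hence $X^*(T)$ is finite free of rank $d$ over $\Z_p$.

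Next I would define the $\Gamma$-action. Since $K'/K$ is Galois, $\Gal(K'/K)$ acts naturally (by conjugation) on both $T_{K'}$ and $\hat{\G}_{m,K'}^\urig$ via its action on $K'$, and this induces an action by group automorphisms on $X^*(T)$; extending trivially through $\Gamma\twoheadrightarrow\Gal(K'/K)$ produces a discrete $\Gamma$-action, so $X^*(T)$ lies in $\Mod_{\Z_p}^{\Gamma,\discr}$.

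For independence, by passing to a compositum it suffices to compare two Galois splitting fields $K'\subseteq K''$ with valuation rings $R'\subseteq R''$. The base change map
\[
X^*_{K'}(T)\,\longrightarrow\,X^*_{K''}(T)
\]
is, by Proposition \ref{descffprop}, injective with image the $\Gal(K''/K')$-invariants. Applying the identification of the first paragraph over $K''$ to the base-changed trivialization $T_{K''}\cong(\hat{\G}_{m,R''}^d)^\urig$, the target is identified with $\End(\hat{\G}_{m,R''})^d=\Z_p^d$; the $\Gal(K''/K')$-action on $T_{K''}$ in this presentation factors purely through the coefficient field $K''$ over $K'$, hence corresponds under the identification to the tautological action of $\Gal(K''/K')$ on the $\Z_p$-coefficient power series defining endomorphisms of $\hat{\G}_{m,R''}$, which is trivial. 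Thus the map above is an isomorphism of $\Gamma$-modules.

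The main obstacle I anticipate is making precise the last point — that once $T$ has been trivialized over $K'$, the further Galois group $\Gal(K''/K')$ acts trivially on $X^*_{K''}(T)$. This hinges on the fact that both $T_{K'}$ and $\hat{\G}_{m,R'}$ are already defined over $K'$, so the $\Gal(K''/K')$-action on the Hom group comes purely from its action on the coefficient ring $R''$; combined with the explicit description of $\End(\hat{\G}_{m,R''})$ as $\Z_p\subseteq R''$ via $p$-adically convergent power series with $\Z_p$-coefficients, this forces the action to be trivial. Together with the $\Z_p$-computation of Step~1, this simultaneously proves the finite freeness claim and the independence of $K'$.
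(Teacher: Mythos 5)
Your proof is correct and follows essentially the same strategy as the paper's: identify $X^*(T)$ with $\End(\hat{\G}_{m,R'})^d$ via the splitting, invoke the N\'eron-model property (the paper cites Proposition \ref{affinenmprop} rather than \ref{genfibneronprop}, but either applies) together with the Lubin-Tate computation $\End(\hat{\G}_{m,R'})\cong\Z_p$, and note the $\Gamma$-action factors through $\Gal(K'/K)$. You spell out the independence of $K'$ in more detail than the paper — reducing to $K'\subseteq K''$, using Proposition \ref{descffprop} to identify $X^*_{K'}(T)$ with the $\Gal(K''/K')$-invariants of $X^*_{K''}(T)$, and observing that this action is trivial on $\Z_p^d$ — which is a legitimate filling-in of an argument the paper leaves implicit.
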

\begin{proof}
The endomorphism ring $\End(\hat{\G}_{m,R})$ is naturally isomorphic to $\Z_p$. Indeed, this follows from \cite{lubin_onepar} Sections 2.2 and 2.3: endomorphisms of $\hat{\G}_{m,R}$ correspond to endomorphisms of the one-dimensional multiplicative formal group law in the sense of \cite{lubin_onepar} Def.\ 1.2.1 and 1.3.1, and the proof of \cite{lubin_onepar} Lemma 2.3.1 carries over verbatim to the case of equal positive characteristic. Hence, $X^*(T)$ is naturally equipped with a structure of $\Z_p$-module. Since $T_{K'}\cong(\hat{\G}_{m,K'}^\urig)^d$ for some $d\in\N$ and since
\[
\Hom_{\uTor_{K'}}(\hat{\G}_{m,K'}^\urig, \hat{\G}_{m,K'}^\urig)\,=\,\Hom(\hat{\G}_{m,R'},\hat{\G}_{m,R'})\,=\,\Z_p
\]
by Proposition \ref{affinenmprop}, we see that the $\Z_p$-module $X^*(T)$ is finite free of rank $d$ and that $X^*(T)$ with its $\Gamma$-action does not depend on the choice of $K'$.
\end{proof}

\begin{defi}
Let $T$ be a uniformly rigid $K$-torus. The object $X^*(T)\in\Mod_{\Z_p}^{\Gamma,\discr}$ defined in Lemma \ref{chargrouplem} is called the character group of $T$.
\end{defi}

\begin{lem}
The contravariant functor
\[
X^*(\cdot):\uTor_K\rightarrow\Mod_{\Z_p}^{\Gamma,\discr}
\]
is an anti-equivalence of categories.
\end{lem}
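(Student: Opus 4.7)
The plan is to verify fullness, faithfulness and essential surjectivity separately, in each case reducing via Galois descent to the tautological statement for split tori. The indispensable inputs are Corollary \ref{maindescentcor}, Proposition \ref{descffprop} together with the remark following it (concerning descent of semi-affinoid group objects), and the identification $\End(\hat{\G}_{m,R'}) = \Z_p$ used in the proof of Lemma \ref{chargrouplem}.

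For fullness and faithfulness, given $T_1, T_2 \in \uTor_K$, I would choose a common Galois splitting field $K'/K$ for both, with Galois group $\Gamma' = \Gal(K'/K)$, so that $T_{i,K'} \cong (\hat{\G}_{m,K'}^{\urig})^{d_i}$ with $d_i = \mathrm{rk}_{\Z_p} X^*(T_i)$. Over $K'$, the computation of $\End(\hat{\G}_{m,R'})$ yields a natural $\Gamma'$-equivariant identification
\[
\Hom_{\uTor_{K'}}(T_{1,K'}, T_{2,K'}) \;\cong\; \Hom_{\Z_p}(X^*(T_2), X^*(T_1))\;.
\]
Since $T_{2,K'}$ is semi-affinoid, Proposition \ref{descffprop} (together with the remark following it, which propagates to the group case) gives $\Hom_{\uTor_K}(T_1, T_2) = \Hom_{\uTor_{K'}}(T_{1,K'}, T_{2,K'})^{\Gamma'}$, and passing to $\Gamma'$-invariants on the right-hand side yields precisely $\Hom_{\Mod_{\Z_p}^{\Gamma,\discr}}(X^*(T_2), X^*(T_1))$, proving full faithfulness.

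For essential surjectivity, let $M \in \Mod_{\Z_p}^{\Gamma,\discr}$, and pick a finite Galois extension $K'/K$ with group $\Gamma' = \Gal(K'/K)$ through which the $\Gamma$-action on $M$ factors. Set $d = \mathrm{rk}_{\Z_p} M$ and $T' := (\hat{\G}_{m,R'}^d)^{\urig}$; a choice of basis identifies the $\Gamma'$-action on $M$ with a homomorphism $\rho \colon \Gamma' \to \mathrm{GL}_d(\Z_p) = \mathrm{Aut}_{\uTor_{K'}}(T')$. I would then twist the canonical semi-linear $\Gamma'$-action on $T'$ (the one coming from $\cdot \times_K K'$ applied to $\hat{\G}_{m,R}^d$) by $\rho$: for $\gamma \in \Gamma'$ set $\tilde{\rho}(\gamma) := \rho(\gamma) \circ \gamma_{T'}$, where $\gamma_{T'}$ is the canonical action. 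Because $\rho(\gamma)$ is defined over $R$, it commutes with $\gamma_{T'}$, so $\tilde{\rho}$ is indeed a semi-linear action of group automorphisms; by the remark after Proposition \ref{descffprop}, the pair $(T',\tilde{\rho})$ descends uniquely to a semi-affinoid $K$-group $T$. By construction $T \times_K K' \cong T'$, so $T$ is a uniformly rigid $K$-torus with splitting field $K'$, and inspection of the induced action on characters recovers $X^*(T) \cong M$ in $\Mod_{\Z_p}^{\Gamma,\discr}$.

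The main obstacle is the duality bookkeeping. The contravariance of $X^*$ means one must be careful to twist by $\rho$ versus $\rho^{-1}$ or its transpose, so that the characters of the descended torus $T$ carry back the originally given $\Gamma$-action on $M$ rather than a contragredient version of it. This is a routine but fiddly point; no further formal-geometric input beyond the descent machinery already established in the preceding sections is required.
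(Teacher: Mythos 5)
Your proof is correct and follows essentially the same route as the paper: full faithfulness by reducing to the split case over a common Galois splitting field via descent and $\End(\hat{\G}_{m,R'})=\Z_p$, and essential surjectivity by twisting the canonical semi-linear action on a split torus by the given representation and descending. The paper states the essential surjectivity step more tersely (citing Lemma~\ref{semaffdesclem}), but the underlying twisting construction is the one you spell out.
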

\begin{proof}
We first show that $X^*(\cdot)$ is fully faithful: let $T_1$ and $T_2$ be uniformly rigid $K$-tori, and let $K'/K$ be a finite Galois extension splitting both $T_1$ and $T_2$. By Corollary \ref{maindescentcor}, the natural map
\[
\Hom_{\uTor_K}(T_1,T_2)\rightarrow\Hom_{\uTor_{K'}}(T_{1,K'},T_{2,K'})^\Gamma
\]
is bijective, and the natural map
\begin{eqnarray*}
\Hom_{\uTor_{K'}}(T_{1,K'},T_{2,K'})^\Gamma&\rightarrow& \Hom_{\Z_p-\textup{mod}}(X^*(T_2),X^*(T_1))^\Gamma\\
&=&\,\Hom_{\Mod_{\Z_p}^{\Gamma,\discr}}(X^*(T_2),X^*(T_1))
\end{eqnarray*}
is bijective by duality for finite free $\Z_p$-modules. Full faithfulness has thus been shown. To see that $X^*(\cdot)$ is essentially surjective, let $M$ be a finite free $\Z_p$-module of rank $d$ equipped with an action of the Galois group of a finite Galois extension $K'/K$, and let $R'$ denote the valuation ring of $K'$. Up to a choice of basis for $M$, we may identify $M$ with the character group of $(\hat{\G}_{m,R'}^d)^\urig$; by Lemma \ref{semaffdesclem}, this split uniformly rigid $K'$-torus with its induced $\Gal(K'/K)$-action descends to a uniformly rigid $K$-torus $T$ such that $X^*(T)\cong M$ in $\Mod_{\Z_p}^{\Gamma,\discr}$.
\end{proof}

There is a natural additive functor $T\mapsto T_{\Z_p}$ from the category $\Tor_K$ of algebraic $K$-Tori to $\uTor_K$ which, on the level of character groups, is given by $\cdot\otimes_\Z\Z_p$. Explicitly, if $T$ is an algebraic $K$-torus, if $K'/K$ is a Galois splitting field for $T$ with valuation ring $R'$ and if $\ul{T}'$ is the Néron lft-model of $T_{K'}$ over $R'$ (which exists by \cite{blr} Ex.\ 10.1/5), then
\[
T_{\Z_p}\,=\,(\ul{T}'|_e)^\urig/\Gamma\;,
\]
where $\ul{T}'|_e$ denotes the formal completion of $\ul{T}'$ along the identity section of its special fiber, and where $(\ul{T}'|_e)^\urig/\Gamma$ denotes the semi-affinoid $K$-group that is obtained from $(\ul{T}'|_e)^\urig$ via Galois descent by means of Corollary \ref{maindescentcor}. 

Let us recall from \cite{blr} Prop.\ 10.1/6 that every algebraic $K$-torus $\ul{T}$ admits a Néron lft-model $\ul{T}$. By Corollary \ref{neronex2cor}, the completion of $\ul{T}$ along its special fiber, together with the natural morphism from the uniformly rigid generic fiber of that completion to $T^{\an,\ur}$, is the Néron model of $T^{\an,\ur}$. Combining this fundamental result with Proposition \ref{constrnmexprop}, we obtain:

\begin{prop}\label{urigtornmexprop}
Let $T$ be an algebraic $K$-torus, let $K'/K$ be a Galois splitting field for $T$ with valuation ring $R'$, let
\[
\psi\,:\,\ul{T}\times_RR'\rightarrow\ul{T}'
\]
be the associated base change map of Néron lft-models, and let $W_k\subseteq\ul{T}_k$ be a closed subscheme such that $W_k\times_kk'$ coincides topologically with the $\psi$-preimage of the unit section of $\ul{T}'_{k'}$. Then $\ul{T}|_{W_k}$ together with the natural morphism $\hat{\phi}:(\ul{T}|_{W_k})^\urig\rightarrow T_{\Z_p}$ is the Néron model of $T_{\Z_p}$.
\end{prop}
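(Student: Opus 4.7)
The plan is to specialize Proposition \ref{constrnmexprop} to the situation where $X = T^{\an}$, $X' = T^{\an}_{K'}$, and where $\fX$, $\fX'$ are the completions of $\ul{T}$ and $\ul{T}'$ along their respective special fibers. First I would check that the hypotheses of Proposition \ref{constrnmexprop} are met: by Corollary \ref{neronex2cor}, applied to the commutative quasi-compact group schemes $T$ and $T_{K'}$ (whose N\'eron lft-models exist by \cite{blr} Prop.\ 10.1/6), the pairs $(\fX,\phi)$ and $(\fX',\phi')$ are the N\'eron models of $X^\ur$ and $(X')^\ur$ respectively, with $\phi$ and $\phi'$ open immersions (in particular \'etale monomorphisms). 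Both $\fX$ and $\fX'$ are smooth, separated formal group schemes of locally tf type over $R$ and $R'$.

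Next I would choose $V_{k'}$ to be the unit section of $\fX'_{k'}$, viewed as a reduced closed subscheme. It is $\Gamma$-stable because $\Gamma$ acts on $\ul{T}'$ by $R'$-group automorphisms, it is affine, and since $\ul{T}'$ is smooth and separated over $R'$ the unit section admits an affine open neighborhood in $\ul{T}'$, hence in $\fX'$. Thus $V_{k'}$ satisfies the hypotheses of Proposition \ref{constrnmexprop}. With this choice, $W_{k'} = \psi_{k'}^{-1}(V_{k'})$ is precisely the topological $\psi$-preimage of the unit section, and the reduced closed subscheme $W_k \subseteq \fX_k = \ul{T}_k$ appearing in the present proposition is exactly the one produced by Proposition \ref{constrnmexprop}, by the uniqueness statement on $W_k$.

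Finally, I would identify the descent $Y$ furnished by Proposition \ref{constrnmexprop} with $T_{\Z_p}$. By construction in Proposition \ref{constrnmexprop}, $Y'= (\fX'|_{V_{k'}})^\urig = (\ul{T}'|_e)^\urig$ and $Y$ is its Galois descent with respect to the natural semi-linear $\Gamma$-action inherited from $\fX'$; but this is by definition exactly $T_{\Z_p}$, as recalled in the paragraph immediately preceding the proposition. Moreover, the morphism $\hat{\phi}:(\ul{T}|_{W_k})^\urig \to T_{\Z_p}$ in the statement is identified with the morphism $\hat{\phi}$ of Proposition \ref{constrnmexprop} by naturality, and Proposition \ref{constrnmexprop} then yields that $(\ul{T}|_{W_k},\hat{\phi})$ is the N\'eron model of $T_{\Z_p}$.

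The main obstacle is not any single deep estimate but rather the careful bookkeeping needed to verify that the $\Gamma$-equivariance hypotheses and the descent datum match up: one has to confirm that the $\Gamma$-action on $\fX'$ induced from the Galois action on $T_{K'}$ via the universal property of the N\'eron lft-model is indeed the one used in Proposition \ref{constrnmexprop}, and that the resulting descent of $(\ul{T}'|_e)^\urig$ truly reproduces $T_{\Z_p}$ rather than some a priori different semi-affinoid $K$-group. Once this identification is in place, the rest is a direct application of Proposition \ref{constrnmexprop}.
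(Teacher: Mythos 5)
Your proposal is correct and takes essentially the same approach as the paper: the proposition is introduced there precisely as the result of combining Corollary \ref{neronex2cor} with Proposition \ref{constrnmexprop}, taking $X=T^{\an}$, $\fX$ and $\fX'$ the completions of $\ul{T}$ and $\ul{T}'$ along their special fibers, and $V_{k'}$ the unit section of $\fX'_{k'}$. Your verification of the hypotheses (applicability of Corollary \ref{neronex2cor} via commutativity of $T$, the unit section as a $\Gamma$-stable affine closed subscheme with affine open neighborhood, and the identification of the descent of $Y'=(\ul{T}'|_e)^\urig$ with $T_{\Z_p}$ by its very definition) matches the paper's intended argument.
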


To deduce that in fact every uniformly rigid $K$-torus has a Néron model, we will need to know that every uniformly rigid $K$-torus is a direct factor of the uniformly rigid $K$-torus associated to an algebraic $K$-torus. This is a consequence of the following result which, on the level of character groups, has been observed by Chai:

\begin{lem}\label{karoubilem}
The natural functor $\cdot\otimes_\Z\Z_p:\Tor_K\rightarrow\uTor_K$ identifies $\uRig_K$ with the pseudo-abelian envelope of $\Tor_K$.
\end{lem}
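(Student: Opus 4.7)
The plan is to work via the anti-equivalence $X^*:\uTor_K\xrightarrow{\sim}\Mod_{\Z_p}^{\Gamma,\discr}$ established above, under which the functor $T\mapsto T_{\Z_p}$ becomes the functor
\[
F\,:\,\Mod_{\Z}^{\Gamma,\discr}\rightarrow\Mod_{\Z_p}^{\Gamma,\discr}\quad,\quad M\mapsto M\otimes_\Z\Z_p\;.
\]
(The statement must be understood in the $\Z_p$-linear sense: $\Tor_K$ is already pseudo-abelian as a $\Z$-linear category, so one implicitly first extends Hom-groups scalars to $\Z_p$, then pseudo-abelianizes.) Concretely, I will check that $\Mod_{\Z_p}^{\Gamma,\discr}$ is pseudo-abelian, that $F$ becomes fully faithful after tensoring Hom-groups with $\Z_p$, and that every object of $\Mod_{\Z_p}^{\Gamma,\discr}$ is a direct summand of an object in the image of $F$. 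Taken together, these three facts identify $\Mod_{\Z_p}^{\Gamma,\discr}$ with the pseudo-abelian envelope of $\Mod_\Z^{\Gamma,\discr}$ in the required sense.

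For pseudo-abelianness of $\Mod_{\Z_p}^{\Gamma,\discr}$: an idempotent $e\in\End_{\Z_p[\Gamma]}(N)$ with $N$ finite free over $\Z_p$ decomposes $N=eN\oplus(1-e)N$ as $\Z_p[\Gamma]$-modules, each summand is finite projective and hence finite free over $\Z_p$, and the $\Gamma$-action on each continues to factor through the finite quotient through which $\Gamma$ acts on $N$. For fully faithfulness after $\otimes\Z_p$ on Homs: since any $M_1,M_2\in\Mod_\Z^{\Gamma',\discr}$ are finitely presented $\Z[\Gamma']$-modules and $\Z_p/\Z$ is flat, the natural map
\[
\Hom_{\Z[\Gamma']}(M_1,M_2)\otimes_\Z\Z_p\;\longrightarrow\;\Hom_{\Z_p[\Gamma']}(M_1\otimes_\Z\Z_p,M_2\otimes_\Z\Z_p)
\]
is an isomorphism. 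In particular $F$ is faithful, and it is full "up to $\Z_p$-linear combinations"; this is what is needed for the universal property.

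The main obstacle is the third step, of showing that every $N\in\Mod_{\Z_p}^{\Gamma',\discr}$ occurs as a direct summand of some $M\otimes_\Z\Z_p$ with $M\in\Mod_\Z^{\Gamma',\discr}$. I would reduce to $N$ indecomposable and apply Artin's induction theorem to the $\Q_p$-representation $N\otimes_{\Z_p}\Q_p$: its character is a $\Q$-linear combination of characters of permutation modules $\Q[\Gamma'/C]$ over cyclic subgroups $C\subseteq\Gamma'$. Each permutation module lifts tautologically to the $\Z$-permutation module $\Z[\Gamma'/C]\in\Mod_\Z^{\Gamma',\discr}$. The delicate point is to promote this virtual equality of characters to an honest direct summand decomposition of integral lattices; this is precisely the content of Chai's observation alluded to in \cite{bisecartpre}, and it can be established by choosing a $\Z_p[\Gamma']$-lattice inside each induced $\Q[\Gamma'/C]\otimes\Q_p$ compatible with a fixed lattice in $N\otimes\Q_p$, and exhibiting the resulting projection onto $N$ as the image of an idempotent in some large $\bigoplus_i\Z[\Gamma'/C_i]\otimes_\Z\Z_p$.

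Having established (i), (ii), (iii), the universal property of the pseudo-abelian envelope follows formally: any $\Z_p$-linear additive functor from $\Mod_{\Z}^{\Gamma,\discr}$ to a $\Z_p$-linear pseudo-abelian category $\mathcal{D}$ extends uniquely through $F$ by sending a direct summand $(F(M),e)$ to the image of the idempotent $e$ in $\mathcal{D}$, and every object of $\Mod_{\Z_p}^{\Gamma,\discr}$ is reached by step (iii). Translating back via $X^*$ gives the asserted identification of $\uTor_K$ with the pseudo-abelian envelope of $\Tor_K$.
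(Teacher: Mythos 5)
Your framing of the argument — pass to character groups via the anti-equivalence with $\Mod_{\Z_p}^{\Gamma,\discr}$, then verify (i) that $\Mod_{\Z_p}^{\Gamma,\discr}$ is pseudo-abelian, (ii) that $F=\cdot\otimes_\Z\Z_p$ becomes fully faithful after extending Hom-groups by $\Z_p$, and (iii) that every object of $\Mod_{\Z_p}^{\Gamma,\discr}$ is a direct summand of some $F(M)$ — is the right architecture, and your verifications of (i) and (ii) are correct (indeed, the paper takes these for granted and does not spell them out). The gap is in (iii), and it is a genuine one, split into two issues.

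At the $\Q_p$-level — realizing $V:=N\otimes_{\Z_p}\Q_p$ as a $\Q_p[\Gamma']$-direct summand of $W\otimes_\Q\Q_p$ for some $\Q[\Gamma']$-module $W$ — Artin induction is the wrong tool. Artin gives a $\Q$-linear (so, after clearing denominators, a $\Z$-linear) virtual character identity $n\chi_V=\sum_C m_C\,\chi_{\Q[\Gamma'/C]}$ with $m_C$ of both signs; this is not a direct-sum decomposition. One can massage it (move the negative terms across and invoke semisimplicity) but this is needless extra work. The paper's observation is both simpler and sharper: every irreducible $\Q_p[\Gamma']$-module occurs as a direct summand of the regular representation $\Q_p[\Gamma']=\Q[\Gamma']\otimes_\Q\Q_p$, so $W=\Q[\Gamma']$ suffices.

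More seriously, the integral step — what you yourself flag as ``the delicate point'' — is precisely where the real content of the lemma lies, and your proposal leaves it unresolved (``it can be established by choosing $\ldots$ and exhibiting the resulting projection $\ldots$ as the image of an idempotent''). The missing argument is an approximation-plus-Nakayama step: choose a $\Gamma$-stable $\Q_p[\Gamma']$-linear complement $V'$ of $V$ inside $W\otimes_\Q\Q_p$ and $\Gamma$-stable $\Z_p$-lattices $\ul{V}\subseteq V$, $\ul{V}'\subseteq V'$; use the $p$-adic density of $W$ in $W\otimes_\Q\Q_p$ to replace a finite $\Z_p[\Gamma]$-generating system of $\ul{V}\oplus\ul{V}'$ by elements of $W$ congruent to it modulo $p(\ul{V}\oplus\ul{V}')$; and then apply Nakayama's lemma to conclude that the $\Z[\Gamma]$-lattice $\ul{W}\subseteq W$ generated by these approximants satisfies $\ul{W}\otimes_\Z\Z_p=\ul{V}\oplus\ul{V}'$, so that $N=\ul{V}$ is a direct summand of $F(\ul{W})$. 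Without this (or an equivalent concrete construction of the idempotent), your proof of (iii) — and hence of the lemma — does not close.
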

\begin{proof}
After passing to character groups, it suffices to show the corresponding statement for the functor 
\[
\cdot\otimes_\Z\Z_p \,:\,\Mod_\Z^\Gamma\rightarrow\Mod_{\Z_p}^{\Gamma,\discr}\;,
\]
where $\Mod_\Z^\Gamma$ denotes the category of finite free $\Z$-modules equipped with a continuous $\Gamma$-action. Let $\Gamma'$ be a finite quotient of $\Gamma$, and let $V$ be a finite-dimensional $\Q_p$-vector space equipped with a $\Gamma'$-action. It suffices to verify the following statements:
\begin{enumerate}
\item There exists a finite-dimensional $\Q$-vector space $W$ with a $\Gamma'$-action such that $V$ is, as a $\Q_p[\Gamma']$-module, a direct summand of $W\otimes_\Q\Q_p$.
\item Let $V'$ be a $\Gamma$-invariant linear complement of $V$ in $W\otimes_\Q\Q_p$, and let 
$\ul{V}\subseteq V$,
$\ul{V}'\subseteq V'$ be $\Gamma$-stable $\Z_p$-lattices. Then there exists 
a $\Gamma$-stable $\Z$-lattice $\ul{W}\subseteq W$ such that the $\Gamma$-equivariant decomposition $W\otimes_\Q\Q_p\cong V\oplus V'$ restricts to a decomposition
\[
\ul{W}\otimes_\Z\Z_p\cong\ul{V}\oplus\ul{V}'\\;.
\]
\end{enumerate}
To prove the first statement, we may assume that $V$ is irreducible; then $V$ is a direct summand of the regular representation $\Q_p[\Gamma']$, so it suffices to take $W=\Q[\Gamma']$. Let us prove the second statement. Clearly $\ul{V}\oplus\ul{V}'$ is a $\Gamma$-stable $\Z_p$-lattice in $W\otimes_\Q\Q_p$; we must descend it to a $\Gamma$-stable $\Z$-lattice in $W$. To do so, we choose finite systems of generators $(v_i)_{i\in I}$ and $(v'_i)_{i\in I'}$ of the $\Z_p[\Gamma]$-modules $\ul{V}$ and $\ul{V}'$ respectively. Since $W$ is $p$-adically dense in $W\otimes_\Q\Q_p$, there exist systems $(w_i)_{i\in I}$ and $(w_i')_{i\in I'}$ in $W$ such that 
\begin{eqnarray*}
v_i-w_i\,\in\,p(\ul{V}\oplus\ul{V}')&\forall&i\in I\quad\textup{and}\\
v'_i-w'_i\,\in\,p(\ul{V}\oplus\ul{V}')&\forall&i\in I'\quad.
\end{eqnarray*}
In particular, the $w_i$ and the $w_i'$ are contained in $\ul{V}\oplus\ul{V}'$. Let $\ul{W}\subseteq W$ be the $\Z[\Gamma]$-submodule generated by the $w_i$ and the $w_i'$. Since the inclusion $\ul{W}\otimes_\Z\Z_p\subseteq\ul{V}\oplus\ul{V}'$ reduces to an isomorphism of $\F_p$-vector spaces modulo $p$, Nakayama's Lemma implies that $\ul{W}\otimes_\Z\Z_p=\ul{V}\oplus\ul{V}'$ as $\Z_p$-modules; hence $\ul{W}$ is a $\Z$-lattice of $W$ with the desired property. 
\end{proof}

Combining Lemma \ref{karoubilem}, Proposition \ref{urigtornmexprop} and Lemma \ref{proddecomplem}, we obtain:

\begin{cor}
Every uniformly rigid $K$-torus has a Néron model.
\end{cor}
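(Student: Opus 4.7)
The plan is to combine the three preceding results in exactly the order they are stated. Given a uniformly rigid $K$-torus $T$ with character group $M = X^*(T) \in \Mod_{\Z_p}^{\Gamma,\discr}$, I will exhibit $T$ as a direct factor of an object of the form $T_0 \otimes_\Z \Z_p$ coming from an algebraic $K$-torus $T_0$, construct a N\'eron model for $T_0 \otimes_\Z \Z_p$ via Proposition \ref{urigtornmexprop}, and then split off the N\'eron model of $T$ via Lemma \ref{proddecomplem}.

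For the first step, I would invoke Lemma \ref{karoubilem}: since $\cdot \otimes_\Z \Z_p \colon \Tor_K \to \uTor_K$ identifies $\uTor_K$ with the pseudo-abelian envelope of $\Tor_K$, the uniformly rigid torus $T$ arises as the image of an idempotent endomorphism of $T_0 \otimes_\Z \Z_p$ for some algebraic $K$-torus $T_0$. Translated through the anti-equivalence with character groups, this means: there exists a finite free $\Z$-module $\ul{W}$ with continuous $\Gamma$-action, a $\Gamma$-stable direct sum decomposition $\ul{W} \otimes_\Z \Z_p \cong M \oplus M'$ in $\Mod_{\Z_p}^{\Gamma,\discr}$, where $M' \in \Mod_{\Z_p}^{\Gamma,\discr}$ is the complementary summand (which is itself the character group of a uniformly rigid $K$-torus $T'$). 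Dualizing, we obtain a canonical isomorphism $T_0 \otimes_\Z \Z_p \cong T \times T'$ in $\uTor_K$, where $T_0$ is the algebraic $K$-torus with character group $\ul{W}$.

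For the second step, Proposition \ref{urigtornmexprop} applied to $T_0$ immediately produces the N\'eron model of $T_0 \otimes_\Z \Z_p$: it is the formal completion $\ul{T}_0|_{W_k}$ of the N\'eron lft-model of $T_0$ along the appropriate closed subscheme of its special fiber, together with the natural morphism to $(T_0)_{\Z_p} = T_0 \otimes_\Z \Z_p$. For the third step, Lemma \ref{proddecomplem} applied to the product decomposition $T_0 \otimes_\Z \Z_p \cong T \times T'$ splits the N\'eron model of $T_0 \otimes_\Z \Z_p$ as a product of formal N\'eron models for $T$ and $T'$, in particular exhibiting a N\'eron model $(\fT,\phi)$ for $T$.

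There is no genuine obstacle at this stage — every input has been established earlier in the paper — so the only point requiring care is book-keeping: verifying that the idempotent produced by the pseudo-abelian envelope description really corresponds to an internal product decomposition in $\uTor_K$ (which is immediate from the anti-equivalence $X^*$, since a direct sum decomposition of character groups dualizes to a direct product decomposition of tori), and verifying that applying Lemma \ref{proddecomplem} to the output of Proposition \ref{urigtornmexprop} is formally legitimate (which it is, since Lemma \ref{proddecomplem} takes as input only the existence of a N\'eron model for a product of uniformly rigid $K$-groups).
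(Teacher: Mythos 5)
Your proposal is correct and takes the same approach as the paper, which simply cites Lemma \ref{karoubilem}, Proposition \ref{urigtornmexprop}, and Lemma \ref{proddecomplem} without further elaboration; you have merely unpacked that one-line combination, including the (routine but worth-noting) translation of the idempotent decomposition from character groups back to an internal product of uniformly rigid $K$-tori.
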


\begin{remark}
If $T$ is an algebraic $K$-torus with Galois splitting field $K'/K$ and Néron lft-models $\ul{T}$, $\ul{T}'$ and if $(\fT,\phi)$ is the Néron model of $T_{\Z_p}$, then on the level of rigid spaces, 
\[
\im\phi^\r\,=\,(\ul{T}^\rig\times_KK')\cap(\fY'|_e)^\rig\;;
\]
that is, $T_{\Z_p}$ is the unit ball around the origin in $T$ with respect to a coordinate system on $\ul{T}'$, and the Néron model of $T_{\Z_p}$ contains those points in that unit ball which extend to the Néron model of $T$. In particular, $\phi$ is surjective onto $T_{\Z_p}$ if and only if the base change conductor $c(T,K)$ of $T$ is zero.
\end{remark}

\begin{remark} Let us assume that the residue field $k$ of $R$ is perfect, and let $A$ be an abelian $K$-variety with potentially ordinary reduction, let $K'/K$ be a finite Galois extension such that $A\times_KK'$ has semi-stable reduction, let $\ul{A}'$ denote the Néron model of $A\times_KK'$, and let $e$ denote the identity section of its special fiber.
\begin{enumerate}
\item If $k$ is algebraically closed, then the completion of $\ul{A}'$ along $e$ is isomorphic to $(\hat{\G}_{m,R'})^d$, where $d$ denotes the dimension of $A$, and hence
\[
(\ul{A}'|_e)^\urig/\Gamma
\]
is a uniformly rigid $K$-torus; we thus obtain a functor from the category of abelian $K$-varieties with potentially ordinary reduction to $\uRig_K$.
\item If $k$ is not necessarily algebraically closed, then $\ul{A}'|_e$ is a possibly non-split formal $R'$-torus; it splits over the maximal unramified extension of $K'$ in $K^\sep$. One can define a uniformly rigid quasi-$K$-torus to be a semi-affinoid $K$-group which, after finite base field extension, become isomorphic to the uniformly rigid generic fiber of a formal torus. By studying character groups, one sees that the category $\uqTor_K$ of uniformly rigid quasi-$K$-tori is anti-equivalent to the category of finite free $\Z_p$-modules equipped with an action of $\Gamma$ that is continuous for the $p$-adic topology on $\Z_p$ and the profinite topology on $\Gamma$ and which is trivial on a finite index subgroup of the inertia subgroup $I\subseteq\Gamma$. The uniformly rigid $K$-group
\[
(\ul{A}'|_e)^\urig/\Gamma
\]
is an object of $\uqTor_K$.
\end{enumerate}
\end{remark}
\bibliographystyle{plain}
\bibliography{formal_neron_models}

\begin{thebibliography}{10}

\bibitem{artin_tate}
Emil Artin and John Tate.
\newblock A note on finite ring extensions.
\newblock {\em J. Math. Soc. Japan}, 3:74--77, 1951.

\bibitem{berthelot_rigcohpreprint}
Pierre Berthelot.
\newblock Cohomologie rigide et cohomologie rigide {\`a} supports propres.
\newblock Pr{\'e}publication de l'universit{\'e} de Rennes 1, 1996.

\bibitem{bosch_frgnotes}
Siegfried Bosch.
\newblock Lectures on formal and rigid geometry.
\newblock {\em Preprint series of the SFB Geometrische Strukturen in der
  Mathematik, M\"unster}, 378, 2005.

\bibitem{bgr}
Siegfried Bosch, Ulrich G{\"u}ntzer, and Reinhold Remmert.
\newblock {\em Non-Archimedean analysis. A systematic approach to rigid
  analytic geometry}, volume 261 of {\em Grundlehren der Mathematischen
  Wissenschaften}.
\newblock Springer-Verlag, Berlin, 1984.

\bibitem{frg1}
Siegfried Bosch and Werner L{\"u}tkebohmert.
\newblock Formal and rigid geometry {I}. {Rigid} spaces.
\newblock {\em Math. Ann.}, 295:291--317, 1993.

\bibitem{frg2}
Siegfried Bosch and Werner L{\"u}tkebohmert.
\newblock Formal and rigid geometry {II}. {Flattening} techniques.
\newblock {\em Math. Ann.}, 296(3):403--429, 1993.

\bibitem{blr}
Siegfried Bosch, Werner L{\"u}tkebohmert, and Michel Raynaud.
\newblock {\em N{\'e}ron models}.
\newblock Springer-Verlag.

\bibitem{bosch_schloeter}
Siegfried Bosch and Klaus Schl{\"o}ter.
\newblock N{\'e}ron models in the setting of formal and rigid geometry.
\newblock {\em Math. Ann.}, 301:339--362, 1995.

\bibitem{bourbakica}
Nicolas Bourbaki.
\newblock {\em Commutative algebra}, volume Chapters 1--7 of {\em Elements of
  Mathematics}.
\newblock Springer-Verlag, 1998.

\bibitem{bisecartpre}
Ching-Li Chai.
\newblock A bisection of the artin conductor.
\newblock unpublished,
  \url{http://www.math.upenn.edu/~chai/papers_pdf/bAcond_v21.pdf}.

\bibitem{conrad_irred}
Brian Conrad.
\newblock Irreducible components of rigid spaces.
\newblock {\em Ann.\ Inst.\ Fourier (Grenoble)}, 49(2):473--541, 1999.

\bibitem{conrad_ampleness}
Brian Conrad.
\newblock Relative ampleness in rigid geometry (amplitude relative en
  g{\'e}om{\'e}trie rigide).
\newblock {\em Annales de l'Institut Fourier}, 56(4):1049--1126, 2006.

\bibitem{dejong_crystalline}
Johan de~Jong.
\newblock Crystalline {Dieudonn{\'e}} module theory via formal and rigid
  geometry.
\newblock {\em Inst.\ Hautes {\'E}tudes Sci.\ Publ.\ Math.}, 82:5--96, 1995.

\bibitem{dejong_crystalline_err}
Johan de~Jong.
\newblock Erratum to: "{Crystalline} {Dieudonn{\'e}} module theory via formal
  and rigid geometry".
\newblock {\em Inst.\ Hautes {\'E}tudes Sci.\ Publ.\ Math.}, 87:175--175, 1998.

\bibitem{eisenbudca}
David Eisenbud.
\newblock {\em Commutative algebra with a view towards algebraic geometry},
  volume 150 of {\em Graduate Texts in Mathematics}.
\newblock Springer-Verlag, 1995.

\bibitem{elkik}
R.~Elkik.
\newblock Solutions d'{\'e}quations {\`a} coefficients dans un anneau
  hens{\'e}lien.
\newblock {\em Annales scientifiques de l'{\'E}.N.S.}, 6(4):553--603, 1973.

\bibitem{godement}
Roger Godement.
\newblock {\em Topologie alg{\'e}brique et th{\'e}orie des faisceaux}.
\newblock Hermann, 1958.

\bibitem{sga3}
Grothendieck.
\newblock S{\'e}minaire de g{\'e}om{\'e}trie alg{\'e}brique 3: sch{\'e}mas en
  groupes.

\bibitem{tohoku}
Alexander Grothendieck.
\newblock Sur quelques points d'alg{\`e}bre homologique.
\newblock {\em Tohoku Math. J.}, 2(9):119--221, 1957.

\bibitem{egaii}
Alexander Grothendieck and Jean Dieudonn{\'e}.
\newblock {\'E}l{\'e}ments de g{\'e}om{\'e}trie alg{\'e}brique {II}. {\'e}tude
  globale {\'e}l{\'e}mentaire de quelques classes de morphismes.
\newblock {\em Inst.\ Hautes {\'E}tudes Sci.\ Publ.\ Math.}, 8:222 pp., 1961.

\bibitem{egaiii}
Alexander Grothendieck and Jean Dieudonn{\'e}.
\newblock {\'E}l{\'e}ments de g{\'e}om{\'e}trie alg{\'e}brique {III}. {\'e}tude
  cohomologique des faisceaux coh{\'e}rents. {I.}
\newblock {\em Inst.\ Hautes {\'E}tudes Sci.\ Publ.\ Math.}, 11:167 pp., 1961.

\bibitem{egaiv}
Alexander Grothendieck and Jean Dieudonn{\'e}.
\newblock {\'E}l{\'e}ments de g{\'e}om{\'e}trie alg{\'e}brique {IV}.
  {{\'E}tude} locale des sch{\'e}mas et des morphismes de sch{\'e}mas.
\newblock {\em Inst.\ Hautes {\'E}tudes Sci.\ Publ.\ Math.}, 20, 24, 28, 32,
  1964 -- 1967.

\bibitem{egain}
Alexander Grothendieck and Jean Dieudonn{\'e}.
\newblock {\em {\'E}l{\'e}ments de g{\'e}om{\'e}trie alg{\'e}brique. {I}},
  volume 166 of {\em Die Grundlehren der mathematischen Wissenschaften in
  Einzeldarstellungen}.
\newblock Springer-Verlag, 1971.

\bibitem{huberhabil}
Roland Huber.
\newblock Bewertungsspektrum und rigide {G}eometrie.
\newblock {\em Regensburger Mathematische Schriften}, 23:0--309, 1993.

\bibitem{huber_adicsppaper}
Roland Huber.
\newblock A generalization of formal schemes and rigid analytic varieties.
\newblock {\em Math. Z.}, 217(4):513--551, 1994.

\bibitem{huberbook}
Roland Huber.
\newblock {\em {\'E}tale Cohomology of Rigid Analytic Varieties and Adic
  Spaces}, volume E30 of {\em Aspects of Mathematics}.
\newblock Friedr.\ Vieweg \& Sohn, 1996.

\bibitem{huber_finiteness_ii}
Roland Huber.
\newblock A finiteness result for the compactly supported cohomology of rigid
  analytic varieties. {II}.
\newblock {\em Ann.\ Inst.\ Fourier (Grenoble)}, 57(3):973--1017, 2007.

\bibitem{urigspaces}
Christian Kappen.
\newblock Uniformly rigid spaces.
\newblock {\em to appear in ANT}, \url{http://arxiv.org/abs/1009.1056}, 2011.

\bibitem{kiehlderham}
Reinhardt Kiehl.
\newblock Die de {R}ham {K}ohomologie algebraischer {M}annigfaltigkeiten
  {\"u}ber einem bewerteten {K}{\"o}rper.
\newblock {\em Inst.\ Hautes {\'E}tudes Sci.\ Publ.\ Math.}, 33:5--20, 1967.

\bibitem{kisinlocconst}
Mark Kisin.
\newblock Local constancy in p-adic families of galois representations.
\newblock {\em Math. Z.}, 230(3):569--593, 1999.

\bibitem{liu_ag}
Quing Liu.
\newblock {\em Algebraic Geometry and Arithmetic Curves}.
\newblock Oxford University Press, 2002.

\bibitem{lubin_onepar}
Jonathan Lubin.
\newblock One-parameter formal lie groups over p-adic integer rings.
\newblock {\em Ann.\ of Math.}, 80(2):464--484, 1964.

\bibitem{luetkebohmert_mer}
Werner L{\"u}tkebohmert.
\newblock Fortsetzbarkeit k-meromorpher {F}unktionen.
\newblock {\em Math. Ann.}, 220(3):273--284, 1976.

\bibitem{luetkebohmertformalrigid}
Werner L{\"u}tkebohmert.
\newblock Formal-algebraic and rigid-analytic geometry.
\newblock {\em Math.\ Ann.}, 286(1-3):341--371, 1990.

\bibitem{nicaise_traceformula}
Johannes Nicaise.
\newblock A trace formula for rigid varieties, and motivic {Weil} generating
  series for formal schemes.
\newblock {\em Math.\ Ann.}, 343(2), 2009.

\bibitem{nicsebmotintrig}
Johannes Nicaise and Julien Sebag.
\newblock Motivic integration for rigid spaces.

\bibitem{rapoport-zink}
Michael Rapoport and Thomas Zink.
\newblock {\em Period spaces for $p$-divisible groups}, volume 141 of {\em
  Annals of Mathematics Studies}.
\newblock Princeton University Press, Princeton, NJ, 1996.

\bibitem{strauch_deformation}
Matthias Strauch.
\newblock Deformation spaces of one-dimensional formal modules and their
  cohomology.
\newblock {\em Adv. Math.}, 217(3):889--951, 2008.

\bibitem{ajl}
L.~Alonso Tarr{\'\i}o, A.~Jerem{\'\i}as L{\'o}pez, and J.~Lipman.
\newblock Duality and flat base change on formal schemes.
\newblock {\em Contemp. Math.}, 244(3--90), 1999.

\bibitem{ajr1}
L.~Alonso Tarr{\'\i}o, A.~Jerem{\'\i}as L{\'o}pez, and M.~P. Rodr{\'\i}guez.
\newblock Infinitesimal lifting and jacobi criterion for smoothness on formal
  schemes.
\newblock {\em Comm. Algebra}, 35(4):1341--1367, 2007.

\bibitem{ajr2}
L.~Alonso Tarr{\'\i}o, A.~Jerem{\'\i}as L{\'o}pez, and M.~P. Rodr{\'\i}guez.
\newblock Local structure theorems for smooth maps of formal schemes.
\newblock {\em J. Pure Appl. Algebra}, 213(7):1373--1398, 2009.

\bibitem{temkin_desing}
Michael Temkin.
\newblock Desingularization of quasi-excellent schemes in characteristic zero.
\newblock {\em Adv. Math.}, 219(2):488--522, 2008.

\bibitem{valabrega1}
P.~Valabrega.
\newblock On the excellent property for power series rings over polynomial
  rings.
\newblock {\em J. Math. Kyoto Univ.}, 15(2):387--395, 1975.

\bibitem{valabrega2}
P.~Valabrega.
\newblock A few theorems on completion of excellent rings.
\newblock {\em Nagoya Math. J.}, 61:127--133, 1976.

\bibitem{wegel}
Oliver Wegel.
\newblock {\em Das Verhalten formeller N{\'e}ron-Modelle unter Basiswechsel}.
\newblock PhD thesis, WWU M{\"u}nster, 1997.

\bibitem{weil}
A.~Weil.
\newblock {\em Vari{\'e}t{\'e}s ab{\'e}liennes et courbes alg{\'e}briques}.
\newblock Hermann, 1948, 1972.

\bibitem{yekutieli_rescompl}
Amnon Yekutieli.
\newblock Smooth formal embeddings and the residue complex.
\newblock {\em Canadian J. Math.}, 1998(863-896), 50.

\end{thebibliography}

\end{document}